\let\Bbbk\relax
\title{Some interactions between Hopf Galois extensions and noncommutative rings}
\shorttitle{Some interactions between Hopf Galois extensions and noncommutative rings} % title in pages
\author[1]{Fabio Calder\'on}
\author[1]{Armando Reyes*}
\begin{document}
\setcounter{page}{58}
\maketitle
\thispagestyle{firstpage}

\begin{abstract}
		In this paper, our objects of interest are Hopf Galois extensions (e.g., Hopf algebras, Galois field extensions, strongly graded algebras, crossed products, principal bundles, etc.) and families of noncommutative rings (e.g., skew polynomial rings, PBW extensions and skew PBW extensions, etc.). We collect and systematize questions, problems, properties and recent advances in both theories by explicitly developing examples and doing calculations that are usually omitted in the literature. In particular, for Hopf Galois extensions we consider approaches from the point of view of quantum torsors (also known as quantum heaps) and Hopf Galois systems, while for some families of noncommutative rings we present advances in the characterization of ring-theoretic and homological properties. Every developed topic is exemplified with abundant references to classic and current works, so this paper serves as a survey for those interested in either of the two theories. Throughout, interactions between both are presented.
		
		\keywords{Hopf algebra; Hopf Galois extension; noncommutative ring; Ore extension; skew PBW extension.}
        \end{abstract}
	
	\section{Introduction}
	
	In the last half-century, Hopf algebras turned out to be a great tool for studying a large number of problems in several contexts: from providing solutions for the Yang-Baxter equation and describing the so-called quantum groups --appearing in theoretical physics and algebraic theory--, to generalizing Galois theory. It is precisely this last instance what concerns us in this paper.
	
	Classically, Galois theory studies and classifies automorphism groups of fields. In 1965 the theory was generalized to groups acting on commutative rings \cite{CHR}, and in 1969 extended to commutative algebras by replacing the action of a group on the algebra for a coaction of a Hopf algebra on a commutative algebra \cite{CS}. The first general definition of \emph{Hopf Galois extensions} is due to Kreimer and Takeuchi \cite{KT}, although the authors restricted their study to the finite-dimensional case. In Part~\ref{ch1}, we will address the modern definition of such extensions, not without first recalling some basic notions regarding the theory of Hopf algebras. Our aim is to provide a large number of examples and properties, developing proofs and calculations that are usually omitted in the literature. We finish the section by giving two recent alternative approaches for Hopf Galois theory: \emph{quantum torsors} (or \emph{quantum heaps}), defined independently by Grunspan \cite{Gru2} and \v{S}koda \cite{Sko}, and \emph{Hopf Galois systems}, introduced by Bichon~\cite{Bic}. We address the equivalence between these three notions.
	
	Almost in parallel to the first appearance of Hopf algebras, Ore introduced in 1933 a new class of noncommutative rings, nowadays known as \emph{skew polynomial rings} (or \emph{Ore extensions}) \cite{Ore}. Although the aim of Ore was to find noncommutative algebras which could be embedded on division rings (e.g., \cite[Chapter 8]{Coh}), these structures belong, per se, to a branch of study in algebra used to describe many rings and algebras, mostly coming from mathematical physics and with broad applications in quantum mechanics. Therefore, some classic results such as the Hilbert's Basis Theorem or the Hilbert's Syzygy Theorem have been generalized to these objects (see Theorem~\ref{t2} or, e.g., \cite[Sections 2.9 and 3.1]{MR}), while many other properties are still being studied. Hence, at the start of Part~\ref{ch2} we review basic definitions and results on skew polynomial rings, along with some remarkable examples.
	
	However, this is not the only family of noncommutative rings (or algebras) that has been defined since then. Inspired by the Poincar\'e-Birkhoff-Witt (PBW) theorem for enveloping algebras of Lie algebras, Bell and Goodearl defined in 1988 the \emph{PBW extensions} \cite{BG}. These consist of polynomial-type rings having a PBW basis and specific commutation rules. Furthermore, Ga\-lle\-go and Lezama in 2011 generalized the notion to \emph{skew PBW extensions} so new examples of rings with polynomial behavior could be studied \cite{LezBook, GL}. Hence, our aim is to also address these types of rings and some examples of the theory. We also study a quite different collection of algebras, known as \emph{almost symmetric algebras} (or \emph{Sridharan enveloping algebras}). These generalize enveloping algebras via twisting by $2$-cocycles, without losing some nice properties \cite{Sri}.
	
	With these two overviews in mind, one could ask for possible relations between some of the mentioned families and Hopf algebras (e.g. \cite{Pan,Sal}), and in particular, with Hopf Galois theory. Therefore, as an original contribution, we study coactions of arbitrary Hopf algebras over skew polynomial rings. Also, following \cite{Gru}, we attach a Hopf Galois system to almost symmetric algebras, and elucidate the structure of quantum torsor present in Kashiwara algebras. 
	
	Therefore, the purpose of this paper is three-fold: as a survey of classic and current works on Hopf Galois theory, as a quick overview of several approaches to noncommutative algebras appearing in applications, and as a compilation (with some original developments) of the interactions between both theories. This work is the result of the first author's master's thesis, which had a Meritorious Mention at Universidad Nacional de Colombia and was written under the advice of the second author.
	
	\subsection*{Notations and conventions} Throughout this manuscript all rings and their morphisms are unitary. $K$ will denote an arbitrary commutative ring and $\Bbbk$ a field (if necessary, algebraically closed and of characteristic 0). Unless stated otherwise, tensor products are assumed to be over $K$ and every $K$-module is non-zero.
	
	Let $f,g,h$ be functions. We denote the composition of $f$ with $g$ by $fg$ and the composition of $h$ with itself $n$-times as $h^n$. The arrow $\operatorname{id}_X:X\rightarrow X$ will always denote the identity map of $X$.
	
	\emph{Arrow diagrams} will be constantly used, representing composition of functions as concatenation of arrows. A diagram is said to be \emph{commutative} if, no matter what path one follows, the composition of arrows (functions) returns always the same result.
	
	The symbols $\mathbb{N}$, $\mathbb{Z}$, $\mathbb{Q}$, $\mathbb{R}$, $\mathbb{C}$ denote the usual numerical systems, assuming that $0\in\mathbb{N}$.
	
	\section{Hopf Galois theory: preliminaries, definitions and examples}\label{ch1}
	
	Although we assume some familiarity with the theory of Hopf algebras, for the purpose of a self-contained document, this section will start addressing basic terminology of (co)algebras (Section~\ref{sec2.1} and~\ref{sec2.2}), some examples (Section~\ref{sec2.3}), and (co)modules, Hopf modules and (co)actions (Sections~\ref{sec2.4} to~\ref{sec2.6}). Then we introduce in Section~\ref{sec2.7} the concept of Hopf Galois extension, which is transversal to this work. A large amount of examples and properties are presented in Sections~\ref{sec2.8} and~\ref{sec2.9}. Finally, and following recent developments, we dedicate Sections~\ref{se4} and~\ref{sec2.11} to two alternative --and, under some conditions, equivalent-- approaches of Hopf Galois extensions, namely quantum torsors and Hopf Galois systems.
	
	Except for the last two sections, all definitions and results presented in this part are classical and can be consulted, for example, in \cite{CS,DNR,DT,Kas,KT,Mon1,Mon2,Sch,Sch4,Swe}.
	
	\subsection{Algebras and coalgebras}\label{sec2.1}
	
	Recall that a \emph{$K$-algebra} is a $K$-module $A$ together with two $K$-linear maps, $m:A\otimes A \rightarrow A$ and $u:K\rightarrow A$, such that the following diagrams are commutative:
	\begin{equation*}
		\begin{tikzcd}
			A\otimes A\otimes A \arrow[r,"m\otimes \operatorname{id}_A"] \arrow[d,swap,"\operatorname{id}_A\otimes m"] &
			A \otimes A \arrow[d,"m"] \\
			A \otimes A \arrow[r,"m"'] & A
		\end{tikzcd} \qquad \begin{tikzcd}[row sep=scriptsize,
			column sep=scriptsize]
			& A\otimes A \arrow[dd,"m"]&\\
			K\otimes A \arrow[ur,"u\otimes \operatorname{id}_A"] \arrow[dr,"\cong"'] & & A\otimes K \arrow[ul,"\operatorname{id}_A\otimes u"'] \arrow[dl,"\cong"] \\
			&A&
		\end{tikzcd}
	\end{equation*}
	The first diagram is known as the \emph{associativity property} while the second as the \emph{main unit property}. We write $ab=m(a\otimes b)$ and $1_A:=u(1_K)$. Similarly, a \emph{$K$-coalgebra} is a $K$-module $C$ together with two $K$-linear maps, $\Delta:C\rightarrow C\otimes C$ and $\varepsilon: C\rightarrow K$, such that the following diagrams are commutative:
	\begin{equation*}
		\begin{tikzcd}
			C \arrow[r,"\Delta"] \arrow[d,swap,"\Delta"] &
			C\otimes C \arrow[d,"\operatorname{id}_C\otimes\Delta"] \\
			C \otimes C \arrow[r,"\Delta\otimes \operatorname{id}_C"'] & C\otimes C \otimes C
		\end{tikzcd} \qquad \begin{tikzcd}[row sep=scriptsize,
			column sep=scriptsize]
			& C \arrow[dd,"\Delta"] \arrow[dl,"\cong"'] \arrow[dr,"\cong"] &\\
			K\otimes C  & & C\otimes K  \\
			&C\otimes C \arrow[ul,"\varepsilon\otimes \operatorname{id}_C"] \arrow[ur,"\operatorname{id}_C\otimes \varepsilon"'] &
		\end{tikzcd}
	\end{equation*}
	The map $\Delta$ is called the \emph{comultiplication} and $\varepsilon$ the \emph{counit}. The left diagram is known as the \emph{coassociativity property} while the second as the \emph{main counit property}. We use the widely accepted \emph{Heyneman--Sweedler notation} for the comultiplication, that is, for any $c\in C$ we write $\Delta(c)=c_{(1)}\otimes c_{(2)} \in C\otimes C$.

	Arrows between these structures are defined as those preserving the operations. Therefore we consider $K$-$\operatorname{Alg}$, the category of $K$-algebras, and $K$-$\operatorname{Cog}$, the category of $K$-coalgebras. Regarding substructures in $K$-$\operatorname{Cog}$, recall that for a coalgebra $C$, a submodule $D$ is called a \emph{subcoalgebra} if $\Delta(D)\subseteq D\otimes D$, and a submodule $I$ is called a \emph{left} (resp. \emph {right}) \emph{coideal} if $\Delta(I)\subseteq C\otimes I$ (resp. $\Delta(I)\subseteq I\otimes C$). Also, a submodule $J$ of $C$ is called a \emph{coideal} if $\Delta(J)\subseteq J\otimes C + C\otimes J$ and $\varepsilon(J)=0$.
	
	Given two $K$-modules $M, N$, the \emph{twist map} $\tau_{M,N}:M\otimes N\rightarrow N\otimes M$ is defined by $m\otimes n \mapsto n\otimes m$, for all $m\in M$ and $n\in N$. Sometimes this arrow is also denoted by $\tau_{(12)}$ when emphasis in the interchanged coordinates is needed. In the situation $M=N$, we simply write $\tau_M$. An algebra $A$ is \emph{commutative} if $m\tau_A=m$, while a coalgebra $C$ is \emph{cocommutative} if $\tau_C\Delta=\Delta$, that is, if $c_{(1)}\otimes c_{(2)}=c_{(2)} \otimes c_{(1)}$, for all $c\in C$.
	
	If $M$ is a $K$-module, for $n\geq 2$, we denote $M^{\otimes n}:= M\otimes \dotsb \otimes M$ ($n$ times). By definition, $M^{\otimes0}:=K$ and $M^{\otimes 1} := M$. This notation is useful when dealing with coalgebras and their maps. For any coalgebra $C$, we define the sequence of maps $\{\Delta_n : C\rightarrow C^{\otimes (n+1)} \}_{n\geq 1}$ recurrently as follows: $\Delta_1:=\Delta$, and $ \Delta_n:= (\Delta \otimes \operatorname{id}_C^{n-1}) \Delta_{n-1}$, for any $ n\geq 2$.
	
	The coassociativity of $\Delta$ states that, for any $c\in C$,
	\begin{equation*}
		( {c_{(1)}}_{(1)}\otimes {c_{(1)}}_{(2)} ) \otimes c_{(2)} =  c_{(1)} \otimes ( {c_{(2)}}_{(1)} \otimes {c_{(2)}}_{(2)} ),
	\end{equation*}
	and therefore we are able to just write $ \Delta_2(c)= c_{(1)}\otimes c_{(2)}\otimes c_{(3)}$. Moreover, we will have
	$\Delta_n(c)= c_{(1)}\otimes \cdots \otimes c_{(n+1)},$ for any $n\geq 1.$
	The main property of the counit $\varepsilon$ may be formulated as $\varepsilon(c_{(1)})c_{(2)}=c= c_{(1)}\varepsilon(c_{(2)})$. Also, the behavior of a coalgebra morphism $g:C\rightarrow D$ can be stated as $g(c)_{(1)} \otimes g(c)_{(2)} = g(c_{(1)}) \otimes g(c_{(2)})$.
	
	\subsection{Bialgebras and Hopf algebras}\label{sec2.2}
	
	Recall that a \emph{$K$-bialgebra} $H$ is a $K$-module simultaneously endowed with an algebra and a coalgebra structure, both over $K$, satisfying that $m$ and $u$ are morphisms of coalgebras (or equivalently, that $\Delta$ and $\varepsilon$ are algebra maps). For simplicity, and since it holds for most applications, throughout we will assume the hypothesis that every $K$-bialgebra $H$ is flat over $K$, meaning that the tensor product functor $-\otimes H$ is exact (i.e., preserves the exactness of sequences). The category of $K$-bialgebras is denoted by $K$-$\operatorname{Bialg}$.
	
	In the $K$-module $\operatorname{Hom}_K(C,A)$ of all $K$-linear maps between a coalgebra $C$ and an algebra $A$ we define the \emph{convolution product} as $	(f*g)(c):=(m_A(f\otimes g)\Delta_C) (c)$, for all $f,g\in \operatorname{Hom}_K(C,A)$, and $c\in C$. In Heyneman--Sweedler notation, $(f*g)(c)=f(c_{(1)})g(c_{(2)})$. Hence the convolution product endows $\operatorname{Hom}_K(C,A)$ with an algebra structure where the identity element is $u_A\varepsilon_C$. We say that $f\in \operatorname{Hom}_K(C,A)$ is \emph{convolution invertible} if there exists an element $g\in\operatorname{Hom}_K(C,A) $ such that $f*g=g*f=u_A\varepsilon_C$.
	
	\begin{definition}[Hopf algebra]
		A $K$-bialgebra $H$ is a \emph{$K$-Hopf algebra} if $\operatorname{id}_H$ is convolution invertible by an element $S\in\operatorname{Hom}_K(H,H)$. In this case, $S$ is called an \emph{antipode} for $H$.
	\end{definition}
	
	Since $\operatorname{Hom}_K(H,H)$ is an algebra and $S$ is defined as an inverse, the antipode is unique. Moreover, using Heyneman--Sweedler notation, $S$ satisfies $S(h_{(1)})h_{(2)} = \varepsilon(h)1=h_{(1)}S(h_{(2)})$, for all $h\in H$. This is known as the \emph{main property of the antipode}. It can be shown that $S$ is an anti-morphism of algebras and coalgebras (e.g. \cite[Proposition 4.2.6]{DNR}).
	
	As with previous structures, arrows can be considered between Hopf algebras and therefore one can define the corresponding category, denoted by $K$-$\operatorname{HopfAlg}$. It can be shown that any bialgebra morphism between two Hopf algebras is always a Hopf algebra morphism (e.g. \cite[Proposition 4.2.5]{DNR}), which in terms of categories means that $K$-$\operatorname{HopfAlg}$ is a full subcategory of $K$-$\operatorname{Bialg}$. Regarding substructures in $K$-$\operatorname{HopfAlg}$, for a Hopf algebra $H$, a submodule $L$ is said to be a \emph{Hopf subalgebra} if it is a subalgebra of $H$, a subcoalgebra of $H$ and $S(L)\subseteq L$. Also, a submodule $I$ of $H$ is said to be a \emph{Hopf ideal} if it is an ideal of $H$ (as algebra), a coideal of $H$ (as coalgebra) and $S(I)\subseteq I$.
	
	Finally, recall that in a coalgebra $C$, an element $c\in C$ is said to be \emph{group-like} if $\Delta(c)=c\otimes c$. The set of all group-like elements of $C$ is denoted by $G(C)$. Similarly, $x\in C$ is said to be a \emph{$(g,h)$-primitive element} (or simply a \emph{skew primitive element} when $g$ and $h$ are not specified), if $\Delta(x)=x\otimes g + h\otimes x$ with $g,h\in G(C)$. The set of all $(g,h)$-primitive elements of $C$ is denoted by $P_{g,h}(C)$. Notice that if $H$ is a Hopf algebra, $G(H)$ becomes a group with induced multiplication and inverses given by the antipode $S$.	
	
	\subsection{Examples of Hopf algebras}\label{sec2.3}
	
	Now we address some essential examples of Hopf algebras (for a larger amount see e.g. \cite{DNR,Kas,Mon1,Swe}).
	
	\begin{example}\label{ex3}
		Any commutative ring $K$ has structure of $K$-Hopf algebra by defining, for all $k\in K$,
		\begin{equation*}
			\Delta(k):=k\otimes 1, \quad \varepsilon(k):=k, \quad \quad S(k):=k.
		\end{equation*}
		 In particular, this holds if $K=\Bbbk$ is a field.
	\end{example}
	
	The next example establishes a methodical way of constructing new Hopf algebras.
	
	\begin{example}\label{ex1}
		Let $A,B$ be two $K$-algebras. Then the $K$-module $A\otimes B$ has also the structure of a $K$-algebra with multiplication $m_{A\otimes B}$ and unit $u_{A\otimes B}$ given by the compositions
		\begin{equation*}
			\begin{gathered}
				m_{A\otimes B}: \begin{tikzcd}[column sep=huge]
					(A\otimes B) \otimes (A\otimes B) \arrow[r,"\operatorname{id}_A \otimes \tau_{A,B}\otimes \operatorname{id}_B"] & (A\otimes A) \otimes (B\otimes B) \arrow[r,"m_A\otimes m_B"] & A\otimes B,
				\end{tikzcd}\\
				u_{A\otimes B}: \begin{tikzcd}
					K \arrow[r,"\cong"] & K\otimes K \arrow[r,"u_A\otimes u_B"] & A\otimes B.
				\end{tikzcd}
			\end{gathered}	
		\end{equation*}
		Notice that the multiplication can be stated as $(a\otimes b)(a'\otimes b'):=aa'\otimes bb'$, for all $a,a'\in A$, $b,b'\in B$. The unit element is $1_A\otimes 1_B$. Further results on this algebra can be found in \cite[Section II. 4]{Kas}. When $A=B$, the multiplication simplifies to $m_{A\otimes A}:= m_A\otimes m_A $.
		
		Similarly, if $C,D$ are two $K$-coalgebras, then $C\otimes D$ has also the structure of a $K$-coalgebra with comultiplication $\Delta_{C\otimes D}$ and counit $\varepsilon_{C\otimes D}$ given by the compositions
		\begin{equation*}
			\begin{gathered}
				\Delta_{C\otimes D}: \begin{tikzcd}[column sep=huge]
				C\otimes D \arrow[r,"\Delta_C\otimes\Delta_D"] &
                                (C\otimes C) \otimes (D\otimes D)
                                \arrow[r,"\operatorname{id}_C\otimes
                                \tau_{C,D}\otimes\operatorname{id}_D"] &
                                (C\otimes D) \otimes (C\otimes C),
			\end{tikzcd}\\
			\varepsilon_{C\otimes D}: \begin{tikzcd}
				C\otimes D \arrow[r,"\varepsilon_C\otimes\varepsilon_D"] & K\otimes K \arrow[r,"\cong"] &K.
			\end{tikzcd}
			\end{gathered}
		\end{equation*}
		In Heyneman--Sweedler notation,
		\begin{gather*}
			\Delta_{C\otimes D}(c\otimes d)=  (c\otimes d)_{(1)} \otimes (c\otimes d)_{(2)}= (c_{(1)} \otimes d_{(1)}) \otimes (c_{(2)} \otimes d_{(2)}),\\
			\varepsilon_{C\otimes D}(c\otimes d) =  \varepsilon_C(c)\varepsilon_D(d).
		\end{gather*}
		When $C=D$, the comultiplication simplifies to $\Delta_{C\otimes C}:= \Delta_C \otimes \Delta_C $.
		
		Furthermore, if $H, L$ are two $K$-Hopf algebras, then their tensor product is also a $K$-Hopf algebra with antipode $S_{H\otimes L}:=S_H\otimes S_L$.
	\end{example}

\begin{example}\label{ex20}
	Let $H$ be a Hopf algebra and $I$ a Hopf ideal of $H$. Since $I$ is a two-sided ideal of $H$, the quotient module $H/I$ already has algebra structure, by putting
	$\overline{h}\overline{g}:=\overline{hg}$, for all $h, g\in H$, where $\overline{h}:=h+I$. The identity element is $\overline{1}$. Moreover, $H/I$ has Hopf algebra structure given by
	\begin{equation*}
		\overline{\Delta}(\overline{h}):=\overline{h_{(1)}} \otimes \overline{h_{(2)}}, \qquad \overline{\varepsilon}(\overline{h}):=\varepsilon(h) \qquad \mbox{and} \qquad \overline{S}(\overline{h}):=\overline{S(h)}, \qquad \mbox{for all } h\in H.
	\end{equation*}
	One can easily check that the canonical projection $H\rightarrow H/I$ is a surjective morphism of Hopf algebras.
\end{example}

	In the following examples, the base ring is a field.
	
	\begin{example}\label{ex5}
		Recall that for any $\Bbbk$-vector space $V$ we denote by $V^*:=\operatorname{Hom}_\Bbbk(V,\Bbbk)$ its \emph{(linear) dual}, consisting of all $\Bbbk$-linear maps from $V$ to $\Bbbk$, together with the point-wise addition and scalar multiplication by constants so that $V^*$ is also a $\Bbbk$-vector space. In particular, we have $\Bbbk^* \cong \Bbbk$ via the identification $f\mapsto f(1_\Bbbk)$.
		
		Let $H$ be a finite dimensional $\Bbbk$-Hopf algebra (i.e., $H$ is finite dimensional as $\Bbbk$-vector space). Then its linear dual $H^*$ is also a $\Bbbk$-Hopf algebra with operations
		\begin{equation*}
			\begin{gathered}
				m^\circ: \begin{tikzcd}
					H^* \otimes H^* \arrow[r,"\cong"] & (H\otimes H)^* \arrow[r,"\Delta^*"] & H^*,
				\end{tikzcd}\\
				u^\circ: \begin{tikzcd}
					\Bbbk \arrow[r,"\cong"] & \Bbbk^* \arrow[r,"\varepsilon^*"] & H^*,
				\end{tikzcd}\\
			\Delta^\circ: \begin{tikzcd}
				H^* \arrow[r,"m^*"] & (H \otimes H)^* \arrow[r,"\cong"] & H^* \otimes H^*, 
			\end{tikzcd}\\
			\varepsilon^\circ: \begin{tikzcd}
				H^* \arrow[r,"u^*"]& \Bbbk^* \arrow[r,"\cong"] & \Bbbk,
			\end{tikzcd}\\
		S^\circ : \begin{tikzcd}
			H^* \arrow[r,"S^*"]& H^*.
		\end{tikzcd}
			\end{gathered}
		\end{equation*}
	In this example the condition of $H$ being finite dimensional cannot be easily dropped, for if $H$ is not finite dimensional, $H^* \otimes H^*$ could be a proper subspace of $(H\otimes H)^*$ and thus the image of $m^\circ: H^* \rightarrow (H\otimes H)^*$ might not lie in $H^* \otimes H^*$. Therefore, for the general case a certain subset $H^\circ$ of $H^*$ is considered, often called the \emph{finite dual} of $H$ (see e.g. \cite[Section 1.2]{Mon1}). On the other hand, duals for Hopf algebras defined over commutative rings constitute an open line of investigation, and some progress has been made when the base ring is a polynomial algebra (see e.g. \cite{Kur}).
	\end{example}
	
	\begin{example}[Group algebra]\label{ex6}
		Let $G$ be a (multiplicative) group. The \emph{group algebra}, denoted by $\Bbbk G$, is the $\Bbbk$-vector space with $G$ as a basis, and hence its elements are of the form $\sum_{g\in G} k_g g$, where only finite $k_g$ are non-zero scalars. $\Bbbk G$ is an algebra with multiplication given by
		\begin{equation*}
			\left( \sum_{i=1}^n k_i g_i \right)\left( \sum_{j=1}^m l_jh_j \right) = \sum_{i=1}^n \sum_{j=1}^m (k_il_j)(g_ih_j), \qquad \mbox{for all } k_i,l_j\in \Bbbk, \, g_i, h_j\in G,
		\end{equation*}
	and unit $1_{\Bbbk G} := 1_G$. Furthermore, $\Bbbk G$ becomes a Hopf algebra by linearly extending the following rules:
		\begin{equation*}
			\Delta(g):=g\otimes g, \qquad \varepsilon(g):=1_{\Bbbk} \qquad \mbox{and} \qquad S(g):=g^{-1}, \qquad \mbox{for all }g\in G.
		\end{equation*}
	\end{example}

	\begin{example}[Dual of group algebra]\label{ex10}
		Let $G$ be a finite group and $H=(\Bbbk G)^*$ the dual Hopf algebra of the group algebra. Even though Example~\ref{ex5} describes the operations of $H$, we want a more detailed description. So notice that the universal property of $\Bbbk G$ allows us to identify $H$ with $\Bbbk^G$, the algebra of functions from $G$ to $\Bbbk$. Hence we have for $f,g\in \Bbbk^G$ and $x,y\in G$:
		\begin{align*}
			(f\cdot g)(x)&:=[m^\circ(f\otimes g)](x)=[\Delta^*(f\otimes g)](x)\\
			&=[(f\otimes g)\Delta](x)=(f\otimes g)(x\otimes x)=f(x)g(x),\\
			[\Delta(f)](x \otimes y)&:=[\Delta^\circ(f)](x\otimes y) = [m^*(f)](x\otimes y) = f(xy),\\
			[S(f)](x)&:=[S^\circ(f)](x)=[S^*(f)](x)= [fS](x)=f(S(x)).
		\end{align*}
		Despite these formulas, a full description of $\Delta(f)$ is not given. Therefore we define for every $x\in G$ the map $p_x: G \rightarrow \Bbbk$ given by
		\begin{equation*}
			p_x(y):=\delta_{x,y}:=\begin{cases*}
				1 & \mbox{if } \mbox{$x=y$},\\
				0 & \mbox{if } \mbox{$x\neq y$}.
			\end{cases*}
		\end{equation*}
		Since $G$ is finite, $\{ p_x : x\in G \}$ is a basis for $\Bbbk^G$ (in correspondence with the dual basis of $(\Bbbk G)^*$). For those elements, we have
		\begin{equation*}
			\Delta(p_x)=\sum_{uv=x} p_u \otimes p_v = \sum_{ y\in G } p_y \otimes p_{y^{-1}x},
		\end{equation*}
		which describes the comultiplication for basis elements.
	\end{example}
	
	\begin{example}[Tensor algebra]\label{ex18}
		Let $V$ be a $\Bbbk$-vector space. An algebra $T(V)$ is said to be a \emph{tensor algebra of $V$} if there exists a linear map $\iota:V\rightarrow T(V)$ such that the following universal property is satisfied: \textit{for any algebra $A$ and any linear map $f:V\rightarrow A$ there exists an unique algebra morphism $\overline{f}:T(V)\rightarrow A$ such that the following diagram is commutative:}
		\begin{equation*}
			\begin{tikzcd}
				V \arrow[d,"f"'] \arrow[r,"\iota"] & T(V) \arrow[dl,dashed,"\overline{f}"]\\
				A
			\end{tikzcd}
		\end{equation*}
		The tensor algebra $T(V)$ is unique up to isomorphism and can be described as $T(V)=\bigoplus_{i\geq 0} V^{\otimes i}$, meaning that any element of $T(V)$ has the form $z=(z_i)_{i\geq 0}$, where $z_i\in V^{\otimes i}$ and almost all $z_i$ vanish. The multiplication is given by the rule
		\begin{equation*}
			(v_1\otimes \cdots \otimes v_i)(v_{i+1}\otimes \cdots \otimes \otimes v_{i+j})=v_1\otimes \cdots \otimes v_{i+j}, \qquad \mbox{for all } i,j\geq 0.
		\end{equation*}
		The identity element is $1\in V^{\otimes0}=\Bbbk$. $T(V)$ becomes a Hopf algebra by extending (via the universal property) the rules
		\begin{equation*}
			\Delta(v):=v\otimes 1 + 1\otimes v, \qquad \varepsilon(v):=0 \qquad \mbox{and} \qquad S(v):=-v, \qquad \mbox{for all } v\in V.
		\end{equation*}
		A complete proof of this fact can be found in \cite[Section 4.3.2]{DNR}.
	\end{example}
	
	\begin{example}[Symmetric algebra]\label{ex19}
		Let $V$ be a $\Bbbk$-vector space. A commutative algebra $S(V)$ is said to be a \emph{symmetric algebra of $V$} if there exists a linear map $l: V\rightarrow S(V)$ such that the following universal property is satisfied: \textit{for any commutative algebra $A$ and any linear map $h: V \rightarrow A$ there exists an unique algebra morphism $\overline{h}: S(V)\rightarrow A$ such that the following diagram is commutative:}
		\begin{equation*}
			\begin{tikzcd}
				V \arrow[d,"h"'] \arrow[r,"l"] & S(V) \arrow[dl,dashed,"\overline{h}"]\\
				A
			\end{tikzcd}
		\end{equation*}
		 A straightforward use of the universal property shows that $S(V)$ is unique up to isomorphism. The existence of symmetric algebras is shown by explicitly construction of $S(V)$ as the quotient $T(V)/I$, where $I=\langle u\otimes v - v\otimes u : u,v\in V \rangle$ \cite[Section 4.3.3]{DNR}. An alternative construction can be found in \cite[Section 15.1.18]{MR}. Since for all $u,v\in V$,
		\begin{align*}
			&\Delta_{T(V)}(u\otimes v - v\otimes u)\\
			 &= \Delta(u)\Delta(v)-\Delta(v)\Delta(u)\\& = (u\otimes 1 + 1\otimes u)(v\otimes 1 + 1\otimes v)-(v\otimes 1 + 1\otimes v)(u\otimes 1 - 1\otimes u)\\
			&= (u\otimes v - v\otimes u)\otimes 1 + 1\otimes (u\otimes v-v\otimes u),
		\end{align*}
		which is an element of $I \otimes T(V)+T(V)\otimes I$, and 
		\begin{gather*}
			\varepsilon_{T(V)}(u\otimes v - v\otimes u)=\varepsilon(u)\varepsilon(v)-\varepsilon(v)\varepsilon(u)=0,\\
			S_{T(V)}(u\otimes v - v\otimes u)=S(u)S(v)-S(v)S(u)=(-y)\otimes (-x)-(-x)\otimes(-y) \in I,
		\end{gather*}
		we have shown that $I$ is a Hopf ideal of $T(V)$. Hence, by Example~\ref{ex20}, $S(V)=T(V)/I$ is a (commutative) Hopf algebra with induced operations.
	\end{example}
	
	\begin{example}[Universal enveloping algebra of a Lie algebra]\label{ex17}
		A $\Bbbk$-vector space $\mathfrak{g}$ is a \emph{Lie algebra} if there exists a bilinear map $[-,-]: \mathfrak{g} \times \mathfrak{g} \rightarrow \mathfrak{g} $, called the \emph{Lie bracket}, such that the following conditions hold:
		\begin{enumerate}[label=(L\arabic*), align=parleft, leftmargin=*]
			\item\label{L1} (\emph{Antisymmetry}) $[x,y]=-[y,x]$, for all $x,y\in \mathfrak{g}$,
			\item\label{L2} (\emph{Jacobi identity}) $[[x,y],z]+[[z,x],y]+[[y,z],x]=0$, for all $x,y,z\in \mathfrak{g}$.
		\end{enumerate}
		The Lie algebra $\mathfrak{g}$ is called \emph{Abelian} if $[x,y]=0$ for every $x,y\in \mathfrak{g}$. In general the Lie bracket is not associative. Moreover,~\ref{L1} implies $[x,x]=0$, for all $x\in\mathfrak{g}$. For example, $\mathbb{R}^3$ equipped with the usual vector product is a $\mathbb{R}$-Lie algebra.
		
		Any (associative) algebra $A$ can be endowed with a $\Bbbk$-Lie algebra structure with $[a,b]:=ab-ba$, for all $a,b\in A$. In this example we shall consider the converse construction, i.e., an associative algebra rising from a given Lie algebra. The importance of this construction is well known, going from representation theory (e.g. \cite{Hum}), construction of Verma modules (e.g. \cite[Section 9.5]{Hall}) or characterization of left-invariant differential operators (e.g. \cite[Chapter II]{Hel}), to cocommutative cases of quantum groups (e.g. \cite{Kas}).
		
		If $\mathfrak{g}_1$ and $\mathfrak{g}_2$ are two Lie algebras, a linear map $f:\mathfrak{g}_1\rightarrow\mathfrak{g}_2$ is an \emph{morphism of Lie algebras} if $		f([x,y])=[f(x),f(y)]$, for all $x,y\in \mathfrak{g}_1$. In particular, if $\mathfrak{g}_2=A$ is an associative algebra endowed with the Lie bracket mentioned above, we say that the map $f:\mathfrak{g}_1\rightarrow A$ is a \emph{representation} (of $\mathfrak{g}_1$).
		
		Let $\mathfrak{g}$ be a Lie algebra. An associative algebra $U(\mathfrak{g})$ is an \emph{universal enveloping algebra of $\mathfrak{g}$}, if there exists a representation $f:\mathfrak{g}\rightarrow U(\mathfrak{g})$ such that the following universal property is satisfied: \textit{for any associative algebra $A$ and any representation $h: \mathfrak{g} \rightarrow A$ there exists an unique algebra morphism $\overline{h}: U(\mathfrak{g})\rightarrow A$ such that the following diagram is commutative:}
		\begin{equation*}
			\begin{tikzcd}
				\mathfrak{g} \arrow[d,"h"'] \arrow[r,"f"] & U(\mathfrak{g}) \arrow[dl,dashed,"\overline{h}"]\\
				A
			\end{tikzcd}
		\end{equation*}
		It follows that $\overline{h}$ is also a Lie algebra map and that $U(\mathfrak{g})$ is unique up to isomorphism. The existence of such enveloping algebra is given by explicitly constructing $U(\mathfrak{g})$ as the quotient algebra $\Bbbk\langle X\rangle/I$ , where $X=\{x_i\}_i$ is a basis of $\mathfrak{g}$, $\Bbbk\langle X\rangle$ is the free $\Bbbk$-algebra over $X$ and $I=\langle x_ix_j-x_jx_i-[x_i,x_j] : x_j,x_i\in X  \rangle$ \cite[Section V.2]{Kas}. An alternative construction \cite[Section 4.3.4]{DNR} is done by taking $U(\mathfrak{g})$ as the quotient $T(\mathfrak{g})/J$, where $T(\mathfrak{g})$ is the tensor algebra of $\mathfrak{g}$ and
		\begin{equation}\label{e39}
			J=\langle [x,y]-x\otimes y + y\otimes x : x,y\in \mathfrak{g} \rangle.
		\end{equation}
		In either case, the Poincar\'e--Birkhoff--Witt Theorem establishes that if there exists a total order $\preceq$ in $X$, then the set containing 1 and all elements of the form $x_{i_1}\cdots x_{i_n}$, with $ x_{i_1}\preceq \cdots \preceq x_{i_n}$, constitutes a $\Bbbk$-basis of $U(\mathfrak{g})$ (e.g. \cite[Theorem V.3]{Jac2}). The rules
		\begin{equation}\label{e40}
			\Delta(x)=x\otimes 1 + 1\otimes x, \quad \varepsilon(x)=0, \quad \varepsilon(x)=-x, \quad \mbox{for all } x\in \mathfrak{g},
		\end{equation}
		can be extended to $U(\mathfrak{g})$ by applying the universal property or, alternatively, verifying that $J$ as in \eqref{e39} is a Hopf ideal. Either way, \eqref{e40} makes $U(\mathfrak{g})$ a cocommutative Hopf algebra.
	\end{example}
	
	\begin{example}[Universal enveloping algebra of $\mathfrak{sl}_2(\Bbbk)$]\label{ex35}
		Denote by $\mathfrak{gl}_2(\Bbbk)$ the $\Bbbk$-algebra consisting of all $n\times n$ matrices with entries in $\Bbbk$ seen as a Lie algebra. One can easily check that the elements
		\begin{equation*}
			x=\left(\begin{array}{c c}
				0 & 1\\
				0 & 0
			\end{array}\right), \qquad
			y=\left(\begin{array}{c c}
				0 & 0\\
				1 & 0
			\end{array}\right), \qquad
			h=\left(\begin{array}{c c}
				1 & 0\\
				0 & -1
			\end{array}\right), \qquad
			i=\left(\begin{array}{c c}
				1 & 0\\
				0 & -1
			\end{array}\right),
		\end{equation*}
		form a basis for $\mathfrak{gl}_2(\Bbbk)$. Moreover, $[x,y]=h$, $[h,x]=2x$, $[h,y]=-2y$, and $[i,x]=[i,y]=[i,h]=0$. We denote by $\mathfrak{sl}_2(\Bbbk)$ the subspace of all matrices with null trace. A basis is $\{x,y,h\}$. For the particular case $\Bbbk=\mathbb{C}$ a detailed study of this Lie algebra can be found in \cite[Chapter V]{Kas}. By the previous example, $U(\mathfrak{sl}_2(\Bbbk))$ can be seen as the Hopf algebra generated by $x,y,h$ subject to the relation $[x,y]=h$, $[h,x]=2x$ and $[h,y]=-2y$. In Part~\ref{ch2}, we will endow this algebra with another structure, evidencing that a single object can be enriched with several distinct structures.
	\end{example}
	
	For the next example, recall that $\omega \in \Bbbk$ is said to be a \emph{$n$-th root of unity} ($n \in \mathbb{Z}^+$) if $\omega^n=1$. Furthermore, $\omega$ is \emph{primitive} if it is not a $k$-th root of unity for some $k<n$.
	
	\begin{example}[Taft Hopf algebra]\label{ex33}
		Given a $n$-th root of unity $\omega$ in $\Bbbk$, the $n^2$-dimensional \emph{Taft Hopf algebra} is given as an algebra by $T_{n^2}(\omega) = \Bbbk\langle g,x\rangle / \langle g^n-1,  x^n, xg-\omega gx \rangle$. $T_{n^2}(\omega)$ acquires structure of non-(co)commutative Hopf algebra via
		\begin{gather*}
			\Delta(g)=g\otimes g, \qquad \varepsilon(g)=1, \qquad S(g)=g^{-1},\\
			\Delta(x)=x\otimes 1 + g\otimes x,\qquad \varepsilon(x)=0, \qquad S(x)=-g^{-1}x.
		\end{gather*}
		Since the construction depends on the choice of $\omega$, there are $\Phi(n)$ non-isomorphic Taft Hopf algebras for each dimension $n^2$, where $\Phi$ denotes Euler's totient function. These Hopf algebras were constructed as examples of finite dimensional Hopf algebras having antipodes of arbitrarily high order, since in $T_{n^2}(\omega)$, $S$ has order $2n$ \cite{Taf}. The case $n=2$ is also known as the \emph{Sweedler Hopf algebra}.
	\end{example}
	
	\begin{example}[Quantum enveloping algebra of $\mathfrak{sl}_2(\Bbbk)$]\label{ex32}
		Let $q\in \Bbbk$ be an invertible element such that $q \neq \pm 1$. We define $U_q:=U_q(\mathfrak{sl}_2(\Bbbk))$ as the algebra generated by $e,f,k,k^{-1}$ subject to the relations
		\begin{gather*}
			kk^{-1}=k^{-1}k=1, \qquad kek^{-1}=q^2e, \qquad kfk^{-1}=q^{-2}f, \\
			\left[e,f\right]=ef-fe=\frac{k-k^{-1}}{q-q^{-1}}.
		\end{gather*}
		It can be shown that $\{ e^i f^j k^l : i,j\in\mathbb{N},\, l\in\mathbb{Z} \}$ is a basis \cite[Proposition VII.1.1]{Kas}. For simplicity, set $\Bbbk=\mathbb{C}$ and $q\in \mathbb{C}$ not being a root of unity. Hence $U_q$ is a $\mathbb{C}$-Hopf algebra with the operations induced by
		\begin{gather*}
			\Delta(e):=1\otimes e + e\otimes k, \qquad	\Delta(f):=k^{-1}\otimes f + f\otimes 1,\qquad	\Delta(k):=k\otimes k,\\
			\Delta(k^{-1}):= k^{-1} \otimes k^{-1}, \qquad \varepsilon(e)=\varepsilon(f)=0, \qquad	\varepsilon(k)=\varepsilon(k^{-1})=1,\\
			S(e):=-ek^{-1}, \qquad S(f):=-kf, \qquad S(k):=k^{-1}, \qquad S(k^{-1}):=k.
		\end{gather*}
		Moreover, if $q^2$ is a $n$-th primitive root of unity, the elements $k^n-1$, $e^n$ and $f^n$ are skew-primitive. Hence the ideal generated by them is a Hopf ideal \cite[Proposition 1.7]{Kha} and thus $U_q':=U_q/\langle k^n-1,e^n,f^n \rangle$ is a Hopf algebra, known as the \emph{Frobenius-Lusztig kernel}.
	\end{example}
	
	\begin{example}[Circle Hopf algebra]\label{ex30}
		Let $H_\Bbbk$ be the algebra defined as $H_\Bbbk:=\Bbbk\langle c,s \rangle /I$, with $I=\langle c^2+s^2-1,cs\rangle$. Then $H_\Bbbk$ is a Hopf algebra via
		\begin{gather*}
			\Delta(c)=c\otimes c - s\otimes s, \qquad \varepsilon(c)=1, \qquad S(c)=c,\\
			\Delta(s)=c\otimes s + s\otimes c,\qquad \varepsilon(s)=0, \qquad S(s)=-s.
		\end{gather*}
		As we shall see in Example~\ref{hge3}, this algebra naturally appears in some examples of separable field extensions not being Galois, but still satisfying the defining condition of a Hopf Galois extension.
	\end{example}
	
	\subsection{Modules and comodules}\label{sec2.4}
	
	The aim of this section is to introduce (co)actions of Hopf algebras over arbitrary $K$-algebras; these are of utmost importance for Hopf Galois extensions. Therefore, we review the notions of (co)module over an algebra and (co)module algebra. Throughout this section $A$ will denote an arbitrary $K$-algebra, while $C$ a $K$-coalgebra.
	
	Recall that a \emph{left $A$-module} is a $K$-module $M$ together with a $K$-linear map $\gamma: A\otimes M \rightarrow M$, called the \emph{scalar product map} of $M$, such that  $\gamma(m\otimes \operatorname{id}_M)=\gamma (\operatorname{id}_A \otimes \gamma)$ and $\gamma(u\otimes \operatorname{id}_M)(k \otimes x)=k x$, for all $k\in K$ and $x\in M$. We write $a\cdot x := \gamma(a\otimes x)$, so the above means that $(ab)\cdot x=a\cdot(b\cdot x)$ and $1\cdot x=x$, for all $x\in M$ and $a,b\in A$. Right modules over $A$ are defined similarly, the difference being that the scalar product map has the form $\gamma:M\otimes A\rightarrow M$.
	
	Similarly, a \emph{right $C$-comodule} is a $K$-module $N$ together with a $K$-linear map $\rho: N \rightarrow N\otimes C$, called the \emph{structure map} of $N$, such that $			(\operatorname{id}_N \otimes \Delta)\rho = (\rho \otimes \operatorname{id}_C)\rho$ and $(\operatorname{id}_N \otimes \varepsilon)\rho(n)=n\otimes 1$, for all $n\in N$. Left comodules over $C$ are defined in the same way, having structure map of the form $\rho:N\rightarrow C\otimes N$.
	
	We extend Heyneman--Sweedler notation to comodules. Let $N$ be a right $C$-comodule with structure map $\rho: N \rightarrow N\otimes C $. For any $n\in N$, the element $\rho(n)$ of $N\otimes C$ shall be written as $\rho(n)= n_{(0)} \otimes n_{(1)}$, fixing the convention that $n_{(j)}\in C$ for $j\neq 0$. With this, the defining properties of a right comodule may be written as
	\begin{gather}
		(n_{(0)})_{(0)} \otimes (n_{(0)})_{(1)} \otimes n_{(1)} = n_{(0)} \otimes (n_{(1)})_{(1)} \otimes (n_{(1)})_{(2)}=n_{(0)} \otimes n_{(1)} \otimes n_{(2)} , \nonumber\\
		\varepsilon(n_{(1)})n_{(0)}=n.\label{e32}
	\end{gather}
	Likewise, if $N$ is a left $C$-comodule with structure map $\rho:N\rightarrow C\otimes N$, preserving the convention for non-zero indexes, we write $\rho(n)= n_{(-1)}\otimes n_{(0)}$.
	
	Given two right $C$-comodules $N$ and $L$, with structure maps $\rho_N$ and $\rho_L$, respectively, a $K$-linear map $g:N\rightarrow Y$ is a \emph{comodule morphism} if $g(n)_{(0)} \otimes g(n)_{(1)} = g(n_{(0)}) \otimes n_{(1)}$, for all $n\in N$. We denote the  category of left (resp. right) $A$-modules by $ {}_A{\operatorname{Mod}}$ (resp. $\operatorname{Mod}_A$). Similarly, the category of right (resp. left) comodules over a coalgebra $C$ is denoted by $\operatorname{Mod}^C$ (resp. $ {}^C {\operatorname{Mod}}$).
	
	The definition of a bimodule over an algebra can also be dualized. Indeed, given two coalgebras $C,D$, a $K$-module $N$ is said to be a \emph{$(D,C)$-bicomodule} if $N$ is a left $D$-comodule with structure map $\mu: N \rightarrow D\otimes N$, $N$ is a right $C$-comodule with structure map $\rho:N\rightarrow N\otimes C$, and $(\mu\otimes \operatorname{id}_C)\rho=(\operatorname{id}_D \otimes \rho)\mu$.
	The later condition may be written in Heyneman--Sweedler notation as
	\begin{equation*}
		(n_{(0)})_{{(-1)}} \otimes (n_{(0)})_{(0)} \otimes n_{(1)} = n_{{(-1)}} \otimes (n_{(0)})_{(0)} \otimes (n_{(0)})_{(1)}, \qquad \mbox{for all } n\in N.
	\end{equation*}
	A \emph{morphism of bicomodules} is a linear map between two $(D,C)$-bicomodules which is both a morphism of left $D$-comodules and a morphism of right $C$-modules. Hence, we can define the correspondent category, which is denoted by ${}^D {\operatorname{Mod}}^C$. Similarly, given two algebras $A,B$, the category of $(B,A)$-bimodules is denoted by $ {}_B {\operatorname{Mod}}_A$.
	
	Let $H$ be a Hopf algebra. For a left $H$-module $M$, \emph{the set of invariants of $H$ on $M$} is
			\begin{equation*}
				M^H:=\{ m\in M : h\cdot m = \varepsilon(h)m ,\ \forall h\in H  \}.
			\end{equation*} 
	Similarly, for a right $H$-comodule $N$ with structure map $\rho : N \rightarrow N\otimes H$, \emph{the set of coinvariants of $H$ on $N$} is given by
			\begin{equation*}
				N^{\operatorname{co}H}:=\{ n\in N : \rho(n)=n\otimes 1  \}.
			\end{equation*}
	When the base ring is a field a natural question is whether there exists a relation between the comodules of $H$ and the modules of the dual Hopf algebra $H^*$. The following result shows that, at least in the finite-dimensional case, there is such a correspondence preserving (co)invariants.
	
	\begin{proposition}[{e.g. \cite[Lemma 1.7.2]{Mon1}}]\label{ex9}
		Let $H$ be a finite-dimensional $\Bbbk$-Hopf algebra and $H^*$ its dual Hopf algebra. Then, for any $\Bbbk$-vector space $N$, the following assertions are equivalent:
		\begin{enumerate}[label=\normalfont(\roman*)]
			\item $N$ is a right $H$-comodule.
			\item $N$ is a left $H^*$-module.
		\end{enumerate}
		Moreover, under these conditions, $N^{H^*}=N^{\operatorname{co}H}$.
	\end{proposition}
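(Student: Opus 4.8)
The plan is to exhibit explicit mutually inverse constructions passing between right $H$-comodule structures and left $H^*$-module structures on the fixed space $N$, and then to check that these constructions carry coinvariants to invariants and back. Throughout I would fix a basis $\{h_i\}$ of $H$ with dual basis $\{h_i^*\}$ of $H^*$, and keep at hand the two finite expansions $h=\sum_i h_i^*(h)\,h_i$ and $f=\sum_i f(h_i)\,h_i^*$; these encode precisely the finite-dimensionality that the statement relies on, and they are what fails in the infinite-dimensional setting discussed in Example~\ref{ex5}.

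For the implication (i)$\Rightarrow$(ii), given a right comodule structure $\rho(n)=n_{(0)}\otimes n_{(1)}$ I would define a left $H^*$-action by $f\cdot n:=n_{(0)}f(n_{(1)})=(\operatorname{id}_N\otimes f)\rho(n)$. Verifying that this is an action reduces to $(f*g)\cdot n=f\cdot(g\cdot n)$, where $*$ is the multiplication of $H^*$, i.e.\ the convolution $(f*g)(h)=f(h_{(1)})g(h_{(2)})$ dual to $\Delta$. Applying the coassociativity of $\rho$ rewrites both sides as $n_{(0)}f(n_{(1)})g(n_{(2)})$, so they agree; the unit axiom collapses to the comodule counit relation \eqref{e32} once one observes that $1_{H^*}=\varepsilon_H$.

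For the converse (ii)$\Rightarrow$(i), given a left $H^*$-action I would set $\rho(n):=\sum_i (h_i^*\cdot n)\otimes h_i$, a definition that one checks is basis-independent. The counit axiom follows from $\sum_i \varepsilon(h_i)\,h_i^*=\varepsilon=1_{H^*}$. I expect the \emph{coassociativity} of this reconstructed $\rho$ to be the main obstacle: it requires translating associativity of the $H^*$-action into the comultiplication of $H$. Concretely, writing $\Delta(h_i)=\sum_{j,k}c^i_{jk}\,h_j\otimes h_k$ and the dual relation $h_j^**h_k^*=\sum_i c^i_{jk}\,h_i^*$, I would expand both $(\operatorname{id}_N\otimes\Delta)\rho$ and $(\rho\otimes\operatorname{id}_H)\rho$ and show they coincide as sums over the same structure constants $c^i_{jk}$. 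This bookkeeping is exactly where the finite-dimensional duality between the algebra $H^*$ and the coalgebra $H$ is genuinely used. A short direct computation then shows the two constructions are mutually inverse, since $\sum_i n_{(0)}h_i^*(n_{(1)})\otimes h_i=n_{(0)}\otimes n_{(1)}$.

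Finally, for $N^{H^*}=N^{\operatorname{co}H}$ I would use that the counit of $H^*$ is evaluation at the unit, $\varepsilon_{H^*}(f)=f(1_H)$. If $\rho(n)=n\otimes 1$ then $f\cdot n=n\,f(1_H)=\varepsilon_{H^*}(f)\,n$, so $n\in N^{H^*}$; conversely, if $f\cdot n=f(1_H)\,n$ for every $f$, then $\rho(n)=\sum_i h_i^*(1_H)\,n\otimes h_i=n\otimes 1$ by the expansion $\sum_i h_i^*(1_H)\,h_i=1_H$, so $n\in N^{\operatorname{co}H}$. Hence the correspondence preserves (co)invariants, completing the proof.
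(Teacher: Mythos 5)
Your proposal is correct and follows essentially the same route as the paper's proof: the same dual-basis constructions $f\cdot n := f(n_{(1)})n_{(0)}$ and $\rho(n):=\sum_i (h_i^*\cdot n)\otimes h_i$, shown to be mutually inverse, with the equality $N^{H^*}=N^{\operatorname{co}H}$ obtained from the expansion of elements of $H$ in the dual basis. The only difference is that you spell out the module/comodule axiom verifications (convolution associativity, structure constants for coassociativity) that the paper dismisses as a straightforward check, which is a welcome but not substantively different addition.
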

	
	\begin{proof}
	Let $\{ e_1,\ldots,e_n \}$ be a basis for $H$ and $\{ e_1^*,\ldots,e_n^* \}$ the corresponding dual basis for $H^*$ (i.e., $e_i^*(e_j)=\delta_{ij}$, the Kronecker delta). If $N$ is a right $H$-comodule, then $N$ becomes a $H^*$-module via
	\begin{equation}\label{e18}
		f\cdot n := f(n_{(1)})n_{(0)}, \qquad \mbox{for all } f\in H^*, \, n\in N.
	\end{equation}
	Reciprocally, if $N$ is a left $H^*$-module, then $N$ becomes a right $H$-comodule with structure map $\rho: N\rightarrow N\otimes H$ via
	\begin{equation}\label{e19}
		\rho(a):=\sum_{i=1}^n e_i^* \cdot a \otimes e_i.
	\end{equation}
		We omit the details in these two implications since these are a straightforward verification of the defining conditions. Finally, using the notation of Example~\ref{ex5}, we have
	\begin{align*}
		N^{H^{*}} &= \{ n\in N : f\cdot a = u^\circ(f) n, \, \forall f\in H^{*} \}\\
		&= \{ n\in N : f(n_{(1)})n_{(0)} = (fu)(1_\Bbbk)n, \forall f\in H^{*} \}\\
		&= \left\{ n\in N : n_{(0)} f(n_{(1)}) = f(1_H)n, \forall f\in H^{*} \right\}\\
		&= \left\{ n\in N : (\operatorname{id}_N\otimes f)(\rho(n)) = (\operatorname{id}_N \otimes f)(n\otimes 1), \forall f\in H^{*} \right\}\\
		&= \{ n\in N : \rho(n)=n\otimes 1 \}= N^{\operatorname{co}H},
	\end{align*}
which shows the desired equality.
	\end{proof}
	
	Now, we review some essential examples of modules and comodules.
	
	\begin{example}
		Any $K$-algebra $A$ is a left module over itself by taking $\gamma=m_A$ (in other words, $a\cdot b=ab$, for all $a,b\in A$). Similarly, any $K$-coalgebra $C$ is a right comodule over itself by taking $\rho=\Delta_C$.
	\end{example}
	
	\begin{example}\label{ex31}
		Let $H$ be a $K$-Hopf algebra and let $V,W$ be two left $H$-modules. Then $V\otimes W$ has also structure of left $H$-module via
		\begin{equation*}
			h\cdot (v\otimes w) := (h_{(1)}\cdot v)\otimes (h_{(2)}\cdot w), \qquad \mbox{for all } h\in H, v\in V, w\in W.
		\end{equation*}
		If $\gamma_V$ and $\gamma_W$ are the respective structure maps of $V$ and $W$, the above means that the structure map $\gamma_{V\otimes W}$ is defined as the composition
		\begin{equation*}
			\gamma_{V\otimes W} := (\gamma_V\otimes \gamma_W)(\operatorname{id}_H \otimes \tau_{H,V} \otimes \operatorname{id}_W)(\Delta \otimes \operatorname{id}_V \otimes \operatorname{id}_W).
		\end{equation*}
	\end{example}
	
	\begin{example}\label{ex4}
		Let $H$ be a $K$-Hopf algebra and let $V,W$ two right $H$-comodules with respective structure maps $\rho_V$ and $\rho_W$. Then $V\otimes W$ is also a right $H$-comodule by taking $\rho_{V\otimes W}$ as the composition
		\begin{equation*}
			\rho_{V\otimes W} = (\operatorname{id}_V \otimes \operatorname{id}_W \otimes m)(\operatorname{id}_V\otimes \tau_{H,W} \otimes \operatorname{id}_H)(\rho_V \otimes \rho_W),
		\end{equation*}
	i.e., $	\rho_{V\otimes W}( v \otimes w ) = v_{(0)} \otimes w_{(0)} \otimes v_{(1)}w_{(1)}$, for all $v\in V$ and $w\in W$.
	\end{example}

	The previous examples implicitly describe the structure of monoidal category that both ${}_H {\operatorname{Mod}}$ and $\operatorname{Mod}^H$ possess (see e.g. \cite[Section~10.4]{Mon1} for the definition).
	
	We end this section by discussing Hopf modules. Let $H$ be a $K$-Hopf algebra. Then, similarly to $H$ being both an algebra and a coalgebra with certain compatibility, a Hopf module over $H$ will be both an $H$-module and an $H$-comodule in which the structure map is a module map. Namely, a $K$-module $M$ is a \emph{right-right $H$-Hopf module} if $M$ is a right $H$-module, $M$ is a right $H$-comodule with structure map $\rho: M \rightarrow M\otimes H$, and $\rho(m\cdot h)= (m \cdot  h)_{(0)} \otimes (m \cdot  h)_{(1)} = m_{(0)} \cdot h_{(1)} \otimes m_{(1)} h_{(2)}$, for all $m\in M$ and $h\in H$. 
		
	In the first defining condition, $H$ may be replaced by a Hopf subalgebra $L$ of $H$. In this case we say that $M$ is a \emph{right-right $(H,L)$-Hopf module}. The category of all right-right $(H,L)$-Hopf modules is denoted by $\operatorname{Mod}_L^H$, in which morphisms are $K$-linear maps also being both morphisms of right $L$-modules and morphisms of right $H$-comodules. Clearly, by changing laterality and modifying the compatibility condition, we also obtain the categories ${}^H {\operatorname{Mod}}_L$, ${}_L {\operatorname{Mod}}^H$ and $ {}^H_L {\operatorname{Mod}}$.
	
	\begin{example}
		Any $K$-Hopf algebra $H$ is a $H$-Hopf module via $\rho=\Delta$.
	\end{example}
	
	\begin{example}[Trivial Hopf module]
		Let $M$ be any right $H$-module. Then $M\otimes H$ is a right-right $H$-Hopf module using $\rho=\operatorname{id}_M \otimes \Delta$. A special case of this is when $M$ is the \emph{trivial $H$-module}, that is, $m\cdot h = \varepsilon(h)m$, for all $m\in M$ and $h\in H$. In this situation, $M\otimes H$ is called the \emph{trivial Hopf module}.
	\end{example}
	
	Recall that the fundamental theorem of Hopf modules classify all Hopf modules as trivial, i.e., if $M$ is a right-right $H$-Hopf module, then $M \cong M^{\operatorname{co}H}\otimes H$ as right-right $H$-Hopf module, where $M^{\operatorname{co}H}\otimes H$ has the trivial structure of Hopf module (e.g. \cite[Theorem 1.9.4]{Mon1}).
	
	\subsection{(Co)module algebras}\label{sec2.6}
	
	In this section we present (co)actions of Hopf algebras over algebras. Although most results and definitions presented are valid over bialgebras, throughout this part $H$ will denote an arbitrary $K$-Hopf algebra (remember that we assume $H$ flat over $K$).
	
	\begin{definition}[Module algebra, comodule algebra]\label{d6}
	Let $A$ be a $K$-algebra.
	\begin{enumerate}[label=\normalfont(\roman*)]
		\item $A$ is a \emph{left $H$-module algebra} if the following conditions hold:
		\begin{enumerate}[label=\normalfont (MA\arabic*), align=parleft, leftmargin=*]
			\item\label{MA1} $A$ is a left $H$-module,
			\item\label{MA2} For all $h\in H$ and $a,b\in A$,
			\begin{equation}\label{e11}
				h\cdot (ab) = (h_{(1)} \cdot a)(h_{(2)}\cdot b) \qquad \mbox{and} \qquad h\cdot 1_A = \varepsilon(h) 1_A.
			\end{equation}
		\end{enumerate}
		\item $A$ is a \emph{right $H$-comodule algebra} if the following conditions hold:
		\begin{enumerate}[label=\normalfont (CA\arabic*), align=parleft, leftmargin=*]
			\item\label{CA1} $A$ is a right $H$-comodule with structure map $\rho:A\rightarrow A\otimes H$,
			\item\label{CA2} For all $a,b\in A$, 
			\begin{equation}\label{e12}
				\rho(ab)= a_{(0)}b_{(0)} \otimes a_{(1)}b_{(1)} \qquad \mbox{and} \qquad \rho(1_A)=1_A\otimes 1_H.
			\end{equation}
		\end{enumerate}
	\end{enumerate}
	\end{definition}

Condition~\ref{MA2} is equivalent to $m_A$ and $u_A$ being $H$-module maps. Dually, condition~\ref{CA2} is equivalent to $m_A$ and $u_A$ being $H$-comodule maps, which is also equivalent to $\rho$ being an algebra map (see e.g. \cite[Propositions~6.1.4 and 6.2.2]{DNR}). Also, as with previous concepts, we may define \emph{right $H$-module algebras}, \emph{left $H$-comodule algebras} and \emph{$(L,H)$-bi(co)module algebras} similarly.

\begin{remark}
	The set of coinvariants in a comodule algebra is always a subalgebra, called the \emph{subalgebra of coinvariants}. Indeed, if $A$ is a right $H$-comodule algebra, $k\in K$ and $a,b\in A^{\operatorname{co}H}$, then
	\begin{gather*}
		\rho(a+b)=\rho(a)+\rho(b)=a\otimes 1 + b\otimes 1 = (a+b)\otimes 1, \\
		\rho(ka)=k\rho(a)=k(a\otimes 1)=ka\otimes 1, \\
		\rho(ab)=ab\otimes 1.
	\end{gather*}
	In the first two rows we used the linearity of $\rho$, while for the last one we used \eqref{e12}.
\end{remark}
	
The next result extends Proposition~\ref{ex9}.
	
	\begin{proposition}[e.g. {\cite[Proposition 6.2.4]{DNR}}]\label{p2}
		Let $H$ be a finite-dimensional $\Bbbk$-Hopf algebra and $H^*$ its dual Hopf algebra. Then, for a $\Bbbk$-algebra $A$, the following assertions are equivalent:
		\begin{enumerate}[label=\normalfont(\roman*)]
			\item $A$ is a right $H$-comodule algebra.
			\item $A$ is a left $H^*$-module algebra.
		\end{enumerate}
		Moreover, under these conditions $A^{H^*}=A^{\operatorname{co}H}$.
	\end{proposition}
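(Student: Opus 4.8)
The plan is to bootstrap from Proposition~\ref{ex9}, which already supplies the bijective correspondence between right $H$-comodule structures and left $H^*$-module structures on the underlying $\Bbbk$-space $A$, realized by the mutually inverse formulas \eqref{e18} and \eqref{e19}, together with the equality of (co)invariants $A^{H^*}=A^{\operatorname{co}H}$. Since the underlying module and comodule structures appearing in (i) and (ii) are exactly the ones matched by Proposition~\ref{ex9}, the equality $A^{H^*}=A^{\operatorname{co}H}$ requires no further argument here: it is inherited verbatim (applying that proposition to $N=A$) once the two algebra conditions are shown to correspond. Thus the entire content of the proposition reduces to checking that, under the dictionary \eqref{e18}--\eqref{e19}, the comodule-algebra axiom~\ref{CA2} holds if and only if the module-algebra axiom~\ref{MA2} holds.

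For the implication (i)$\Rightarrow$(ii), I would assume $\rho(a)=a_{(0)}\otimes a_{(1)}$ satisfies \eqref{e12} and deduce the two equalities in \eqref{e11} directly from \eqref{e18}. For the unit, $f\cdot 1_A=f((1_A)_{(1)})(1_A)_{(0)}=f(1_H)1_A=\varepsilon^\circ(f)1_A$, using $\rho(1_A)=1_A\otimes 1_H$ and the description of $\varepsilon^\circ$ from Example~\ref{ex5}. For multiplicativity, I would expand $f\cdot(ab)=f(a_{(1)}b_{(1)})\,a_{(0)}b_{(0)}$ via $\rho(ab)=a_{(0)}b_{(0)}\otimes a_{(1)}b_{(1)}$, expand the right-hand side of \eqref{e11} as $(f_{(1)}\cdot a)(f_{(2)}\cdot b)=f_{(1)}(a_{(1)})f_{(2)}(b_{(1)})\,a_{(0)}b_{(0)}$, and observe that the two agree because $f_{(1)}(x)f_{(2)}(y)=f(xy)$, i.e.\ because the comultiplication $\Delta^\circ$ of $H^*$ is by definition dual to the multiplication $m$ of $H$ (Example~\ref{ex5}). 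This duality identity is the genuine point of the argument: it is what converts a statement about $\rho$ being an algebra map into a statement about the $H^*$-action respecting products.

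For the converse (ii)$\Rightarrow$(i), I would run the same computation backwards in a fixed dual basis $\{e_i\}$, $\{e_i^*\}$ via \eqref{e19}. The unit axiom follows from $\rho(1_A)=\sum_i(e_i^*\cdot 1_A)\otimes e_i=\sum_i e_i^*(1_H)1_A\otimes e_i=1_A\otimes 1_H$, since $\sum_i e_i^*(1_H)e_i=1_H$. For multiplicativity the crucial combinatorial identity is
\begin{equation*}
	\sum_i \Delta^\circ(e_i^*)\otimes e_i=\sum_{j,k} e_j^*\otimes e_k^*\otimes e_je_k \qquad \mbox{in } H^*\otimes H^*\otimes H,
\end{equation*}
which holds precisely because $\sum_i e_i^*(x)e_i=x$ for every $x\in H$; feeding it into $\rho(ab)=\sum_i(e_i^*\cdot(ab))\otimes e_i$ and applying~\ref{MA2} collapses the expression to $a_{(0)}b_{(0)}\otimes a_{(1)}b_{(1)}$. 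Alternatively, since $H$ is finite-dimensional one has $H\cong H^{**}$ as Hopf algebras, so the converse is just the forward implication applied to $H^*$; I would mention this as a shortcut but still carry out the direct verification, since it is self-contained. The main obstacle throughout is purely bookkeeping: keeping the duality $f_{(1)}(x)f_{(2)}(y)=f(xy)$ (equivalently, the displayed basis identity) straight, as everything else is formal manipulation in Heyneman--Sweedler notation.
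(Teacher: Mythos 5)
Your proof is correct and follows essentially the same route as the paper's: both inherit the structure correspondence \eqref{e18}--\eqref{e19} and the equality $A^{H^*}=A^{\operatorname{co}H}$ from Proposition~\ref{ex9}, prove (i)$\Rightarrow$(ii) from $\rho(1_A)=1_A\otimes 1_H$ and the duality $f_{(1)}(x)f_{(2)}(y)=f(xy)$, and prove (ii)$\Rightarrow$(i) by a dual-basis computation. Your displayed identity $\sum_i\Delta^\circ(e_i^*)\otimes e_i=\sum_{j,k}e_j^*\otimes e_k^*\otimes e_je_k$ is precisely the fact the paper exploits when it tests $\rho(ab)$ against an arbitrary $f\in H^*$ via $\operatorname{id}_A\otimes f$; your version packages the same computation without the test functional, which is a marginal (and valid) streamlining.

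One caveat about your aside: the shortcut ``the converse is just the forward implication applied to $H^*$'' does not work as stated. The forward implication applied to the Hopf algebra $H^*$ says that every right $H^*$-comodule algebra is a left $H^{**}\cong H$-module algebra, which by laterality is a different implication from the needed (ii)$\Rightarrow$(i); starting from a left $H^*$-module algebra you have no right $H^*$-comodule structure to feed into it, so the reduction is circular. Since you explicitly carry out the direct verification and treat the shortcut only as a remark, this does not affect the correctness of your proof, but the remark should be deleted or reformulated.
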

	
	\begin{proof}
		Let $\{ e_1,\ldots,e_n \}$ be a basis for $H$ and $\{ e_1^*,\ldots,e_n^* \}$ the corresponding dual basis for $H^*$. If $A$ is a right $H$-comodule algebra, then we already know that $A$ is a $H^*$-module via \eqref{e18}. Moreover, if $a,b\in A$ and $f\in H^*$, we have
		\begin{align*}
			f\cdot (ab) &= f((ab)_{(1)})(ab_{(0)}) = f(a_{(1)}b_{(1)})a_{(0)}b_{(0)} =  (fm_H)(a_{(1)} \otimes b_{(1)})a_{(0)}b_{(0)}\\
			&= \Delta^\circ(f)(a_{(1)}\otimes b_{(1)})a_{(0)}b_{(0)} = f_{(1)}(a_{(1)})f_{(2)}(b_{(1)})a_{(0)}b_{(0)}\\
			& = f_{(1)}(a_{(1)})a_{(0)} f_{(2)}(b_{(1)})b_{(0)} = (f_{(1)}\cdot a)(f_{(2)}\cdot b);\\
			f\cdot 1_A &= f(1_A)1_A= (fu_H)(1_H)1_A = u^\circ(f)1_A.
		\end{align*}
		Therefore, $A$ is a left $H^*$-module algebra.
		
		Conversely, let $A$ be a left $H^*$-module algebra. We already know that $A$ is a $H$-comodule via \eqref{e19}. Moreover, if $a,b\in A$ and $f\in H^*$, we have
		\begin{align*}
		&(\operatorname{id}_A\otimes f)(\rho(ab))\\
		&= \sum_{i=1}^n e_i^* \cdot (ab) \otimes f(e_i) = \sum_{i=1}^n (e_i^* \cdot (ab)) f(e_i) \otimes 1 = \sum_{i=1}^n (e_i^* f(e_i)) \cdot (ab) \otimes 1 \\
		&= f \cdot (ab) \otimes 1 = (f_{(1)}\cdot a)(f_{(2)}\cdot b) \otimes 1 = \sum_{i,j=1}^n ( (e_i^* f_{(1)}(e_i)) \cdot a )( (e_j^* f_{(2)}(e_j)) \cdot b ) \otimes 1\\
		& = \sum_{i,j=1}^n ( e_i^*  \cdot a )(e_j^* \cdot b ) \otimes f_{(1)}(e_i)f_{(2)}(e_j) =  \sum_{i,j=1}^n ( e_i^*  \cdot a )(e_j^* \cdot b ) \otimes f(e_ie_j)\\
		&= (\operatorname{id}_A \otimes f)\left( \sum_{i,j=1}^n (e_i^* \cdot a)(e_j^*\cdot b) \otimes e_ie_j \right) =(\operatorname{id}_A \otimes f)(\rho(a)\rho(b)),
		\end{align*}
		and hence $\rho(ab)=\rho(a)\rho(b)$. On the other hand,
		\begin{align*}
			\rho(1)&= \sum_{i=1}^n e_i^* \cdot 1_A \otimes e_i = \sum_{i=1}^n e_i^*(1_H) 1_A \otimes e_i \\
			&= \sum_{i=1}^n 1_A \otimes e_i^*(1_H) e_i = 1_A \otimes \sum_{i=1}^n e_i^*(1_H) e_i = 1_A \otimes 1_H.
		\end{align*}
		Thus, $A$ is a right $H$-comodule algebra.
		
		The equality $A^{H^*}=A^{\operatorname{co}H}$ follows as in the proof of Proposition~\ref{ex9}.
	\end{proof}
	
	Now, we generalize the notion of Hopf module by replacing the module structure over $H$ (or a Hopf subalgebra $L$) by a comodule algebra.
	
	\begin{definition}\label{d16}
		Let $A$ be a right $H$-comodule algebra. A $K$-module $M$ is said to be a \emph{left-right $(A,H)$-Hopf module} if the following conditions hold:
		\begin{enumerate}[label=\normalfont (HM\arabic*), align=parleft, leftmargin=*]
			\item\label{HM1} $M$ is a left $A$-module,
			\item\label{HM2} $M$ is a right $H$-comodule with structure map $\rho_M: M \rightarrow M \otimes H$,
			\item\label{HM3} For every $a\in A$ and $m\in M$, $\rho_M(a \cdot m)= a_{(0)} \cdot m_{(0)} \otimes a_{(1)} m_{(1)}$.
		\end{enumerate}
	\end{definition}
	
	The category of left-right $(A,H)$-Hopf modules, ${}_A{\operatorname{Mod}}^H$, has as morphisms the $K$-linear maps which are also $A$-linear and $H$-colinear. Similarly, objects in the category ${\operatorname{Mod}}^H_A$ can be define by replacing~\ref{HM3} with $\rho_M(m \cdot a)= m_{(0)} \cdot a_{(0)} \otimes m_{(1)}a_{(1)}$.
	
	We end this section by giving some examples characterizing actions and coactions of distinguished Hopf algebras. We assume that the base ring is a field $\Bbbk$.
	
	\begin{example}[(Co)actions of a Hopf algebra over itself]\label{ex34}
		It is clear that $H$ is a right $H$-comodule algebra using $\rho=\Delta$. By Proposition~\ref{p2}, when $H$ if finite dimensional, this dualizes to a left action (denoted by $\rightharpoonup$) of $H^*$ on $H$ given by
		\begin{equation*}
			f\rightharpoonup h = f(h_{(2)})h_{(1)}, \qquad \mbox{for all } h\in H \mbox{ and } f\in H^*.
		\end{equation*}
		The (co)invariants are given by $H^{H^*}=H^{\operatorname{co}H}=\Bbbk 1$.
	\end{example}
	
	\begin{example}[Actions of the group algebra]\label{ex8}
		Let $G$ be a group. We say that $G$ \emph{acts (from the left) as automorphisms} on a $\Bbbk$-algebra $A$ if there is a group morphism $\psi: G \rightarrow \operatorname{Aut}_{K-\operatorname{Alg}}(A)$. In this case, we write $\psi(g)(a)=g(a)$ (or $g^a$), for all $g\in G$ and $a\in A$. Moreover, if $\psi$ is injective, we say that $G$ acts \emph{faithfully}.
		
		If $G$ acts as automorphisms on a $A$ and $\Bbbk G$ is the group algebra of $G$ (see Example~\ref{ex6}), then $A$ is a left $\Bbbk G$-module algebra via $g\cdot a = g(a)$, for all $g\in G$ and $a\in A$. Indeed, for every $g\in G$ and $a,b\in A$, we have
		\begin{gather*}
			g\cdot(ab)=g(ab)=g(a)g(b)=(g\cdot a)(g\cdot b),\\
			g\cdot 1_A = g(1_A)=1_A = 1_A1_A= \varepsilon(g)1_A.
		\end{gather*}
		In this case, $A^{\Bbbk G}$ is the set of fixed points under the action of $G$,
		\begin{equation*}
			A^{\Bbbk G}=A^G:=\{ a\in A : g(a)=a,\, \forall g\in G \}.
		\end{equation*}
		Conversely, if $A$ is a $\Bbbk G$-module algebra, then $G$ acts as automorphisms on $A$ via the map $\psi: G \rightarrow \operatorname{Aut}_{K-\operatorname{Alg}}(A)$, given by $\psi(g)(a)= g\cdot a$.
		
		Thus, we have shown that $A$ is a $\Bbbk G$-module algebra if and only if $G$ acts as automorphisms on $A$.
	\end{example}
	
	\begin{example}[Coactions of the group algebra]\label{ex7}
		Let $G$ be a group and let $A$ be a $\Bbbk$-algebra. We say that $A$ is a \emph{$G$-graded algebra} if there exists a collection $\{ A_g \}_{g\in G}$ of $\Bbbk$-subspaces of $A$ such that
		$A=\bigoplus_{g\in G} A_g$ and $A_gA_h \subseteq A_{gh}$, for all $g,h\in G$.
		
		If $A$ is a $\Bbbk G$-comodule algebra with structure map $\rho: A \rightarrow A \otimes \Bbbk G$, then we replace the Heyneman--Sweedler notation by writing instead
		$\rho(a)=\sum a_g \otimes g$, for all $a\in A$. By definition, $[(\operatorname{id}_A \otimes \Delta)\rho](a) = [(\rho \otimes \operatorname{id}_{\Bbbk G})\rho](a)$, for all $a\in A$. Expanding the left-hand side we get
		\begin{equation*}
			[(\operatorname{id}_A \otimes \Delta)\rho](a) = (\operatorname{id}_A \otimes \Delta) \left( \sum a_g \otimes g \right) = \sum a_g \otimes g\otimes g,
		\end{equation*}
		while the right-hand side gives
		\begin{equation*}
			[(\rho \otimes \operatorname{id}_{\Bbbk G})\rho](a) = (\rho \otimes \operatorname{id}_{\Bbbk G}) \left( \sum a_g \otimes g \right) = \sum (a_g)_h \otimes h \otimes g.
		\end{equation*}
		Comparing both expressions we have that
		\begin{equation*}
			(a_g)_h = \begin{cases*}
				a_g & \mbox{if } \mbox{$g=h$},\\
				0 & \mbox{if } \mbox{$g\neq h$}.
			\end{cases*}
		\end{equation*}
		Thus $\rho(a_g)=a_g\otimes g$ and we can set $A_g=\{ a_g : a\in A \}$, for all $g\in G$. Since $\rho$ is $\Bbbk$-linear we have $\rho(a+b)=\rho(a)+\rho(b)$ and $\rho(ka)=k\rho(a)$, for all $a,b\in A$ and $k\in\Bbbk$. Thus,
		$(a+b)_g = a_g + b_g$ and $(ka)_g=ka_g$, and therefore $A_g$ is a $\Bbbk$-subspace of $A$, for all $g\in G$. On the other hand, since $G$ is a basis of $\Bbbk G$, the sum $\sum_{g\in G} A_g$ is direct. Indeed, if $0=a_{g_1}+\cdots+a_{g_t}$ for some $a_{g_k} \in A_{g_k}$, $1\leq k \leq t$, by applying $\rho$ we get
		\begin{equation*}
			0\otimes 0=\rho(0)=\rho(a_{g_1}+\cdots+a_{g_t}) = \rho(a_{g_1})+\cdots+\rho(a_{g_t}) = a_{g_1}\otimes g_1 + \cdots + a_{g_t} \otimes g_t,
		\end{equation*}
		and thus $a_{g_k}=0$, for $1\leq k \leq t$. Additionally, since $A$ is a $\Bbbk G$-comodule algebra,
		\begin{equation*}
			\rho(a_gb_h)= a_gb_h\otimes gh, \qquad \mbox{for all } a_g\in A_g, \, b_h\in A_h.
		\end{equation*}
		Therefore $a_gb_h \in A_{gh}$ and $A_gA_h \subseteq A_{gh}$. Finally, by definition $[(\operatorname{id}_A \otimes \varepsilon)\rho](a)= a\otimes 1$, for all $a\in A$. But the left hand side is
		\begin{equation*}
			[(\operatorname{id}_A \otimes \varepsilon)\rho](a) = (\operatorname{id}_A \otimes \varepsilon) \left( \sum a_g\otimes g \right) = \sum a_g \otimes 1,
		\end{equation*}
		and thus for each $a\in A$, we have $\sum a_g = a$. Therefore $A=\bigoplus_{g\in G} A_g$ and $A$ is a $G$-graded algebra. Notice that
		\begin{equation*}
			A^{\operatorname{co}H}=\{ a\in A : \rho(a)=a\otimes 1 \}=A_1,
		\end{equation*}
		the identity component of the $G$-graduation. In particular, $1_A\in A_1$.
		
		Conversely, if $A$ is a $G$-graded algebra define the structure map $\rho: A \rightarrow A\otimes \Bbbk G$ as $\rho(a)=a\otimes g$, for all $a\in A_g$ and $g\in G$.
		
		Thus, we have shown that $A$ is a $\Bbbk G$-comodule algebra if and only if $A$ is a $G$-graded algebra.
	\end{example}
	
	\begin{example}[Actions of the dual group algebra]\label{ex10a}
		Let $G$ be a finite group and recall from Example~\ref{ex10} that $(\Bbbk G)^*=\Bbbk^G$. By Proposition~\ref{p2}, the actions of $\Bbbk^G$ correspond to coactions of $\Bbbk G$, which by Example~\ref{ex7} are precisely $G$-gradings.
	\end{example}
	
	\begin{example}[Actions of the universal enveloping algebra of a Lie algebra]
		Let $A$ be a $\Bbbk$-algebra. A $\Bbbk$-linear map $\delta: A \to A$ is a \emph{derivation} if $\delta(ab)=a\delta(b)+\delta(a)b$, for all $a,b\in A$. Then the space of $\Bbbk$-derivations over $A$, $\operatorname{Der}_{\Bbbk}(A)$, becomes a Lie algebra by taking as Lie bracket the commutator, i.e., $[\delta,\pi]=\delta \pi - \pi\delta$, for all $\delta,\pi\in \operatorname{Der}_{\Bbbk}(A)$. Indeed, for $a,b\in A$:
		\begin{align*}
			(\delta \pi - \pi \delta)(ab) &= \delta(\pi(ab))- \pi(\delta(ab)) = \delta(a \pi(b)) + \delta(\pi(a)b) - \pi(a\delta(b))-\pi(\delta(a)b)\\
			&= a \delta(\pi(b)) + \delta(a)\pi(b) + \pi(a)\delta(b) + \delta(\pi(a))b \\
			& \quad - a\pi(\delta(b)) - \pi(a)\delta(b) - \delta(a)\pi(b)- \pi(\delta(a))b\\
			&= a (\delta \pi - \pi \delta)(b) + (\delta \pi - \pi \delta)(a)b.
		\end{align*}
		Let $\mathfrak{g}$ be a Lie algebra. We say that $\mathfrak{g}$ \emph{acts by derivations} on $A$ if there exists a Lie algebra morphism $\psi: \mathfrak{g} \to \operatorname{Der}_{\Bbbk}(A)$.
		
		If $\mathfrak{g}$ acts by derivations on $A$ and $U(\mathfrak{g})$ is the universal enveloping algebra of $G$ (see Example~\ref{ex17}), then $A$ is a left $U(\mathfrak{g})$-module algebra via $x\cdot a = \psi(x)(a)$, for all $x\in \mathfrak{g}$ and $a\in A$ (extended by the PBW theorem). In this case, \begin{equation*}
			A^{U(\mathfrak{g})}:=A^{\mathfrak{g}}=\{ a\in A : x\cdot a =0, \mbox{ for all } x\in \mathfrak{g} \}.
		\end{equation*}
		Conversely, if $A$ is a $U(\mathfrak{g})$-module algebra, using \eqref{e11} we get $x\cdot (ab)= x\cdot (a) b + a(x\cdot b)$ and $x\cdot 1 = 0$. This implies that the map $\psi: \mathfrak{g} \to \operatorname{Der}_{\Bbbk}(A)$ given by $\psi(x)(a):=x\cdot a$ is a Lie algebra morphism and thus $\mathfrak{g}$ acts by derivations on $A$.
		
		Hence, we have shown that $A$ is a $U(\mathfrak{g})$-module algebra if and only if $\mathfrak{g}$ acts by derivations on $A$.		
	\end{example}
	
	\subsection{Hopf Galois extensions}\label{sec2.7}
	
	Finally, we are able to introduce one of the transversal concepts appearing in this document. Hopf Galois extensions generalize the notion of Galois extensions over rings, replacing the action of a group on the algebra by the coaction of a Hopf algebra. The first general definition is due to Kreimer and Takeuchi \cite{KT}, although the commutative case was previously studied by Chase and Sweedler \cite{CS}. As in the previous section, throughout the discussion $H$ will denote an arbitrary $K$-Hopf algebra.
	
	\begin{definition}[$H$-Galois extension]\label{d7}
		Let $A$ be a right $H$-comodule algebra with structure map $\rho:A\rightarrow A\otimes H$. We say that the extension $A^{\operatorname{co}H} \subset A$ (also denoted $A/A^{\operatorname{co}H}$) is a \emph{right $H$-Galois extension} if the map $\beta: A\otimes_{A^{\operatorname{co}H}} A \rightarrow A \otimes H$, given by
		\begin{equation}\label{e62}
			\beta(a\otimes b) = (a\otimes 1)\rho(b)= ab_{(0)}\otimes b_{(1)}, \qquad \mbox{for all } a,b\in A,
		\end{equation}
		is bijective. In this case $\beta$ is called the \emph{Galois map}.
	\end{definition}
	
	Notice that we could have defined the Galois map as $\beta': A\otimes_{A^{\operatorname{co}H}} A \rightarrow A \otimes H$ given by 
	\begin{equation}\label{e63}
		\beta'(a\otimes b) = \rho(a)(b\otimes 1)= a_{(0)}b\otimes a_{(1)}, \qquad \mbox{for all } a,b\in A.
	\end{equation}
	A natural question is whether $\beta'$ can replace $\beta$. The following result gives a case in which the answer is affirmative.
	
	\begin{proposition}[e.g. {\cite[p. 372]{Mon2}}]
		Let $A$ be a right $H$-comodule algebra with structure map $\rho:A\rightarrow A\otimes H$. If the antipode $S$ is bijective, then the following assertions are equivalent:
		\begin{enumerate}[label=\normalfont(\roman*)]
			\item The map $\beta$, given by \eqref{e62}, is bijective (resp. injective, surjective).
			\item The map $\beta'$, given by \eqref{e63}, is bijective (resp. injective, surjective).
		\end{enumerate}
	\end{proposition}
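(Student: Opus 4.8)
The plan is to reduce the equivalence to the single observation that $\beta$ and $\beta'$ differ by an automorphism of the target space $A\otimes H$. Postcomposing with a bijection preserves injectivity, surjectivity and bijectivity simultaneously, so exhibiting a bijection $\theta\colon A\otimes H\to A\otimes H$ with $\theta\beta=\beta'$ settles all three variants of (i)$\Leftrightarrow$(ii) at once. Accordingly, first I would introduce the $K$-linear map
\[ \theta(a\otimes h):=a_{(0)}\otimes a_{(1)}S(h),\qquad a\in A,\ h\in H, \]
which is well defined as a composite of linear maps and touches only the factor $A\otimes H$, hence the relation $\theta\beta=\beta'$ descends to the balanced tensor product $A\otimes_{A^{\operatorname{co}H}}A$ without any well-definedness worry.

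Next I would verify the identity $\theta\beta=\beta'$ by a direct Heyneman--Sweedler computation. Starting from $\theta(\beta(a\otimes b))=\theta(ab_{(0)}\otimes b_{(1)})$, I would expand $\rho(ab_{(0)})$ using the comodule-algebra condition \eqref{e12} together with coassociativity to re-expand $\rho(b_{(0)})$, producing a threefold coaction on $b$. The upshot is
\[ \theta(\beta(a\otimes b))=a_{(0)}b_{(0)}\otimes a_{(1)}\,b_{(1)}S(b_{(2)})=a_{(0)}b\otimes a_{(1)}=\beta'(a\otimes b), \]
where the middle equality uses the antipode identity $b_{(1)}S(b_{(2)})=\varepsilon(b_{(1)})1_H$ followed by the counit property $b_{(0)}\varepsilon(b_{(1)})=b$ of the comodule $A$.

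It then remains to show that $\theta$ is bijective, and this is the only place the hypothesis on $S$ enters. I would factor $\theta=\Lambda\circ(\operatorname{id}_A\otimes S)$, where $\Lambda(a\otimes h):=a_{(0)}\otimes a_{(1)}h$. A short computation with the two antipode identities shows that $\Lambda$ is always bijective, with inverse $a\otimes h\mapsto a_{(0)}\otimes S(a_{(1)})h$; note this requires only the \emph{existence} of $S$. Since $S$ is assumed bijective, $\operatorname{id}_A\otimes S$ is bijective with inverse $\operatorname{id}_A\otimes S^{-1}$, and therefore $\theta$ is a bijection. Combining this with $\theta\beta=\beta'$ yields the claimed equivalence in each of the injective, surjective and bijective cases. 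The main obstacle is purely bookkeeping: keeping the iterated coaction indices straight in the verification of $\theta\beta=\beta'$. Once the factorization $\theta=\Lambda\circ(\operatorname{id}_A\otimes S)$ is spotted, the role of the bijectivity of $S$ becomes transparent and the rest is formal.
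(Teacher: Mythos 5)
Your proposal is correct and follows essentially the same route as the paper: your map $\theta(a\otimes h)=a_{(0)}\otimes a_{(1)}S(h)$ is exactly the paper's intertwiner $\Phi$, and the computation $\theta\beta=\beta'$ is the same Heyneman--Sweedler calculation. The only cosmetic difference is in verifying bijectivity of the intertwiner: the paper writes down the explicit inverse $a\otimes h\mapsto a_{(0)}\otimes S^{-1}(h)a_{(1)}$, whereas you factor $\theta=\Lambda\circ(\operatorname{id}_A\otimes S)$, which has the small merit of isolating precisely where bijectivity of $S$ is used.
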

	
	\begin{proof}
		Let $\Phi: A\otimes H\rightarrow A\otimes H $ the endomorphism given by
		\begin{equation*}
			\Phi(a\otimes h) := \rho(a)(1\otimes S(h)) = a_{(0)} \otimes a_{(1)}S(h), \qquad \mbox{for all } a\in A, \, h\in H.
		\end{equation*}
		We have, for all $a,b\in A$,
		\begin{align*}
			(\Phi\beta)(a\otimes b) &=\Phi\left( ab_{(0)} \otimes b_{(1)} \right) = \Phi(ab_{(0)}\otimes b_{(1)}) = \rho(ab_{(0)}) (1\otimes S(b_{(1)})) \\
			&= (a_{(0)}b_{(0)} \otimes a_{(1)}b_{(1)})(1\otimes S(b_{(2)})) = a_{(0)}b_{(0)} \otimes a_{(1)}b_{(1)}S(b_{(2)})\\
			&= a_{(0)}b_{(0)} \otimes a_{(1)} \varepsilon(b_{(1)}) = a_{(0)} b_{(0)} \varepsilon(b_{(1)}) \otimes a_{(1)}\\
			&= a_{(0)}b \otimes a_{(1)} = \beta'(a\otimes b).
		\end{align*}
		Hence $\Phi \beta=\beta' $. But notice that $\Phi$ has as inverse $\Phi^{-1}: A\otimes H\rightarrow A\otimes H $ given by 
		\begin{equation*}
			\Phi^{-1}(a\otimes h) := (1\otimes S^{-1}(h))\rho(a) = a_{(0)} \otimes S^{-1}(h)a_{(1)} , \qquad \mbox{for all } a\in A, \, h\in H.
		\end{equation*}
		Therefore, the assertions are equivalent.
	\end{proof}
	
	Similarly, \emph{left $H$-Galois extensions} can be defined with Galois map $\beta_l: A\otimes_{A^{\operatorname{co}H}} A \rightarrow H \otimes A$. If several Hopf Galois extensions of different laterality are involved, we add to their Galois maps an index indicating whether these are left or right sided; for example, for a right Hopf Galois extension we may write $\beta_r$.
	
	When a right $H$-Galois extension is of the form $K \subset A$ (i.e., $A^{\operatorname{co}H}=K$), we call $A$ a \emph{right $H$-Galois object}.	In this situation, the Galois map is given by the composition
	\begin{equation*}
		\beta : \begin{tikzcd}
			A\otimes A \arrow[r,"\operatorname{id}_A\otimes \rho_A"] & A\otimes A \otimes H \arrow[r,"m_A \otimes \operatorname{id}_H"] & A \otimes H.
		\end{tikzcd}
	\end{equation*}
	Moreover, we have the following useful result.
	
	\begin{proposition}[{\cite[Theorem 1.15]{CS}}]\label{p11}
		Let $A$ be a right $H$-Galois object. Then $A$ is a faithfully flat $K$-module.
	\end{proposition}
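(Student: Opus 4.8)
The plan is to deduce faithful flatness from the bijectivity of the Galois map by transporting flatness properties through the isomorphisms it induces, handling flatness and faithfulness separately. First I would record the key structural observation. Since $\beta(a\otimes b)=ab_{(0)}\otimes b_{(1)}$ is linear for left multiplication on the first tensorand, $\beta$ is an isomorphism of left $A$-modules $A\otimes_K A\cong A\otimes_K H$, where the target carries the $A$-action on its first factor. Because $H$ is flat over $K$ (our standing hypothesis), the extension of scalars $A\otimes_K H$ is a flat left $A$-module; hence so is $A\otimes_K A$. A second, complementary observation is that the comodule counit axiom \eqref{e32} gives $(\operatorname{id}_A\otimes\varepsilon)\rho=\operatorname{id}_A$, so $\rho\colon A\to A\otimes_K H$ is a split $K$-linear monomorphism and $A$ is a $K$-module retract of $A\otimes_K H\cong A\otimes_K A$.

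Next I would address flatness of $A$ over $K$. The content obtained above is that $A\otimes_K A$ is flat \emph{over $A$}, whereas the goal is flatness of $A$ \emph{over $K$}. I would bridge the two by faithfully flat descent along $K\to A$: if $K\to A$ is faithfully flat, then a $K$-module $M$ is flat over $K$ as soon as $A\otimes_K M$ is flat over $A$, and here $A\otimes_K A\cong A\otimes_K H$ is flat over $A$ by the previous paragraph, with $M=A$. For faithfulness I would reduce modulo an arbitrary maximal ideal $\mathfrak m\subset K$: base change sends the Galois object $A$ to a Galois object $\bar A=A/\mathfrak mA$ over the residue field $\kappa=K/\mathfrak m$ for the $\kappa$-Hopf algebra $H\otimes_K\kappa$, since $\beta$ remains bijective under base change; over a field every nonzero comodule algebra is trivially faithfully flat, so it would only remain to check $\bar A\neq 0$, i.e. $1_A\notin\mathfrak mA$, which follows once $K\hookrightarrow A$ is known to be pure.

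The main obstacle is the apparent circularity in the flatness step: faithfully flat descent along $K\to A$ presupposes exactly the faithful flatness we are trying to establish, and the purity of $K\hookrightarrow A$ is of the same nature. Breaking this circularity is where the real work lies, and I see two routes. In the Chase--Sweedler setting, where $H$ is finitely generated projective over $K$, I would instead invoke Proposition~\ref{p2} to reinterpret the $H$-coaction as an $H^*$-action, recast the Galois condition as the classical one for the $H^*$-action, and prove directly that $A$ is finitely generated projective over $K$ by a dual-basis/trace argument, which yields faithful flatness outright. In the general flat case, I would avoid descent along $K\to A$ altogether and work with the translation map $\gamma=\beta^{-1}(1_A\otimes-)\colon H\to A\otimes_K A$, using its defining identities to build an Amitsur-type cosimplicial resolution and an explicit contracting homotopy after applying $A\otimes_K(-)$, with the splitting $(\operatorname{id}_A\otimes\varepsilon)\rho=\operatorname{id}_A$ supplying the degree $-1$ map; acyclicity of this complex then delivers flatness without presupposing it. I expect verifying the identities satisfied by $\gamma$ and assembling the homotopy to be the most delicate part of the argument.
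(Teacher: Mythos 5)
First, a point of comparison: the paper offers no proof of Proposition~\ref{p11} at all; it simply invokes \cite[Theorem 1.15]{CS}, whose setting is $H$ finitely generated projective over $K$. Your first route (dualize the coaction to an $H^*$-action via Proposition~\ref{p2} and show $A$ is finitely generated projective over $K$) is exactly in the spirit of that cited source, and it is viable there --- but the ``dual-basis/trace argument'' you name is precisely the hard content of Chase--Sweedler/Kreimer--Takeuchi (via the smash product $A\# H^*$ and a Morita-type argument), and you do not carry it out. Two smaller repairs would also be needed: finitely generated projective alone does not give faithfulness; you need $A_{\mathfrak{p}}\neq 0$ for every prime $\mathfrak{p}$, which does follow here because $K=A^{\operatorname{co}H}$ embeds in $A$ and localization is exact, but this must be said.

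The genuine gap is in the second route, the only one addressed to the generality actually stated (flat $H$ over a commutative ring), and it would fail for two independent reasons. First, you only produce the contracting homotopy \emph{after} applying $A\otimes_K(-)$; transferring exactness back down from $A\otimes_K(\text{complex})$ to the complex itself (or to any conclusion about $K$) is again faithfully flat descent along $K\to A$, so the circularity you flagged earlier reappears a third time rather than being broken. Second, even an unconditional $K$-linear contracting homotopy for an Amitsur-type complex would only give split exactness, hence purity of $K\to A$; purity yields the faithfulness half (taking $M=K/\mathfrak{m}$ gives $\mathfrak{m}A\neq A$) but \emph{not} flatness: the inclusion $\mathbb{Z}\subset \mathbb{Z}\oplus\mathbb{Z}/2$ (trivial extension) is a split, hence pure, ring extension whose target is not flat. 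Flatness is the statement that $A\otimes_K(-)$ preserves arbitrary injections, and acyclicity of one fixed complex --- every term of which involves $A$, whose flatness is exactly what is in question --- cannot bootstrap it. So your proposal correctly diagnoses why the naive descent arguments are circular (and that diagnosis is sound, as is your structural setup with $\beta$, the retract $A\subset A\otimes H$, and the reduction of faithfulness to $\mathfrak{m}A\neq A$), but it does not repair the circle: as it stands it is a plan with an acknowledged hole, and the proposed patch rests on a false inference. That the subtlety is real is corroborated by the paper itself, which re-assumes faithful flatness of Galois objects as a hypothesis in Theorem~\ref{t10}.
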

	
	As with bi(co)modules, the notion of Galois objects may be two-sided with certain compatibility.
	
	\begin{definition}[BiGalois object]\label{d11}
		Let $H,L$ be two $K$-Hopf algebras. A $K$-algebra $A$ is a \emph{$(L,H)$-biGalois object} if the following assertions hold:
		\begin{enumerate}[label=(BG\arabic*), align=parleft, leftmargin=*]
			\item\label{BG1} $A$ is a faithfully flat $(L,H)$-bicomodule algebra,
			\item\label{BG2} $A$ is a left $L$-Galois extension of $K$,
			\item\label{BG3} $A$ is a right $H$-Galois extension of $K$.
		\end{enumerate}
	\end{definition}
	
	\subsection{Examples of Hopf Galois extensions}\label{sec2.8}
	
	In this section we review a large amount of examples of Hopf Galois extensions. Unless otherwise stated, they are adapted from \cite{DNR,Mon1,Mon2,Sch4}.
	
	\subsubsection{Hopf algebras}\label{hge1}
	
	Every $K$-Hopf algebra $H$ can be seen as a $K$-Galois object. Indeed, since $H$ has a natural structure of $H$-comodule algebra via the comultiplication (i.e., the structure map is $\rho:=\Delta$), for any $h\in H^{\operatorname{co}H}$,
	\begin{equation*}
		\rho(h)=\Delta(h) = h_{(1)} \otimes h_{(2)} = h\otimes 1.
	\end{equation*}
	Applying $\varepsilon \otimes \operatorname{id}_H$ we have $$(\varepsilon\otimes \operatorname{id}_H)( h_{(1)} \otimes h_{(2)} ) = (\varepsilon \otimes \operatorname{id}_H)(h\otimes 1),$$ whence $ \varepsilon(h_{(1)})1 \otimes h_{(2)} = \varepsilon(h)1\otimes 1$, which via the isomorphism $K \otimes H \cong H$, gives $$h = \varepsilon(h_{(1)})h_{(2)} = \varepsilon(h)1,$$ that is, $h\in K$. Conversely, if $h\in K$ then $\Delta(h)=\Delta(h1)=h\Delta(1)=h(1\otimes 1)=h\otimes 1$ and therefore $h\in H^{\operatorname{co}H}$. Hence $H^{\operatorname{co}H}=K$.
	
	The map $\beta: H\otimes H \rightarrow H\otimes H$ is defined by
	\begin{equation*}
		\beta(a\otimes b)= (a\otimes 1) \rho(b) = (a\otimes 1)\Delta(b)= ab_{(1)} \otimes b_{(2)}, \qquad \mbox{for all }  a,b\in H.
	\end{equation*}
	
	Then, the inverse $\beta^{-1}: H\otimes H \rightarrow H\otimes H$ of the Galois map is given by
	\begin{equation*}
		\beta^{-1}(a\otimes b)= a S(b_{(1)})\otimes b_{(2)}, \qquad \mbox{for all }  a,b\in H.
	\end{equation*}
	Indeed, if $a,b\in H$, then
	\begin{align*}
		\beta(\beta^{-1}(a\otimes b))&=\beta\left( aS(b_{(1)})\otimes b_{(2)} \right) = \beta(aS(b_{(1)})\otimes b_{(2)}) = aS(b_{(1)})b_{(2)}\otimes b_{(3)} \\
		&= a \varepsilon(b_{(1)}) \otimes b_{(2)} = a \otimes \varepsilon(b_{(1)})b_{(2)} = a\otimes b;\\
		\beta^{-1}(\beta(a\otimes b)) &= \beta^{-1}\left( ab_{(1)} \otimes b_{(2)} \right) =  \beta^{-1}(ab_{(1)}\otimes b_{(2)}) = ab_{(1)}S(b_{(2)})\otimes b_{(3)}\\
		&= a\varepsilon(b_{(1)}) \otimes b_{(2)} = a \otimes \varepsilon(b_{(1)}) b_{(2)} = a\otimes b.
	\end{align*}
	Therefore, $\beta$ is bijective and we have shown the next result.
	
	\begin{proposition}\label{ex13}
		Let $H$ be a $K$-Hopf algebra. Then $K\subset H$ is a $H$-Galois extension.
	\end{proposition}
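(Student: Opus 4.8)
The plan is to check the three ingredients demanded by Definition~\ref{d7}: that $H$ is a right $H$-comodule algebra, that its subalgebra of coinvariants equals $K$, and that the resulting Galois map is bijective.

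First I would observe that $H$ is a right $H$-comodule algebra with structure map $\rho := \Delta$. The comodule axioms are precisely coassociativity and the counit property, and since $H$ is a bialgebra its comultiplication is an algebra morphism, which is exactly condition~\ref{CA2}; thus $\rho = \Delta$ endows $H$ with a right $H$-comodule algebra structure over itself.

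Second, I would compute $H^{\operatorname{co}H}$. If $h$ is coinvariant then $\Delta(h) = h \otimes 1$; applying $\varepsilon \otimes \operatorname{id}_H$ and invoking the counit identity $\varepsilon(h_{(1)})h_{(2)} = h$ together with the isomorphism $K \otimes H \cong H$ yields $h = \varepsilon(h)1_H$, so $h \in K1_H$. The reverse inclusion is immediate since $\Delta(k1_H) = k1_H \otimes 1_H$. Hence $H^{\operatorname{co}H} = K$, and in particular the relevant tensor product $H \otimes_{H^{\operatorname{co}H}} H$ collapses to $H \otimes H$ over $K$.

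The crux of the argument, and the step I expect to be the main obstacle, is the bijectivity of the Galois map $\beta \colon H \otimes H \to H \otimes H$, $\beta(a \otimes b) = ab_{(1)} \otimes b_{(2)}$. Proving this amounts to producing an explicit inverse, and this is exactly where the antipode becomes indispensable: a mere bialgebra structure would not suffice. Reading the antipode axiom $S(h_{(1)})h_{(2)} = \varepsilon(h)1 = h_{(1)}S(h_{(2)})$ as a recipe for undoing $\beta$, I would propose $\gamma(a \otimes b) := aS(b_{(1)}) \otimes b_{(2)}$ as a candidate inverse and verify $\beta\gamma = \operatorname{id} = \gamma\beta$ by Heyneman--Sweedler computations. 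In each composite, after expanding via coassociativity, an adjacent pair of the form $S(b_{(i)})b_{(i+1)}$ (or $b_{(i)}S(b_{(i+1)})$) collapses to a counit through the antipode identity, and a final application of $\varepsilon(b_{(1)})b_{(2)} = b$ returns $a \otimes b$. With both composites equal to the identity, $\beta$ is bijective, and therefore $K \subset H$ is an $H$-Galois extension in the sense of Definition~\ref{d7}.
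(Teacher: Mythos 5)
Your proposal is correct and follows essentially the same route as the paper: the coinvariants $H^{\operatorname{co}H}=K$ are computed by applying $\varepsilon\otimes\operatorname{id}_H$ to $\Delta(h)=h\otimes 1$, and bijectivity of $\beta$ is established via the same explicit inverse $a\otimes b\mapsto aS(b_{(1)})\otimes b_{(2)}$, verified with the antipode and counit identities. No gaps.
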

	
	\begin{remark}\label{r1}
		Since Hopf Galois extensions are definable over bialgebras, it can be shown for every bialgebra $H$ that, in fact, $H$ is a Hopf algebra if and only if $K \subset H$ is a $H$-Galois extension \cite[Example 2.1.2]{Sch4}. More generally, if $H$ is a $K$-flat bialgebra admitting a $H$-Galois extension $A^{\operatorname{co}H}\subset A$ that is faithfully flat as a $K$-module, $H$ must be a Hopf algebra \cite{Sch2}.
	\end{remark}
	
	\subsubsection{Classical field extensions}\label{hge2}
	
	Let $G$ be a finite group acting as automorphisms on a field $E\supset \Bbbk$ in the sense of Example~\ref{ex8}. Clearly $E$ is a left $\Bbbk G$-module algebra. Hence, by Proposition~\ref{p2}, it is a right $\Bbbk^G$-comodule algebra (recall that $(\Bbbk G)^*=\Bbbk^G$; see Examples~\ref{ex10} and~\ref{ex10a}). As the name suggest, the next result shows that classical Galois extensions of fields can be seen as Hopf Galois extensions.
	
	\begin{theorem}[e.g. {\cite[Section 8.1.2]{Mon1}}]\label{t9}
		Let $G$ be a finite group acting as automorphisms on a field $E\supset \Bbbk$, and $F:=E^G$. Then the following assertions are equivalent:
		\begin{enumerate}[label=\normalfont(\roman*)]
			\item The field extension $F \subset E$ is Galois with Galois group $G$.
			\item $G$ acts faithfully on $E$.
			\item $[E:F]=|G|$.
			\item $F \subset E$ is a right $\Bbbk^G$-Galois extension.
		\end{enumerate}
	\end{theorem}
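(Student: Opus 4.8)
The plan is to separate the purely field-theoretic equivalences (i)--(iii) from the Hopf-theoretic statement (iv) and to connect them through an explicit description of the Galois map. First I would record the data coming from Examples~\ref{ex10} and~\ref{ex8} together with Proposition~\ref{p2}: viewing $E$ as a left $\Bbbk G$-module algebra via $g\cdot a=\psi(g)(a)=g(a)$, it becomes a right $\Bbbk^G$-comodule algebra whose structure map, computed from \eqref{e19} using the basis $\{p_g\}_{g\in G}$ of $\Bbbk^G$ dual to $\{g\}_{g\in G}\subset\Bbbk G$, is $\rho(a)=\sum_{g\in G} g(a)\otimes p_g$. In particular $E^{\operatorname{co}\Bbbk^G}=E^{\Bbbk G}=E^{G}=F$, so the extension $F\subset E$ is exactly $E^{\operatorname{co}\Bbbk^G}\subset E$, and the Galois map of Definition~\ref{d7} reads $\beta(a\otimes b)=\sum_{g\in G} a\,g(b)\otimes p_g$.

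For (i)$\Leftrightarrow$(ii)$\Leftrightarrow$(iii) I would invoke classical Galois theory. The image $\psi(G)$ is a finite subgroup of $\operatorname{Aut}(E)$ with $E^{\psi(G)}=E^{G}=F$, so Artin's theorem yields $[E:F]=|\psi(G)|$ together with the facts that $E/F$ is Galois and $\operatorname{Gal}(E/F)=\psi(G)$. Since $|\psi(G)|=|G|/|\ker\psi|$, the action is faithful iff $\psi$ is injective iff $|\psi(G)|=|G|$ iff $[E:F]=|G|$, which gives (ii)$\Leftrightarrow$(iii); and the induced map $\psi\colon G\to\operatorname{Gal}(E/F)$ is automatically surjective by the above, hence an isomorphism precisely when it is injective, giving (i)$\Leftrightarrow$(ii).

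To bring in (iv) I would exploit that $\beta$ is left $E$-linear (via left multiplication on the first tensor factor), so it is a morphism of free left $E$-modules. Identifying $E\otimes_\Bbbk\Bbbk^G\cong\bigoplus_{g\in G}E$ through the basis $\{1\otimes p_g\}$ and choosing an $F$-basis $\{b_1,\dots,b_n\}$ of $E$ (so that $\{1\otimes b_j\}$ is an $E$-basis of $E\otimes_F E$), the map $\beta$ is represented by the $|G|\times n$ matrix $M=(g(b_j))_{g\in G,\,1\le j\le n}$ with entries in $E$. As an isomorphism of finite free modules over the field $E$ forces equal ranks, (iv) immediately gives $n=[E:F]=|G|$, i.e.\ (iv)$\Rightarrow$(iii). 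For the converse I would use (ii): if the action is faithful, the automorphisms $\{g\}_{g\in G}$ are pairwise distinct and hence $E$-linearly independent by Dedekind's lemma on independence of characters; since each $g$ fixes $F$ and $\{b_j\}$ spans $E$ over $F$, any relation $\sum_g c_g\,g(b_j)=0$ holding for all $j$ forces $\sum_g c_g\,g=0$ on all of $E$, whence every $c_g=0$, so the rows of $M$ are $E$-linearly independent. Combined with $n=|G|$ from (iii), the square matrix $M$ is invertible, $\beta$ is bijective, and (iv) holds.

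The routine parts are the explicit computation of $\rho$ and $\beta$ and the bookkeeping of $E$-module ranks. The main obstacle is the implication (ii)$\Rightarrow$(iv): one must recognize that bijectivity of the abstract Galois map is governed by invertibility of the classical character matrix $(g(b_j))$, and then supply Dedekind's independence of characters as the decisive input that makes this matrix nonsingular.
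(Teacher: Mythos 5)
Your proof is correct and follows essentially the same route as the paper: the equivalences (i)--(iii) are delegated to Artin's lemma, the passage to (iv) hinges on Dedekind's independence of characters making the matrix $(g(b_j))$ invertible, and the implication (iv)$\Rightarrow$(iii) is a dimension count. The only cosmetic difference is that you compare ranks of free left $E$-modules and show the representing matrix is invertible outright, whereas the paper counts $F$-dimensions and proves injectivity of $\beta$ first; the decisive inputs are identical.
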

	\begin{proof}
		The implications (i) $\Leftrightarrow$ (ii) $\Leftrightarrow$ (iii) are consequences of Artin's Lemma (e.g. \cite[Theorem 4.7]{Jac}). Hence, we will just show their equivalence with (iv). Suppose that the field extension $F \subset E$ is Galois. Set $n:=|G|$ and $G=\{ x_1,\ldots,x_n \}$. Let $\{ u_1,\ldots,u_n \}$ be a basis of $E/F$ and let $\{ p_1,\ldots,p_n \} \subset \Bbbk^G $ be the dual basis to $\{ x_i \} \subset \Bbbk G$. As it was said before, $E$ coacts on $\Bbbk^G$, where the structure map $\rho: E \rightarrow E\otimes \Bbbk^G $ is given by
		\begin{equation*}
			\rho(a)=\sum_{i=1}^n (x_i \cdot a) \otimes p_i, \qquad \mbox{for all } a\in E.
		\end{equation*}
		Therefore, the Galois map $\beta: E\otimes_F E \rightarrow E \otimes \Bbbk^G $ is given by
		\begin{equation*}
			\beta(a\otimes b) = (a\otimes 1) \rho(b) = \sum_{i=1}^n a(x_i \cdot b) \otimes p_i.
		\end{equation*}
		We must show that $\beta$ is bijective. For that, suppose $w=\sum_j a_j \otimes u_j \in \ker(\beta)$. Then
		\begin{equation*}
			\beta \left( w \right) = \sum_j a_j(x_i \cdot u_j) \otimes p_i =0\otimes 0.
		\end{equation*}
		Since the $p_i$ are linearly independent, we conclude that
		\begin{equation}\label{e23}
			\sum_j a_j(x_i \cdot u_j) = 0, \qquad \mbox{for all } 1\leq i \leq n.
		\end{equation}
		Using the faithfulness of the action of $G$, Dedekind independence theorem implies that the $n\times n$ matrix $C=[x_i \cdot u_j]$ associated to the system \eqref{e23} is invertible (e.g. \cite[p. 291]{Jac}). Hence, all $a_j$ are $0$ and $w=0$. Thus $\beta$ is injective. Since both $E\otimes_F E$ and $E \otimes \Bbbk^G$ are $F$-vector spaces of dimension $n^2$, it follows that $\beta$ is a bijection.
		
		Conversely, suppose that the Galois map $\beta$ is an isomorphism. Notice that
		\begin{equation*}
			\dim_F(E\otimes_F E) = [E:F]^2 \qquad \mbox{and} \qquad \dim_F (E\otimes \Bbbk^G)=[E:F]|G|.
		\end{equation*}
		Via the isomorphism, we get $[E:F]=|G|$ and therefore $F\subset E$ is a Galois extension of fields.
	\end{proof}
	
	\subsubsection{Separable field extensions}\label{hge3}
	
	For the circle Hopf algebra $H_\Bbbk$ (cf. Example~\ref{ex30}), it is possible for a finite separable field extension $F \subset F$ to be $H_\Bbbk$-Galois, although it is not Galois in the classical sense. This example is due to Greither and Pareigis \cite{PG}.
	
	Let $\Bbbk=F=\mathbb{Q}$ and $E=F(\omega)$, where $\omega$ is the real $4$-th root of 2. $F\subset E$ is not Galois for any group $G$, since the automorphism group of $E/F$ fixes $\mathbb{Q}(\sqrt{2})$ pointwise. However, if $H_\mathbb{Q}$ is the circle Hopf algebra, it can be shown that $F \subset E$ is $H_\mathbb{Q}^*$-Galois. In this case $H_\mathbb{Q}$ acts on $E$ as follows:
	\begin{center}
		\begin{tabular}{c|cccc}
			$\cdot$ & $1$ & $\omega$  & $\omega^2$  & $\omega^3$ \\ \hline
			$c$     & $1$ & $0$       & $-\omega^2$ & $0$        \\
			$s$     & $0$ & $-\omega$ & $0$         & $\omega^3$
		\end{tabular}
	\end{center}
	
	When we change $\Bbbk$ for $\mathbb{Q}(i)$, $H_\Bbbk \cong \Bbbk\mathbb{Z}_4$, the group algebra of $\mathbb{Z}_4$. This means that $\mathbb{Q}\mathbb{Z}_4$ and $H_\mathbb{Q}$ are $\mathbb{Q}(i)$-forms of each other. For a suitable Hopf algebra $H$ described in \cite[Section 3]{Par}, which is a $\mathbb{Q}(\sqrt{-2})$-form of $\mathbb{Q}[\mathbb{Z}_2 \times \mathbb{Z}_2]$, $\mathbb{Q}\subset E$ is also a $H^*$-Galois. Hence an extension can be Hopf Galois over two different Hopf algebras.
	
	\subsubsection{Strongly graded algebras}\label{hge4}
	
	This example is due to Ulbrich and Osterburg \cite{Ulb,Ulb2,OQ}. Let $A=\bigoplus_{g\in G} A_g$ be a $G$-graded algebra (see Example~\ref{ex7}). $A$ is \emph{strongly graded} if $A_gA_h=A_{gh}$, for all $g,h\in G$.
	
	\begin{lemma}[e.g. {\cite[Proposition 1.1.1]{NV}}]\label{l3}
		Let $A$ be a $G$-graded algebra. Then the following assertions are equivalent:
		\begin{enumerate}[label=\normalfont(\roman*)]
			\item $A$ is strongly graded,
			\item $A_gA_{g^{-1}}=A_1$ for all $g\in G$.
		\end{enumerate}
	\end{lemma}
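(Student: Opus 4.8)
The plan is to prove the two implications separately, with (i) $\Rightarrow$ (ii) being essentially immediate and the converse carrying all the content. For (i) $\Rightarrow$ (ii), I would simply specialize the strong-grading identity $A_gA_h=A_{gh}$ to the case $h=g^{-1}$, obtaining $A_gA_{g^{-1}}=A_{gg^{-1}}=A_1$ for every $g\in G$.

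For the converse (ii) $\Rightarrow$ (i), fix $g,h\in G$. The inclusion $A_gA_h\subseteq A_{gh}$ holds for free, being part of the very definition of a $G$-graded algebra, so the whole task reduces to proving the reverse inclusion $A_{gh}\subseteq A_gA_h$. The key ingredient I would use is that the unit lives in the identity component, $1_A\in A_1$ (established at the end of Example~\ref{ex7}, and in any case quickly reproved by writing $1_A=\sum_g e_g$ and comparing homogeneous components of $a_h=1_A a_h$). Invoking hypothesis (ii) for the element $g$, I can then write $1_A\in A_1=A_gA_{g^{-1}}$, hence $1_A=\sum_i a_ib_i$ for finitely many $a_i\in A_g$ and $b_i\in A_{g^{-1}}$.

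Next I would take an arbitrary homogeneous $x\in A_{gh}$ and run it through this decomposition of the unit:
\[
x = 1_A\,x = \Big(\sum_i a_ib_i\Big)x = \sum_i a_i(b_ix).
\]
Here the index bookkeeping is the only thing to watch: each factor $b_ix$ lies in $A_{g^{-1}}A_{gh}\subseteq A_{g^{-1}gh}=A_h$, so $a_i(b_ix)\in A_gA_h$; summing, $x\in A_gA_h$. Since $A_{gh}$ is the additive span of such homogeneous $x$ and $A_gA_h$ is closed under addition, this yields $A_{gh}\subseteq A_gA_h$, and hence equality, which is precisely strong grading.

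I do not anticipate any serious obstacle: the argument is short once $1_A\in A_1$ is in hand. The only points demanding minor care are (a) recalling or reproving that the unit is homogeneous of degree $1$, which is what lets us transport the decomposition of $1_A$ onto an arbitrary $x$, and (b) tracking the group indices so that the product $A_{g^{-1}}A_{gh}$ collapses exactly to $A_h$ rather than to something larger.
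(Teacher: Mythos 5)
Your proof is correct and is essentially the paper's argument: the paper also reduces everything to inserting the decomposition $A_1=A_{h^{-1}}A_h$ (hypothesis (ii) applied at $h^{-1}$) into $A_{gh}=A_{gh}A_1$ and collapsing via associativity and the grading inclusions, exactly as you do. The only cosmetic differences are that the paper multiplies on the right and works at the level of subspaces in a single chain of inclusions, whereas you multiply on the left and spell the same computation out element-wise through a decomposition of $1_A$; both versions rest on the same fact $1_A\in A_1$ that you correctly flag.
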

	
	\begin{proof}
		(i)$\Rightarrow$(ii) is trivial. (ii)$\Rightarrow$(i): We have
		\begin{equation*}
			A_{gh}=A_{gh}A_1 = A_{gh}(A_{h^{-1}}A_h) \subseteq (A_{gh}A_h^{-1})A_h \subseteq A_{ghh^{-1}}A_h =A_gA_h \subseteq A_{gh},
		\end{equation*}
		for all $g,h\in G$.
	\end{proof}
	
	Recall that $A$ is a $\Bbbk G$-comodule algebra with structure map $\rho: A \rightarrow A\otimes \Bbbk G$ given by $\rho(a)=a\otimes g$, for all $a\in A_g$, and that $A^{co\Bbbk G}=A_1$. Then $\beta: A \otimes_{A_1} A \rightarrow A \otimes \Bbbk G $ is given by $\beta( a\otimes b ) = (a\otimes 1)\rho(b)= \sum_{g\in G} ab_g \otimes g$, for all $a, b\in A$ with $b=\sum_{g\in G} b_g$.
	
	\begin{theorem}[e.g. {\cite[Theorem 8.1.7]{Mon1}}]\label{t0}
		Let $G$ be a group and $A$ be a $G$-graded algebra. Then the following assertions are equivalent:
		\begin{enumerate}[label=\normalfont(\roman*)]
			\item $A_1 \subset A$ is a $\Bbbk G$-extension.
			\item $A$ is strongly graded.
		\end{enumerate}
	\end{theorem}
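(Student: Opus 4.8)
The plan is to read assertion~(i) as the statement that the Galois map $\beta\colon A\otimes_{A_1}A\to A\otimes\Bbbk G$, $\beta(a\otimes b)=\sum_{g\in G}ab_g\otimes g$, is bijective, and to prove the two implications separately. Throughout I will exploit Lemma~\ref{l3}, which lets me replace strong gradedness by the single-variable condition $A_gA_{g^{-1}}=A_1$ for every $g\in G$, together with the facts that $A\otimes\Bbbk G=\bigoplus_{g\in G}A\otimes g$ is a free left $A$-module on $\{1_A\otimes g\}_{g\in G}$ and that $\beta$ is left $A$-linear.

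For (ii)$\Rightarrow$(i) I would build an explicit inverse of $\beta$. Fix $g\in G$; since $A$ is strongly graded, $1_A\in A_{g^{-1}}A_g$, so I may choose finitely many $v_i\in A_{g^{-1}}$ and $u_i\in A_g$ with $\sum_i v_iu_i=1_A$. I then define $\gamma\colon A\otimes\Bbbk G\to A\otimes_{A_1}A$ on the free generators by $\gamma(a\otimes g):=\sum_i av_i\otimes u_i$ and extend $\Bbbk$-linearly. The computation $\beta\gamma(a\otimes g)=\sum_i av_iu_i\otimes g=a\otimes g$ is immediate because each $u_i$ is homogeneous of degree $g$. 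The reverse composite requires the key observation that the tensor product is balanced over $A_1$: for a homogeneous $b_g\in A_g$ one has $b_gv_i\in A_gA_{g^{-1}}\subseteq A_1$, so $ab_gv_i\otimes u_i=a\otimes b_gv_iu_i$, and summing over $i$ collapses this to $a\otimes b_g$; running over all homogeneous components of $b$ yields $\gamma\beta(a\otimes b)=a\otimes b$. Hence $\beta$ is bijective. I expect this construction --choosing the $u_i,v_i$ with the correct degrees and tracking where each factor lands so that both composites reduce to the identity-- to be the main obstacle, the crux being that only degree-$1$ factors are allowed to cross the $A_1$-balanced tensor.

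For (i)$\Rightarrow$(ii) only the surjectivity of $\beta$ is needed. Given $g\in G$, I would write $1_A\otimes g=\beta\bigl(\sum_j a_j\otimes b_j\bigr)=\sum_j\sum_{h\in G}a_j(b_j)_h\otimes h$ and compare components in the direct sum $\bigoplus_{h\in G}A\otimes h$, obtaining $\sum_j a_j(b_j)_g=1_A$. Decomposing each $a_j$ into homogeneous pieces and projecting this identity onto the degree-$1$ summand $A_1$ (the only surviving terms being those of the form $(a_j)_{g^{-1}}(b_j)_g$, since $(a_j)_s(b_j)_g\in A_{sg}$ lies in $A_1$ precisely when $s=g^{-1}$) gives $1_A=\sum_j (a_j)_{g^{-1}}(b_j)_g\in A_{g^{-1}}A_g$. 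As $A_{g^{-1}}A_g\subseteq A_1$ always holds, and $A_1=A_1\cdot 1_A\subseteq A_1A_{g^{-1}}A_g\subseteq A_{g^{-1}}A_g$, I conclude $A_{g^{-1}}A_g=A_1$ for every $g\in G$. After relabelling $g\mapsto g^{-1}$ this is precisely condition~(ii) of Lemma~\ref{l3}, so $A$ is strongly graded.
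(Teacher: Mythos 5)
Your proposal is correct and follows essentially the same route as the paper: for (ii)$\Rightarrow$(i) you construct the same explicit inverse $a\otimes g\mapsto\sum_i av_i\otimes u_i$ from a decomposition $1=\sum_i v_iu_i\in A_{g^{-1}}A_g$ supplied by Lemma~\ref{l3}, and for (i)$\Rightarrow$(ii) you use surjectivity at $1\otimes g$, compare coefficients against the basis $G$ of $\Bbbk G$, and project onto the identity component. If anything, your handling of (i)$\Rightarrow$(ii) is slightly more careful than the paper's, which loosely asserts $a_i\in A_{g^{-1}}$ for possibly non-homogeneous $a_i$, whereas you correctly decompose into homogeneous pieces and extract the degree-$g^{-1}$ parts.
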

	
	\begin{proof}
		(i)$\Rightarrow$(ii): First assume that $\beta$ is bijective and, in particular, surjective. Whence for every $g\in G$ there exist finitely many $a_i,b_i\in A$ such that
		\begin{equation*}
			\beta\left( \sum_i a_i \otimes b_i \right)=\sum_{i} \sum_{h\in G} a_i (b_i)_h \otimes h = \sum_{i,h} a_i (b_i)_h \otimes h = 1\otimes g.
		\end{equation*}
		Since $G$ is a basis for $\Bbbk G$, it follows that $\sum_i a_i(b_i)_g =1 $ and $\sum_i a_i(b_i)_h=0$ for all $h\neq g$.  The inclusion $A_gA_{g^{-1}} \subseteq A_1$ always holds in $G$-graded algebras, so now suppose $a\in A_1$. Fixing $g$, we have $a = a1 = a\sum_i a_i(b_i)_g =\sum_i aa_i(b_i)_g$. Since the $(b_i)_g$ are homogeneous of degree $g$, $a$ is homogeneous of degree 1, and since the sum of homogeneous components is direct, we get $a_i\in A_{g^{-1}}$. Hence $A_{g}A_{g^{-1}}=A_1$ and by Lemma~\ref{l3} it follows that $A$ is strongly graded.
		
		(ii)$\Rightarrow$(i): Conversely, suppose now that $A$ is strongly graded. If $g\in G$, then by Lemma~\ref{l3}, $1\in A_1=A_{g^{-1}}A_{g}$ and we may write $1=\sum_i a_ib_i$ for some $a_i\in A_{g^{-1}}$ and $b_i\in A_g$. Therefore let $\alpha: A\otimes\Bbbk G \rightarrow A\otimes_{A_1} A$ be defined as $\alpha(a\otimes g)= \sum_i a a_i \otimes b_i$.
		Since all $g_i\in A_g$, we get
		\begin{equation*}
			(\beta\alpha)(a\otimes g)=\beta \left( \sum_i a a_i \otimes b_i \right) = \sum_i aa_ib_i \otimes g = a \left( a_ib_i \right) \otimes g = a1\otimes g =a\otimes g.
		\end{equation*}
		On the other hand,
		\begin{equation*}
			(\alpha\beta)(a\otimes b)= \alpha\left( \sum_{g\in G} ab_g \otimes g \right) = \sum_{g\in G} \alpha( ab_g \otimes g ) = \sum_{g\in G} \sum_i ab_g a_i \otimes b_i,
		\end{equation*}
		but $b_ga_i \in A_g A_{g^{-1}}= A_1$ and hence
		\begin{align*}
			(\alpha\beta)(a\otimes b)&= \sum_{g\in G} \sum_i ab_g a_i \otimes b_i = \sum_{g\in G} \sum_i a \otimes b_g a_ib_i\\
			&= \sum_{g\in G} a \otimes b_g \left( \sum_i a_ib_i \right) = \sum_{g\in G} a \otimes b_g 1 = a\otimes b.
		\end{align*}
		Therefore, $\alpha=\beta^{-1}$ and $\beta$ is bijective.
	\end{proof}
	
	\subsubsection{Crossed products over groups}\label{hge5}
	
	We now give a major example that can be deduced from the previous one. Let $G$ be a (multiplicative) group acting as automorphisms on a $\Bbbk$-algebra $R$. Recall that we denoted $g(r)=g\cdot r$. The action is said to be \emph{twisted} if there exists a map $\sigma: G \times G \rightarrow R$ such that the following conditions hold:
	\begin{enumerate}[label=\normalfont(\roman*)]
		\item (\emph{Cocycle condition}) For all $g,h,k\in G$, $[g\cdot \sigma(h,k)]\sigma(h,hk)=\sigma(g,h)\sigma(gh,k)$, and $\sigma(g,1)=\sigma(1,g)=1$
		\item (\emph{Twisted module condition}) For all $g,h\in G$ and $r\in R$, $$[g\cdot(h\cdot r)]\sigma(g,h)=\sigma(g,h)(gh\cdot r).$$
	\end{enumerate}
	In this case, we say that $\sigma$ is a \emph{2-cocycle}.
	
	We define a new different structure over $R \otimes \Bbbk G$. In this context, an arbitrary element $r\otimes g \in R \otimes \Bbbk G$ will be denoted by $r*g$.
	
	\begin{definition}[Crossed product]\label{crossprod}
		Let $G$ be a group acting as automorphisms on a $\Bbbk$-algebra $R$. If the action is twisted, we define the \emph{crossed product of $R$ and $G$}, denoted by $R*G$, as the $\Bbbk$-space $R \otimes \Bbbk G$ together with the multiplication
		\begin{equation*}
			(r*g)(s*h)=r(g\cdot s)\sigma(g,h)*gh, \qquad \mbox{for all } r,s\in R \mbox{ and } g,h\in G,
		\end{equation*}
		and unit element $1_R*1_G$.
	\end{definition}
	
	\begin{proposition}[e.g. {\cite[Example 2.7]{Mon2}}]
		Let $G$ be a group acting as automorphisms on a $\Bbbk$-algebra $R$. Suppose that the action is twisted. Then $R*G$ is a $G$-graded algebra. Moreover, $R\subset A$ is a $\Bbbk G$-Galois extension.
	\end{proposition}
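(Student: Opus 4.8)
The plan is to establish the two claims in turn, reducing the Galois statement to Theorem~\ref{t0} through the notion of strong grading.

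First I would exhibit the $G$-grading on $A := R*G$ by setting $A_g := R*g = \{ r*g : r\in R \}$. Because $R*G$ is by construction the $\Bbbk$-module $R\otimes \Bbbk G = \bigoplus_{g\in G} R\otimes \Bbbk g$, the decomposition $A = \bigoplus_{g\in G} A_g$ into $\Bbbk$-subspaces is immediate, and the homogeneity condition $A_g A_h \subseteq A_{gh}$ is read straight off the multiplication rule: $(r*g)(s*h) = r(g\cdot s)\sigma(g,h)*gh$ with $r(g\cdot s)\sigma(g,h)\in R$. I would also record that $r\mapsto r*1$ is an algebra isomorphism of $R$ onto $A_1 = R*1$ (using that $1$ acts as $\operatorname{id}_R$ and $\sigma(1,1)=1_R$), so that the extension $R\subset R*G$ is precisely $A_1\subset A$.

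For the Galois claim, the idea is to show that $A$ is \emph{strongly graded} and then quote Theorem~\ref{t0}, which immediately gives that $A_1\subset A$ is a $\Bbbk G$-Galois extension. By Lemma~\ref{l3} it is enough to prove $A_gA_{g^{-1}}=A_1$ for every $g\in G$; the inclusion $A_gA_{g^{-1}}\subseteq A_1$ holds in any graded algebra, so the real task is the reverse inclusion. Here I would use that the $2$-cocycle $\sigma$ of a crossed product takes invertible values in $R$: since $\psi(g)$ is a surjective automorphism of $R$, choose $s\in R$ with $g\cdot s = \sigma(g,g^{-1})^{-1}$, and then for every $r\in R$ compute $(r*g)(s*g^{-1}) = r(g\cdot s)\sigma(g,g^{-1})*1 = r*1$. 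This displays each $r*1\in A_1$ as a product of a degree-$g$ element with a degree-$g^{-1}$ element, yielding $A_1\subseteq A_gA_{g^{-1}}$.

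I expect the heart of the argument to be exactly this factorization of $1_R*1$, which is what forces $A$ to be strongly graded rather than merely $G$-graded; it relies essentially on the invertibility of $\sigma(g,g^{-1})$ (part of the standing definition of a crossed product) and on each $\psi(g)$ being bijective, so that $\sigma(g,g^{-1})^{-1}$ can be pulled back along the action. Once strong grading is in hand, Theorem~\ref{t0} and the identification $R\cong A_1$ complete the proof with no further computation.
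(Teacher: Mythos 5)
Your reduction of the Galois claim to Theorem~\ref{t0} via strong grading is the same strategy the paper uses, and your treatment of that step is actually \emph{more} careful than the paper's: the paper simply asserts the equality $(R*G)_g(R*G)_h=R\otimes gh$, whereas you invoke Lemma~\ref{l3} and produce the explicit factorization $(r*g)(s*g^{-1})=r*1$ with $g\cdot s=\sigma(g,g^{-1})^{-1}$. Note, however, that the invertibility of $\sigma(g,g^{-1})$ which you call ``part of the standing definition'' is not actually stated in the paper's definition of a twisted group action (Definition~\ref{crossprod} and the conditions preceding it); it is standard in the literature (and implicitly needed even for the paper's one-line assertion), but you should flag it as an additional hypothesis rather than cite it as given.

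The genuine gap is earlier: you never verify that $R*G$ is an associative unital algebra. The proposition's first claim is that $R*G$ is a $G$-graded \emph{algebra}, and Definition~\ref{crossprod} only prescribes a multiplication on the $\Bbbk$-module $R\otimes\Bbbk G$; associativity is not automatic, and it is precisely here that the cocycle condition and the twisted module condition are consumed. Indeed, the bulk of the paper's proof is the computation showing $(r*g)[(s*h)(t*k)]=[(r*g)(s*h)](t*k)$ (which uses both conditions on $\sigma$) together with the unit laws $(r*g)(1*1)=r*g=(1*1)(r*g)$. Without this step, the object to which you apply Theorem~\ref{t0} is not known to be an algebra at all, so the grading and strong-grading arguments have nothing to stand on. Your proof becomes complete once you prepend this associativity and unitality verification.
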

	
	\begin{proof}
		For all $g,h,k\in G$ and $r,s,t\in R$ we have
		\begin{align*}
			(r*g)[(s*h)(t*k)] &= (r*g)(s(h\cdot t)\sigma(h,k)*hk)\\&= r\left[ g\cdot (s(h\cdot t)\sigma(h,k)) \right]\sigma(g,hk) * g(hk)\\
			&= r \left[ (g\cdot s)(g\cdot(h\cdot t)) (g\cdot \sigma(h,k)) \right]\sigma(g,hk) * g(hk) \\
			&=	r (g\cdot s)(g\cdot(h\cdot t)) \left[ (g\cdot \sigma(h,k))\sigma(g,hk) \right] * (gh)k\\
			&=r (g\cdot s)(g\cdot(h\cdot t)) \left[ \sigma(g,h)\sigma(gh,k) \right] * (gh)k\\
			&=r (g\cdot s)\left[(g\cdot(h\cdot t))  \sigma(g,h)\right]\sigma(gh,k)  * (gh)k\\
			&= r (g\cdot s)\left[ \sigma(g,h)(gh\cdot t) \right]\sigma(gh,k)  * (gh)k\\
			&=\left[r(g\cdot s)\sigma(g,h)*gh  \right](t*k)=\left[ (r*g)(s*h) \right](t*k).
		\end{align*}
		On the other hand,
		\begin{align*}
			(r*g)(1*1)&=r(g\cdot 1)\sigma(g,1)*g1=r1*g1=r*g,\\
			(1*1)(r*g)&=1(1\cdot r)\sigma(1,g)*1g=1r1*1g=r*g.
		\end{align*}
		Hence, $R*G$ is a $\Bbbk$-algebra. The homogeneous components are given by
		\begin{equation*}
			(R*G)_1 = R\otimes 1\cong R	\qquad \mbox{and} \qquad  (R*G)_g = R\otimes g, \quad \mbox{for all } g\in G. 
		\end{equation*}
		Finally, since $(R*G)_g(R*G)_h=R\otimes gh = (R*G)_{gh}$, for all $g,h\in G$, the algebra is strongly graded and therefore, by Theorem~\ref{t0}, $R\subset A$ is $\Bbbk G$-Galois.
	\end{proof}
	
	The relation between crossed products over groups and group extensions is explored in \cite[Example 2.8]{Mon2} by showing that for any group $G$ with normal subgroup $N$ and quotient $L=G/N$, $\Bbbk G=\Bbbk N * \Bbbk L$. Also, in \cite[Examples 2.9 and 2.10]{Mon2} counterexamples of strongly graded algebras that are not crossed products are given.
	
	\subsubsection{Hopf crossed products and smash products}\label{hge6}
	
	This example attempts to generalize the previous one to the context of Hopf algebras. Let $H$ be a $K$-Hopf algebra and $R$ an $H$-module algebra. The action is said to be \emph{twisted} if there exists a map $\sigma: H \times H \rightarrow R$ such that the following conditions hold:
	\begin{enumerate}[label=\normalfont(\roman*)]
		\item (\emph{Hopf 2-cocycle condition}) For all $g,h,k\in H$,
		\begin{gather}
			[g_{(1)} \cdot \sigma(h_{(1)},k_{(1)})] \sigma(g_{(2)},h_{(2)}k_{(2)})= \sigma(g_{(1)},h_{(1)})\sigma(g_{(2)}h_{(2)},k), \label{e81} \\
			\sigma(g,1)=\sigma(1,g)=\varepsilon(g)1.\nonumber
		\end{gather}
		\item (\emph{Hopf twisted module condition}) For all $g,h\in H$ and $r\in R$,
		\begin{equation}
			[g\cdot(h\cdot r)] = \sigma(g_{(1)},h_{(1)}) (g_{(2)}h_{(2)} \cdot r)\sigma(g_{(3)},h_{(3)})^{-1}. \label{e82}
		\end{equation}
	\end{enumerate}
	In this case, we say that $\sigma$ is a \emph{2-cocycle}.
	
	\begin{definition}[Hopf crossed product]
		Let $H$ be a $K$-Hopf algebra and $R$ an $H$-module algebra. If the action is twisted, we define the \emph{crossed product of $R$ and $H$}, denoted by $R\#_{\sigma}H$, as the $K$-module $R \otimes H$ together with the multiplication
		\begin{equation}\label{e83}
			(r\#g)(s\#h)= r(g_{(1)}\cdot s)\sigma(g_{(2)},h_{(1)})\#g_{(3)}h_{(2)}, \quad \mbox{for all } r,s\in R \mbox{ and } g,h\in H,
		\end{equation}
		and unit element $1_R\#1_H$.
	\end{definition}
	
	For any group $G$, if $H=\Bbbk G$, then this definition coincides with that of a crossed product over $G$ (see Definition~\ref{crossprod}).
	
	\begin{theorem}[e.g. {\cite[Example 3.6]{Mon2}}]\label{p10}
		Let $R$ be an $H$-module algebra. Suppose that the action is twisted. If $A:=R\#_\sigma H$, then:
		\begin{enumerate}[label=\normalfont(\roman*)]
			\item  $A$ is an algebra.
			\item $A$ is a right $H$-comodule algebra via $\rho: A \rightarrow A \otimes H$ given by
			\begin{equation*}
				\rho(r\# h) = (r\# h_{(1)}) \otimes h_{(2)}, \qquad \mbox{for all } r\#h\in A.
			\end{equation*}
			\item $A^{\operatorname{co}H} \cong R$.
		\end{enumerate}
	\end{theorem}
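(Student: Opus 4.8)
The plan is to treat $A = R\otimes H$ with the multiplication \eqref{e83}, writing a generic element as $r\#h$, and to verify the three claims in turn. For (i), I would first dispatch the unit: the products $(r\#g)(1_R\#1_H)$ and $(1_R\#1_H)(r\#h)$ collapse once one substitutes $\sigma(g,1)=\sigma(1,g)=\varepsilon(g)1$ and uses the module-algebra identity $g\cdot 1_R=\varepsilon(g)1_R$, $\Delta(1_H)=1_H\otimes 1_H$, and the counit property $\varepsilon(g_{(1)})g_{(2)}=g$. The substantive part is associativity. I would expand $[(r\#g)(s\#h)](t\#k)$ and $(r\#g)[(s\#h)(t\#k)]$ separately through \eqref{e83}, so that each side becomes a sum (in Heyneman--Sweedler notation) of terms $r\,(g_{(\cdot)}\cdot s)\,(g_{(\cdot)}\cdot(h_{(\cdot)}\cdot t))\,\sigma(\cdots)\sigma(\cdots)\#\,g_{(\cdot)}h_{(\cdot)}k_{(\cdot)}$, and then transport one expression to the other: the Hopf twisted module condition \eqref{e82} rewrites the nested action $g\cdot(h\cdot t)$ that appears on one side, while the Hopf $2$-cocycle condition \eqref{e81} reconciles the two products of cocycle values. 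This is the exact Hopf-theoretic analogue of the computation already carried out for group crossed products after Definition~\ref{crossprod}, and it is the main obstacle: the bookkeeping of the comultiplication indices is delicate, and one must apply \eqref{e81} and \eqref{e82} at precisely the right Sweedler components so that the two sides align term by term.

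For (ii), the comodule axioms are inherited from $H$. Since $\rho(r\#h)=(r\#h_{(1)})\otimes h_{(2)}$ is nothing but $\operatorname{id}_R\otimes\Delta$ under the identification $A=R\otimes H$, the coassociativity $(\operatorname{id}_A\otimes\Delta)\rho=(\rho\otimes\operatorname{id}_H)\rho$ reduces to the coassociativity of $\Delta$, and the counit condition $(\operatorname{id}_A\otimes\varepsilon)\rho(r\#h)=(r\#h)\otimes 1$ reduces to $h_{(1)}\varepsilon(h_{(2)})=h$. It then remains to show that $\rho$ is multiplicative, since by the remark following Definition~\ref{d6} the conditions $\rho((r\#g)(s\#h))=\rho(r\#g)\rho(s\#h)$ and $\rho(1_A)=1_A\otimes 1_H$ are equivalent to~\ref{CA2}. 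I would compute $\rho$ of the product \eqref{e83} directly; the key point is that the $H$-component $g_{(3)}h_{(2)}$ produced by the multiplication has comultiplication $g_{(3)}h_{(2)}\otimes g_{(4)}h_{(3)}$ (by multiplicativity of $\Delta$), and after relabelling this matches the product of the first tensor-legs of $\rho(r\#g)$ and $\rho(s\#h)$ computed in the tensor-product algebra $A\otimes H$, with the second legs multiplying to the same element $g_{(4)}h_{(3)}$.

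For (iii), I would exhibit the isomorphism explicitly. The map $r\mapsto r\#1_H$ is an algebra embedding $R\hookrightarrow A$, because $(r\#1)(s\#1)=r(1\cdot s)\sigma(1,1)\#1=rs\#1$ using $1\cdot s=s$ and $\sigma(1,1)=\varepsilon(1)1=1$; denote its image by $R\#1$. Its elements are coinvariant, as $\rho(r\#1)=(r\#1)\otimes 1$, so $R\#1\subseteq A^{\operatorname{co}H}$. For the reverse inclusion I would use the splitting $H=K1_H\oplus\ker\varepsilon$ (coming from $\varepsilon u=\operatorname{id}_K$), which gives $A=(R\#1)\oplus(R\otimes\ker\varepsilon)$; since $R\#1\subseteq A^{\operatorname{co}H}$ and $A^{\operatorname{co}H}$ is a submodule, any coinvariant decomposes with its $(R\otimes\ker\varepsilon)$-component $x=\sum_i r_i\otimes k_i$ again coinvariant. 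Coinvariance of $x$ reads $\sum_i r_i\otimes(\Delta(k_i)-k_i\otimes 1)=0$; applying $\operatorname{id}_R\otimes\varepsilon\otimes\operatorname{id}_H$ and using $(\varepsilon\otimes\operatorname{id}_H)(\Delta(k)-k\otimes 1)=k$ for $k\in\ker\varepsilon$ forces $x=0$. Hence $A^{\operatorname{co}H}=R\#1\cong R$ as algebras. This last step is the only place needing an idea beyond direct calculation, and the splitting of $\varepsilon$ is what lets one conclude without imposing any flatness hypothesis on $R$.
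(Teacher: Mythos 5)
Your proposal is correct and follows essentially the same route as the paper: part (i) by direct expansion of both associations using \eqref{e11}, \eqref{e81} and \eqref{e82} plus the unit check, part (ii) by inheriting coassociativity and counit from $\Delta$ and then verifying multiplicativity of $\rho$, and part (iii) by identifying $R$ with $R\#1$ and recovering coinvariants via the map $\operatorname{id}_R\otimes\varepsilon\otimes\operatorname{id}_H$. The only divergence is in (iii): the paper applies that map to a coinvariant written as a simple tensor $r\#h$, whereas you first split $H=K1_H\oplus\ker\varepsilon$ so that arbitrary (sum) elements are handled cleanly --- a slightly more careful packaging of the same idea, and a genuine (if minor) improvement over the paper's phrasing.
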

	
	\begin{proof}
		(i) For all $r,s,t\in R$ and $g,h,f\in H$, we have 
		\begin{align*}
			&(r\#g)\left[ (s\# h) (t\#f) \right] = (r\#g) \left[ s(h_{(1)}\cdot t)\sigma(h_{(2)},f_{(1)})\#h_{(3)}f_{(2)} \right]\\
			&= r(g_{(1)} \cdot (s(h_{(1)}\cdot t)\sigma(h_{(2)},f_{(1)}))) \sigma(g_{(2)}, (h_{(3)}f_{(2)})_{(2)} ) \# g_{(3)}(h_{(3)}f_{(2)})_{(2)}  \\
			&=  r(g_{(1)} \cdot (s(h_{(1)}\cdot t)\sigma(h_{(2)},f_{(1)}))) \sigma(g_{(2)}, h_{(4)}f_{(3)} ) \# g_{(3)}h_{(4)}f_{(3)}\\
			&\overset{\mbox{\eqref{e11}}}{=} r(g_{(1)} \cdot s) ( g_{(2)} \cdot (h_{(1)}\cdot t)) (g_{(3)} \cdot \sigma(h_{(2)},f_{(1)})) \sigma(g_{(4)}, h_{(4)}f_{(3)} ) \# g_{(5)}h_{(4)}f_{(3)}\\
			&\overset{\mbox{\eqref{e81}}}{=}  r(g_{(1)} \cdot s) ( g_{(2)} \cdot (h_{(1)}\cdot t)) \sigma(g_{(3)},h_{(2)})\sigma(g_{(4)}h_{(3)},f_{(1)})  \# g_{(5)}h_{(4)}f_{(2)} \\
			&\overset{\mbox{\eqref{e82}}}{=} r(g_{(1)}\cdot s)\sigma(g_{(2)},h_{(1)}) ( g_{(3)}h_{(2)} \cdot t) \sigma(g_{(4)}h_{(3)},f_{(1)}) \# g_{(5)}h_{(4)}f_{(2)}\\
			&= r(g_{(1)}\cdot s)\sigma(g_{(2)},h_{(1)}) ( (g_{(3)}h_{(2)})_{(1)} \cdot t) \sigma((g_{(3)}h_{(2)})_{(2)},f_{(2)}) \# (g_{(3)}h_{(2)})_{(3)} f_{(2)}\\
			&= \left[ r(g_{(1)}\cdot s)\sigma(g_{(2)},h_{(1)})\#g_{(3)}h_{(2)} \right] (t\# f) =\left[ (r\# g)(s\#h) \right] (t\# f)
		\end{align*}
		and
		\begin{align*}
			(r\#g)(1\#1)&= r(g_{(1)}\cdot 1) \sigma(g_{(2)},{(1)}) \# g_{(3)} \overset{\mbox{\eqref{e82}}}{=} (g_1\cdot 1) \varepsilon(g_{(2)}) \# g_{(3)}\\
			&\overset{\mbox{\eqref{e11}}}{=} \varepsilon(g_{(1)}) \varepsilon(g_{(2)}) \# g_{(3)} = r \# \varepsilon(g_{(1)}g_{(2)})g_{(3)} = r \# g,\\
			(1\#1)(r\#g) &= 1(1\cdot r) \sigma(1,g_{(1)}) \# 1g_{(2)} \overset{\mbox{\eqref{e82}}}{=} r \varepsilon(g_{(1)}) \# g_{(2)} = r \# \varepsilon(g_{(1)})g_{(2)} = r\# g.
		\end{align*}
		Therefore, $R\#_\sigma H$ is an algebra.
		
		(ii) Since for all $r\in R$ and $h\in H$ we have
		\begin{align*}
			[(\operatorname{id}_{R\#_\sigma H}\otimes \Delta)\rho](r\# h) &= (\operatorname{id}_{R\#_\sigma H}\otimes \Delta)\left( (r\# h_{(1)})\otimes h_{(2)} \right) = (a\# h_{(1)}) \otimes h_{(2)} \otimes h_{(3)} \\
			& = (\rho\otimes \operatorname{id}_H)\left(  (r\# h_{(1)})\otimes h_{(2)} \right)=[(\rho\otimes \operatorname{id}_H)\rho](r\# h),\\
			[(\operatorname{id}_{R\#_\sigma H} \otimes \varepsilon)\rho](r\#h)&=(\operatorname{id}_{R\#_\sigma H} \otimes \varepsilon) \left( (r\# h_{(1)})\otimes h_{(2)} \right) = (r\# h_{(1)}) \otimes \varepsilon(h_{(2)})1 \\&= r\# h_{(1)}\varepsilon(h_{(2)}) \otimes 1= r\# h \otimes 1,
		\end{align*}
		it follows that $R\#_\sigma H$ is an $H$-module. Moreover, for all $r,s\in R$ and $g,h\in H$,
		\begin{align*}
			\rho((r\#g)(s\#h)) &= \rho\left( r(g_{(1)}\cdot s) \sigma(g_{(2)},h_{(1)}) \# g_{(3)}h_{(2)} \right) \\
			&= (r(g_{(1)}\cdot s) \sigma(g_{(2)},h_{(1)}) \# (g_{(3)}h_{(2)})_{(1)}) \otimes (g_{(3)}h_{(2)})_{(2)}\\
			&= (r(g_{(1)}\cdot s) \sigma(g_{(2)},h_{(1)}) \# g_{(3)}h_{(2)} \otimes g_{(4)}h_{(3)} \\
			&= (r\# g_{(1)})(s\# h_{(1)}) \otimes g_{(2)}h_{(2)},\\
			\rho(1\otimes 1) &= (1 \# 1) \otimes 1,
		\end{align*} 
		meaning that $R\#_\sigma H$ is an $H$-comodule algebra.
		
		(iii) We have
		\begin{equation*}
			(R\#_\sigma H)^{\operatorname{co}H} = \left\{ z\in R\#_\sigma H : \rho(z)=z\otimes 1 \right\}.
		\end{equation*} 
		Clearly, $R\cong R \#_\sigma 1 \subseteq (R\#_\sigma H)^{\operatorname{co}H}$. Reciprocally, if $r\#h\in(R\#_\sigma H)^{\operatorname{co}H}$, applying the map $\operatorname{id}_R\otimes\varepsilon\otimes \operatorname{id}_H$ to the equality $\rho(r\#h)=(r\#h)\otimes 1$ we get at the left-hand side
		\begin{align*}
			(\operatorname{id}_R\otimes\varepsilon\otimes \operatorname{id}_H)(\rho(r\#h)) &= 	(\operatorname{id}_R\otimes\varepsilon\otimes \operatorname{id}_H) \left( (r\# h_{(1)})\otimes h_{(2)} \right)\\
			&= (r \# \varepsilon(h_{(1)})1) \otimes h_{(2)} = (r\# 1)\otimes h,
		\end{align*} 
		while at the right-hand side
		\begin{equation*}
			(\operatorname{id}_R\otimes\varepsilon\otimes \operatorname{id}_H)((r\#h)\otimes 1) = (r \# \varepsilon(h)1) \otimes 1 = (r \# 1) \otimes \varepsilon(h)1.
		\end{equation*}
		Comparing, we get $h=\varepsilon(h)1 \in K$, and hence, $r\# h= r\varepsilon(h)\#1 \in R\#_\sigma 1 \cong R$. Thus $A^{\operatorname{co}H}\cong R$, as desired.
	\end{proof}
	
	Hopf crossed products are relevant for characterizing Hopf Galois extensions having the normal basis property (see Theorem~\ref{t8}).
	
	A particular case of Hopf crossed products is when the cocycle $\sigma$ is \emph{trivial}, that is, when $\sigma(g,h)=\varepsilon(g)\varepsilon(h)$ for all $g,h\in H$. In this case, we write $R\# H$ and the multiplication defined in \eqref{e83} is simply $(r\#g)(s\#h)= r(g_{(1)}\cdot s) \# g_{(2)}h$, for all $r,s\in R$ and $g,h\in H$.
	$R\# H$ is called the \emph{smash product} of $R$ and $H$.
	
	Under these conditions, Theorem~\ref{p10} can be restated as follows.
	
	\begin{theorem}
		Let $R$ be an $H$-module algebra. Then $A:=R\#H$ is a $H$-comodule algebra with structure map $\rho: A \rightarrow A \otimes H$ given by
		\begin{equation*}
			\rho(r\#h) = (r\# h_{(1)})\otimes h_{(2)}, \qquad \mbox{for all } r\in R \mbox{ and } h\in H.
		\end{equation*}
		Moreover, $A^{\operatorname{co}H} \cong R$.
	\end{theorem}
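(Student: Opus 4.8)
The plan is to deduce this statement as the special case of Theorem~\ref{p10} obtained when the cocycle $\sigma$ is trivial, thereby avoiding any repetition of the long computations already carried out there. First I would make precise the remark implicit in the surrounding text: that the smash product $R\# H$ coincides with the Hopf crossed product $R\#_\sigma H$ for the \emph{trivial} cocycle $\sigma(g,h):=\varepsilon(g)\varepsilon(h)1_R$. Substituting this $\sigma$ into the multiplication \eqref{e83} turns $\sigma(g_{(2)},h_{(1)})$ into the scalar $\varepsilon(g_{(2)})\varepsilon(h_{(1)})1_R$, and absorbing the two counits by the counit axiom recovers exactly $(r\#g)(s\#h)=r(g_{(1)}\cdot s)\# g_{(2)}h$, the announced smash-product multiplication.

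Next I would verify that the trivial $\sigma$ satisfies the two hypotheses of Theorem~\ref{p10}. For the Hopf $2$-cocycle condition \eqref{e81}, both sides reduce to $\varepsilon(g)\varepsilon(h)\varepsilon(k)1_R$: on the left one uses the module-algebra identity $g_{(1)}\cdot 1_R=\varepsilon(g_{(1)})1_R$ from \eqref{e11} to evaluate $g_{(1)}\cdot\sigma(h_{(1)},k_{(1)})$ and then multiplies out the remaining counits; on the right one applies $\varepsilon$ multiplicatively to $\sigma(g_{(1)},h_{(1)})$ and $\sigma(g_{(2)}h_{(2)},k)$ together with $\varepsilon(g_{(2)}h_{(2)})=\varepsilon(g_{(2)})\varepsilon(h_{(2)})$. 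The normalization $\sigma(g,1)=\sigma(1,g)=\varepsilon(g)1_R$ is immediate. For the Hopf twisted module condition \eqref{e82} I would take $\sigma^{-1}$ to be the trivial cocycle again, so that the right-hand side becomes $\varepsilon(g_{(1)})\varepsilon(h_{(1)})(g_{(2)}h_{(2)}\cdot r)\varepsilon(g_{(3)})\varepsilon(h_{(3)})1_R$; collapsing the counits via $\varepsilon(g_{(1)})g_{(2)}\varepsilon(g_{(3)})=g$ and the analogue for $h$, and then using $(gh)\cdot r=g\cdot(h\cdot r)$ since $R$ is a left $H$-module, leaves precisely $g\cdot(h\cdot r)$, the left-hand side.

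With both conditions checked, Theorem~\ref{p10} applies verbatim: part (i) gives that $A=R\#H$ is an algebra; the structure map exhibited in part (ii), namely $\rho(r\#h)=(r\#h_{(1)})\otimes h_{(2)}$, does not involve $\sigma$ at all, so it carries over unchanged and realizes $A$ as a right $H$-comodule algebra; and part (iii) yields $A^{\operatorname{co}H}\cong R$. I expect there to be essentially no obstacle here, the deduction being routine once the reduction to $\sigma$ trivial is in place; the only point requiring care is the treatment of $\sigma^{-1}$ in \eqref{e82}, where one should confirm that the trivial cocycle is genuinely convolution-invertible with itself as inverse (indeed $(\sigma*\sigma)(g\otimes h)=\varepsilon(g_{(1)})\varepsilon(h_{(1)})\varepsilon(g_{(2)})\varepsilon(h_{(2)})1_R=\varepsilon(g)\varepsilon(h)1_R$) rather than merely setting $\sigma^{-1}=\sigma$ by fiat.
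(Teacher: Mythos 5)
Your proposal is correct and follows exactly the paper's own route: the paper presents this theorem as the restatement of Theorem~\ref{p10} for the trivial cocycle $\sigma(g,h)=\varepsilon(g)\varepsilon(h)$, offering no separate proof. Your verification that the trivial cocycle satisfies \eqref{e81}--\eqref{e82}, is its own convolution inverse, and reduces the multiplication \eqref{e83} to the smash product simply makes explicit the routine checks the paper leaves implicit.
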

	
	\begin{corollary}[e.g. {\cite[Example 6.4.3]{DNR}}]
		Let $R$ be an $H$-module algebra. Then $R \subset R\# H$ is an $H$-Galois extension.
	\end{corollary}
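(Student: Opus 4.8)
The plan is to verify Definition~\ref{d7} directly by exhibiting an explicit two-sided inverse for the Galois map. By the preceding theorem, $A := R\#H$ is a right $H$-comodule algebra with structure map $\rho(r\#h) = (r\#h_{(1)})\otimes h_{(2)}$ and $A^{\operatorname{co}H}\cong R$, so the only thing left to establish is that
\[ \beta: A\otimes_R A \to A\otimes H, \qquad \beta(a\otimes b) = ab_{(0)}\otimes b_{(1)}, \]
is bijective. Expanding on generators via the smash product multiplication $(r\#g)(s\#h)=r(g_{(1)}\cdot s)\#g_{(2)}h$, one gets $\beta\bigl((r\#g)\otimes_R(s\#h)\bigr) = r(g_{(1)}\cdot s)\#g_{(2)}h_{(1)}\otimes h_{(2)}$.

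Two elementary facts will drive the construction. First, for the trivial cocycle the map $h\mapsto 1\#h$ is an algebra morphism (the product of $1\#g$ and $1\#h$ is $1\#gh$), and $r\#g=(r\#1)(1\#g)$, so $R$ and $H$ sit inside $A$ as subalgebras that generate it. Second, the antipode $S$ of $H$ is at our disposal. I would therefore define a candidate inverse $\gamma: A\otimes H\to A\otimes_R A$ by
\[ \gamma\bigl((r\#h)\otimes k\bigr) := (r\#h\,S(k_{(1)}))\otimes_R(1\#k_{(2)}), \]
modeled on the inverse of the Galois map for $H$ itself (Proposition~\ref{ex13}), here corrected by the embedding $h\mapsto 1\#h$.

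To check $\beta\gamma=\operatorname{id}$ I would compute directly that $\beta\bigl((r\#h\,S(k_{(1)}))\otimes_R(1\#k_{(2)})\bigr) = r\#h\,S(k_{(1)})k_{(2)}\otimes k_{(3)}$, where the product collapses because $x\cdot 1_R=\varepsilon(x)1_R$, and then use the antipode identity $S(k_{(1)})k_{(2)}\otimes k_{(3)} = 1\otimes k$ (which is $S(x_{(1)})x_{(2)}=\varepsilon(x)1$ applied to the first two legs of $\Delta_2$) to recover $(r\#h)\otimes k$. For $\gamma\beta=\operatorname{id}$ I would feed the expression for $\beta$ into $\gamma$ and invoke the dual identity $h_{(1)}S(h_{(2)})\otimes h_{(3)}=1\otimes h$, which reduces $\gamma\beta\bigl((r\#g)\otimes_R(s\#h)\bigr)$ to $\bigl(r(g_{(1)}\cdot s)\#g_{(2)}\bigr)\otimes_R(1\#h)$.

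The final and most delicate step is to identify this last expression with $(r\#g)\otimes_R(s\#h)$. Here the key is that $r(g_{(1)}\cdot s)\#g_{(2)}=(r\#g)(s\#1)$ with $s\#1\in R=A^{\operatorname{co}H}$, so $R$-balancedness of $\otimes_R$ permits sliding $s\#1$ across the tensor to obtain $(r\#g)\otimes_R\bigl((s\#1)(1\#h)\bigr)=(r\#g)\otimes_R(s\#h)$. I expect the main obstacle to be purely bookkeeping: keeping the Heyneman--Sweedler indices consistent through the triple coproducts and confirming that $\gamma$ is well defined into the balanced tensor product (and that $\beta$ descends from $A\otimes A$ to $A\otimes_R A$, which holds since $\rho(c)=c\otimes1$ for $c\in R$). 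Beyond the two antipode identities and the recognition that $R$ is precisely the coinvariant subalgebra over which we tensor, no genuinely hard idea is required.
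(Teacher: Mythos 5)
Your proof is correct and follows essentially the same route as the paper's: you use the identical inverse $\beta^{-1}\bigl((r\#h)\otimes k\bigr)=(r\#h\,S(k_{(1)}))\otimes_R(1\#k_{(2)})$, the same antipode identities, and the same $R$-balancedness observation, which the paper simply applies at the start (reducing to tensors of the form $(r\#h)\otimes_R(1\#g)$) rather than at the end of the computation as you do.
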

	
	\begin{proof}
		We have $\beta: (R\# H) \otimes_R (R\#H) \rightarrow (R\#H) \otimes H $ defined as $\beta(z\otimes w)=(z\otimes 1)\rho(w)$, for all $z,w\in R\# H$. But since the first tensor product is taken over $R\# 1 \cong R$, for all $r,s\in A$ and $h,g\in H$ we have $(r\# h)(s\# 1)\otimes (1\# g) = (r\# h) \otimes (s\# g)$. Thus, it is enough to study $\beta$ on elements of the form $(r\#h) \otimes (1\# g)$. That is,
		\begin{align*}
			\beta((r\#h) \otimes (1\# g)) &= ((r\# h)\otimes 1) \rho(1\# g) = ((r\# h)\otimes 1)\left( (1\# g_{(1)}) \otimes g_{(2)} \right)\\
			&= ((r\# h)(1\# g_{(1)})) \otimes g_{(2)} = (r(h_{(1)}\cdot 1)\# h_{(2)}g_{(1)}) \otimes g_{(2)}\\
			& =(r \varepsilon(h_{(1)}) \# h_{(2)}g_{(1)}) \otimes g_{(2)} = (r\# hg_{(1)}) \otimes g_{(2)}.
		\end{align*}
		As in the proof of Proposition~\ref{ex13}, $\beta^{-1}:(R\# H) \otimes H \rightarrow (R\# H) \otimes_R (R\# H)$ is given by
		\begin{equation*}
			\beta^{-1}( (r\# h) \otimes g) = (r\#hS(g_{(1)})) \otimes (1\#g_{(2)}), \qquad \mbox{for all } r\in R \mbox{ and } h,g\in H.
		\end{equation*}
		Indeed, if $r\in R$ and $h,g\in H$, we have
		\begin{align*}
			\beta^{-1}(\beta((r\#h) \otimes (1\# g))) &= \beta^{-1}\left( (r\# hg_{(1)}) \otimes g_{(2)} \right) \\&= (r\# hg_{(1)}S((g_{(2)})_{(1)}) ) \otimes (1\#(g_{(2)})_{(2)})\\
			&= (r\# hg_{(1)}S(g_{(2)}) ) \otimes (1\#g_{(3)}) = (r\# h\varepsilon(g_{(1)}))  \otimes (1\#g_{(2)})\\
			&= (r\#h)\otimes (1\# g),\\
			\beta(\beta^{-1}( (r\# h) \otimes g)) &= \beta\left( (r\#hS(g_{(1)})) \otimes (1\#g_{(2)}) \right) \\&= (r\#  hS(g_{(1)})(g_{(2)})_{(1)})\otimes (g_{(2)})_{(2)}\\
			&= (r\#  hS(g_{(1)})g_{(2)})\otimes g_{(3)} = (r\# h\varepsilon(g_{(1)})) \otimes g_{(2)}\\
			&= (r\# h) \otimes g.
		\end{align*}
		Therefore, $R \subset R\# H$ is a $H$-Galois extension.
	\end{proof}
	
	Smash products are relevant for Hopf Galois extensions since, when $H$ is finite dimensional, a generator of $\operatorname{Mod}_H$ describes all $H^*$-Galois extensions (see Theorem~\ref{t7}).
	
	\subsubsection{Groups acting on sets}\label{hge7}
	
	The Galois map $\beta$ can be seen as the dual of a natural map arising whenever a group acts on a set, as this example shows. Recall that if $G$ is a (multiplicative) group and $X$ is a non-empty set, a function $\mu: X \times G \rightarrow X$ is called a \emph{(right) group action of $G$ on $X$}, which we denote by $(x,g)\mapsto x\cdot g$, if
	\begin{equation*}
		x\cdot 1 = x \qquad \mbox{and} \qquad x\cdot (gh)=(x\cdot g)\cdot h, \qquad \mbox{for all } x\in X \mbox{ and } g,h\in G.
	\end{equation*}
	The action is said to be \emph{free} if for a given $g\in G$ such that $x\cdot g=x$, for some $x\in X$, it follows that $g= 1$; in other words, no element in $G$, besides 1, has fixed points. It is not hard to check that an action is free if and only if, given $g,h\in G$, the existence of an $x \in X$ such that $x\cdot g = x\cdot h$ implies $g = h$.
	
	Also, recall that for any $x\in X$, its \emph{orbit} is defined by $x\cdot G := \{ x\cdot g : g\in G \}$. The set of all orbits of $X$ under the action of $G$ is denoted by $X/G$ and is called the \emph{quotient} of the action.
	
	Consider the map $\alpha: X \times G \rightarrow X\times X$ given by $(x,g) \mapsto (x,x\cdot g)$. Notice that $\alpha$ is injective if only if the action is free. Moreover, the image of this map can bee seen as a pull-back. Indeed,
	\begin{align*}
		\operatorname{Im}(\alpha)&=\{ (x,y)\in X\times X : y=x\cdot g \mbox{ for some } g\in G \}\\
		&= \{ (x,y)\in X\times X : x\cdot G = y\cdot G \}\\
		&= X \times_{X/G} X,
	\end{align*}
	called \emph{the fiber product} of $X$ with itself over $X/G$ via the canonical map $x\mapsto x\cdot G$.
	
	We want to dualize this scenario; for simplicity assume that $X$ and $G$ are finite. We denote by $A:=\Bbbk^X$ the algebra of functions from $X$ to $\Bbbk$ endowed with the pointwise addition and multiplication. The unit of this algebra is the map $1_A: X \rightarrow \Bbbk$ given by $x \mapsto 1_\Bbbk$. We say that $a\in A$ is \emph{constant on $G$-orbits} if $a(x\cdot g)=a(x)$, for all $x\in X$ and $g\in G$. The set of functions constant on $G$-orbits is denoted by $\Bbbk^{X/G}$. Finally, recall that $H:=(\Bbbk G)^*=\Bbbk^G$ is the Hopf algebra of functions from $G$ to $\Bbbk$ (see Example~\ref{ex10}). Hence, we have the following.
	
	\begin{lemma}[e.g. {\cite[Example 8.1.9]{Mon1}}]\label{l21}
		Let $X$ be a finite non-empty set, $G$ a finite group and $\mu: X \times G \rightarrow X$ a right group action. If $A=\Bbbk^X$ and $H=\Bbbk^G$, then:
		\begin{enumerate}[label=\normalfont(\roman*)]
			\item The right $G$-action on $X$ induces a left $G$-action on $A$, given by $(g\cdot a)(x):=a(x\cdot g)$,
			\item $A$ is a right $H$-comodule algebra with induced structure map $\mu^* : A\rightarrow A\otimes H$. Moreover, $A^{\operatorname{co}H}=\Bbbk^{X/G}$.
		\end{enumerate}	 
	\end{lemma}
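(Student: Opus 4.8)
The plan is to treat the two parts separately, with part~(i) a direct check and part~(ii) an exercise in dualizing the structural maps of the group action. For part~(i), I would verify that the assignment $(g\cdot a)(x):=a(x\cdot g)$ satisfies the axioms of a left action by algebra automorphisms, so that Example~\ref{ex8} applies and $A$ becomes a left $\Bbbk G$-module algebra. The unit axiom follows from $x\cdot 1=x$, and the compatibility $g\cdot(h\cdot a)=(gh)\cdot a$ follows from the associativity $x\cdot(gh)=(x\cdot g)\cdot h$ of the right action (the order reversal is exactly what turns a right action on $X$ into a left action on $A$). That each $g\cdot(-)$ is an algebra map is immediate from the pointwise operations of $\Bbbk^X$, since $(g\cdot(ab))(x)=(ab)(x\cdot g)=a(x\cdot g)b(x\cdot g)$ and $(g\cdot 1_A)(x)=1_\Bbbk$.

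For part~(ii), the central point is to identify the structure map. Since $X$ and $G$ are finite, one has the canonical algebra isomorphism $\Bbbk^{X\times G}\cong\Bbbk^X\otimes\Bbbk^G=A\otimes H$ under which the indicator $p_{(x,g)}$ corresponds to $p_x\otimes p_g$; this is the step where finiteness is genuinely used (compare the caveat in Example~\ref{ex5}). Under this identification the pullback $\mu^*\colon\Bbbk^X\to\Bbbk^{X\times G}$ becomes a map $\rho:=\mu^*\colon A\to A\otimes H$ determined by $(\mu^*a)(x,g)=a(x\cdot g)$, that is, $\rho(a)=\sum_{x\in X,\,g\in G}a(x\cdot g)\,p_x\otimes p_g$.

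The verification of the comodule-algebra axioms I would then carry out functorially, reading each axiom as the dual of a set-theoretic identity. Writing $m\colon G\times G\to G$ for the group multiplication and $e\colon\{\ast\}\to G$ for the inclusion of the identity, Example~\ref{ex10} identifies $\Delta_H$ with $m^*$ (and the counit $\varepsilon_H$ with evaluation at $1_G$, i.e.\ $e^*$). Coassociativity $(\operatorname{id}_A\otimes\Delta)\rho=(\rho\otimes\operatorname{id}_H)\rho$ then dualizes the associativity identity $\mu\circ(\operatorname{id}_X\times m)=\mu\circ(\mu\times\operatorname{id}_G)$ of the action, while the counit law $(\operatorname{id}_A\otimes\varepsilon)\rho=\operatorname{id}_A$ dualizes $\mu\circ(\operatorname{id}_X\times e)=\operatorname{id}_X$, which is precisely $x\cdot 1=x$. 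That $\rho$ is an algebra map (condition~\ref{CA2}) is automatic, because pulling back functions along any map of sets is an algebra homomorphism for the pointwise operations and sends the constant function $1$ to $1$; concretely $\rho(ab)=\rho(a)\rho(b)$ and $\rho(1_A)=1_A\otimes 1_H$. Hence $A$ is a right $H$-comodule algebra.

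It remains to compute the coinvariants. Unwinding $\rho(a)=a\otimes 1_H$ pointwise gives $a(x\cdot g)=(a\otimes 1_H)(x,g)=a(x)$ for all $x\in X$ and $g\in G$, which says exactly that $a$ is constant on $G$-orbits; thus $A^{\operatorname{co}H}=\Bbbk^{X/G}$. I expect the only real obstacle to be bookkeeping: keeping the finiteness-dependent identification $\Bbbk^{X\times G}\cong A\otimes H$ consistent throughout, and matching the Hopf structure of $H=\Bbbk^G$ (its comultiplication dual to $m$ and counit dual to $e$) against the correct action maps so that the dualized diagrams line up. Once that dictionary is fixed, every axiom becomes a transparent dualization of a group-action identity, and the explicit expansion $\rho(a)=\sum_{x,g}a(x\cdot g)\,p_x\otimes p_g$ can serve as a cross-check if desired.
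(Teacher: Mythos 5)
Your proposal is correct and follows essentially the same route as the paper: a direct check of the action axioms for part (i), and for part (ii) the identification $\Bbbk^{X\times G}\cong A\otimes H$ (valid by finiteness) together with $\rho=\mu^*$, after which the comodule-algebra axioms and the computation $A^{\operatorname{co}H}=\Bbbk^{X/G}$ follow by dualizing the group-action identities. You merely spell out the verifications the paper labels ``straightforward'' and ``evident,'' and you are in fact more careful than the paper on one point, namely that the isomorphism $\Bbbk^{X\times G}\cong\Bbbk^X\otimes\Bbbk^G$ genuinely requires finiteness.
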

	
	\begin{proof}
		(i) For every $a\in A$, $g,h\in G$ and $x\in X$ we have
		\begin{gather*}
			(1\cdot a)(x)=a(x\cdot 1)=a(x),\\
			((gh)\cdot a)(x) = a(x\cdot (gh))=a((x\cdot g)\cdot h) = (h\cdot a)(x\cdot g)=(g\cdot (h\cdot a))(x).
		\end{gather*}
		
		(ii) Since for any non-empty sets $U$ and $V$, $\Bbbk^{U\times V}\cong \Bbbk^U \otimes \Bbbk^V$, we have $A\otimes H \cong \Bbbk^{X\otimes G}$. Thus, we can define $(\mu^*(a))(x,g)=a\mu(x,g)=a(x\cdot g)$. The verification of $A$ being a right $H$-comodule algebra is straightforward. It is evident that $A^{\operatorname{co}H}=\Bbbk^{X/G}$.
	\end{proof}
	
	The map $\alpha$ defined above dualizes to $\alpha^*: A \otimes_B A \rightarrow A \otimes H$. By transposition, it is given by
	\begin{equation}\label{e85}
		\alpha^*(a\otimes b)=(a\otimes 1)\mu^*(b), \qquad \mbox{for all } a,b\in A,
	\end{equation}
	that is, $\alpha^*=\beta$, the Galois map. By remarks in previous paragraphs, the freeness of the action is equivalent to $\Bbbk^{X/G} \subset \Bbbk^X$ being a $\Bbbk^G$-Galois extension. In other words, we have the following.
	
	\begin{theorem}
		Let $X$ be a finite non-empty set, $G$ a finite group and $\mu: X \times G \rightarrow X$ a right group action. The Galois map $\beta=\alpha^*$ given by \eqref{e85} is bijective if and only if $\alpha: X\times G \rightarrow X \times_{X/G} X$ is bijective, and this holds if and only if the $G$-action is free.
	\end{theorem}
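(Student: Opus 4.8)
The plan is to realize the whole statement as the contravariant dual of an elementary fact about finite sets: applying the functor $\Bbbk^{(-)}$ to a map of finite sets interchanges injectivity and surjectivity. First I would record this duality explicitly. For a map $\phi\colon S\to T$ of finite sets, the transpose $\phi^{*}\colon \Bbbk^{T}\to\Bbbk^{S}$, $f\mapsto f\phi$, satisfies $\phi^{*}(p_{t})=\sum_{\phi(s)=t}p_{s}$ in terms of the delta-function bases $\{p_{s}\}$ of $\Bbbk^{S}$ and $\{p_{t}\}$ of $\Bbbk^{T}$. Reading off these images immediately gives that $\phi$ is surjective if and only if $\phi^{*}$ is injective, and that $\phi$ is injective if and only if $\phi^{*}$ is surjective; consequently $\phi$ is bijective if and only if $\phi^{*}$ is bijective.

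Next I would set $B:=A^{\operatorname{co}H}=\Bbbk^{X/G}$ and identify the domain of $\beta$ with a function space on the fiber product. Since $X$ and $G$ are finite, decomposing $\Bbbk^{X}=\bigoplus_{O\in X/G}\Bbbk^{O}$ over the orbits (these are exactly the fibres of the canonical projection $X\to X/G$) makes $\Bbbk^{X}$ a module over $B=\Bbbk^{X/G}=\prod_{O}\Bbbk$, and the orthogonal idempotents of $B$ split the tensor product, yielding $A\otimes_{B}A=\Bbbk^{X}\otimes_{\Bbbk^{X/G}}\Bbbk^{X}\cong\bigoplus_{O}\Bbbk^{O}\otimes_{\Bbbk}\Bbbk^{O}\cong\Bbbk^{X\times_{X/G}X}$. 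Under this identification, together with $A\otimes H\cong\Bbbk^{X\times G}$, equation~\eqref{e85} says precisely that $\beta$ is the transpose $\alpha^{*}$ of the map $\alpha\colon X\times G\to X\times_{X/G}X$.

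With these identifications the three equivalences follow formally. From the computation of $\operatorname{Im}(\alpha)$ preceding the statement, $\alpha$ regarded as a map into $X\times_{X/G}X$ is surjective by construction, so $\alpha$ is bijective if and only if it is injective; and it was already observed that injectivity of $\alpha$ is equivalent to freeness of the $G$-action. On the other hand, the duality of the first paragraph applied to $\alpha$ gives that $\beta=\alpha^{*}$ is bijective if and only if $\alpha$ is bijective. Chaining these implications yields $\beta$ bijective $\Leftrightarrow$ $\alpha$ bijective $\Leftrightarrow$ the $G$-action is free.

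The main obstacle I anticipate is the middle step: justifying carefully that the tensor product $A\otimes_{B}A$, taken over the subalgebra $B$ of coinvariants rather than over $\Bbbk$, is identified with functions on the fiber product $X\times_{X/G}X$, and not merely with $\Bbbk^{X\times X}$. Everything else is either the tautological surjectivity of $\alpha$ onto its image, the already-noted equivalence between injectivity of $\alpha$ and freeness, or the purely linear-algebraic duality, which is routine for finite sets.
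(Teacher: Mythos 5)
Your proof is correct and follows essentially the same route as the paper, which presents this theorem as the direct dualization of the set-level discussion preceding it: $\alpha$ is surjective onto the fiber product by construction, its injectivity is equivalent to freeness, and transposition converts bijectivity of $\alpha$ into bijectivity of $\beta=\alpha^*$. The only difference is one of detail, not of method: the paper leaves implicit the two technical points you prove explicitly, namely the finite-set duality (transposition interchanges injectivity and surjectivity) and the identification $A\otimes_B A \cong \Bbbk^{X\times_{X/G}X}$ obtained by splitting the tensor product with the orthogonal idempotents of $B=\Bbbk^{X/G}$.
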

	
	\subsubsection{Algebraic group schemes}\label{hge8}
	
	Recall that a $\Bbbk$-algebra $A$ is called \emph{affine} if it is finitely generated as $\Bbbk$-algebra, i.e., there exist finitely many elements $a_1,\ldots,a_n \in A$ such that every element of $A$ can be expressed as a polynomial in $a_1,\ldots,a_n$ with coefficients in $\Bbbk$. This definition we use is the one given in \cite[Definition 4.2.3]{Mon1}. However, nowadays in most contexts affine algebras are also required to be commutative and reduced (i.e., without nilpotent elements).
	
	We say that $X$ is an \emph{affine scheme} if $X=\operatorname{Spec}(A)$ for a commutative affine $\Bbbk$-algebra $A$. Similarly, $G$ is said to be an \emph{affine algebraic group scheme} if $G=\operatorname{Spec}(H)$ for some commutative affine $\Bbbk$-Hopf algebra $H$. As in Lemma~\ref{l21}, any action $\mu: X \times G \rightarrow X$ is determined by a coaction $\rho=\mu^*: A \rightarrow A\otimes H$.
	
	\begin{lemma}[e.g. {\cite[Example 2.12]{Mon2}}]
		Let $X=\operatorname{Spec}(A)$ be an affine scheme, $G=\operatorname{Spec}(H)$ an affine algebraic group scheme and $\rho: A \rightarrow A\otimes H$ a coaction. The map $\alpha: X \times G \rightarrow X\times X$ given by $\alpha(x,g)=(x,x\cdot g)$, for all $x\in X$ and $g\in G$, is a closed embedding if and only if $\alpha^*: A\otimes A \rightarrow A \otimes H$ given by $\alpha^*(a,b)=(a\otimes 1)\rho(b)$ is surjective. Under these conditions, we say that the coaction $\rho$ is \emph{free}.
	\end{lemma}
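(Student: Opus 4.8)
The plan is to translate the geometric statement into an algebraic one by means of the contravariant $\operatorname{Spec}$ functor, which furnishes an anti-equivalence between commutative affine $\Bbbk$-algebras and affine schemes. First I would recall that in this dictionary the tensor product of algebras corresponds to the product of schemes: since $A$ and $H$ are commutative affine $\Bbbk$-algebras, one has $X \times G = \operatorname{Spec}(A \otimes H)$ and $X \times X = \operatorname{Spec}(A \otimes A)$, because $-\otimes-$ is the coproduct in the category of commutative $\Bbbk$-algebras and $\operatorname{Spec}$ sends coproducts to products. Consequently, morphisms of affine schemes $X \times G \rightarrow X \times X$ correspond bijectively to $\Bbbk$-algebra homomorphisms $A \otimes A \rightarrow A \otimes H$, and $\alpha$ is realized as $\operatorname{Spec}(\alpha^*)$ for a unique such $\alpha^*$.

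Next I would identify $\alpha^*$ explicitly. Writing $\alpha = (\operatorname{pr}_X, \mu)$, where $\operatorname{pr}_X \colon X \times G \rightarrow X$ is the first projection and $\mu \colon X \times G \rightarrow X$ is the action underlying $\rho$, the universal property of the product $X \times X$ shows that $\alpha^*$ is the algebra map determined on the two tensor factors of $A \otimes A$ by the duals of these two components. The projection $\operatorname{pr}_X$ dualizes to $a \mapsto a \otimes 1$, while $\mu$ dualizes precisely to the coaction $\rho$ by hypothesis. Since $A \otimes A$ is generated as an algebra by the elements $a \otimes 1$ and $1 \otimes b$, and $\alpha^*$ is multiplicative, I obtain
\[
\alpha^*(a \otimes b) = \alpha^*\bigl((a \otimes 1)(1 \otimes b)\bigr) = (a \otimes 1)\,\rho(b),
\]
which is exactly the formula in the statement.

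Finally I would invoke the fundamental fact of affine scheme theory that a morphism $\operatorname{Spec}(\phi) \colon \operatorname{Spec}(B) \rightarrow \operatorname{Spec}(C)$ induced by a ring homomorphism $\phi \colon C \rightarrow B$ is a closed embedding (closed immersion) if and only if $\phi$ is surjective; this rests on the correspondence between closed subschemes of $\operatorname{Spec}(C)$ and ideals of $C$, a surjection $\phi \colon C \rightarrow B$ exhibiting $\operatorname{Spec}(B) \cong V(\ker \phi)$. Applying this with $C = A \otimes A$, $B = A \otimes H$ and $\phi = \alpha^*$ yields the desired equivalence: $\alpha$ is a closed embedding if and only if $\alpha^*$ is surjective.

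The routine parts --the product-to-tensor dictionary and the computation of $\alpha^*$-- are essentially bookkeeping. The step I expect to carry the real content is the general correspondence between closed immersions and surjective ring maps; here one must be careful that \emph{closed embedding} is understood in the scheme-theoretic sense, and that $A \otimes H$ and $A \otimes A$ are genuinely the coordinate rings of the relevant products, which uses that we work with affine --hence functorially well-behaved-- objects. Once this is granted, the equivalence follows at once.
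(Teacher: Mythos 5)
Your proposal is correct. Note that the paper itself gives no proof of this lemma at all: it is stated and attributed to Montgomery's survey (Example 2.12 of the cited reference), so there is no in-paper argument to compare against; your write-up supplies exactly the standard reasoning that the citation is standing in for. The three ingredients you use are all sound in this setting: the anti-equivalence $\operatorname{Spec}$ between commutative affine $\Bbbk$-algebras and affine schemes turns coproducts (tensor products) into products, which is legitimate here precisely because the paper's definitions require $A$ and $H$ to be \emph{commutative}; the identification $\alpha^* (a\otimes b) = (a\otimes 1)\rho(b)$ follows, as you say, from the universal property of $A\otimes A$ as a coproduct applied to the two components $\operatorname{pr}_X$ and $\mu$ of $\alpha$, together with multiplicativity of $\alpha^*$; and the equivalence ``closed immersion of affine schemes $\Leftrightarrow$ surjective ring map'' is the standard theorem identifying closed subschemes of $\operatorname{Spec}(C)$ with quotients $C/I$. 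You are also right to flag that last step as the one carrying the real content, and that ``closed embedding'' must be read scheme-theoretically (surjectivity on structure sheaves, not merely a homeomorphism onto a closed subset) for the equivalence to hold.
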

	
	However, in contrast with the previous example, the Galois map cannot be $\alpha^*$, since its domain is not $A\otimes_{A^{\operatorname{co}H}} A$.  Instead, we shall proceed differently by applying the $\operatorname{Spec}$ functor to the exact sequence
	\begin{equation*}
		\begin{tikzcd}
			A^{\operatorname{co}H} \arrow[r,hook] & A \arrow[r, yshift=0.7ex,"\rho"] \arrow[r, yshift=-0.7ex,"-\otimes 1"'] & A\otimes_{A^{\operatorname{co}H}} A,
		\end{tikzcd}
	\end{equation*}
	and getting an exact sequence of affine schemes
	\begin{equation*}
		\begin{tikzcd}
			X \times G \arrow[r, yshift=0.7ex,"\mu"] \arrow[r, yshift=-0.7ex,"\pi"'] & X \arrow[r]& \operatorname{Spec}(A^{\operatorname{co}H}),
		\end{tikzcd}
	\end{equation*}
	where $\pi$ is the projection on the first coordinate. $\operatorname{Spec}(A^{\operatorname{co}H})$ is called the \emph{affine quotient} of $X$ by $G$.
	
	In general, $Y:=\operatorname{Spec}(A^{\operatorname{co}H})$ does not necessarily coincide with $X/G$, the set of $G$-orbits on $X$. However, if the coaction is free, it will happen that $Y=X/G$. Thus the map $X \times G \rightarrow X \times_Y X$ is an isomorphism and $X\rightarrow Y$ is faithfully flat. In an algebraic language, we have the following result.
	
	\begin{theorem}[e.g. {\cite[Example 2.12]{Mon2}}]
		Let $X=\operatorname{Spec}(A)$ be an affine scheme, $G=\operatorname{Spec}(H)$ an affine algebraic group scheme and $\rho: A \rightarrow A\otimes H$ a free coaction. Then
		\begin{enumerate}[label=\normalfont(\roman*)]
			\item $\beta: A \otimes_{A^{\operatorname{co}H}} A \rightarrow A \otimes H$ is bijective and so $B \subset A$ is $H$-Galois,
			\item $A$ is a faithfully flat $A^{\operatorname{co}H}$-module.
		\end{enumerate}
	\end{theorem}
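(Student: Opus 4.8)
The plan is to separate the surjectivity of the Galois map from the deeper flatness statement. First I would note that $\beta: A\otimes_{A^{\operatorname{co}H}}A\to A\otimes H$ is simply the factorization of $\alpha^{*}: A\otimes A\to A\otimes H$ through the canonical surjection $q: A\otimes A\to A\otimes_{A^{\operatorname{co}H}}A$, since both maps send $a\otimes b$ to $(a\otimes 1)\rho(b)$. By the preceding lemma, freeness of the coaction means exactly that $\alpha^{*}$ is surjective; as $\alpha^{*}=\beta q$ with $q$ surjective, the surjectivity of $\beta$ follows at once. This settles the easy half of~(i).

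For the injectivity of $\beta$ and for part~(ii) I would pass to the geometric picture and read the statement as the assertion that a free action of an affine group scheme makes $X\to Y:=\operatorname{Spec}(A^{\operatorname{co}H})$ into a $G$-torsor. The action and first projection $\mu,\pi: X\times G\to X$ form a groupoid over $X$, and freeness says that it is an equivalence relation whose image in $X\times X$ is the fiber product $X\times_{Y}X$. The key structural input is that $H$, being a Hopf algebra over the field $\Bbbk$, is free and hence faithfully flat as a $\Bbbk$-module; consequently $A\otimes H$ is faithfully flat over $A$ and the source map $X\times G\to X$ is faithfully flat. Faithfully flat descent then makes this equivalence relation effective: the quotient is realized by $A^{\operatorname{co}H}$, the morphism $X\to Y$ is faithfully flat, and the canonical comparison $X\times G\to X\times_{Y}X$ is an isomorphism.

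Dualizing yields precisely the two claims. The isomorphism $X\times G\to X\times_{Y}X$ is $\operatorname{Spec}$ applied to $\beta$, so $\beta$ is bijective and $A^{\operatorname{co}H}\subset A$ is $H$-Galois, completing~(i); and faithful flatness of $X\to Y$ is exactly the assertion that $A$ is a faithfully flat $A^{\operatorname{co}H}$-module, which is~(ii).

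I expect the main obstacle to be the faithful flatness of $X\to Y$ (equivalently, the injectivity of $\beta$), since surjectivity is purely formal. The difficulty is that one cannot verify flatness of $A$ over $B:=A^{\operatorname{co}H}$ by the base change $B\to A$ without circularity; the honest route is a homogeneity argument, in which generic flatness of $A$ over $B$ is propagated over all of $Y$ using that the fibers of $X\to Y$ are mutually isomorphic $G$-torsors, combined with the effectivity of descent for the groupoid $\mu,\pi: X\times G\to X$. Here the flatness of $G$ over $\Bbbk$ and the freeness hypothesis carry the essential weight, and this is the input one must borrow from descent theory (as in \cite{Mon2}) rather than obtain by direct computation.
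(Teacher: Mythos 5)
The paper itself offers no proof of this theorem: it is stated with a citation to \cite[Example 2.12]{Mon2}, and the paragraph preceding it (``if the coaction is free, it will happen that $Y=X/G$\dots and $X\rightarrow Y$ is faithfully flat'') is an informal gloss, not an argument. So your proposal must stand on its own. Your first step does, and it records more than the paper: since $\rho(c)=c\otimes 1$ for $c\in A^{\operatorname{co}H}$, the Galois map $\beta$ is well defined on $A\otimes_{A^{\operatorname{co}H}}A$ and $\alpha^{*}=\beta q$, where $q:A\otimes A\rightarrow A\otimes_{A^{\operatorname{co}H}}A$ is the canonical surjection; freeness is by definition surjectivity of $\alpha^{*}$, so $\beta$ is surjective. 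That half of (i) is correct and purely formal, exactly as you say.

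The remainder has a genuine gap, and it is not one your ``homogeneity argument'' can close. The sentence ``faithfully flat descent then makes this equivalence relation effective'' runs descent backwards: descent (Lemma~\ref{l6}, or any form of faithfully flat descent) \emph{presupposes} a morphism already known to be faithfully flat and compares modules over the base with modules equipped with descent data upstairs; it neither constructs the quotient, nor proves that $A$ is faithfully flat over $A^{\operatorname{co}H}$, nor shows that $X\times G\rightarrow X\times_{Y}X$ is an isomorphism. Effectivity of the orbit equivalence relation is precisely the assertion being proved, so the step is circular, as your closing paragraph in effect concedes. Worse, conclusion (ii) actually fails under the hypotheses as stated, so no formal repair exists. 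Take $\Bbbk=\mathbb{C}$, $X=SL_2$, and $G=\mathbb{G}_a$ the subgroup of upper unitriangular matrices acting by right translation, so that $A=\Bbbk[a,b,c,d]/(ad-bc-1)$, $H=\Bbbk[t]$, $\rho(a)=a\otimes 1$, $\rho(c)=c\otimes 1$, $\rho(b)=a\otimes t+b\otimes 1$, $\rho(d)=c\otimes t+d\otimes 1$. The image of $\alpha^{*}$ is the span of the elements $(x\otimes 1)\rho(y)$, which (everything being commutative) is the subalgebra generated by $A\otimes 1$ and $\rho(A)$; it contains $a\otimes t$ and $c\otimes t$, hence $(ad-bc)\otimes t=1\otimes t$, hence all of $A\otimes H$, so the coaction is free in the paper's sense. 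The coinvariants are $B=\Bbbk[a,c]$ (invariant functions descend to the orbit space $\mathbb{A}^{2}\setminus\{0\}$, whose ring of regular functions is the polynomial ring), yet $A/(a,c)A=0$, so $\operatorname{Spec}(A)\rightarrow\operatorname{Spec}(B)$ misses the origin and $A$ is flat but \emph{not} faithfully flat over $B$. (Consistently with your surjectivity argument, $\beta$ is even bijective here, because $SL_2\rightarrow\mathbb{A}^{2}\setminus\{0\}$ is a $\mathbb{G}_a$-torsor and both projections to $\operatorname{Spec}(B)$ factor through this open subscheme.) The moral is that the faithful-flatness half genuinely requires hypotheses beyond freeness --- finiteness or reductivity of $G$, or surjectivity of $X\rightarrow Y$ --- which are present in the original sources but are lost in the statement as reproduced; it therefore cannot be obtained by descent formalities, and any appeal to the literature must import those extra hypotheses explicitly.
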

	
	\subsubsection{Principal bundles}\label{hge9}
	
	In this example we discuss why, in noncommutative algebraic geometry, faithfully flat Hopf Galois extensions are considered a generalization of classical affine principal bundles. Our main reference for this example is the work of Brzezi{\'n}ski and Fairfax \cite{BF}. Let us first shortly recall some basic terminology related to topological bundles.
	
	\begin{definition}
		A \emph{bundle} is a triple $(E,\pi,M)$ where $E$ and $M$ are topological spaces an $\pi: E\rightarrow M$ is a continuous surjective map.
	\end{definition}
	
	Here $M$ is called the \emph{base space}, $E$ the \emph{total space} and $\pi$ the \emph{projection} of the bundle.	For each $m\in M$, the \emph{fiber} over $m$ is the topological space $E_m:=\pi^{-1}(m)$. A \emph{local section} of a bundle is a continuous map $s : U \rightarrow E$ with $\pi s = \operatorname{id}_M|_U$, where $U$ is an open subset of $M$. If each fiber of a bundle is endowed with a vector space structure such that the addition and scalar multiplication are continuous, we call it a \emph{vector bundle}.
	
	When the fibers of a bundle are all homeomorphic to a common space $F$, it is known as a \emph{fiber bundle}. An intuitive example of fiber bundle is the M\"obius strip, since it has a circle that runs lengthwise through the center of the strip as a base $M$ and a line segment running vertically for the fiber $F$. The line segments are, in fact, copies of the real line, so $F=\mathbb{R}$.
	
	\begin{remark}
		Commonly, in the definition of fiber bundle a condition of local triviality is required for $\pi$, which means that, for each $x\in E$, there is a open neighborhood $U_x \subset M$ and a homeomorphism $\phi_x: \pi^{-1}(U_x)\rightarrow U_x\times F$ such that the following diagram commutes:
		\begin{equation*}
			\begin{tikzcd}
				\pi^{-1}(U_x) \arrow[r,"\phi_x"] \arrow[d,"\pi"'] & U_x\times F \arrow[dl,"p_1"]\\
				U_x
			\end{tikzcd}
		\end{equation*}
		Here $p_1$ denotes the first projection.
	\end{remark}
	
	In the most general sense, a \emph{bundle} over an object $M$ in a category $\mathcal{C}$ is a morphism $\pi:E\rightarrow M$ in $\mathcal{C}$. For $m:1\rightarrow M$, a generalized element of $M$, the \emph{fiber} $E_m$ is defined as the pullback of $E$ along $m$. Moreover, given an object $F$ in $\mathcal{C}$, $p:E\rightarrow M$ is called a \emph{fiber bundle} with standard fiber $F$, if given any $m:1\rightarrow M$, $E_m$ is isomorphic to $F$. Locally trivial fiber bundles can be defined over sites (see e.g. \cite[p. 20]{Hu}).
	
	Let $X$ be a topological space and $G$ a topological group. Suppose there is a right action $\mu: X\times G\rightarrow X$ and write $\mu(x,g)=x\cdot g$. We had seen in previous examples that, even without structure, $G$ acts freely on $X$ if and only if the map $\alpha: X\times G \rightarrow X\times X$, given by $(x,g)\mapsto (x,x\cdot g)$, is injective if and only if $\alpha: X\times G \rightarrow X\times_{X/G} X$ is bijective. Since $(X,\pi,X/G)$ is a bundle, where $\pi$ is the natural projection, our goal is to characterize the freeness of the action in this topological context.
	
	Recall that a continuous map $f: Y\rightarrow W$ is \emph{proper} if the map $f\times \operatorname{id}_Z: Y \times Z \rightarrow W\times Z$ is closed, for any topological space $Z$. The action $\mu$ is said to be \emph{proper} if it is continuous and $\alpha: X\times G \rightarrow X\times X$ is proper.
	
	\begin{definition}\label{d17}
		A \emph{$G$-principal bundle} is a quadruple $(X,\pi,M,G)$ such that
		\begin{enumerate}[label=\normalfont(\roman*)]
			\item $(X,\pi,M)$ is a bundle and $G$ is a topological group acting continuously on $X$ from the right via $\mu: X\times G\rightarrow X$,
			\item $\mu$ is principal (i.e., free and proper),
			\item $\pi(x)=\pi(y)$ if and only if there exists $g\in G$ such that $y=x\cdot g$,
			\item[(iv)] The induced map $X/G\rightarrow M$ is a homeomorphism.
		\end{enumerate}
	\end{definition}
	
	The first two properties tell us that principal bundles are bundles admitting a principal action of a group $G$ on the total space $X$, i.e., principal bundles correspond to principal actions. But as desired, those are just continuous free actions (i.e., continuous actions such that $\alpha: X\times G \rightarrow X\times X$ is injective). The third property ensures that the fibers of the bundle correspond to the orbits coming from the action and the final one implies that the quotient space can topologically be viewed as the base space of the bundle.
	
	The following two examples are due to \cite[p. 13]{BHMS} and \cite[p. 4]{BF}.
	
	\begin{example}
		Clearly, a principal right action of $G$ on $X$ automatically makes the bundle $(X, \pi, X/G)$ a $G$-principal bundle. However, not every principal bundle has to be of this form. If we replace $X/G$ by a homeomorphic space, not only we are formally defining a different bundle, but also it might happen that such a new bundle is not equivalent to $(X, \pi, X/G)$ \cite[p. 157]{Fri}.
	\end{example}
	
	\begin{example}
		Any vector bundle can be understood as a bundle associated to a principal bundle in the following way: consider a $G$-principal bundle $(X,\pi,M,G)$ and let $V$ be a \emph{representation space} of $G$, that is, a (topological) vector space with a (continuous) left $G$-action $G\times V \rightarrow V$, denoted $(g,v)\mapsto g\cdot v$. Then $G$ acts from the right on $X\times V$ by $(x,v) \cdot g := (xg,g^{-1}\cdot v)$, for all $x\in X$, $v\in V$ and $g\in G$. Hence, we define $E=(X\times V)/G$ and a surjective (continuous) map $\pi_E: E \rightarrow M$ given by $(x,v)G \mapsto \pi(x)$, for all $x\in X, \ v\in V$. Thus, we have the fiber bundle $(E,\pi_E,M,V)$.
	\end{example}
	
	\begin{remark}\label{rem1}
	In a category $\mathcal{C}$, given a group object $G$, a \emph{$G$-principal bundle} (also called a \emph{$G$-torsor}) is a bundle $\pi: E\rightarrow X$ equipped with a $G$-action $\mu: E\times G \rightarrow G$ on $E$ over $X$ such that the canonical morphism $\alpha: E\times G \rightarrow E \times_X E$ is an isomorphism, which in turn means that the action is free and transitive over $X$ and hence each fiber of the bundle looks like $G$ once we choose a base point. In other words, this says that, after picking any point of $X$ as the identity, $X$ ``acquires a group structure'' isomorphic to $G$. Hence, colloquially, a torsor is understood as a group that has forgotten its identity.	 
	
	In specific contexts (as that of the category of topological spaces, in Definition~\ref{d17}), several perspectives for torsors come up. For example:
	\begin{itemize}
		\item In the category of sets, a $G$-torsor over a set $X$ becomes a group action $X\times G \rightarrow G$ (denoted by $(x,g)\rightarrow x\cdot g$) such that for any $x_1,x_2 \in X$, there exists a unique $g\in G$ such that $x_1\cdot g  = x_2$. Usually $g$ is denoted by $x_2/x_1$ and is called the \emph{ratio} of $x_1$ and $x_2$. Due to the above, while in an multiplicative (resp. additive) group $G$ one can multiply and divide (resp. add and subtract) elements, in a $G$-torsor one can multiply (resp. add) an element of $G$ to an element of $X$ and get a result in $X$, or one can divide (resp. subtract) two elements of $X$ and get a result in $G$. Basic examples of torsors are the anti-derivatives of a function or the euclidean plane without the origin (see the discussion at the start of Section~\ref{se4}). 
		\item In algebraic geometry, given a smooth algebraic group $G$, a $G$-torsor over a scheme $X$ is a scheme with an action of $G$ that is locally trivial in the given Grothendieck topology \cite{BCE}.
		\item Recent works in measurement theory attempt to understand the algebraic structure underlying the quantity calculus using topological bundles (see e.g. \cite{Dom,Rap}).
	\end{itemize}
	\end{remark}
	
	Focusing on topological bundles, we want to dualize the setup previously described in order to obtain a noncommutative version. For simplicity, assume that $X$ is a complex affine variety with an action of an affine algebraic group $G$ and set $Y=X/G$ with the usual Euclidean topology. Let $A:=\mathcal{O}(X)$, $B:=\mathcal{O}(Y)$ and $H:=\mathcal{O}(G)$ be the corresponding coordinate rings. Since $\mathcal{O}(G\times G) \cong \mathcal{O}(G) \otimes \mathcal{O}(G)$, $H$ is a Hopf algebra with operations given by $\Delta(f)(g,h)=f(gh)$, $\varepsilon(f)=f(e)$, and $(Sf)(g)=f(g^{-1})$.
	
	Using the fact that $G$ acts on $X$ from the right, $A$ is a right $H$-algebra comodule with structure map $\rho: A \rightarrow A\otimes H$ given by $\rho(a)(x,g):=a(x\cdot g)$ \cite[p. 5]{BF}. Also, $B$ can be viewed as a subalgebra of $A$ via $\pi^*:B\rightarrow A$ given by $b \mapsto b\circ \pi$, where $\pi$ is the canonical surjection $\pi: X\rightarrow X/G$. Indeed, the map $\pi^*$ is injective since $b\neq b'$ in $B=\mathcal{O}(X/Y)$ means that there exists at least one orbit $x\cdot G$ such that $b(x\cdot G)\neq b'(x\cdot G)$. But, since $\pi(x)=x\cdot G$, it follows $\pi^*(b)\neq \pi^*(b')$. 	Furthermore, $a\in \pi^{*}(B)$ if and only if $a(x\cdot g)=a(x)$, for all $x\in X$ and $g\in G$, meaning that $\rho(a)(x,g)=(a\otimes 1)(x,g)$, where $1:G\rightarrow \mathbb{C}$ is the unit constant function $1(g)=1$. Hence $a\in A^{\operatorname{co}H}$. The other inclusion is obvious, so $A^{\operatorname{co}H}=\pi^*(B) \cong B$.
	
	Finally, notice that we can identify $\mathcal{O}(X\times_Y X)$ with $\mathcal{O}(X) \otimes_{\mathcal{O}(Y)} \mathcal{O}(X) = A \otimes_B A$ via the map $\theta(a\otimes a')(x,y)=a(x) a'(y)$, where $\pi(x)=\pi(y)$. This last condition implies the well-definition of $\theta$. With this, we have the following result.
	
	\begin{theorem}[{\cite[Proposition 4]{BF}}]
		Let $X$ be a complex affine variety with a right action of an affine algebraic group $G$ and put $Y=X/G$. Let $A:=\mathcal{O}(X)$, $B:=\mathcal{O}(Y)$ and $H:=\mathcal{O}(G)$ be the corresponding coordinate rings. Then the following assertions are equivalent:
		\begin{enumerate}[label=\normalfont(\roman*)]
			\item The action of $G$ on $X$ is free,
			\item The map $\alpha^*: \mathcal{O}(X \times_Y X)\rightarrow \mathcal{O}(X\times G)$ given by $f\mapsto f\circ \alpha$ is bijective, where $\alpha: X\times G \rightarrow X \times_Y X$ is defined as $(x,g)\mapsto (x,xg)$.
			\item The map $\beta: A\otimes_B A \rightarrow A \otimes H$ given by $\beta(a\otimes a')=a\rho(a')$ is bijective and thus $B\subset A$ is a right $H$-Galois extension.
		\end{enumerate}
	\end{theorem}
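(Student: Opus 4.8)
The plan is to prove the three statements equivalent by first disposing of the formal equivalence (ii)$\Leftrightarrow$(iii) through the two coordinate-ring identifications, and then reducing (i)$\Leftrightarrow$(ii) to the contravariant equivalence between affine varieties and their coordinate rings.

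First I would establish (ii)$\Leftrightarrow$(iii) by showing that $\alpha^*$ and $\beta$ are the \emph{same} map once we pass through the identifications $\mathcal{O}(X\times G)\cong A\otimes H$ and $\mathcal{O}(X\times_Y X)\cong A\otimes_B A$, the latter being the isomorphism $\theta$ recalled just before the statement. Concretely, for a simple tensor $\theta(a\otimes a')$ a transposition computation gives, for all $(x,g)\in X\times G$,
\begin{equation*}
	(\alpha^*\theta(a\otimes a'))(x,g) = \theta(a\otimes a')(x, x\cdot g) = a(x)\, a'(x\cdot g) = a(x)\,\rho(a')(x,g),
\end{equation*}
so that $\alpha^*(\theta(a\otimes a'))$ corresponds precisely to $a\rho(a')=\beta(a\otimes a')$ in $A\otimes H$. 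Hence $\alpha^*$ is bijective if and only if $\beta$ is, and (ii) and (iii) express one and the same condition.

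For (i)$\Leftrightarrow$(ii), I would invoke the fact that over $\mathbb{C}$ the coordinate-ring functor $\mathcal{O}(-)$ is a contravariant equivalence between affine varieties and finitely generated reduced commutative $\mathbb{C}$-algebras. Under this anti-equivalence, $\alpha: X\times G \to X\times_Y X$ is an isomorphism of varieties if and only if $\alpha^*$ is an isomorphism of algebras. The direction (ii)$\Rightarrow$(i) is then immediate: if $\alpha^*$ is bijective then $\alpha$ is an isomorphism, in particular injective, and injectivity of $\alpha$ is exactly the freeness of the action; indeed, as recalled in Sections~\ref{hge7} and~\ref{hge9}, one has $\operatorname{Im}(\alpha)=X\times_Y X$, so $\alpha$ is always surjective onto $X\times_Y X$ and bijectivity reduces to injectivity.

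The remaining and hardest direction is (i)$\Rightarrow$(ii). Freeness yields that $\alpha$ is a bijection onto $X\times_Y X$, but to conclude that $\alpha^*$ is an isomorphism one must upgrade this set-theoretic bijection to an isomorphism of varieties, i.e.\ exhibit a \emph{regular} inverse. The plan is to build the \emph{translation} (or \emph{division}) map $\tau: X\times_Y X \to G$ sending a pair $(x, x\cdot g)$ to the unique $g\in G$ determined by freeness; then $(\operatorname{pr}_1,\tau): X\times_Y X \to X\times G$ is the set-theoretic inverse of $\alpha$, and dualizing it produces the algebraic inverse of $\beta$. The main obstacle is precisely verifying that $\tau$ is a morphism of varieties (equivalently, that $\beta^{-1}$ is well defined on coordinate rings): this is where the algebraic-group structure of $G$ and the geometric meaning of freeness are essential, and it cannot be deduced from pointwise bijectivity alone. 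Once $\tau$ is regular, $\alpha$ is an isomorphism, $\alpha^*=\beta$ is bijective, and since $A^{\operatorname{co}H}\cong B$ was established above, the extension $B\subset A$ is right $H$-Galois.
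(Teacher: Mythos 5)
Your handling of (ii)$\Leftrightarrow$(iii) is correct: under the paper's identifications $\theta: A\otimes_B A \to \mathcal{O}(X\times_Y X)$ and $\mathcal{O}(X\times G)\cong A\otimes H$, the computation $(\alpha^*\theta(a\otimes a'))(x,g)=a(x)\,a'(x\cdot g)=a(x)\,\rho(a')(x,g)$ shows that $\alpha^*$ and $\beta$ are the same map, so the two conditions coincide. Your (ii)$\Rightarrow$(i) is also sound, and can even be made lighter: surjectivity of $\alpha^*$ alone implies that every regular function on $X\times G$ takes equal values at $(x,g)$ and $(x,h)$ whenever $x\cdot g=x\cdot h$, and since regular functions separate points of the affine variety $X\times G$, this forces $g=h$. (For context: the paper itself offers no proof of this theorem --- it is quoted from \cite[Proposition 4]{BF} after setting up $\rho$, $A^{\operatorname{co}H}\cong B$ and $\theta$ --- so your proposal has to stand on its own.)

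It does not, because the implication (i)$\Rightarrow$(ii) is never actually proved. You correctly reduce it to the statement that the translation map $\tau: X\times_Y X\to G$, $(x,x\cdot g)\mapsto g$ (well defined by freeness), is a \emph{morphism of varieties}, but then you write ``Once $\tau$ is regular, $\alpha$ is an isomorphism\ldots''; the regularity of $\tau$ is precisely the mathematical content of this direction, not a deferrable verification. Freeness gives only that $\alpha$ is a bijective morphism onto $X\times_Y X$, and a bijective morphism of complex varieties need not be an isomorphism: the normalization $t\mapsto(t^2,t^3)$ of the cuspidal cubic is bijective, yet its inverse is not regular, so the induced map on coordinate rings is injective but not surjective. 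Exactly this failure mode is what must be excluded for $\alpha$, and nothing in your proposal rules out that $\alpha^*$ is injective without being surjective. Closing the gap requires genuine input --- for instance constructing an explicit algebraic inverse of $\beta$ out of freeness and the Hopf-algebra structure of $H=\mathcal{O}(G)$, or a geometric argument (of Zariski-main-theorem/homogeneous-space type, special to characteristic $0$) establishing that $\tau$ is regular --- and none of this appears. As written, the hardest third of the theorem is asserted rather than proved.
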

	
	Basically this theorem states that in bundles the freeness condition is equivalent to the Galois map being bijective. Hence, the Hopf Galois extension condition is a necessary condition to ensure that a bundle is principal.
	
	However, not all information about the topological spaces involved is encoded in their coordinate rings, so to make a transparent reflection on the richness of principal bundles, we require an additional notion.
	
	\begin{definition}\label{d19}
		Let $H$ be a $K$-Hopf algebra with bijective antipode and let $A$ be a right $H$-comodule algebra with structure map $\rho: A \rightarrow A\otimes H$. We say that $A$ is a \emph{principal $H$-comodule algebra}, if it satisfies the following conditions:
		\begin{enumerate}[label=(PCA\arabic*), align=parleft, leftmargin=*]
			\item\label{PCA1} $A^{\operatorname{co}H} \subset A$ is a right $H$-Galois extension.
			\item\label{PCA2} (\emph{Equivariant projectivity condition}) The map $B\otimes A \rightarrow A$, given by $b\otimes a \rightarrow ba$, splits as a left $A^{\operatorname{co}H}$-module and right $H$-comodule morphisms.
		\end{enumerate}
	\end{definition}
	
	The concept of equivariant projectivity here replaces that of faithful flatness used in general Hopf Galois theory. Under the hypothesis of bijective antipode, in a Hopf Galois extension these two concepts are equivalent, while in general only the implication ``equivariant projectivity'' $\Rightarrow$ ``faithful flatness'' holds \cite{Sch7}.
	
	The next result is due to \cite{DGHH} and \cite{BH}.
	
	\begin{theorem}
		Let $H$ be a $\mathbb{C}$-Hopf algebra with bijective antipode and let $A$ be a right $H$-comodule algebra with structure map $\rho: A \rightarrow A\otimes H$. $A$ is a principal $H$-comodule algebra if and only if it admits a \emph{strong connection form}, that is, if there exists a map $\omega: H \rightarrow A\otimes A$ such that $\omega(1)=1\otimes 1$, $m_H\omega=u_H\varepsilon_H$, $	(\omega \otimes \operatorname{id}_H)\Delta = (\operatorname{id}_A \otimes \rho)\omega$, and $(S\otimes \omega)\Delta = (\tau_{A,H} \otimes \operatorname{id}_A)(\rho \otimes \operatorname{id}_A) \omega$.
	\end{theorem}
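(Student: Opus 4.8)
The plan is to prove the two implications separately, using as bridges between principality (an algebraic condition on the Galois map and on the multiplication) and the strong connection $\omega$ (a more geometric datum) the \emph{translation map} attached to a Hopf--Galois extension and the \emph{splitting} furnished by equivariant projectivity. Throughout set $B:=A^{\operatorname{co}H}$, and in the normalisation condition read the multiplication and unit as those of $A$, i.e. $m_A\omega=u_A\varepsilon_H$.

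First I would show that a strong connection forces principality. Assume $\omega$ exists and write $\omega(h)=h^{\langle1\rangle}\otimes h^{\langle2\rangle}$. For \ref{PCA1} I would exhibit an explicit inverse of the Galois map, $\beta^{-1}:A\otimes H\rightarrow A\otimes_B A$, $\beta^{-1}(a\otimes h)=ah^{\langle1\rangle}\otimes_B h^{\langle2\rangle}$. A short Heyneman--Sweedler computation shows that $\beta\beta^{-1}=\operatorname{id}$ follows from the right covariance condition $(\omega\otimes\operatorname{id}_H)\Delta=(\operatorname{id}_A\otimes\rho)\omega$ together with $m_A\omega=u_A\varepsilon_H$, while $\beta^{-1}\beta=\operatorname{id}$ follows once one knows that the elements $a_{(0)}a_{(1)}^{\langle1\rangle}$ are coinvariant and may therefore be transported across $\otimes_B$. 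That coinvariance is exactly what the left covariance condition $(S\otimes\omega)\Delta=(\tau_{A,H}\otimes\operatorname{id}_A)(\rho\otimes\operatorname{id}_A)\omega$ yields, upon applying $\rho$ and collapsing $h_{(1)}S(h_{(2)})=\varepsilon(h)1$. For \ref{PCA2} I would split the multiplication $\mu\colon B\otimes A\rightarrow A$ by $s(a):=a_{(0)}\omega(a_{(1)})=a_{(0)}a_{(1)}^{\langle1\rangle}\otimes a_{(1)}^{\langle2\rangle}$; the same coinvariance guarantees that $s$ lands in $B\otimes A$, the identity $\mu s=\operatorname{id}_A$ reduces to $m_A\omega=u_A\varepsilon_H$ and the counit axiom \eqref{e32}, and left $B$-linearity together with right $H$-colinearity of $s$ are read off from the two covariance conditions.

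For the converse, assume $A$ principal. From \ref{PCA1} the Galois map is bijective, so the translation map $\tau:=\beta^{-1}(1_A\otimes-):H\rightarrow A\otimes_B A$ is available; write $\tau(h)=h^{[1]}\otimes_B h^{[2]}$, and record the standard identities $h^{[1]}h^{[2]}=\varepsilon(h)1$, $\tau(1)=1\otimes_B1$, and the covariances of $h^{[1]}$ and $h^{[2]}$ under $\rho$, all forced by $\beta\beta^{-1}=\beta^{-1}\beta=\operatorname{id}$ and by $\rho$ being an algebra map. From \ref{PCA2} fix a left $B$-linear, right $H$-colinear section $s:A\rightarrow B\otimes A$, written $s(a)=a^{-}\otimes a^{+}$ with $a^{-}\in B$; we may take $s$ unital. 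Since $\tau$ a priori takes values only in the balanced tensor product, I would lift it to $A\otimes A$ by setting
\begin{equation*}
	\omega(h):=h^{[1]}(h^{[2]})^{-}\otimes(h^{[2]})^{+},
\end{equation*}
the left $B$-linearity of $s$ being precisely what makes this independent of the representative chosen for $\tau(h)$.

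The main obstacle is entirely in this converse direction: verifying that the single formula for $\omega$ satisfies all four defining conditions at once. The normalisations $\omega(1)=1\otimes1$ and $m_A\omega=u_A\varepsilon_H$ are inherited from $\tau(1)=1\otimes_B1$, $h^{[1]}h^{[2]}=\varepsilon(h)1$, and $\mu s=\operatorname{id}_A$. The two covariance conditions are the delicate point: each couples a covariance identity of the translation map with the right $H$-colinearity of $s$, and these must be propagated carefully through the defining formula in Heyneman--Sweedler notation. Bijectivity of the antipode $S$ enters exactly here, through the first-leg covariance of $\tau$ (where $S$, and implicitly $S^{-1}$, appears), and is what allows the left covariance condition for $\omega$ to be closed. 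A tidy way to organise the whole argument is to record all translation-map identities first and then feed them, one covariance condition at a time, into the formula for $\omega$; the remaining checks are routine diagram chases.
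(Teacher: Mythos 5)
Your forward implication (strong connection $\Rightarrow$ principal) is correct, and your reading of the normalisation as $m_A\omega=u_A\varepsilon_H$ is the right fix of the statement's typo: the inverse $\beta^{-1}(a\otimes h)=ah^{\langle1\rangle}\otimes_B h^{\langle2\rangle}$ and the splitting $s(a)=a_{(0)}\omega(a_{(1)})$ work exactly as you describe, with the coinvariance of $a_{(0)}a_{(1)}^{\langle1\rangle}$ coming from the left covariance condition. In the converse, the formula $\omega=(\operatorname{id}_A\otimes_B s)\circ\tau$ is also the standard construction: it is well defined by left $B$-linearity of $s$, the identity $m_A\omega=u_A\varepsilon_H$ follows from $h^{[1]}h^{[2]}=\varepsilon(h)1$ and $\mu s=\operatorname{id}_A$, and both covariance conditions do close up by pushing the analogues of \eqref{e46} and \eqref{e47} through $\operatorname{id}_A\otimes_B s$, using colinearity of $s$ and $(h^{[2]})^{-}\in A^{\operatorname{co}H}$. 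One side remark: bijectivity of $S$ is not what makes the first-leg identity \eqref{e47} work — that identity only needs $\beta^{-1}$ to be a morphism in ${}_A{\operatorname{Mod}}^H$ (Proposition~\ref{p9}); $S^{-1}$ is needed rather to read the left covariance condition as colinearity for the left coaction $a\mapsto S^{-1}(a_{(1)})\otimes a_{(0)}$, and in relating equivariant projectivity to faithful flatness.

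The genuine gap is the phrase ``we may take $s$ unital.'' With your $\omega$ one gets $\omega(1)=s(1)$, and \ref{PCA2} only provides a left $B$-linear, right $H$-colinear section of the multiplication; colinearity forces $s(1)\in B\otimes B$ with $m_A(s(1))=1$, but nothing forces $s(1)=1\otimes1$. This cannot be repaired by a routine normalisation. Replacing $s(a)$ by $s(a)-s(1)\cdot a+1\otimes a$ (acting on the second leg) restores unitality and colinearity but destroys left $B$-linearity, since $s(1)\cdot(ba)\neq b\,(s(1)\cdot a)$ when $B$ is not central in $A$. Correcting $\omega$ instead, say by $\omega(h)-\varepsilon(h)\bigl(\omega(1)-1\otimes1\bigr)$, preserves $m_A\omega=u_A\varepsilon_H$ but breaks right covariance: the correction transforms as $\varepsilon(h)(\,\cdot\,)\otimes1$ while covariance requires $(\,\cdot\,)\otimes h$, so equality forces $\omega(1)=1\otimes1$ to begin with. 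Worse, granted \ref{PCA1}, the existence of a unital $B$-linear colinear splitting is equivalent to the existence of a strong connection — from $\omega$ one recovers exactly such a splitting via $a\mapsto a_{(0)}\omega(a_{(1)})$ — so your assumption is essentially the statement to be proved. This is precisely where the sources cited in the paper (\cite{DGHH}, \cite{BH}) have to work harder: the lift of the translation map must be produced unital and bicolinear simultaneously (via relative injectivity/faithful flatness arguments), not obtained by post-composing an arbitrary equivariant splitting.
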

	
	This theorem provides an effective method for the verification of the principality of a comodule algebra. For example, every cleft comodule algebra (see Definition~\ref{d18} below) is a principal comodule algebra \cite[Example 3]{BF}.
	
	\subsubsection{Other examples}\label{hge10}
	
	Several other examples of Hopf Galois extensions (HGE) are treated in the literature. The list here is in chronological order and is non-exhaustive.
	
	\begin{itemize}
		\item HGE for Azumaya algebras \cite[Theorem 6.20]{DT}. For a fixed Hopf algebra $H$, let $E$ be an Azumaya algebra and let $C \subset E$ be a subalgebra such that the right $C$-module $E$ is a progenerator. Doi and Takeuchi showed that there is a one-to-one correspondence between the right $H$-Galois extensions $B^{\operatorname{co}H} \subset B$ (such that there exists an algebra morphism $B\rightarrow E$ and $B^{\operatorname{co}H}\cong C$) and the measuring actions of the form $E^C \otimes H \rightarrow E^C$, where $E^C:=\{ x\in E : cx=xc, \mbox{ for all } c\in C \}$.
		\item Differential Galois theory \cite[Section 8.1.3]{Mon1}. Let $E \supset \Bbbk$ be a field of characteristic $p>0$ and let $\mathfrak{g} \subset \operatorname{Der}_\Bbbk(E)$ be a restricted Lie algebra of $\Bbbk$-derivations of $E$, which is finite-dimensional over $\Bbbk$. If the restricted enveloping algebra is denoted $u(\mathfrak{g})$ and it acts on $E$ via $\mathfrak{g}$ acting as derivations, then for $H=u(\mathfrak{g})^*$, $E^{\operatorname{co}H}=E^{\mathfrak{g}}=\{ a\in E : x\cdot a = 0, \mbox{for all } x\in \mathfrak{g} \}$. $E^{\mathfrak{g}} \subset E$ is $u(\mathfrak{g})^*$-Galois if and only if $E \otimes \mathfrak{g} \rightarrow \operatorname{Der}_\Bbbk(E)$ is injective.
		\item HGE for Taft Hopf algebras \cite{Mas}. Masoka classified cleft extensions for certain Hopf algebras generated by skew primitive elements. A particular case is the Taft Hopf algebra of Example~\ref{ex33}.
		\item Quantum principal bundles with a compact structure group \cite{Dur}. Dur{\dj}evi{\'c} gave another generalization of classical principal bundles, via the so-called \textit{quantum principal bundles}, which are defined in terms of $*$-algebras. In his work it is shown that every quantum principal bundle with a compact structure group is a HGE.
		\item HGE for the Drinfel'd double of the Taft algebra \cite{Sch8}. In general, the Drinfel'd double of a finite dimensional $\Bbbk$-Hopf algebra $H$ is the tensor product algebra $D(H)=H\otimes H^*$. This construction is a quasi-triangular Hopf algebra. Roughly speaking, the Drinfel'd double of the Taft algebra can be seen as $U_q(\mathfrak{sl}_2(\Bbbk))'$ with two copies of the group-like generators. Schauenburg classifies all the Galois objects over $U_q(\mathfrak{sl}_2(\Bbbk))'$ and $D(T_{n^2}(\omega))$.
		\item HGE for pointed Hopf algebras \cite{Gun}. G{\"u}nther developed a systematic method to calculate cleft extensions for pointed Hopf algebras. In particular, cleft extensions for the quantum enveloping algebra $U_q(\mathfrak{sl}_2(\Bbbk))$ (see Example~\ref{ex32}) and the Frobenius-Lusztig kernel $U_q(\mathfrak{sl}_2(\Bbbk))'$ were classified.
		\item Reduced enveloping algebras \cite[Section 6]{Skr}. Let $\Bbbk$ be a field of characteristic $p > 0$, and let $\mathfrak{g}$ be a finite dimensional $p$-Lie algebra over $\Bbbk$. For $\xi\in \mathfrak{g}^*$, denote by $u_\xi(\mathfrak{g})$ the corresponding reduced enveloping algebra of $\mathfrak{g}$. In other words $u_\xi(\mathfrak{g})$ is the factor algebra of the universal enveloping algebra $U(\mathfrak{g})$ by its ideal generated by central elements $x^p - x^{[p]} - \xi(x)^p1$, with $x\in \mathfrak{g}$. Skryabin shows that $u_\xi(\mathfrak{g})$ is a $u_0(\mathfrak{g})^*$-Galois extension if and only if $u_\xi(\mathfrak{g})$ is central simple.
		\item HGE for Calabi-Yau Hopf algebras \cite{Yu}. Yu showed that Hopf Galois objects of a twisted Calabi-Yau Hopf algebra with bijective antipode are also twisted Calabi-Yau, and described explicitly their Nakayama automorphism. As an application, Yu shows that cleft objects (see Definition~\ref{d18}) of twisted Calabi-Yau Hopf algebras and Hopf Galois objects of the quantum automorphism groups of non-degenerate bilinear forms are twisted Calabi-Yau.
		\item HGE for monoidal Hom-Hopf algebras \cite{CZ}. The concept of $\operatorname{Hom}$-Hopf algebra is similar to that of monoid object in a monoidal category, and have appearances in some physical contexts. Chen and Zhang generalized the Schneider’s affineness theorem (\cite[Theorem 8.5.6]{Mon1}) for monoidal $\operatorname{Hom}$-Hopf algebras in terms of total integrals and $\operatorname{Hom}$-Hopf Galois extensions.
		\item Zhang twists \cite[to appear]{HNUVVW}. The notion of a Zhang twist of a graded algebra $A$ was introduced as a deformation of the original graded product by a graded authomorphism $\phi$ of $A$. It is denoted by $A^{\phi}$. Among other results, the mentioned paper shows that the Zhang twist $H^{\phi}$ of a graded Hopf algebra $H$ is left (resp. right) $H$-cleft (see Definition~\ref{d18}) by realizing $H^{\phi} \cong {}_{\sigma} H$ (resp. $H^{\phi} \cong H_{\sigma^{-1}}$) for a suitable 2-cocycle $\sigma$ (see Example~\ref{twist}).
	\end{itemize}
	
	\subsection{Properties of Hopf Galois extensions}\label{sec2.9}
	
	Since their first appearance Hopf Galois extensions have been intensively studied, and used as a tool in the investigation and classification of Hopf algebras themselves. In this section, we review some of those studies by following~\cite{DNR,Mon1,Mon2,Schn}.
	
	The first result we present is the structure of Hopf module morphisms for the Galois map. The proof is a routine check of~\ref{HM3} for each case.
	
	\begin{proposition}[{\cite[Remark 1.1]{Schn}}]\label{p9}
		Let $A^{\operatorname{co}H} \subset A$ be a right $H$-Galois extension. Then
		\begin{enumerate}[label=\normalfont(\roman*)]
			\item $A\otimes_{A^{\operatorname{co}H}} A$ is a left-right $(A,H)$-Hopf module with left $A$-module structure given by
			\begin{equation*}
				a(x\otimes y)=ax \otimes y, \qquad \mbox{for all } a,x,y\in A,
			\end{equation*}
			and right $H$-comodule structure given by
			\begin{equation}\label{e51}
				x\otimes y \mapsto x_{(0)} \otimes y \otimes x_{(1)}, \qquad \mbox{for all } x,y\in A.
			\end{equation}
			\item $A\otimes H$ is a left-right $(A,H)$-Hopf module with left $A$-module structure given by
			\begin{equation*}
				a(x\otimes h)=ax \otimes h, \qquad \mbox{for all } a,x\in A \mbox{ and } h\in H,
			\end{equation*}
			and right $H$-comodule structure given by
			\begin{equation}\label{e53}
				x\otimes h \mapsto  x_{(0)} \otimes h_{(2)} \otimes x_{(1)}S(h_{(1)}), \qquad \mbox{for all } x\in A \mbox{ and } h\in H.
			\end{equation}
			\item $A\otimes_{A^{\operatorname{co}H}} A$ is a right-right $(A,H)$-Hopf module with right $A$-module structure given by
			\begin{equation*}
				(x\otimes y)a=x \otimes ya, \qquad \mbox{for all } a,x,y\in A,
			\end{equation*}
			and right $H$-comodule structure given by
			\begin{equation}\label{e55}
				x\otimes y \mapsto x \otimes y_{(0)} \otimes y_{(1)}, \qquad \mbox{for all } x,y\in A.
			\end{equation}
			\item $A\otimes H$ is a right-right $(A,H)$-Hopf module with right $A$-module structure given by
			\begin{equation*}
				(x\otimes h)a= xa_{(0)} \otimes ha_{(1)} , \qquad \mbox{for all } a,x\in A \mbox{ and } h\in H,
			\end{equation*}
			and right $H$-comodule structure given by
			\begin{equation}\label{e57}
				x\otimes h \mapsto x \otimes h_{(1)} \otimes h_{(2)}, \qquad \mbox{for all } x\in A \mbox{ and } h\in H.
			\end{equation}
		\end{enumerate}
		Hence, the Galois map $\beta: A\otimes_{A^{\operatorname{co}H}} A \rightarrow A \otimes H$ is a morphism in both ${}_A{\operatorname{Mod}}^H$ and ${\operatorname{Mod}}^H_A$.
	\end{proposition}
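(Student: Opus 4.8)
The plan is to treat the four structures symmetrically. For each one I must verify three things — that the prescribed action makes the underlying space a (left or right) $A$-module, that the prescribed costructure is a genuine right $H$-comodule structure (descending to the balanced tensor product where relevant), and that the Hopf-module compatibility~\ref{HM3} (or its right-right analogue) holds — and only afterwards check that $\beta$ intertwines the two relevant pairs. Throughout I would use repeatedly the two facts coming from the hypotheses: that $A$ is a right $H$-comodule algebra, so $\rho(ab)=a_{(0)}b_{(0)}\otimes a_{(1)}b_{(1)}$ and $\rho(1_A)=1_A\otimes 1_H$ by~\eqref{e12}; and that every $b\in A^{\operatorname{co}H}$ satisfies $\rho(b)=b\otimes 1$ by definition of the coinvariants.

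For the two modules built on $A\otimes_{A^{\operatorname{co}H}}A$ (parts~(i) and~(iii)), the $A$-action is obviously associative and unital, so the only delicate point is that the proposed coactions descend to the balanced tensor product. For part~(i) I would check that $x\otimes y\mapsto x_{(0)}\otimes y\otimes x_{(1)}$ respects the relation $xb\otimes y=x\otimes by$ for $b\in A^{\operatorname{co}H}$: expanding $(xb)_{(0)}\otimes y\otimes(xb)_{(1)}=x_{(0)}b_{(0)}\otimes y\otimes x_{(1)}b_{(1)}$ and inserting $\rho(b)=b\otimes 1$ collapses this to $x_{(0)}b\otimes y\otimes x_{(1)}=x_{(0)}\otimes by\otimes x_{(1)}$, as needed; part~(iii) is symmetric, using $\rho(by)=by_{(0)}\otimes y_{(1)}$. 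Coassociativity and counitality of $\rho^{(i)}$ and $\rho^{(iii)}$ are then immediate from the coassociativity and counit axioms of the coaction on $A$, and~\ref{HM3} is a one-line check: for~(i), $\rho^{(i)}(ax\otimes y)=a_{(0)}x_{(0)}\otimes y\otimes a_{(1)}x_{(1)}$ matches $a_{(0)}\cdot(x\otimes y)_{(0)}\otimes a_{(1)}(x\otimes y)_{(1)}$, and~(iii) is identical on the second factor.

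The genuinely nontrivial case is part~(ii), whose coaction $x\otimes h\mapsto x_{(0)}\otimes h_{(2)}\otimes x_{(1)}S(h_{(1)})$ involves the antipode. Here there is no descent issue (the tensor product is over $K$), but coassociativity must be checked by hand. Applying the map twice and using that $S$ is an anti-coalgebra morphism, so $\Delta(S(h))=S(h_{(2)})\otimes S(h_{(1)})$ (as recorded after the definition of the antipode), both iterations should reduce — via coassociativity of $\Delta$ and of $\rho$ — to the common value $x_{(0)}\otimes h_{(3)}\otimes x_{(1)}S(h_{(2)})\otimes x_{(2)}S(h_{(1)})$, while the counit axiom follows from $\varepsilon S=\varepsilon$. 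Part~(iv) is the easiest: its coaction is just $\operatorname{id}_A\otimes\Delta$, and verifying that $(x\otimes h)a=xa_{(0)}\otimes ha_{(1)}$ is an associative action satisfying the right-right~\ref{HM3} uses only~\eqref{e12} and comodule coassociativity. I expect this coassociativity computation in~(ii) to be the main bookkeeping obstacle, since it is the one place where the antipode and its anti-comultiplicativity actually enter.

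Finally, to see that $\beta$ is a morphism in ${}_A{\operatorname{Mod}}^H$ I would check left $A$-linearity, which is immediate from $\beta(ax\otimes y)=axy_{(0)}\otimes y_{(1)}=a\,\beta(x\otimes y)$, together with right $H$-colinearity $\rho^{(ii)}\beta=(\beta\otimes\operatorname{id}_H)\rho^{(i)}$. The right-hand side equals $x_{(0)}y_{(0)}\otimes y_{(1)}\otimes x_{(1)}$, while the left-hand side, after expanding $\rho^{(ii)}(xy_{(0)}\otimes y_{(1)})$ and using that the $H$-components of the iterated coaction are the iterated coproduct of a single component, becomes $x_{(0)}y_{(0)}\otimes y_{(3)}\otimes x_{(1)}y_{(1)}S(y_{(2)})$; collapsing the first two $H$-slots via $h_{(1)}S(h_{(2)})=\varepsilon(h)1$ (applied to the three $H$-components, which form a triple coproduct) reduces this to exactly $x_{(0)}y_{(0)}\otimes y_{(1)}\otimes x_{(1)}$. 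The morphism property in ${\operatorname{Mod}}^H_A$ relative to~(iii) and~(iv) is analogous but simpler, needing no antipode, since there colinearity reduces directly by comodule coassociativity. I anticipate that reconciling the Sweedler index bookkeeping in the colinearity computation for~(i)$\to$(ii) — keeping straight which components originate from $\rho$ and which from $\Delta$ — will be the subtlest point of the whole argument.
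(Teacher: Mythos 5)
Your proposal is correct and follows the same route as the paper, which simply declares the result ``a routine check of~\ref{HM3} for each case'' and omits all details; your plan is that routine check spelled out, including the points the paper leaves implicit (descent of the coactions \eqref{e51} and \eqref{e55} to the balanced tensor product, the anti-comultiplicativity of $S$ in the coassociativity of \eqref{e53}, and the collapse $y_{(1)}S(y_{(2)})=\varepsilon(y_{(1)})1$ in the colinearity of $\beta$). All of your stated computations check out, so there is nothing to correct.
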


	A classical theorem in Galois theory says that, if $F\subset E$ is a finite Galois extension of fields with Galois group $G$, then there exists $a\in E$ such that $\{ x\cdot a : x\in G \}$ is basis for $E$ over $F$. Such feature is known as the \emph{normal basis property}. An important question in Hopf Galois theory is whether such property is satisfied.
	
	\begin{definition}[Normal basis property]
		Let $A$ be a right $H$-comodule algebra and consider the algebra extension $A^{\operatorname{co}H}\subset A$ (not necessarily being Galois). We say that the extension has the \emph{right normal basis property}, if $A \cong A^{\operatorname{co}H} \otimes H$ as left $A^{\operatorname{co}H}$-modules and right $H$-comodules.
	\end{definition}
	
	We review some basic facts about finite-dimensional Hopf algebras in order to show that, in this case, the normal basis property is equivalent to the classical notion.
	
	\begin{definition}[Integrals]
		Let $H$ be a $K$-Hopf algebra.
		\begin{enumerate}[label=\normalfont(\roman*)]
			\item A \emph{left integral} in $H$ is an element $\lambda\in H$ such that $h\lambda=\varepsilon(h)\lambda$, for all $h\in H$.
			\item A \emph{right integral} in $H$ is an element $\lambda'\in H$ such that $\lambda'h=\varepsilon(h)\lambda'$, for all $h\in H$.
		\end{enumerate}
		We denote the space of left (resp. right) integral by $\int_H^l$ (resp. $\int_H^r$).
	\end{definition}
	
	If $H$ is such that $\int_H^l=\int_H^r$, it is called \emph{unimodular}. (Counter)examples of such situation can be found in \cite[Examples 2.1.2]{Mon1}.
	
	Using the Fundamental Theorem of Hopf modules, Larson and Sweedler proved that for a finite-dimensional $\Bbbk$-Hopf algebra, both $\int_H^l$ and $\int_H^r$ are one-dimensional. Moreover, if $\lambda \in \int_H^l$ and $\lambda\neq 0$, $H$ is a cyclic left $H^*$-module with generator $\lambda$ for the action $\rightharpoonup$ described in Example~\ref{ex34}. That is, we can identify $H$ with $H^*\rightharpoonup \lambda$. As a consequence, Maschke’s theorem for Hopf algebras states that $H$ is semisimple if and only if $\varepsilon(\int_H^l)\neq 0$ if and only if $\varepsilon(\in_H^r)\neq 0$. In this case, $\int_H^l = \int_H^r$ and we may choose $\lambda$ such that $\varepsilon(\lambda)=1$ (see e.g. \cite[Theorems 2.1.3 and 2.2.1]{Mon1}). We use these results to prove the following.
	
	\begin{proposition}[e.g. {\cite[Lemma 3.5]{Mon2}}]
		Let $H$ be a finite-dimensional $\Bbbk$-Hopf algebra such that $\dim_\Bbbk(H)=n$ and let $A$ be a right $H$-comodule algebra. Consider $H^*$ acting on $A$ with $A^{H^*}=A^{\operatorname{co}H}$. Then the following assertions are equivalent:
		\begin{enumerate}[label=\normalfont(\roman*)]
			\item There exists $u\in A$ and $\{f_i\} \subset H^*$ such that $\{ f_1 \cdot u, \ldots, f_n \cdot u \}$ is a basis for the free left $A^{\operatorname{co}H}$-module $A$ (i.e., $A$ has a normal basis over $A^{\operatorname{co}H}$ in the classical sense).
			\item The algebra extension $A^{\operatorname{co}H}\subset A$ has the right normal basis property.
		\end{enumerate}
	\end{proposition}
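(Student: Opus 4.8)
Write $B := A^{\operatorname{co}H}$. The plan is to recast both conditions as statements about modules over the algebra $B \otimes H^*$ and then reduce the whole equivalence to a single module isomorphism coming from the theory of integrals.

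First I would set up the common framework. By Proposition~\ref{p2} the right $H$-comodule algebra structure on $A$ is the same datum as a left $H^*$-module algebra structure, under which $B = A^{\operatorname{co}H} = A^{H^*}$. Since $A$ is an $H^*$-module algebra and every element of $B$ is $H^*$-invariant, left multiplication by an element of $B$ is $H^*$-linear; hence the left $B$-action and the left $H^*$-action on $A$ commute, making $A$ a left module over the algebra $B \otimes H^*$. The same correspondence turns $B \otimes H$ (with left $B$-multiplication and comodule structure $\operatorname{id}_B \otimes \Delta$) into a left $B \otimes H^*$-module, where $H^*$ acts on the second factor by the action $\rightharpoonup$ of Example~\ref{ex34}, namely $f \rightharpoonup h = f(h_{(2)}) h_{(1)}$; here the two actions live on different tensor factors and so commute automatically. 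Crucially, via the correspondence of Propositions~\ref{ex9} and~\ref{p2} a $\Bbbk$-linear map between such objects is simultaneously left $B$-linear and right $H$-colinear if and only if it is $B \otimes H^*$-linear, so condition (ii) is precisely the assertion that $A \cong B \otimes H$ as left $B \otimes H^*$-modules, with $H$ carrying the $\rightharpoonup$-action.

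Next I would reformulate (i). A short argument shows that the $f_i$ may be taken to form a basis of $H^*$: if $\sum_i c_i f_i = 0$ then $\sum_i c_i (f_i \cdot u) = 0$, and the freeness in (i) forces all $c_i = 0$, so the $n = \dim_\Bbbk H^*$ functionals $f_i$ are linearly independent and hence a basis. Consider then the $B \otimes H^*$-linear evaluation map $\mu_u : B \otimes H^* \to A$, $x \mapsto x \cdot u$, where $B \otimes H^*$ is viewed as the free rank-one module over itself. It is left $B$-linear and sends the $B$-basis $\{1 \otimes f_i\}$ of $B \otimes H^*$ to $\{f_i \cdot u\}$, so $\mu_u$ is bijective exactly when $\{f_i \cdot u\}$ is a $B$-basis of $A$. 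Thus condition (i) is equivalent to $A \cong B \otimes H^*$ as left $B \otimes H^*$-modules, where $B \otimes H^*$ carries its regular structure.

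The key step, and the one I expect to carry the real content, is to compare the two models $B \otimes H^*$ (regular) and $B \otimes H$ (with the $\rightharpoonup$-action). Here I would invoke the Larson--Sweedler theory recalled above: fixing a nonzero left integral $\lambda \in \int_H^l$, the map $H^* \to H$, $f \mapsto f \rightharpoonup \lambda$, is a morphism of left $H^*$-modules because $(gf) \rightharpoonup \lambda = g \rightharpoonup (f \rightharpoonup \lambda)$, it is surjective since $H = H^* \rightharpoonup \lambda$ is cyclic, and it is therefore bijective by $\dim_\Bbbk H^* = \dim_\Bbbk H = n$. Tensoring this isomorphism with $\operatorname{id}_B$ yields $B \otimes H^* \cong B \otimes H$ as left $B \otimes H^*$-modules. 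Chaining the three equivalences gives
\begin{equation*}
	\text{(i)} \iff A \cong B \otimes H^* \iff A \cong B \otimes H \iff \text{(ii)}
\end{equation*}
as left $B \otimes H^*$-modules, which is the desired statement; concretely, the element $u$ realizing (i) is recovered from any isomorphism $\Phi : B \otimes H^* \to A$ as $u = \Phi(1 \otimes \varepsilon_H)$, since then $f \cdot u = \Phi(1 \otimes f)$. The main obstacle is the bookkeeping of the first paragraph---verifying that the module/comodule dictionary produces the correct module structure on $B \otimes H$ (via $\rightharpoonup$) and that colinear-plus-$B$-linear maps are exactly the $B \otimes H^*$-linear ones---after which the integral isomorphism does all the work.
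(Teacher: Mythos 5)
Your proposal is correct and takes essentially the same route as the paper's proof: both rest on the Larson--Sweedler fact that $f \mapsto f \rightharpoonup \lambda$ is an isomorphism $H^* \to H$ of left $H^*$-modules together with the comodule/module dictionary of Propositions~\ref{ex9} and~\ref{p2}, and your composite $\mu_u \circ (\operatorname{id}_B \otimes \Theta)^{-1}$ is precisely the paper's map $\phi(b \otimes (f \rightharpoonup \lambda)) = b(f \cdot u)$, with $u$ recovered the same way in the converse direction. The packaging through left $B \otimes H^*$-modules is a cosmetic reorganization, though it has the small merit of making explicit a step the paper glosses over, namely that the $n$ functionals $f_i$ in (i) are automatically linearly independent and hence a basis of $H^*$, which is what guarantees bijectivity of the constructed map.
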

	
	\begin{proof}
		(i)$\Rightarrow$(ii): Assume that $A$ has a normal basis over $A^{\operatorname{co}H}$ in the classical sense. Using the identification $H=H^*\rightharpoonup \lambda$ of the previous paragraph, we may consider the left $H^*$-module map $\phi: A^{\operatorname{co}H} \otimes H \rightarrow A$ given by
		\begin{equation*}
			\phi(b\otimes(f\rightharpoonup \lambda)):=b(f\cdot u), \qquad \mbox{for all } b\in A^{\operatorname{co}H} \mbox{ and } f\in H^*.
		\end{equation*}
		$A^{\operatorname{co}H}\otimes H$ is a left $H^*$ module via $f\cdot(b\otimes h)=b\otimes(f\rightharpoonup h)$, since this is the dual of the right comodule structure given by $\operatorname{id} \otimes \Delta$. Thus, $\phi$ is a right $H$-comodule map. Since this is a left $A^{\operatorname{co}H}$-module isomorphism, $A^{\operatorname{co}H}$ has the normal basis property.
		
		(ii)$\Rightarrow$(i): Now, assume that there exists an isomorphism $\phi: A^{\operatorname{co}H} \otimes H \rightarrow A$ of left $A^{\operatorname{co}H}$-modules and right $H$-comodules. Given a $\Bbbk$-basis $\{ f_1, \ldots, f_n \}$ for $H^*$, the set $$\{ 1\otimes (f_1\rightharpoonup \lambda), \ldots, 1\otimes (f_n\rightharpoonup \lambda) \}$$ is an $A^{\operatorname{co}H}$-basis for $A^{\operatorname{co}H}\otimes H$.  Hence, it follows that $\{ f_1 \cdot u, \ldots, f_n \cdot u \}$ is an $A^{\operatorname{co}H}$-basis for $A$, where $u=\phi(1\otimes \lambda)$. Thus $A^{\operatorname{co}H}\subset A$ has a normal basis in the usual sense.
	\end{proof}
	
	In general, not all Hopf Galois extensions have the normal basis property \cite[Example 8.2.3]{Mon1}. However, we mention the work of Doi and Takeuchi that characterizes extensions having the normal basis property.
	
	\begin{definition}[Cleft extension]\label{d18}
		Let A be a right $H$-co\-module algebra. The algebra extension $A^{\operatorname{co}H}\subset A$ is said to be \emph{$H$-cleft} if there exists a right $H$-comodule map $\gamma: H \rightarrow B$ which is convolution invertible.
	\end{definition}
	
	\begin{remark}
		If $A^{\operatorname{co}H}\subset A$ is a $H$-cleft extension, then the map $\gamma$ can always be chosen \emph{normalized}, in the sense that $\gamma(1)=1$. Indeed, if it is not normalized we can replace $\gamma$ by $\gamma'=\gamma^{-1}(1)\gamma$. This is possible since $1$ is a group-like element and hence $\gamma(1)$ is invertible with inverse $(\gamma(1))^{-1}=\gamma^{-1}(1)$.
	\end{remark}

	\begin{theorem}[e.g. {\cite[Theorem 6.4.12]{DNR}}]\label{t8}
		Let $A$ be a right $H$-comodule algebra. Then the following assertions are equivalent:
		\begin{enumerate}[label=\normalfont(\roman*)]
			\item There exists an invertible $2$-cocycle and an action of $H$ on $A^{\operatorname{co}H}$ such that $A\cong A^{\operatorname{co}H} \#_\sigma H$. 
			\item The extension $A^{\operatorname{co}H} \subset A$ is $H$-cleft.
			\item $A^{\operatorname{co}H} \subset A$ is a $H$-Galois extension and has the normal basis property.
		\end{enumerate}
	\end{theorem}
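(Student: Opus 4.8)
The plan is to establish the equivalences through the cycle (i) $\Rightarrow$ (ii) $\Rightarrow$ (iii) $\Rightarrow$ (i), writing $B:=A^{\operatorname{co}H}$ throughout and exploiting at each stage the explicit crossed-product multiplication \eqref{e83} together with the comodule structure of Theorem~\ref{p10}(ii).

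For (i) $\Rightarrow$ (ii), I would take the obvious candidate cleaving map $\gamma\colon H\to A$, $\gamma(h):=1_B\# h$. By Theorem~\ref{p10}(ii) one has $\rho(1_B\# h)=(1_B\# h_{(1)})\otimes h_{(2)}$, so $\gamma$ is a normalized right $H$-comodule map. Setting $r=s=1_B$ in \eqref{e83} yields the twisted multiplicativity $\gamma(g)\gamma(h)=\sigma(g_{(1)},h_{(1)})\,\gamma(g_{(2)}h_{(2)})$, and from this relation together with the invertibility of $\sigma$ one constructs a convolution inverse $\gamma^{-1}$ (expressible through $\sigma^{-1}$ and $S$), whose defining identities reduce to the cocycle condition \eqref{e81}. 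Hence $B\subset A$ is $H$-cleft.

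For (ii) $\Rightarrow$ (iii), I would start from a normalized cleaving map $\gamma$ with convolution inverse $\gamma^{-1}$. The normal basis property follows by exhibiting $\phi\colon B\otimes H\to A$, $\phi(b\otimes h)=b\gamma(h)$, as an isomorphism of left $B$-modules and right $H$-comodules, with inverse $a\mapsto a_{(0)}\gamma^{-1}(a_{(1)})\otimes a_{(2)}$; here the key subsidiary fact is that $a_{(0)}\gamma^{-1}(a_{(1)})\in B$, which I would verify directly from the comodule-map identity $\rho\gamma=(\gamma\otimes\operatorname{id})\Delta$. For the Galois condition I would guess the inverse of the Galois map \eqref{e62} to be $\beta^{-1}(a\otimes h)=a\gamma^{-1}(h_{(1)})\otimes\gamma(h_{(2)})$; both composites collapse to the identity by the comodule-map property of $\gamma$ together with $\gamma(h_{(1)})\gamma^{-1}(h_{(2)})=\varepsilon(h)1_A$, the only delicate point being to move $b_{(0)}\gamma^{-1}(b_{(1)})\in B$ across the balanced tensor product $\otimes_B$.

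The substantial direction is (iii) $\Rightarrow$ (i). Assuming both a normal-basis isomorphism $\theta\colon B\otimes H\to A$ (of left $B$-modules and right $H$-comodules) and bijectivity of $\beta$, I would set $\gamma:=\theta(1_B\otimes-)\colon H\to A$, which is automatically a right $H$-comodule map, and then propose the action and cocycle $h\cdot b:=\gamma(h_{(1)})\,b\,\gamma^{-1}(h_{(2)})$ and $\sigma(g,h):=\gamma(g_{(1)})\gamma(h_{(1)})\gamma^{-1}(g_{(2)}h_{(2)})$. One must then check that these land in $B$, that $B$ becomes an $H$-module algebra, that $\sigma$ is an invertible $2$-cocycle satisfying \eqref{e81} and \eqref{e82}, and finally that $b\# h\mapsto b\gamma(h)$ is an isomorphism $B\#_\sigma H\to A$ of comodule algebras. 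The main obstacle --- and the only place where Galois-ness (as opposed to merely the normal basis property) is genuinely needed --- is establishing that $\gamma$ is convolution invertible: its inverse must be extracted from the inverse Galois map $\beta^{-1}$, and it is precisely the bijectivity of $\beta$ that guarantees $\gamma^{-1}$ exists, so that all the formulas above are well defined. Once convolution invertibility is secured, the remaining verifications of the crossed-product axioms are long but entirely mechanical manipulations in Heyneman--Sweedler notation.
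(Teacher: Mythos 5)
The paper never proves this theorem: it is stated with a citation to \cite[Theorem 6.4.12]{DNR} and followed only by the remark that the cleaving map of a crossed product is $\gamma(h)=1\#h$. So there is no in-paper argument to compare against; your cycle (i) $\Rightarrow$ (ii) $\Rightarrow$ (iii) $\Rightarrow$ (i) is the standard Doi--Takeuchi route, and the first two legs are essentially correct. For (i) $\Rightarrow$ (ii) the inverse is indeed expressible through $\sigma^{-1}$ and $S$ (explicitly $\gamma^{-1}(h)=\sigma^{-1}(S(h_{(2)}),h_{(3)})\,\#\,S(h_{(1)})$), and for (ii) $\Rightarrow$ (iii) your formulas for $\phi^{-1}$ and $\beta^{-1}$ are the right ones. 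One point you compress too much: the membership $a_{(0)}\gamma^{-1}(a_{(1)})\in A^{\operatorname{co}H}$ does \emph{not} follow ``directly'' from $\rho\gamma=(\gamma\otimes\operatorname{id})\Delta$; you first need the formula $\rho(\gamma^{-1}(h))=\gamma^{-1}(h_{(2)})\otimes S(h_{(1)})$, which one obtains by noting that $\rho$, being an algebra map, carries the convolution inverse of $\gamma$ to the convolution inverse of $\rho\gamma$ in $\operatorname{Hom}_K(H,A\otimes H)$, and uniqueness of convolution inverses identifies the latter with $(\gamma^{-1}\otimes S)\tau\Delta$. This is where the antipode genuinely enters, and it is used again implicitly in your (iii) $\Rightarrow$ (i).

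The genuine gap sits exactly where you locate the ``main obstacle'' in (iii) $\Rightarrow$ (i): convolution invertibility of $\gamma:=\theta(1_B\otimes-)$. Saying the inverse ``must be extracted from $\beta^{-1}$'' is not yet a construction, and the claim that ``it is precisely the bijectivity of $\beta$ that guarantees $\gamma^{-1}$ exists'' is false as stated: bijectivity of $\beta$ alone can never produce a convolution-invertible comodule map, since there are $H$-Galois extensions without the normal basis property (the paper cites \cite[Example 8.2.3]{Mon1}), and such extensions admit no cleaving map at all. The problem is that $\beta^{-1}(1\otimes h)=h^{[1]}\otimes_B h^{[2]}$ lives in $A\otimes_B A$, not in $A$; to collapse it you must also use the normal-basis isomorphism $\theta$. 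Concretely, set $\pi:=(\operatorname{id}_B\otimes\varepsilon)\theta^{-1}:A\rightarrow B$, which is left $B$-linear, and define $\gamma^{-1}(h):=h^{[1]}\pi(h^{[2]})$; this is well defined across $\otimes_B$ precisely because $\pi$ is left $B$-linear. Writing any $a\in A$ as $a=b\gamma(h)$ one checks $\pi(a_{(0)})\gamma(a_{(1)})=a$, and then the two identities $\gamma^{-1}*\gamma=u\varepsilon=\gamma*\gamma^{-1}$ follow from the analogues over $B$ of the translation-map identities \eqref{e44}, \eqref{e45} and \eqref{e46}. With this construction supplied --- using $\beta^{-1}$ \emph{and} $\theta$ together, not $\beta^{-1}$ alone --- the remaining crossed-product verifications (that $h\cdot b$ and $\sigma(g,h)$ land in $B$, the conditions \eqref{e81}--\eqref{e82}, and that $b\#h\mapsto b\gamma(h)$ is an isomorphism) are indeed mechanical, as you say.
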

	
	Roughly speaking, this result establishes that every Hopf crossed product is ``cleft'' using the convolution invertible map $\gamma : H\rightarrow A$ given by $\gamma(h)=1\#h$.
	
	Now, we discuss some conditions equivalent to $A^H \subset H$ being $H^*$-Galois.
	
	\begin{lemma}[e.g. {\cite[Lemma 8.3.2]{Mon1}}]\label{l22}
		Let $H$ be an arbitrary Hopf algebra and $A$ a left $H$-module algebra. Then the following assertions hold:
		\begin{enumerate}[label=\normalfont(\roman*)]
			\item $A$ is a left $A\# H$-module, via $(a\#h) \cdot b := a(h\cdot b)$, for all $a,b\in A$ and $h\in H$.
			\item $A$ is a right $A^H$-module, via right multiplication.
			\item $A$ is a $(A\#H,A^H)$-bimodule.
			\item $A^H \cong \operatorname{End}( {}_{A\# H}A )^{\operatorname{op}}$ as algebras.
		\end{enumerate}
	\end{lemma}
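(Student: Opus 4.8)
The plan is to treat the four parts in order, with (i)--(iii) being direct verifications and (iv) carrying the real content. For (i), I would check the two module axioms using the smash product multiplication $(a\#g)(s\#h)=a(g_{(1)}\cdot s)\#g_{(2)}h$ together with the module-algebra identity \eqref{e11}. Expanding $(a\#g)\cdot\bigl((s\#h)\cdot b\bigr)=a\bigl(g\cdot(s(h\cdot b))\bigr)=a(g_{(1)}\cdot s)\bigl((g_{(2)}h)\cdot b\bigr)$ should match $\bigl((a\#g)(s\#h)\bigr)\cdot b$, and the unit axiom is immediate from $(1\#1)\cdot b=1(1\cdot b)=b$. For (ii), I would first record that $A^H$ is a subalgebra of $A$: from \eqref{e11} one gets $h\cdot(ab)=(h_{(1)}\cdot a)(h_{(2)}\cdot b)=\varepsilon(h)ab$ for $a,b\in A^H$, and $h\cdot 1=\varepsilon(h)1$, so $A^H$ is closed under multiplication and contains $1_A$; right multiplication then makes $A$ a right $A^H$-module automatically.

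For (iii), the only nontrivial point is that the left $A\#H$-action and the right $A^H$-action commute. For $c\in A^H$ I would compute $(a\#h)\cdot(bc)=a\bigl(h\cdot(bc)\bigr)=a(h_{(1)}\cdot b)(h_{(2)}\cdot c)$, then use $h_{(2)}\cdot c=\varepsilon(h_{(2)})c$ and the counit property to collapse this to $a(h\cdot b)c=\bigl((a\#h)\cdot b\bigr)c$, which is exactly the associativity linking the two actions.

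Part (iv) is where the work lies. I would define $\Psi\colon A^H\to\operatorname{End}({}_{A\#H}A)$ by $\Psi(c)=\rho_c$, where $\rho_c(a)=ac$ is right multiplication; part (iii) guarantees each $\rho_c$ is left $A\#H$-linear. Since $(\rho_c\rho_d)(a)=adc=\rho_{dc}(a)$, composition reverses order, so $\Psi$ is an algebra anti-homomorphism into $\operatorname{End}({}_{A\#H}A)$, equivalently a genuine algebra homomorphism into the opposite ring $\operatorname{End}({}_{A\#H}A)^{\operatorname{op}}$, and this is precisely why the $\operatorname{op}$ appears. Injectivity is immediate from $\rho_c(1_A)=c$. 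The crux is surjectivity, and the key observation is that $1_A$ cyclically generates $A$ as an $A\#H$-module, since $(a\#1)\cdot 1_A=a(1\cdot 1_A)=a$. Hence any $f\in\operatorname{End}({}_{A\#H}A)$ is pinned down by $c:=f(1_A)$ via $f(a)=f\bigl((a\#1)\cdot 1_A\bigr)=(a\#1)\cdot f(1_A)=ac=\rho_c(a)$. Finally, applying $f$ to $(1\#h)\cdot 1_A=\varepsilon(h)1_A$ and using linearity yields $h\cdot c=(1\#h)\cdot f(1_A)=f(\varepsilon(h)1_A)=\varepsilon(h)c$, so $c\in A^H$; thus $\Psi$ is bijective and the inverse sends $f\mapsto f(1_A)$.

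I expect the main obstacle to be purely bookkeeping: keeping the opposite-algebra convention consistent (right multiplication is an anti-homomorphism, forcing the $\operatorname{op}$) and correctly exploiting that $1_A$ generates $A$ over $A\#H$ to identify an arbitrary module endomorphism with right multiplication by an invariant.
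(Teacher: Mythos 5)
Your proposal is correct and follows essentially the same route as the paper: parts (i)--(iii) are handled as direct verifications (with the same counit computation in (iii)), and part (iv) uses the identical map $c\mapsto\rho_c$ (right multiplication), with injectivity read off at $1_A$, surjectivity from the fact that $1_A$ generates $A$ over $A\#H$, and the same check that $f(1_A)$ is $H$-invariant. The only cosmetic difference is that you spell out the verification of (i)--(ii) and make explicit why the anti-homomorphism forces the $\operatorname{op}$, which the paper leaves implicit.
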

	
	\begin{proof}
	(i) and (ii) are straightforward.
	
	(iii) Let $a\in A^H$, $b\# h \in A\# H$ and $c\in A$. Hence, using the associativity of $A$ we have
	\begin{align*}
		(b\# h) \cdot (ca) &=  b(h\cdot (ca)) \overset{\mbox{\eqref{e11}}}{=} b\left[ (h_{(1)} \cdot c) (h_{(2)} \cdot a) \right] = b\left[ (h_{(1)} \cdot c) \varepsilon(h_{(2)})a \right] \\
		&=b[(h\cdot c)a] = [ b(h\cdot c) ]a = 	[(b\#h)\cdot c]a.
	\end{align*}
	 (iv) Let $\psi: A^H \rightarrow \operatorname{End}({}_{A\# H}A)$ be given by $a\mapsto a_r$, where $a_r$ is the right multiplication by $a$ in $A^H$. Then $a_r$ is indeed a ${A\# H}$-map since, for all $a\in A^H$, $b\# h \in A\# H$ and $c\in A$, we have
	\begin{align*}
		a_r((b\#h)\cdot c) = [(b\#h)\cdot c]a = (b\#h)\cdot (ca)= (b\#h)\cdot a_r(c).
	\end{align*}
	On the other hand, if $a,b\in A^H$ and $x\in {}_{A\# H}A$, then
	\begin{align*}
		\psi(a+b)(x)&=(a+b)_r(x)=x(a+b)=xa+xb=a_r(x)+b_r(x)=\psi(a)(x)+\psi(b)(x),\\
		\psi(ab)(x)&= (ab)_r(x)=x(ab)=(xa)b=(a_r(x))b=b_r(a_r(x))=b_ra_r(x),\\
		\psi(1_A)(x) &= (1_A)_r(x)=x1_A = x = \operatorname{id}_A(x).
	\end{align*}
	Hence, $\psi$ is an algebra anti-morphism. Clearly $a_r=0$ if and only if $a1=a=0$, so $\psi$ is injective. Moreover, it is surjective since for any $\sigma\in \operatorname{End}({}_{A\# H}A)$ and $a\in A$, we have
	\begin{align*}
		\sigma(a)&=\sigma(a1_A)=\sigma( a(1_H\cdot 1_A) )=\sigma( (a\#1_H)\cdot 1_A )\\&=(a\#1_H) \cdot \sigma(1_A) = a (1_H\cdot \sigma(1_A))=a\sigma(1_A),
	\end{align*}
	and hence $\sigma=\sigma(1_A)_r=\psi(\sigma(1_A))$. Then $\sigma(1_A)$ is indeed an element of $A^H$ since, for any $h\in H$,
	\begin{equation*}
		h\cdot \sigma(1_A)=(1_A\# h)\cdot \sigma(1_A) = \sigma((1_A\# h)\cdot 1) = \sigma(h \cdot 1_A ) \overset{\mbox{\eqref{e11}}}{=} \varepsilon(h) \sigma(1_A). \qedhere
	\end{equation*}
\end{proof}
	
	When the antipode is bijective, the laterality in this result can be interchanged.
	
	\begin{lemma}[{\cite[Lemma 0.3]{CFM}}]\label{l23}
		Let $H$ be a Hopf algebra such that the antipode $S$ is bijective, and let $A$ be a left $H$-module algebra. Then the following assertions hold:
		\begin{enumerate}[label=\normalfont(\roman*)]
			\item $A$ is a right $A\# H$-module, via
			\begin{equation*}
				b \cdot (a\# h) = S^{-1}(h) \cdot (ba), \qquad \mbox{for all } a,b\in A \mbox{ and } h\in H,
			\end{equation*}
			\item $A^H \cong \operatorname{End}( A_{A\# H})$ as algebras.
		\end{enumerate}
	\end{lemma}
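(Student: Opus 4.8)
The plan is to mirror the argument for Lemma~\ref{l22}, interchanging left and right throughout; the only genuinely new ingredient is that the bijectivity of $S$ lets us use $S^{-1}$, which is again an anti-morphism of both the algebra and the coalgebra structures, and which satisfies the ``reversed'' antipode identities $\sum S^{-1}(h_{(2)})h_{(1)} = \varepsilon(h)1 = \sum h_{(2)}S^{-1}(h_{(1)})$. These follow from the defining property $S(h_{(1)})h_{(2)} = \varepsilon(h)1 = h_{(1)}S(h_{(2)})$ by applying the injective map $S$ and cancelling $S\circ S^{-1} = \operatorname{id}$; likewise $\Delta(S^{-1}(h)) = S^{-1}(h_{(2)})\otimes S^{-1}(h_{(1)})$ is anti-comultiplicativity for $S^{-1}$.

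For (i) I would first verify the unit axiom: since $S(1_H) = 1_H$ forces $S^{-1}(1_H) = 1_H$, we get $b\cdot(1_A\#1_H) = S^{-1}(1_H)\cdot(b1_A) = 1_H\cdot b = b$. The associativity axiom $(b\cdot x)\cdot y = b\cdot(xy)$ is the main computation. Writing $x = a\#g$ and $y = c\#h$, the left side equals $S^{-1}(h)\cdot\big[(S^{-1}(g)\cdot(ba))c\big]$, while on the right $xy = a(g_{(1)}\cdot c)\#g_{(2)}h$ gives, after using that $S^{-1}$ reverses products, $S^{-1}(h)\cdot\big[S^{-1}(g_{(2)})\cdot(ba(g_{(1)}\cdot c))\big]$. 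Expanding the inner action with the module-algebra rule \eqref{e11} and the anti-comultiplicativity $\Delta(S^{-1}(g_{(2)})) = S^{-1}(g_{(3)})\otimes S^{-1}(g_{(2)})$ produces $[S^{-1}(g_{(3)})\cdot(ba)]\,[(S^{-1}(g_{(2)})g_{(1)})\cdot c]$. The crux is then the identity
\begin{equation*}
	\sum (S^{-1}(g_{(2)})g_{(1)})\otimes g_{(3)} = 1_H\otimes g,
\end{equation*}
which follows by writing $\Delta_2(g) = (g_{(1)})_{(1)}\otimes (g_{(1)})_{(2)}\otimes g_{(2)}$, applying $\sum S^{-1}(h_{(2)})h_{(1)} = \varepsilon(h)1$ to $h = g_{(1)}$, and collapsing the spectator leg with the counit. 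Substituting this identity reduces the inner action to $[S^{-1}(g)\cdot(ba)]c$, and the two sides agree.

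For (ii) I would define $\psi: A^H\to\operatorname{End}(A_{A\#H})$ by $\psi(a) = a_l$, left multiplication by $a$ (the mirror of the $a_r$ used in Lemma~\ref{l22}(iv), which is why no ``$\operatorname{op}$'' appears here). The content is that $a_l$ is right $A\#H$-linear \emph{precisely} when $a\in A^H$: checking $a_l(b\cdot(c\#h)) = a_l(b)\cdot(c\#h)$ reduces to $a\,(S^{-1}(h)\cdot(bc)) = S^{-1}(h)\cdot(abc)$, which I would obtain by expanding the right-hand action with \eqref{e11}, using $\Delta(S^{-1}(h)) = S^{-1}(h_{(2)})\otimes S^{-1}(h_{(1)})$ and the invariance $S^{-1}(h_{(2)})\cdot a = \varepsilon(h_{(2)})a$. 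That $\psi$ is a unital algebra morphism is immediate from associativity of $A$, and injectivity follows from $a_l(1_A) = a$. For surjectivity, given $\sigma\in\operatorname{End}(A_{A\#H})$ I would use $b = 1_A\cdot(b\#1_H)$ to get $\sigma(b) = \sigma(1_A)\cdot(b\#1_H) = \sigma(1_A)b$, so $\sigma = \psi(\sigma(1_A))$; and $\sigma(1_A)\in A^H$ follows from $1_A\cdot(1_A\#h) = \varepsilon(h)1_A$ together with colinearity, giving $S^{-1}(h)\cdot\sigma(1_A) = \varepsilon(h)\sigma(1_A)$ for all $h$, whence $h'\cdot\sigma(1_A) = \varepsilon(h')\sigma(1_A)$ after the bijective substitution $h' = S^{-1}(h)$.

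The main obstacle I anticipate is purely bookkeeping: keeping the Heyneman--Sweedler indices aligned through the triple coproduct in part (i), and in particular recognizing which reversed antipode identity to invoke and on which tensor leg. The bijectivity of $S$ is used in an essential way both to make sense of the action in (i) and to perform the substitution $h' = S^{-1}(h)$ at the end of (ii); without it the reversed antipode identities, and hence the whole argument, are unavailable.
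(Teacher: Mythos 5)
Your proposal is correct and follows essentially the same route as the paper's proof: part (ii) uses the identical map $\psi(a)=a_l$ with the same linearity, injectivity, and surjectivity arguments (the paper checks $\sigma(1_A)\in A^H$ by acting with $1_A\# S(h)$, which is the same substitution-by-bijectivity-of-$S$ trick you perform at the end), and part (i), which the paper dismisses as ``similar to Lemma~\ref{l22}'', is exactly the mirrored computation you carry out, with the reversed antipode identity $S^{-1}(h_{(2)})h_{(1)}=\varepsilon(h)1$ and anti-comultiplicativity of $S^{-1}$ as the only new ingredients. The only nitpick is terminological: in part (ii) you say ``colinearity'' where you mean right $A\# H$-linearity of $\sigma$; the mathematics is unaffected.
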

	
	\begin{proof}
	(i) The proof is similar to that of Lemma~\ref{l22}.
	
	(ii) Let $\psi: A^H \rightarrow \operatorname{End}( A_{A\# H})$ be given by $a \mapsto a_l$, where $a_l$ is the left multiplication by $a$ in $A^H$. Then $a_l$ is indeed an ${A\# H}$-map since, for all $a\in A^H$, $b\in A$ and $c\# h \in A\# H$, we have
	\begin{align*}
		a_l(b \cdot (c\# h)) &= a_l( S^{-1}(h) \cdot (bc) )= a( S^{-1}(h) \cdot (bc) ) \\&=  S^{-1}(h) \cdot (abc) = (ab) \cdot (c\# h)=a_l(b) \cdot (c\# h).
	\end{align*}
	Here we used that the multiplication of $A$ is a $H$-module map. On the other hand, if $a,b\in A^H$ and $x\in {A}_{A\# H}$, then
	\begin{align*}
		\psi(a+b)(x)&=(a+b)_lr(x)=(a+b)x=ax+bx=a_l(x)+b_l(x)=\psi(a)(x)+\psi(b)(x),\\
		\psi(ab)(x)&= (ab)_l(x)=(ab)x=a(bx)=a(b_l(x))=a_l(b_l(x))=a_lb_l(x),\\
		\psi(1_A)(x) &= (1_A)_l(x)=1_Ax=x=\operatorname{id}_A(x).
	\end{align*}
	Hence, $\psi$ is an algebra morphism. Clearly $a_l=0$ if and only if $1a=a=0$, so $\psi$ is injective. Moreover, it is surjective since for any $\sigma\in \operatorname{End}({A}_{A\# H})$ and $a\in A$, we have
	\begin{align*}
		\sigma(a)&=\sigma( 1_H\cdot a )= \sigma( S^{-1}(1_H)\cdot (1_Aa) ) = \sigma(1_A \cdot (a\# 1_H)) = \sigma(1_A)\cdot (a\# 1_H) \\
		&= S^{-1}(1_H) \cdot (\sigma(1_A)a) 	= \sigma(1_A)a
	\end{align*}
	and hence $\sigma=\sigma(1_A)_l=\psi(\sigma(1_A))$. Then $\sigma(1_A)$ is indeed an element of $A^H$ since, for any $h\in H$,
	\begin{equation*}
		h\cdot \sigma(1_A)= \sigma(1_A) \cdot (1_A \# S(h)) = \sigma( 1_A \cdot (1_A \# S(h)) ) = \sigma(h \cdot 1_A)\overset{\mbox{\eqref{e11}}}{=} \varepsilon(h) \sigma(1). \qedhere
	\end{equation*}
\end{proof}
	
	Using Lemma~\ref{l22} and the notion of \emph{left trace function} (i.e., maps of the form $\hat{\lambda}:A\rightarrow A^H$ such that $\hat{\lambda}(a)=\lambda a$, for some left integral $\lambda \neq 0$ in $H$), works of Doi, Kreimer, Takeuchi and Ulbrich lead to the following characterization of $H^*$-Galois extensions \cite{DT2,KT,Ulb2}.
	
	\begin{theorem}[e.g. {\cite[Theorem 8.3.3]{Mon1}}]\label{t7}
		Let $H$ be a finite-dimensional $\Bbbk$-Hopf algebra and $A$ a left $H$-module algebra (and thus, a right $H^*$-comodule). Then the following assertions are equivalent:
		\begin{enumerate}[label=\normalfont(\roman*)]
			\item $A^H \subset A$ is a right $H^*$-Galois extension.
			\item The map $\pi: A\# H \rightarrow \operatorname{End}(A_{A^{H}})$ is an algebra morphism and $A$ is finitely generated projective as a right $A^H$-module.
			\item $A$ is a generator for the category of left $A\# H$-modules.
			\item If $0\neq\lambda \in \int_H^l $, then the map $[-,-]: A\otimes_{A^H} A \rightarrow A\# H$, given by $[a,b]=a\lambda b$, is surjective.
			\item For any left $A\# H$-module $M$, consider $A \otimes_{A^H} M^H$ as a left $A\# H$-module by letting $A\# H$ act on $A$ via $\pi$. Then the map $\Phi: A \otimes_{A^H} M^H \rightarrow M$, given by $a \otimes m \mapsto a\cdot m$, is a left $A\# H$-module isomorphism.
		\end{enumerate}
	\end{theorem}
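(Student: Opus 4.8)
The plan is to fix a nonzero left integral $\lambda\in\int_H^l$, write $B:=A^H$ and $R:=A\#H$, and read the entire statement as the Morita-theoretic analysis of the bimodule $A$ sitting between $R$ and $B$. By Lemma~\ref{l22}, $A$ is an $(R,B)$-bimodule with $\operatorname{End}({}_R A)\cong B^{\operatorname{op}}$; since $H$ is finite-dimensional its antipode is bijective, so Lemma~\ref{l23} supplies the companion right $R$-module structure with $\operatorname{End}(A_R)\cong B$. I would first assemble these into a Morita context $(B,R,A,A)$ and check that its connecting trace map $A\otimes_B A\to R$ is exactly $[-,-]$, $[a,b]=a\lambda b$. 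Here the appearance of $\lambda$ is the crucial bookkeeping point: using the finite-dimensional identification $H\cong H^*\rightharpoonup\lambda$ from the integral discussion, the pairing of the context is precisely realized through the convolution action $\rightharpoonup$, so I would verify both that $[-,-]$ lands in $R$ and that it is $(R,R)$-bilinear.

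With the context in place, the equivalence (iii)$\Leftrightarrow$(iv) is the standard criterion that $A$ is a generator of ${}_R\operatorname{Mod}$ if and only if its trace ideal — the image of $[-,-]$ — is all of $R$; and surjectivity of $[-,-]$ amounts to $1_R$ lying in that image, which is exactly the generator condition. Next, (iii)$\Leftrightarrow$(ii) is Morita's theorem on generators: ${}_R A$ is a generator if and only if it is finitely generated projective over $\operatorname{End}({}_R A)^{\operatorname{op}}=B=A^H$ and the canonical double-centralizer map $R\to\operatorname{End}(A_B)$ is an isomorphism. I would spell out that the map $\pi$ of (ii), sending $a\#h$ to the endomorphism $b\mapsto a(h\cdot b)$, coincides with this canonical map into $\operatorname{End}(A_{A^H})$.

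The genuinely Hopf-theoretic content is the equivalence of (i) and (v). Dualizing via Proposition~\ref{p2}, $A$ is a right $H^*$-comodule algebra with $A^{\operatorname{co}H^*}=A^H$; since $H$ is finite-dimensional, left $A\#H$-modules are identified with left-right relative Hopf modules in ${}_A\operatorname{Mod}^{H^*}$, under which $M^H=M^{\operatorname{co}H^*}$. With this identification the map $\Phi$ of (v) is precisely the counit of the adjunction given by $N\mapsto A\otimes_{A^H}N$ and $M\mapsto M^{\operatorname{co}H^*}$. The assertion that $\Phi$ is always an isomorphism is the structure theorem for relative Hopf modules, which holds exactly when the Galois map $\beta$ is bijective — the finite-dimensionality of $H$ supplying the faithful flatness the theorem needs — giving (i)$\Leftrightarrow$(v). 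To close the cycle I would derive (v)$\Rightarrow$(iii) by transporting the generator $A^H$ of ${}_B\operatorname{Mod}$ through the equivalence $A\otimes_{A^H}(-)$ to obtain that $A$ generates ${}_R\operatorname{Mod}$, and (ii)$\Rightarrow$(i) by using a dual basis of the finitely generated projective right $A^H$-module $A$ together with the isomorphism $\pi$ to write down an explicit inverse of $\beta$.

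I expect the main obstacle to be the equivalence (i)$\Leftrightarrow$(v): making the identification of left $A\#H$-modules with relative $(A,H^*)$-Hopf modules fully precise and invoking the structure theorem at the right generality, since this is the one place where the Galois hypothesis is used substantively rather than through formal Morita theory. The secondary delicate point is the identification of $[-,-]$ with the trace map of the context, where the integral $\lambda$ must be tracked correctly through the action $\rightharpoonup$; every other implication then reduces to standard generator/progenerator criteria.
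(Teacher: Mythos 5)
The paper itself gives no proof of Theorem~\ref{t7}: it is quoted from Montgomery \cite[Theorem 8.3.3]{Mon1} and attributed to Doi, Kreimer--Takeuchi and Ulbrich, so your attempt can only be measured against the standard argument behind that citation --- and your Morita-theoretic reading is indeed that standard argument (the Cohen--Fischman--Montgomery context \cite{CFM}, which the paper records immediately after the theorem). Several of your steps are sound as planned: (iv)$\Rightarrow$(iii) works because each $[-,b]$ is a left $A\# H$-module map into $A\# H$, so surjectivity of $[-,-]$ forces the trace ideal of ${}_{A\# H}A$ to be all of $A\# H$; (v)$\Rightarrow$(iii) works because $M\cong A\otimes_{A^H}M^H$ together with right exactness of $A\otimes_{A^H}-$ exhibits every module as a quotient of a direct sum of copies of $A$ (no equivalence of categories is needed, only the counit isomorphism you have); (iii)$\Leftrightarrow$(ii) is the generator/double-centralizer theorem once $\pi$ is identified with the canonical map, provided one reads ``morphism'' in (ii) as ``isomorphism'', as in Montgomery's original statement ($\pi$ is always an algebra morphism by Lemma~\ref{l22}, so the literal wording is vacuous); and (ii)$\Rightarrow$(i) by dual bases is the classical easy direction.

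Two steps, however, have genuine holes. The serious one is (i)$\Rightarrow$(v). You invoke Schneider's structure theorem, whose hypothesis is Galois \emph{together with} faithful flatness of $A$ over the coinvariants, and you assert that finite-dimensionality of $H$ supplies the faithful flatness. It does not: what supplies it is the Galois hypothesis itself, through the Kreimer--Takeuchi theorem \cite{KT} that for finite-dimensional $H$ a Galois extension is automatically finitely generated projective over $A^H$ --- which is essentially the implication (i)$\Rightarrow$(ii). In your cycle, (ii) is reached only downstream of (v) (via (v)$\Rightarrow$(iii)$\Rightarrow$(ii)), so assuming it inside (i)$\Rightarrow$(v) is circular. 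You must either prove (i)$\Rightarrow$(ii) independently first (for instance, using the translation map $h\mapsto\beta^{-1}(1\otimes h)$ and a nonzero integral of $H^*$ to manufacture a finite dual basis of $A_{A^H}$ and to invert $\pi$), or construct $\Phi^{-1}$ explicitly from the translation map. The second hole is the context itself: you take the right $A\# H$-action on $A$ from Lemma~\ref{l23}, but the paper stresses, immediately after the theorem, that this action is \emph{not} adequate --- unless $H$ is unimodular, $[a,b]=a\lambda b$ fails to be right $A\# H$-linear for it, and one must use the action \eqref{e88a}, twisted by the modular element $\alpha$ defined by $\lambda h=\alpha(h)\lambda$. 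Relatedly, in (iii)$\Rightarrow$(iv) the equality of the image of $[-,-]$ with the trace ideal of ${}_{A\# H}A$ is not formal Morita bookkeeping: the nontrivial inclusion requires that every $f\in\operatorname{Hom}_{A\# H}(A,A\# H)$ be of the form $[-,b]$, which rests on the computation $\{z\in A\# H : hz=\varepsilon(h)z\ \mbox{for all } h\in H\}=\lambda A$ --- a Hopf-module/integral argument, and precisely the place where $\lambda$ enters substantively rather than as bookkeeping.
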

	
In \cite[Example 4.6]{Mon2} it is shown that, even for group actions, Theorem~\ref{t7} does not necessarily holds.

Now, we discuss Morita contexts. Two rings $R,S$ are \emph{connected by a Morita context} if there exist an $(R,S)$-bimodule ${}_{R} V_S$, a $(S,R)$-bimodule ${}_S W_R$ and bimodule morphisms
	\begin{equation*}
		[-,-]:W \otimes_R V \rightarrow S \qquad \mbox{and} \qquad (-,-): V \otimes_S W \rightarrow R
	\end{equation*}
	such that, for all $v,v'\in V$ and $w,w'\in W$, the relations
	\begin{equation*}
		v' \cdot [w,v]=(v',w)\cdot v \in V \qquad \mbox{and} \qquad [w,v]\cdot w' = w\cdot (v,w') \in W
	\end{equation*}
	hold. This is equivalent to saying that the array
	\begin{equation*}
		T=\begin{bmatrix}
			R & V\\
			W & S
		\end{bmatrix}
	\end{equation*}
	becomes an associative ring, where the formal operations are those of $2\times 2$ matrices, using $[-,-]$ and $(-,-)$ to compute the multiplication.
	
	When the two maps $[-,-]$ and $(-,-)$ are surjective, we say that the rings $R$ and $S$ are \emph{Morita equivalent}. This is equivalent to saying that the four functors
	\begin{align*}
		- \otimes_R V&: \operatorname{Mod}_R \rightarrow \operatorname{Mod}_S, & - \otimes_S W &: \operatorname{Mod}_S \rightarrow \operatorname{Mod}_R\\
		W \otimes_R - &: {}_R{\operatorname{Mod}}\rightarrow {}_S{\operatorname{Mod}} & V \otimes_S - &: {}_S{\operatorname{Mod}} \rightarrow {}_R{\operatorname{Mod}}
	\end{align*}
	are equivalences of categories \cite[Section 3.5.5]{MR}.
	
	We shall see that, for any finite-dimensional Hopf algebra $H$ and any left $H$-module algebra $A$, such a set-up exists for the rings $R=A^H$ and $S=A\#H$, using $V=W=A$. By Lemma~\ref{l22}, we already guaranteed that $A$ is an $(A\# H,A^H)$-bimodule. However, the structure described in Lemma~\ref{l23} is not enough for the other laterality to work. Hence, we proceed as follows: recall that if $H$ is finite-dimensional, then $S$ is bijective and both $\int_H^r$ and $\int_H^l$ are one-dimensional. Also, notice that if $0\neq \lambda \in \int_H^l$, then $\lambda h\in \int_H^l$, for any $h\in H$. Thus, there exists $\alpha \in H^*$ such that $\lambda h = \alpha(h)\lambda$, for all $h\in H$.
	
	With the notation of Example~\ref{ex34}, we define $h^\alpha:= \alpha \rightharpoonup h$. Since $\alpha$ is multiplicative, it is a group-like element of $H^*$ and thus the map $h\mapsto h^\alpha$ is an automorphism of $H$. By \cite{Rad}, $\lambda^\alpha=S(\alpha)$. We define our new right action of $A\# H$ on $A$ by
	\begin{equation}\label{e88a}
		a\cdot (b\# h)=S^{-1}h^\alpha \cdot (ab), \qquad \mbox{for all } a,b\in A \mbox{ and } h\in H.
	\end{equation}
	Notice that this is the action of Lemma~\ref{l23} but ``twisted'' by $\alpha$.
	
	\begin{theorem}[{\cite[Theorem 2.10]{CFM}}]
	Let $H$ be a finite-dimensional Hopf algebra and $A$ a left $H$-module algebra (and hence, a right $H^*$-comodule algebra). Consider $A$ in ${}_{A\# H}{\operatorname{Mod}}_{A^H} $ as in Lemma~\ref{l22}, and in ${}_{A^H}{\operatorname{Mod}}_{A\# H}$ with the right action of $A\# H$ given by \eqref{e88a}. Then $V= {}_{A^H} A_{A\# H}$ and $W={}_{A\# H} A_{A^H}$, together with the maps
		\begin{align*}
			[-,-] &: A \otimes_{A^H} A \rightarrow A \# H  & \mbox{given by } [a,b]&:=a\lambda b,\\
			(-,-) &: A \otimes_{A\# H} A \rightarrow A^H & \mbox{given by } (a,b)&:= \lambda\cdot(ab),
		\end{align*}
		give a Morita context for $A^H$ and $A\# H$.
	\end{theorem}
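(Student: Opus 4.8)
The plan is to verify the four defining data of a Morita context for the rings $R=A^H$ and $S=A\#H$, with bimodules $V={}_{A^H}A_{A\#H}$ and $W={}_{A\#H}A_{A^H}$, and then to check the two compatibility relations $v'\cdot[w,v]=(v',w)\cdot v$ and $[w,v]\cdot w'=w\cdot(v,w')$. The recurring tools will be the explicit smash-product multiplication \eqref{e83}, the module-algebra axiom \eqref{e11}, the left-integral property $h\lambda=\varepsilon(h)\lambda$, and the relation $\lambda h=\alpha(h)\lambda$ that defines $\alpha$.

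First I would settle the bimodule structures and the well-definedness of the two pairings. The bimodule $W={}_{A\#H}A_{A^H}$ is exactly the one produced in Lemma~\ref{l22}, so nothing new is needed there; for $V={}_{A^H}A_{A\#H}$ the left $A^H$-action is the restriction of the multiplication of $A$ and the right $A\#H$-action is the twisted one \eqref{e88a}, and since \eqref{e88a} is a genuine right action (a twist by the group-like $\alpha\in H^*$ of the action in Lemma~\ref{l23}) that commutes with left multiplication by an $H$-invariant element, $V$ is a bimodule. For the pairing $(a,b):=\lambda\cdot(ab)$ the image lies in $A^H$ because $h\cdot(\lambda\cdot(ab))=(h\lambda)\cdot(ab)=\varepsilon(h)\,\lambda\cdot(ab)$, while the $A\#H$-balancing $(a\cdot s,b)=(a,s\cdot b)$ reduces, via \eqref{e83} and \eqref{e11}, to a standard integral identity in $H\otimes H$ in which $\alpha$ first makes its appearance. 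For $[a,b]:=a\lambda b$ I would record its explicit value $[a,b]=a(\lambda_{(1)}\cdot b)\#\lambda_{(2)}$ in $A\#H$; the $A^H$-balancing $[ar,b]=[a,rb]$ then follows because an $H$-invariant $r$ satisfies $r\lambda=\lambda r$ in $A\#H$.

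The heart of the proof is the pair of associativity relations, which I would establish by expanding each side to an element of $A$. Writing $v,v'\in V$ and $w\in W$ as $x,y,a\in A$, the relation $v'\cdot[w,v]=(v',w)\cdot v$ becomes, after feeding $[a,x]=a(\lambda_{(1)}\cdot x)\#\lambda_{(2)}$ into the twisted action \eqref{e88a},
\begin{equation*}
S^{-1}\!\bigl((\lambda_{(2)})^{\alpha}\bigr)\cdot\bigl(ya(\lambda_{(1)}\cdot x)\bigr)=\bigl(\lambda\cdot(ya)\bigr)x,
\end{equation*}
which I would reduce using \eqref{e11}, the counit property, and the antipode identities so as to cancel the $S^{-1}$. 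The second relation $[w,v]\cdot w'=w\cdot(v,w')$ is handled symmetrically, now with the left multiplication by $w$ meeting $(v,w')=\lambda\cdot(vw')$.

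The main obstacle is bookkeeping the Sweedler indices of the integral $\lambda$ as they pass simultaneously through the smash product and through the factor $S^{-1}(h^{\alpha})$ of \eqref{e88a}, and seeing precisely why the twist by $\alpha$ is forced. Each time $\lambda$ is slid past an algebra element inside $A\#H$ one produces a factor governed by $\lambda h=\alpha(h)\lambda$, and this is exactly the factor that must be absorbed by the twist $h\mapsto S^{-1}(h^{\alpha})$ built into the right $A\#H$-action on $V$. Carrying out this cancellation rigorously --- using the identity $\lambda^{\alpha}=S(\alpha)$ recorded above and the fact that $h\mapsto h^{\alpha}$ is a Hopf-algebra automorphism --- is the delicate step, and it is precisely what the untwisted action of Lemma~\ref{l23} fails to supply when $H$ is not unimodular.
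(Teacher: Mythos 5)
First, a point of reference: the paper never proves this theorem --- it is quoted from \cite[Theorem 2.10]{CFM}, and the surrounding text only assembles the ingredients (Lemma~\ref{l22}, the distinguished group-like $\alpha\in H^*$, and the twisted action \eqref{e88a}). So your proposal can only be compared with the standard proof in the literature, and in structure it coincides with it: identify the two bimodules, check well-definedness and balancedness of the pairings, and reduce the two Morita compatibility relations to integral identities. The reductions you state are correct. In particular, $[a,x]=a(\lambda_{(1)}\cdot x)\#\lambda_{(2)}$, the observation that $r\lambda=\lambda r$ in $A\#H$ for $r\in A^H$ settles the $A^H$-balancing of $[-,-]$, and your displayed equation is exactly what the first compatibility relation becomes.

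There are, however, two concrete points where the outline as written would derail if executed literally. First, the identity you plan to use, $\lambda^{\alpha}=S(\alpha)$, is a typo in the paper itself: it does not typecheck ($\lambda^{\alpha}\in H$ while $S(\alpha)\in H^{*}$). Radford's identity is $\lambda^{\alpha}=S(\lambda)$, and this corrected form is precisely what closes your displayed relation: expanding the left-hand side with \eqref{e11} and the antipode axiom collapses it to $\bigl(S^{-1}(\lambda^{\alpha})\cdot(ya)\bigr)x$, which equals $\bigl(\lambda\cdot(ya)\bigr)x$ exactly because $S^{-1}(\lambda^{\alpha})=\lambda$. Second, your appeal to ``$h\mapsto h^{\alpha}$ is a Hopf-algebra automorphism'' is dangerous: this map is an algebra automorphism of $H$ but \emph{not} a coalgebra morphism; since $\alpha$ is group-like one has
\begin{equation*}
\Delta(h^{\alpha})=h_{(1)}\otimes (h_{(2)})^{\alpha},
\end{equation*}
not $(h_{(1)})^{\alpha}\otimes (h_{(2)})^{\alpha}$. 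Using the wrong coproduct in the Sweedler bookkeeping produces a false intermediate identity (one can check on Sweedler's four-dimensional Hopf algebra, where $\lambda=x+gx$ and $\alpha(g)=-1$, that the two expansions genuinely differ). Finally, the real content that your outline names but never carries out is a single $H\otimes H$-identity,
\begin{equation*}
\lambda_{(1)}S^{-1}(h^{\alpha})\otimes\lambda_{(2)}=\lambda_{(1)}\otimes\lambda_{(2)}h,\qquad h\in H,
\end{equation*}
derived from $\lambda h=\alpha(h)\lambda$ and the antipode axioms; it is what gives the $A\#H$-balancedness of $(-,-)$ (via $\lambda\cdot\bigl((S^{-1}(h^{\alpha})\cdot a)c\bigr)=\lambda\cdot\bigl(a(h\cdot c)\bigr)$) and the bimodule-morphism property of both pairings, a requirement of the definition that your plan should list explicitly. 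With the typo corrected, the coproduct of $h^{\alpha}$ handled as above, and this identity actually proved, your plan completes into the proof given in \cite{CFM}.
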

	
	Using the left trace function, $\hat{\lambda}(A)=\lambda \cdot A = (A,A)$. If we also consider the ideal $A\lambda A = [A,A]$, the next result is immediate.
	
	\begin{corollary}[{\cite[Corollary 2.9]{Mon2}}]
		Let $H$ be a finite-dimensional Hopf algebra and $A$ a left $H$-module algebra. If $0\neq \lambda \in \int_H^l$ is such that the left trace function $\hat{\lambda}: A \rightarrow A^H$ is surjective and $A\lambda A = A\# H$, then $A\# H$ is Morita equivalent to $A^H$.
	\end{corollary}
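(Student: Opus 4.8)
The plan is to invoke the Morita context supplied by the preceding theorem and to verify that, under the two stated hypotheses, both connecting maps $[-,-]$ and $(-,-)$ are surjective. By the characterization recalled above, surjectivity of both maps is exactly the condition for $R=A^H$ and $S=A\#H$ to be Morita equivalent, so once surjectivity is established there is nothing left to prove.

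First I would recall that, by the previous theorem, $V=W=A$ equipped with the bimodule morphisms $[a,b]=a\lambda b$ and $(a,b)=\lambda\cdot(ab)$ already constitute a Morita context for $A^H$ and $A\#H$. Hence none of the bimodule structures nor the associativity relations $v'\cdot[w,v]=(v',w)\cdot v$ and $[w,v]\cdot w'=w\cdot(v,w')$ need to be re-examined; the entire content of the corollary reduces to the surjectivity of the two maps.

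Next I would identify the image of $(-,-)$ with the range of the left trace function. Since $A$ is unital, for every $a\in A$ one has $(a,1_A)=\lambda\cdot a=\hat{\lambda}(a)$, while conversely $(a,b)=\lambda\cdot(ab)=\hat{\lambda}(ab)$; therefore $\operatorname{Im}(-,-)=\hat{\lambda}(A)$. By hypothesis $\hat{\lambda}:A\rightarrow A^H$ is surjective, so $(-,-)$ maps onto $A^H$. Analogously, the image of $[-,-]$ is spanned by the products $a\lambda b$, i.e. it coincides with the ideal $A\lambda A$ of $A\#H$, so the hypothesis $A\lambda A=A\#H$ says precisely that $[-,-]$ maps onto $A\#H$.

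Finally, with both connecting maps surjective I would conclude directly from the definition that the rings $A^H$ and $A\#H$ are Morita equivalent, equivalently that the four induced functors between their module categories are equivalences. I do not expect a genuine obstacle: the corollary is essentially a translation of the two hypotheses into surjectivity via the identifications $\hat{\lambda}(A)=(A,A)$ and $A\lambda A=[A,A]$. The only point requiring a moment's care is using that $\lambda$ is a left integral and that $A$ is unital in order to obtain the equality $\operatorname{Im}(-,-)=\hat{\lambda}(A)$ rather than merely an inclusion.
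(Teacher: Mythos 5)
Your proposal is correct and follows essentially the same route as the paper: the paper notes the identifications $\hat{\lambda}(A)=(A,A)$ and $A\lambda A=[A,A]$ immediately before the corollary and then declares the result immediate, since the two hypotheses are exactly surjectivity of the two connecting maps of the Morita context from the preceding theorem. Your write-up merely makes explicit the small verification (using $(a,1_A)=\hat{\lambda}(a)$) that the paper leaves implicit.
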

	
	Since simplicity of $A\# H$ implies that $A\lambda A= A\# H$ and semisimplicity of $H$ implies that the trace function is surjective \cite[Corollary 1.3]{CFM}, the next result follows from Theorem~\ref{t7}.
	
	\begin{corollary}
		Let $H$ be a semisimple finite-dimensional Hopf algebra and $A$ a left $H$-module algebra such that $A\# H$ is a simple algebra. Then $A^H \subset A$ is $H^*$-Galois and $A^H$ is Morita equivalent to $A\# H$.
	\end{corollary}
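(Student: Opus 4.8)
The plan is to assemble the statement from Theorem~\ref{t7} and the preceding corollary \cite[Corollary~2.9]{Mon2}; the only genuine work is to extract the two hypotheses ``$A\lambda A = A\#H$'' and ``the left trace function is surjective'' from the simplicity of $A\#H$ and the semisimplicity of $H$, respectively. So I would begin by fixing a nonzero left integral $\lambda\in\int_H^l$. Since $H$ is semisimple, Maschke's theorem for Hopf algebras (recalled just above) guarantees $\varepsilon(\int_H^l)\neq 0$, so $\lambda$ may be normalized so that $\varepsilon(\lambda)=1$.

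With this normalization the surjectivity of $\hat{\lambda}$ is a one-line computation: for any $a\in A^H$ one has $h\cdot a=\varepsilon(h)a$ for all $h\in H$, hence $\hat{\lambda}(a)=\lambda\cdot a=\varepsilon(\lambda)a=a$. Thus $\hat{\lambda}\colon A\to A^H$ restricts to the identity on $A^H$ and is in particular onto (this is the content of \cite[Corollary~1.3]{CFM}). Next I would use simplicity to prove $A\lambda A=A\#H$. The key observation is that $A\lambda A$ is precisely the image of the pairing $[-,-]\colon A\otimes_{A^H}A\to A\#H$, $[a,b]=a\lambda b$, appearing both in Theorem~\ref{t7}(iv) and in the Morita context constructed above. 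Being the image of an $(A\#H,A\#H)$-bimodule morphism landing in $A\#H$, this image is a two-sided ideal of $A\#H$; it is nonzero since it contains $1\lambda 1=\lambda\neq 0$. As $A\#H$ is simple, this ideal must be all of $A\#H$, so $A\lambda A=A\#H$.

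The two conclusions now follow at once. On the one hand, $A\lambda A=A\#H$ says exactly that the map $[-,-]$ of Theorem~\ref{t7}(iv) is surjective, so the implication (iv)$\Rightarrow$(i) of that theorem yields that $A^H\subset A$ is a right $H^*$-Galois extension. On the other hand, having simultaneously $\hat{\lambda}$ surjective and $A\lambda A=A\#H$ is precisely the hypothesis of the preceding corollary \cite[Corollary~2.9]{Mon2}, which then gives that $A\#H$ is Morita equivalent to $A^H$.

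The step I expect to be the main obstacle is the middle one: one must be certain that $A\lambda A$ is a genuine two-sided ideal (rather than merely a subspace or a one-sided submodule), for only then does simplicity force it to equal $A\#H$. This is exactly why I would present $A\lambda A$ as the image of the bimodule morphism $[-,-]$ coming from the Morita context, so that closure under finite sums and under two-sided multiplication by $A\#H$ is automatic; the rest is bookkeeping with the already-established results.
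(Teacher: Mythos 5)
Your proposal is correct and follows essentially the same route as the paper, which disposes of the corollary in the single remark preceding it: semisimplicity of $H$ gives surjectivity of the trace function (via \cite[Corollary 1.3]{CFM}), simplicity of $A\# H$ gives $A\lambda A = A\# H$, and the conclusions follow from Theorem~\ref{t7} together with the preceding corollary. The only difference is that you fill in the details the paper leaves implicit — the normalization $\varepsilon(\lambda)=1$ and the observation that $A\lambda A$ is a two-sided ideal because it is the image of the Morita bimodule map $[-,-]$ — and both of these details are argued correctly.
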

	
	Finally, we mention a relevant result also involving equivalences of categories of modules.
	
	\begin{theorem}[{\cite[Theorem I]{Schn2}}]
		Let $H$ be an arbitrary Hopf algebra with bijective antipode and $A$ a right $H$-comodule algebra. Then the following assertions are equivalent:
		\begin{enumerate}[label=\normalfont(\roman*)]
			\item $A^{\operatorname{co}H} \subset A$ is right $H$-Galois and $A$ is a faithfully flat left (or right) $A^{\operatorname{co}H}$-module.
			\item The Galois map $\beta : A \otimes_{A^{\operatorname{co}H}} A \rightarrow A \otimes H$ is surjective and $A$ is an injective $H$-comodule.
			\item The functor $\Phi: \operatorname{Mod}_{A^{\operatorname{co}H}} \rightarrow \operatorname{Mod}_A^H $ given by $M \mapsto M \otimes_{A^{\operatorname{co}H}} A$ is an equivalence.
			\item The functor $\Phi': {}_{A^{\operatorname{co}H}}{\operatorname{Mod}} \rightarrow {}_A{\operatorname{Mod}}^H$ given by $M \mapsto A \otimes_{A^{\operatorname{co}H}} M$ is an equivalence.
		\end{enumerate}
	\end{theorem}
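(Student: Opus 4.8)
The plan is to write $B := A^{\operatorname{co}H}$ and to study $\Phi$ through its right adjoint, the coinvariants functor $\Psi := (-)^{\operatorname{co}H} : \operatorname{Mod}_A^H \to \operatorname{Mod}_B$, where $\Psi(N) = \{n \in N : \rho_N(n) = n\otimes 1\}$. A standard adjunction argument reduces the statement ``$\Phi$ is an equivalence'' to the requirement that the unit $\eta_M : M \to (M \otimes_B A)^{\operatorname{co}H}$ and the counit $\varepsilon_N : N^{\operatorname{co}H}\otimes_B A \to N$ both be natural isomorphisms, so the whole theorem becomes a matter of controlling these two transformations. Before anything else I would dispose of (iii) $\Leftrightarrow$ (iv): since $S$ is bijective, the Proposition comparing $\beta$ with $\beta'$ lets one pass between the right- and left-handed Galois maps, and the same symmetry interchanges the functors $\Phi$ and $\Phi'$ as well as the categories $\operatorname{Mod}_A^H$ and ${}_A\operatorname{Mod}^H$; hence it suffices to treat one handedness.

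The heart of the argument is (i) $\Leftrightarrow$ (iii), which I would establish by \emph{faithfully flat descent}. Assuming (i), the Galois map $\beta : A \otimes_B A \to A \otimes H$ is bijective, and by Proposition~\ref{p9} it is a morphism of relative Hopf modules; transporting structure along $\beta$ identifies $\operatorname{Mod}_A^H$ with the category of \emph{descent data} for the faithfully flat ring map $B \hookrightarrow A$. Grothendieck's faithfully flat descent then gives $\operatorname{Mod}_B \simeq \operatorname{Mod}_A^H$, and a direct check identifies this equivalence with $\Phi$. Concretely, I would show that the counit $\varepsilon_N$ becomes an isomorphism after applying the faithfully flat functor $-\otimes_B A$ (where the Galois identification together with the fundamental theorem of Hopf modules reduces everything to the cofree case $N = V \otimes H$), and then descend: faithful flatness guarantees that a $B$-module map which is an isomorphism after $-\otimes_B A$ is already one. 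The unit $\eta_M$ is handled symmetrically, using that $(-)^{\operatorname{co}H}$ is the equalizer of $\rho$ and $-\otimes 1$ and that flatness commutes with this equalizer.

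For the converse (iii) $\Rightarrow$ (i), I would use that an equivalence is exact and faithful. Since $\Phi(M) = M \otimes_B A$ preserves and reflects exactness and is faithful, the underlying functor $-\otimes_B A$ is faithfully exact, i.e. $A$ is faithfully flat as a left $B$-module. To recover the Galois condition, I would evaluate the counit at the cofree relative Hopf module $A \otimes H$: under the identification $(A\otimes H)^{\operatorname{co}H}\cong A$ supplied by the fundamental theorem of Hopf modules, this counit is exactly the Galois map $\beta$, and since an equivalence forces every counit to be an isomorphism, $\beta$ is bijective.

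Finally, (i) $\Leftrightarrow$ (ii) is where I expect the main obstacle, since one must upgrade the weaker data of (ii) — mere \emph{surjectivity} of $\beta$ plus \emph{injectivity} of $A$ as an $H$-comodule — to the full strength of (i). Injectivity of $A$ in $\operatorname{Mod}^H$ is equivalent, by Doi's theorem, to the existence of a \emph{total integral} $\phi : H \to A$, a right $H$-comodule map with $\phi(1_H) = 1_A$; the delicate point is to combine such a $\phi$ with a section of the surjection $\beta$ to manufacture a genuine two-sided inverse for $\beta$ and to deduce faithful flatness. This is the hard analytic core of Schneider's theorem, valid for an \emph{arbitrary} (not necessarily finite-dimensional) $H$: from $\phi$ and the section one builds the contracting/descent data that feed into the equivalence of the previous paragraphs, and it is precisely here that bijectivity of the antipode is used in an essential way.
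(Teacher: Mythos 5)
The paper never proves this result: it is quoted directly from Schneider \cite[Theorem I]{Schn2} as a known theorem, so there is no in-paper argument to measure your proposal against; it can only be judged on its own terms. On those terms, your framework for (i) $\Leftrightarrow$ (iii) $\Leftrightarrow$ (iv) is sound and is the standard descent-theoretic route: setting $B=A^{\operatorname{co}H}$, using the adjunction between $-\otimes_B A$ and $(-)^{\operatorname{co}H}$, identifying the counit at the relative Hopf module $A\otimes H$ of Proposition~\ref{p9}(iv) with the Galois map $\beta$ (which correctly gives (iii) $\Rightarrow$ (i), since an equivalence forces all counits to be isomorphisms and forces $-\otimes_B A$ to be faithfully exact), and for (i) $\Rightarrow$ (iii) invoking faithfully flat descent together with the exactness of $0\to M\to M\otimes_B A\rightrightarrows M\otimes_B A\otimes H$, checked after a further application of the faithfully flat functor $-\otimes_B A$, where the Galois isomorphism and the fundamental theorem of Hopf modules split everything.

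The genuine gap is the equivalence with (ii), and it is not a peripheral one: that implication \emph{is} the content of Schneider's theorem. You correctly translate injectivity of $A$ in $\operatorname{Mod}^H$ into the existence of a total integral $\phi:H\to A$ (Doi), but from there you only assert that one must ``combine $\phi$ with a section of the surjection $\beta$ to manufacture a genuine two-sided inverse'' and that this ``builds the contracting/descent data''; no construction is given, and none of the intermediate statements that make this work are even formulated. In Schneider's argument, surjectivity of $\beta$ plus the integral must first be upgraded to surjectivity of the canonical map $M^{\operatorname{co}H}\otimes_B A\to M$ for \emph{every} Hopf module $M$, then to bijectivity, and faithful flatness of $A$ over $B$ has to be extracted by a separate argument; the bijectivity of the antipode enters in explicit formulas at this stage, not merely as a background hypothesis. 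As written, your proposal establishes the comparatively routine cycle (i) $\Leftrightarrow$ (iii) $\Leftrightarrow$ (iv) and leaves (ii) as an acknowledged black box, so it is an outline of the easier part of the theorem rather than a proof of it.
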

	
	Although in this section we tried to cover several properties of Hopf Galois extensions (HGE), the list is large and it would be impossible to address them all. Nevertheless, we shall mention some remarkable recent advances in Hopf Galois theory. The presented list appears in chronological order and is non-exhaustive.
	
	\begin{itemize}
		\item Representation theory of HGE \cite{Schn}. Schneider adressed some questions of representation theory for HGE, such as induction and restriction of simple or indecomposable modules.  In particular, generalizations of classical results on representations of groups and Lie algebras were given.
		\item Maschke's theorem for HGE \cite{Doi}. Classically, Maschke's Theorem states that if $G$ is a finite group and $\Bbbk$ a field whose characteristic does not divide the order of $G$, then the group algebra $\Bbbk G$ is semisimple. For a finitely generated projective Hopf algebra $H$ and an $H$-Galois extension $A^{\operatorname{co}H} \subset A$, Doi proved an analogous of Maschke's Theorem, stating that if $A^{\operatorname{co}H}$ is semisimple Artinian, then so is $A$.
		\item Hopf biGalois objects and Galois theory for HGE \cite{OZ,Sch}. Van Oystaeyen and Zhang proved that if $\Bbbk \subset A$ are fields such that $\Bbbk \subset A$ is a $(L,H)$-biGalois extension, then there is a one-to-one correspondence between the Hopf ideal of $L$ and the $H$-costable intermediate fields $F\subset A$. This correspondence theorem is a generalization of the classical Galois connection in the theory of field extensions. On the other hand, Schauenburg proved that the existence of a $(L,H)$-biGalois object is equivalent to $H$ and $L$ being monoidally co-Morita $\Bbbk$-equivalent, i.e, their monoidal categories of comodules are equivalent as monoidal $\Bbbk$-linear categories. This leads to another Galois correspondence that is studied in \cite{Sch9}. More recently, the existence of a Galois connection between subalgebras of an $H$-comodule algebra and generalized quotients of the Hopf algebra $H$ has been studied in unpublished works of Marciniak and Szamotulski \cite{MSz}.
		\item Hopf Galois coextensions \cite{CWW,DMR}. We studied above the relation between HGE and the normal basis property; however, there exists a coalgebra version of the normal basis property involving the notion of crossed coproduct and cleft coextension, introduced by D{\u a}sc{\u a}lescu, Militaru and Raianu. This is further studied by Caenepeel, Wang and Wang by addressing the notion of Hopf Galois coextension and twisting techniques. More recently, this theory was used in \cite{Hass} to show that Hopf Galois coextensions of coalgebras are the sources of stable anti Yetter-Drinfeld modules.
		\item Hochschild cohomology on HGE \cite{Ste}. {\c{S}}tefan constructed a spectral sequence for $H$-Galois extensions $A^{\operatorname{co}H}\subset A$, and used it to connect the Hochschild cohomologies and homologies of $A$ and $A^{\operatorname{co}H}$.  This is further studied in \cite{MS}.
		\item HGE with central invariants \cite{Rum}. A $H$-Galois extension $A^{\operatorname{co}H}\subset A$ is said to have \emph{central invariants} if $A^{\operatorname{co}H} \subset Z(A)$. Rumynin studied these HGE, addressing some geometric properties which are close to those of principal bundles and Frobenius manifolds.
		\item Prime ideals in HGE \cite{MoS}. Let $H$ be a finite-dimensional Hopf algebra and $A^{\operatorname{co}H}\subset A$ a $H$-Galois extension such that $A$ is a faithfully flat as a left $A^{\operatorname{co}H}$-module. Montgomery and Schneider gave a comparison between the prime ideals of $A^{\operatorname{co}H}$ and of $A$, studying in particular the classical Krull relations. Since Hopf crossed products are examples of faithfully flat Galois extensions, those results were applied to crossed products. These also show that if $H$ is semisimple and semisolvable, then $A$ is semiprime, provided $A^{\operatorname{co}H}$ is $H$-semiprime.
		\item Cyclic homology of HGE \cite{JS}. For a Hopf algebra $H$, the category $\mathcal{CM}_m(H)$ of modular crossed modules over $H$ was introduced by Jara and {\c{S}}tefan. If $M \in \mathcal{CM}_m(H)$ and $L$ is a Hopf subalgebra, these allow the computing of the cyclic homology of $H\otimes_L M$ under certain restrictions for $L$ and $M$. In particular, this was used to calculate the cyclic homology of group algebras, quantum tori and almost symmetric algebras. This is further studied in \cite{HR}.
		\item Algebraic $K$-theory of HGE \cite{AW,BH}. Principal extensions given in Definition~\ref{d19} were firstly introduced by Brzezi{\'n}ski and Hajac, and along the applications given in Section~\ref{hge9}, principal extensions also were used to construct an explicit formula for the Chern-Galois character (which is a homomorphism of Abelian groups that assigns the homology class of an even cyclic cycle to the isomorphism class of a finite-dimensional corepresentation). Later, Ardakov and Wadsley showed that the Cartan map from $K$-theory to G-theory of HGE is a rational isomorphism, provided the subalgebra of coinvariants is regular, the base Hopf algebra is finite-dimensional and its Cartan map is injective in degree zero. In particular, this covers the case of a crossed product of a regular ring with a finite group, and was applied to the study of Iwasawa modules.
		\item Generalized HGE \cite{Sch7}. By definition, an extension $A^{\operatorname{co}H}\subset A$ is $H$-Galois if the Galois map $\beta$ is bijective. Criteria under which surjectivity of $\beta$ (which is usually much easier to verify) is sufficient were studied by Schauenburg. Such conditions were used to investigate the structure of $A$ as an $A^{\operatorname{co}H}$-module and as $H$-comodule. In particular, equivariant projectivity of extensions in several important cases was proven. Moreover, these reconstructed the theory when the Hopf algebra $H$ is interchanged for a quotient coalgebra or an one-sided module of a Hopf algebra.
		\item Homotopy theory of HGE \cite{Hess,KSch}. As we have remarked before, HGE can be viewed as the noncommutative analogues of principal fiber bundles where the role of the 	structural group is played by a Hopf algebra. It is therefore natural to adapt the concept of homotopy to them. Such construction was made by Kassel and Schneider. They classified HGE up to homotopy equivalence and some of their homotopy invariants were studied. Later, Hess developed a theory of homotopic HGE in monoidal categories (with compatible model category structure), which generalizes the case of structured ring spectra.
		\item HGE in braided tensor categories \cite{ZZ}. Braided Hopf algebras have attracted much attention in both mathematics and mathematical physics for playing an important role in the classification of finite-dimensional pointed Hopf algebras (see e.g. \cite{AnS}). The immediate generalization of such setup is the concept of braided tensor categories (BTC). Hence, unpublished work of Zhang and Zhang attempts to generalize Galois theory to BTC by showing that if the category $\mathcal{C}$ is BTC and has (co)equalizers, $A=B \#_\sigma H$ is a crossed product algebra if and only if the extension $B\subset A$ is Galois, the canonical map $q : A\otimes A \rightarrow A \otimes_B A$ is split, and $A$ is isomorphic as a left $B$-module and as a right $H$-comodule to $B \otimes H$ in $\mathcal{C}$ (compare with Theorem~\ref{t8}).
		\item HGE for weak Hopf algebras and Hopf algebroids \cite{Boh,CD}. Both theories of weak Hopf algebras and Hopf algebroids are natural generalizations of the classical notion (and, under some conditions, equivalent). Caenepeel and De Groot have studied the Galois theory for weak Hopf algebras objects, while Böhm for Hopf algebroids.
		\item Morita (auto)equivalences of HGE \cite{CCMT,CaM}. Let $A^{\operatorname{co}H} \subset A$ and $B^{\operatorname{co}H} \subset B$  be two $H$-Galois extensions. Caenepeel, Crivei, Marcus and Takeuchi investigated the category ${}_A{\operatorname{Mod}}_B^H$ of relative Hopf bimodules, and therefore the Morita equivalences between $A$ and $B$ induced by them. More recently, the first and third mentioned authors addressed $H$-Morita autoequivalences of HGE, introduced the concept of $H$-Picard group, and established an exact sequence linking the $H$-Picard group of the comodule algebra $A$ and the Picard group of $A^{\operatorname{co}H}$.
		\item Generic HGE \cite{Kas4,Kas3}. Aljadeff and Kassel studied twisted algebras, which are associative algebras ${}^{\alpha} H$ obtained from a given Hopf algebra $H$ by twisting its product by a cocycle $\alpha$. These coincide with the class of cleft objects (as we saw in the examples, classical Galois extensions and strongly graded algebras belong to this class). The authors attached two universal algebras $U(H)^{\alpha}$ and $A(H)^{\alpha}$ to each twisted algebra ${}^{\alpha}{H}$, so the second author later studied $A(H)^{\alpha}$, which is a generic version of ${}^{\alpha}{H}$. Then, he calculated the generic cocycle cohomologous to the original cocycle $\alpha$, and considered the commutative algebra $B(H)^{\alpha}$ generated by the values of the generic cocycle and of its convolution inverse. It was shown that $A(H)^{\alpha}$ is a cleft $H$-Galois extension of $B(H)^{\alpha}$, called a generic $H$-Galois extension. Some theory regarding versal deformation spaces was also developed.
		\item Cohen-Macaulay invariant subalgebra of dense HGE \cite{JZh}. Let $H$ be a finite-dimensional Hopf algebra and $A$ a left $H$-module (and hence, a right $H^*$-comodule). The algebra extension $A^H \subset A$ is called a $H^*$-dense Galois extension if the cokernel of the Galois map $\beta: A\otimes_{A^H} A \rightarrow A \otimes H^*$ is finite-dimensional (no bijectivity required). Obviously the concept of Hopf dense Galois extension is a weaker version of that of HGE. When $H$ is semisimple and $A$ is left $H$-Noetherian, He and Zhang studied (unpublished work) whether $A^H$ inherits the AS-Cohen-Macaulay property from $A$ under some mild conditions, and whether $A$, when viewed as a right $A^H$-module, is a Cohen-Macaulay module.
		\item HGE for Hopf categories \cite{BCV,CaF}. The concept of $\Bbbk$-algebra can be translated to category theory using the notion of $\Bbbk$-linear category. Similarly, categorical notions of bialgebras and Hopf algebras have been introduced (respectively, $\Bbbk$-linear semi-Hopf categories and $\Bbbk$-linear Hopf categories). Batista, Caenepeel and Vercruysse proved that several classical properties of Hopf algebras can be generalized to Hopf categories. Later, the second mentioned author and Fieremans introduced the notion of a Hopf--Galois category extension. Also, the concept of descent datum (see Definition~\ref{d14}) was categorized.
		\item Galois cowreaths \cite{BT}. Bulacu and Torrecillas studied pre-Galois cowreath, which are a generalization of HGE to monoidal categories via the language of cowreaths (coalgebras in a suitable associative monoidal category associated to an algebra in a monoidal category). 
		\item Discriminant of HGE \cite{Zhu}. The discriminant of an algebra over a commutative ring is a well known construction that has been used to solve several algebraic problems such as the Zariski cancellation problem (see e.g. \cite{LWZ}). In work of Zhu (to appear), the discriminant of the smash product was calculated by finding a formula for the discriminant of a HGE.
		\item Partial HGE \cite{CFPQT}. Using the language of partial actions, a notion of partial (co)module algebra can be given. Hence, the mentioned paper (to appear) introduced Hopf--Galois partial extensions and studied several properties analogous to those presented in this section.
	\end{itemize}
	
	\subsection{Quantum torsors}\label{se4}
	
	We mentioned in Remark~\ref{rem1} that the notion of (classical) $G$-torsor is present in many algebraic formulations of different contexts, such as vector bundle, affine scheme, categorical bundles, etc.; furthermore, dualizing such setup, we motivated the notion of Hopf Galois extension based on the bijectivity of the map $\alpha^*$ (see Section~\ref{hge9}). However, in recent years different approaches to noncommutative torsors have been achieved. In this section, we review the one given by Grunspan which, instead of working with $\alpha^*$ and the freeness of the action, uses a ``parallelogram'' property of torsors \cite{Gru2}. As Zoran \v{S}koda has pointed to us, this same notion was discovered independently by him \cite{Sko}. We shall give a rough motivation to this ``parallelogram'' property with one simple example (see also \cite[Section 1.2]{Gru2} and \cite[Section 1.1]{Sko}).
	
	Recall that, when working with vectors on the Euclidean plane, a point is fixed and called the \emph{origin}. Thus, any point in the plane is identified with the arrow going from the origin to that point. This lets us add points in the plane by adding their arrows (in other words, the parallelogram property), making $\mathbb{R}^2$ a group. However, if we forget the origin, we lost the identification of points with arrows. In this case we cannot longer add them, but we can still subtract two of them and get an arrow. Thus, the plane (without origin) is a $\mathbb{R}^2$-torsor. The moral of this is that, although we are not longer able to explicitly apply the parallelogram property with arrows, we can still associate to three points $a,b,c$ a fourth point $d$ such that $a,b,c,d$ is a parallelogram. In multiplicative notation for an arbitrary $G$-torsor, we have identified the assignation $(a,b,c)\mapsto d:=ab^{-1}c$.
	
	The following axioms dualize this setup to the noncommutative case.
	
	\begin{definition}\label{d13}
		A \emph{quantum $K$-torsor} (or \emph{quantum $K$-heap}) is a $K$-algebra $T$ together with an algebra morphism $\mu: T \rightarrow T\otimes T^{\operatorname{op}} \otimes T $ such that the following relations hold:
		\begin{align}
			(\operatorname{id}_T \otimes \operatorname{id}_{T^{\operatorname{op}}} \otimes \mu)\mu &= (\mu\otimes \operatorname{id}_{T^{\operatorname{op}}} \otimes \operatorname{id}_T)\mu, \label{neq2} \\
			(m\otimes \operatorname{id}_T)u &= u\otimes \operatorname{id}_T.\label{neq4}\\
			(\operatorname{id}_T\otimes m)u &= \operatorname{id}_T\otimes u, \label{neq3}
		\end{align}
	\end{definition}
	
	We extend Heyneman--Sweedler notation by writing $\mu(x)=x^{(1)}\otimes x^{(2)} \otimes x^{(3)}$ for $x\in T$. Hence, condition~\eqref{neq2} can be written as
	\begin{equation}\label{e42}
		\mu(x^{(1)}) \otimes x^{(2)} \otimes x^{(3)}  = x^{(1)} \otimes x^{(2)} \otimes \mu(x^{(3)}),
	\end{equation}
	while \eqref{neq4} and \eqref{neq3} are
	$x^{(1)} \otimes x^{(2)}x^{(3)} = x \otimes 1_T$ and $x^{(1)} x^{(2)} \otimes x^{(3)} = 1_T \otimes x$, respectively.
	
	The torsor is said to be \emph{commutative} if $T$ is a commutative algebra. If $\mu=\mu^{\operatorname{op}}$, where $\mu^{\operatorname{op}}(x)=x^{(3)}\otimes x^{(2)} \otimes x^{(1)}$, the torsor is said to be \emph{equipped with a commutative law}. A morphism of quantum torsors is an algebra map $\phi: T \to T'$ such that $\mu_{T'}(\phi(x)) = (\phi \otimes \phi \otimes \phi) \mu_{T}(x)$, for all $x\in T$. The category of quantum torsors is denoted by $\operatorname{QHeaps}$. The main theorem of \cite{Sko} established a categorical isomorphism between certain subcategory of $\operatorname{QHeaps}$ and the category $K$-$\operatorname{HopfAlg}$.
	
	In order to relate quantum torsors with Hopf Galois extensions, we briefly recall the mechanism of \emph{faithfully flat descent} for extensions of noncommutative rings and a related result.
	
	\begin{definition}[Descent datum]\label{d14}
		Let $R$ be a subring of the ring $S$, with the inclusion map denoted by $\eta: R\rightarrow S$, and let $M$ be a left $S$-module with structure map $\gamma: S \otimes M \rightarrow M$. An \emph{$S/R$-descent datum on $M$} is a left $S$-module map $D:M \rightarrow S\otimes_R M$ such that
		\begin{equation}\label{dd}
			(\operatorname{id}_S\otimes_R D)D = (\operatorname{id}_S\otimes_R \eta \otimes_R \operatorname{id}_M)D \qquad \mbox{and} \qquad \gamma D = \operatorname{id}_M.
		\end{equation}
	\end{definition}
	
	Consider the pairs $(M,D)$, consisting of a $S$-module $M$ and a $S/R$-descent datum $D$ on $M$, together with arrows $f: (M,D)\rightarrow (N,E)$, where $f:M\rightarrow N$ is a $S$-module morphism such that $Ef=(\operatorname{id}_S\otimes f)D$. This category is denoted by $\operatorname{DD}(S/R)$. When $S$ is faithfully flat as a right $R$-module, there exists an equivalence between the category of left $R$-modules and $\operatorname{DD}(S/R)$. We present a formulation of this result due to Schauenburg.
	
	\begin{lemma}[{\cite[Section 1.3]{BR}}]\label{l6}
		Let $R$ be a subring of the ring $S$ with the inclusion map denoted by $\eta: R\rightarrow S$. If the category of left $R$-modules is denoted by $ {}_R{\operatorname{Mod}}$, then the assignation ${}_R{\operatorname{Mod}} \rightarrow \operatorname{DD}(S/R) $ given by $N \mapsto (S\otimes_R N,D)$, where
		\begin{equation*}
			D(s\otimes n)= s\otimes 1 \otimes n \in S\otimes_R\otimes S\otimes_R N,
		\end{equation*}
		induces a functor. Moreover, if $S$ is faithfully flat as a right $R$-module, then this functor is an equivalence. The inverse equivalence is given in objects as
		\begin{equation*}
			(M,D) \mapsto {}^D M:=\{m\in M: D(m)=1\otimes m \}.
		\end{equation*}
		In particular, for every descent datum $(M,D)$, the map $f:S\otimes_R {}^D M\rightarrow M$ given by $s\otimes m \mapsto sm$ is an isomorphism with inverse induced by $D$, i.e., $f^{-1}:M\rightarrow S \otimes_R  {}^D{M}$ is given by $f^{-1}(m)=D(m)$.
	\end{lemma}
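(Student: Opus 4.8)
The plan is to produce the claimed functor, exhibit an explicit quasi-inverse, and show that the unit and counit of the resulting adjunction are isomorphisms exactly when $S$ is faithfully flat. Write $F\colon{}_R\operatorname{Mod}\to\operatorname{DD}(S/R)$ for the assignment $N\mapsto(S\otimes_R N,D)$ with $D(s\otimes n)=s\otimes 1\otimes n$, and $G\colon\operatorname{DD}(S/R)\to{}_R\operatorname{Mod}$ for $(M,D)\mapsto{}^D M$. First I would check that $F$ is well defined, i.e. that $D$ genuinely is an $S/R$-descent datum in the sense of Definition~\ref{d14}: both requirements in \eqref{dd} are direct computations, since $(\operatorname{id}_S\otimes_R D)D(s\otimes n)=s\otimes 1\otimes 1\otimes n=(\operatorname{id}_S\otimes_R\eta\otimes_R\operatorname{id})D(s\otimes n)$ and $\gamma D(s\otimes n)=s\otimes n$, and functoriality on morphisms is equally routine. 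Dually I would verify that $G$ is well defined: because $D$ is left $S$-linear and the tensor product is over $R$, for $r\in R$ and $m\in{}^D M$ one has $D(rm)=rD(m)=r(1\otimes m)=r\otimes m=1\otimes rm$, so ${}^D M$ is an $R$-submodule of $M$, and restriction of $S$-linear maps supplies the action on morphisms.

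The substance of the lemma is that, under faithful flatness, the unit $\eta_N\colon N\to{}^D(S\otimes_R N)$, $n\mapsto 1\otimes n$, and the counit $f\colon S\otimes_R{}^D M\to M$, $s\otimes m\mapsto sm$, are isomorphisms. For the unit, I would observe that ${}^D(S\otimes_R N)$ is by definition the equalizer of the two maps $s\otimes n\mapsto s\otimes 1\otimes n$ and $s\otimes n\mapsto 1\otimes s\otimes n$ from $S\otimes_R N$ to $S\otimes_R S\otimes_R N$, so the claim is precisely that the augmented (Amitsur) complex
\[
0\to N\xrightarrow{\ n\mapsto 1\otimes n\ } S\otimes_R N\xrightarrow{\ \delta\ } S\otimes_R S\otimes_R N,
\]
with $\delta$ the difference of those two face maps, is exact.

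I expect this exactness to be the main obstacle, since it is the one place where faithful flatness does real work. The standard device is that faithful flatness of $S$ as a right $R$-module lets one test exactness after applying $S\otimes_R-$; once an extra tensor factor is present, the complex acquires a contracting homotopy assembled from the multiplication map (e.g. $s\otimes s'\otimes n\mapsto ss'\otimes n$), so the base-changed sequence is split exact, and faithful flatness then reflects exactness back to the original sequence. Keeping track of which face map is which, and of the homotopy identities, is the delicate part.

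For the counit, which is the ``in particular'' assertion, I would show that $D$ itself is the inverse. The identity $\gamma D=\operatorname{id}_M$ from \eqref{dd} gives $f\circ D=\operatorname{id}_M$ at once, while the cocycle condition $(\operatorname{id}_S\otimes_R D)D=(\operatorname{id}_S\otimes_R\eta\otimes_R\operatorname{id}_M)D$ says exactly that $D(m)$ lies in $S\otimes_R{}^D M$: applying $S\otimes_R-$ to the defining kernel sequence $0\to{}^D M\to M\to S\otimes_R M$ (legitimate by flatness) identifies $S\otimes_R{}^D M$ with $\ker(\operatorname{id}_S\otimes(D-1\otimes-))$, and the cocycle condition places $D(m)$ in this kernel. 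Granting that, for $m\in{}^D M$ the $S$-linearity of $D$ gives $D(f(s\otimes m))=D(sm)=sD(m)=s(1\otimes m)=s\otimes m$, so $D\circ f=\operatorname{id}$ as well. Combining the two directions yields the mutually inverse equivalences $F$ and $G$, and the closing formula $f^{-1}(m)=D(m)$ is just a restatement of this counit computation.
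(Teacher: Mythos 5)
Your proposal cannot be compared with a proof in the paper, because the paper gives none: Lemma~\ref{l6} is stated purely as a quotation of faithfully flat descent from the cited reference and is then used as a black box (e.g.\ in the proofs of Theorem~\ref{t6} and its generalization). Judged on its own merits, your argument is the standard Grothendieck descent proof and it is correct. The well-definedness checks for both functors are right ($D(s\otimes n)=s\otimes 1\otimes n$ visibly satisfies both conditions of \eqref{dd}, and ${}^D M$ is an $R$-submodule because $r\otimes m=1\otimes rm$ when the tensor is over $R$). Your counit computation is the efficient one: $\gamma D=\operatorname{id}_M$ gives $f D=\operatorname{id}_M$, and the cocycle condition, read through the flat identification of $S\otimes_R{}^D M$ with $\ker\bigl(\operatorname{id}_S\otimes(D-(1\otimes-))\bigr)$, shows $D(m)\in S\otimes_R{}^D M$, after which $D f=\operatorname{id}$ follows from $S$-linearity of $D$. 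Your reduction of the unit to exactness of the augmented Amitsur complex, proved by base change along the faithfully flat $S_R$, a multiplication-induced contracting homotopy, and reflection of exactness, is exactly the right mechanism; the identities one needs are $h_0(\operatorname{id}_S\otimes e)=\operatorname{id}$ and $(\operatorname{id}_S\otimes e)h_0-h_1(\operatorname{id}_S\otimes\delta)=\operatorname{id}$, where $e(n)=1\otimes n$, $\delta$ is the difference of the two face maps, and $h_0,h_1$ multiply the first two tensor factors. The only items a complete write-up should add are routine: naturality of the unit and counit, and the verification that $f$ is actually a morphism in $\operatorname{DD}(S/R)$, i.e.\ $D_M f=(\operatorname{id}_S\otimes f)D_{\mathrm{can}}$, which is a one-line check. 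Neither constitutes a gap.
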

	
	The next result shows that every torsor induces a descent datum.
	
	\begin{lemma}[{\cite[Lemma 3.3]{Sch6}}]\label{l7}
		Let $T$ be a quantum $K$-torsor. If $\mu: T \rightarrow T\otimes T^{\operatorname{op}} \otimes T $ is the associated map to $T$, then
		\begin{equation*}
			D:=(m\otimes \operatorname{id}_T \otimes \operatorname{id}_T)(\operatorname{id}_T\otimes \mu) : T\otimes T \rightarrow T\otimes T \otimes T
		\end{equation*}
		is a $T/K$-descent datum on the left $T$-module $T\otimes T$. Moreover, it satisfies
		\begin{equation*}
			(\operatorname{id}_T\otimes D)\mu(x)= x^{(1)} \otimes 1 \otimes x^{(2)} \otimes x^{(3)} = (\operatorname{id}_T\otimes 1_T \otimes \operatorname{id}_T\otimes \operatorname{id}_T )\mu(x).
		\end{equation*}
	\end{lemma}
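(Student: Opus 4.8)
The plan is to unwind $D$ in the extended Heyneman–Sweedler notation $\mu(x)=x^{(1)}\otimes x^{(2)}\otimes x^{(3)}$ and then check, in order, the three requirements for $(M,D)$ with $M:=T\otimes T$ to be a $T/K$-descent datum: left $T$-linearity, the counit condition $\gamma D=\operatorname{id}_M$, and the cocycle condition in \eqref{dd}. Since $(\operatorname{id}_T\otimes\mu)(a\otimes b)=a\otimes b^{(1)}\otimes b^{(2)}\otimes b^{(3)}$ and the leading $m$ multiplies the first two tensorands in $T$, the map reads
\[
D(a\otimes b)=ab^{(1)}\otimes b^{(2)}\otimes b^{(3)},\qquad a,b\in T .
\]
Here $T$ acts on $M$ through the first tensorand, and correspondingly on $T\otimes_K M=T\otimes T\otimes T$ (all tensor products being over $K$) through the outermost factor.

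The first two requirements should be short. Left $T$-linearity $D(t\cdot(a\otimes b))=t\cdot D(a\otimes b)$ will follow at once from the associativity of $m$, since $t$ only multiplies the leading factor. For the counit condition I would apply the action map $\gamma$ (left multiplication on the first slot of $M$) to obtain
\[
\gamma D(a\otimes b)=(ab^{(1)})b^{(2)}\otimes b^{(3)}=a\bigl(b^{(1)}b^{(2)}\bigr)\otimes b^{(3)}=a\cdot(1_T\otimes b)=a\otimes b,
\]
the third equality being exactly axiom \eqref{neq3}.

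The cocycle condition is the heart of the argument. Expanding both sides on a generator $a\otimes b$ gives
\[
(\operatorname{id}_T\otimes D)D(a\otimes b)=ab^{(1)}\otimes b^{(2)}(b^{(3)})^{(1)}\otimes (b^{(3)})^{(2)}\otimes (b^{(3)})^{(3)},
\]
while $(\operatorname{id}_T\otimes\eta\otimes\operatorname{id}_M)D(a\otimes b)=ab^{(1)}\otimes 1_T\otimes b^{(2)}\otimes b^{(3)}$; as $a$ enters only through the leading factor it suffices to match the two four-fold tensors at $a=1_T$. The plan is to start from coassociativity \eqref{e42}, written out fivefold as
\[
(b^{(1)})^{(1)}\otimes (b^{(1)})^{(2)}\otimes (b^{(1)})^{(3)}\otimes b^{(2)}\otimes b^{(3)}=b^{(1)}\otimes b^{(2)}\otimes (b^{(3)})^{(1)}\otimes (b^{(3)})^{(2)}\otimes (b^{(3)})^{(3)},
\]
and to apply the partial multiplication $\operatorname{id}_T\otimes m\otimes\operatorname{id}_T\otimes\operatorname{id}_T$ (collapsing the second and third tensorands) to both sides. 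On the right this yields precisely the expression appearing in $(\operatorname{id}_T\otimes D)D(1_T\otimes b)$, while on the left it yields $(b^{(1)})^{(1)}\otimes (b^{(1)})^{(2)}(b^{(1)})^{(3)}\otimes b^{(2)}\otimes b^{(3)}$, which collapses to $b^{(1)}\otimes 1_T\otimes b^{(2)}\otimes b^{(3)}$ by axiom \eqref{neq4} applied to $b^{(1)}$. This establishes the cocycle condition. The ``moreover'' identity then comes for free: since $D(1_T\otimes b)=\mu(b)$, the equality $(\operatorname{id}_T\otimes D)\mu(x)=x^{(1)}\otimes 1_T\otimes x^{(2)}\otimes x^{(3)}$ is just the cocycle condition read off at the generator $1_T\otimes x$, and its right-hand side is by definition $(\operatorname{id}_T\otimes 1_T\otimes\operatorname{id}_T\otimes\operatorname{id}_T)\mu(x)$.

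I expect the only real obstacle to be the bookkeeping forced by the $T^{\operatorname{op}}$ middle slot: one must be careful that the partial products $b^{(1)}b^{(2)}$, $(b^{(1)})^{(2)}(b^{(1)})^{(3)}$ and $b^{(2)}(b^{(3)})^{(1)}$ are all formed with the multiplication $m$ of $T$—the same $m$ used to define $D$—so that the appeals to \eqref{neq3} and \eqref{neq4} are legitimate. Once the slots are correctly aligned, each step is a single application of a torsor axiom, and no genuinely hard estimate or construction is involved.
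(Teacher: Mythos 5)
Your proposal is correct and follows essentially the same route as the paper's own proof: the same expansion $D(a\otimes b)=ab^{(1)}\otimes b^{(2)}\otimes b^{(3)}$, left $T$-linearity from associativity, the action condition $\gamma D=\operatorname{id}$ via \eqref{neq3}, and the cocycle condition obtained by combining coassociativity \eqref{e42} with \eqref{neq4} applied to $b^{(1)}$. Your reduction to $a=1_T$ by left $T$-linearity and your reading of the ``moreover'' identity as the cocycle condition at $1_T\otimes x$ are only presentational variants of what the paper does with a general first argument.
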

	
	\begin{proof}
		In Heyneman--Sweedler notation, for every $x\otimes y\in T\otimes T$, we have
		\begin{equation*}
			D(x\otimes y) = xy^{(1)} \otimes y^{(2)} \otimes y^{(3)}.
		\end{equation*}
		Left $T$-linearity of this map is obvious. Additionally, for every $x\otimes y \in T\otimes T$ we have
		\begin{align*}
			(\operatorname{id}_T \otimes D)D(x\otimes y) &= (\operatorname{id}_T \otimes D)(xy^{(1)} \otimes y^{(2)} \otimes y^{(3)}) = xy^{(1)} \otimes D( y^{(2)} \otimes y^{(3)})\\
			&= xy^{(1)} \otimes (m\otimes \operatorname{id}_T \otimes \operatorname{id}_T)( y^{(2)} \otimes \mu(y^{(3)}) ) \\
			&=  ( m \otimes m \otimes \operatorname{id}_T \otimes \operatorname{id}_T ) ( x \otimes y^{(1)} \otimes y^{(2)} \otimes  \mu(y^{(3)}) )\\
			&=  ( m \otimes m \otimes \operatorname{id}_T \otimes \operatorname{id}_T ) ( x \otimes \mu (y^{(1)}) \otimes y^{(2)} \otimes  y^{(3)})\\
			&= ( m \otimes \operatorname{id}_T \otimes \operatorname{id}_T \otimes \operatorname{id}_T ) ( x \otimes {y^{(1)}}^{(1)} \otimes {y^{(1)}}^{(2)}{y^{(1)}}^{(3)} \otimes y^{(2)} \otimes  y^{(3)})\\
			&= ( m \otimes \operatorname{id}_T \otimes \operatorname{id}_T \otimes \operatorname{id}_T ) ( x \otimes y^{(1)} \otimes 1_T \otimes y^{(2)} \otimes  y^{(3)})\\
			&= xy^{(1)} \otimes  1_T \otimes y^{(2)} \otimes  y^{(3)}\\
			&= (\operatorname{id}_T \otimes 1_T \otimes \operatorname{id}_{T\otimes T}) (xy^{(1)} \otimes y^{(2)} \otimes y^{(3)})\\
			&=(\operatorname{id}_T \otimes 1_T \otimes \operatorname{id}_{T\otimes T})D(x\otimes y),
		\end{align*}
		which is the left condition of \eqref{dd}. For the right one, we have
		\begin{align*}
			\gamma_{T\otimes T} D(x\otimes y) &=\gamma_{T\otimes T}(xy^{(1)} \otimes y^{(2)} \otimes y^{(3)}) = xy^{(1)} y^{(2)} \otimes y^{(3)}\\
			&= (m \otimes \operatorname{id}_T ) (x \otimes y^{(1)}y^{(2)} \otimes y^{(3)}) = (m \otimes \operatorname{id}_T ) (x \otimes 1_T \otimes y) = x\otimes y.
		\end{align*}
		Hence, $D$ is a $T/K$-descent datum on the left $T$-module $T\otimes T$.
	\end{proof}
	
	Now we are ready to prove that every faithfully flat quantum $K$-torsor $T$ is a right $H$-Galois object, for a suitable Hopf algebra $H$ provided by the categorical equivalence of Lemma~\ref{l7}.
	
	\begin{theorem}[{\cite[Theorem 3.4]{Sch6}}]\label{t6}
		Let $T$ be a faithfully flat quantum $K$-torsor and
		\begin{equation*}
			H:={}^D (T\otimes T)= \{ x\otimes y \in T\otimes T : xy^{(1)} \otimes y^{(2)} \otimes y^{(3)} = 1\otimes x \otimes y \}.
		\end{equation*}
		Then the following assertions hold:
		\begin{enumerate}[label=\normalfont(\roman*)]
			\item $H$ is a Hopf algebra. The algebra structure is that of a subalgebra of $T^{\operatorname{op}}\otimes T$; comultiplication and counit are given by
			\begin{equation*}
				\Delta(x\otimes y)=x\otimes y^{(1)} \otimes y^{(2)} \otimes y^{(3)} \qquad \mbox{and} \qquad \varepsilon(x\otimes y )=xy.
			\end{equation*}
			\item $T$ is a right $H$-comodule algebra with structure map given by
			\begin{equation*}
				\rho(x)=\mu(x)=x^{(1)} \otimes x^{(2)} \otimes x^{(3)}.
			\end{equation*}
			Moreover, $T^{\operatorname{co}H}=K$.
			\item $T$ is a right $H$-Galois object.
		\end{enumerate}	
	\end{theorem}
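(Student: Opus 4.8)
The plan is to run everything through the faithfully flat descent machinery already assembled in Lemmas~\ref{l7} and~\ref{l6}. Since $T$ is faithfully flat over $K$ and $D(x\otimes y)=xy^{(1)}\otimes y^{(2)}\otimes y^{(3)}$ is a $T/K$-descent datum on the left $T$-module $T\otimes T$, Lemma~\ref{l6} supplies at once an isomorphism $f\colon T\otimes_K H\to T\otimes T$, $t\otimes(x\otimes y)\mapsto tx\otimes y$, whose inverse is induced by $D$. This single isomorphism is the engine for all three parts. Moreover, because $T$ is $K$-flat, tensoring with $T$ commutes with the equalizer that defines $H={}^D(T\otimes T)$, so I may identify $T\otimes_K H$ with $\{z\in T\otimes T\otimes T:(\operatorname{id}_T\otimes D)(z)=(\operatorname{id}_T\otimes\iota)(z)\}$, where $\iota(x\otimes y)=1\otimes x\otimes y$. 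This is the device I will use to decide when a map lands in the correct subspace, since naive Heyneman--Sweedler bookkeeping only places the output in some $T^{\otimes n}$.

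For part (ii) I would first compute $T^{\operatorname{co}H}=\{x:\mu(x)=x\otimes 1_H\}=\{x:x^{(1)}\otimes x^{(2)}\otimes x^{(3)}=x\otimes 1\otimes 1\}$. The inclusion $K\subseteq T^{\operatorname{co}H}$ follows from $\mu$ being an algebra map, whence $\mu(1)=1\otimes 1\otimes 1$; conversely, inserting such an $x$ into \eqref{neq3} gives $x\otimes 1=1\otimes x$ in $T\otimes T$, which forces $x\in K$ by the exactness of the sequence $0\to K\to T\to T\otimes_K T$ (difference of the two canonical maps) that comes from faithful flatness. To see that $\rho=\mu$ genuinely corestricts to a map into $T\otimes H$, I invoke the \emph{Moreover} identity of Lemma~\ref{l7}, namely $(\operatorname{id}_T\otimes D)\mu(x)=x^{(1)}\otimes 1\otimes x^{(2)}\otimes x^{(3)}=(\operatorname{id}_T\otimes\iota)\mu(x)$, which says precisely that $\mu(x)$ lies in the equalizer identified above. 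Coassociativity and counitality of the comodule structure, together with the algebra-map property \eqref{e12}, then reduce to \eqref{neq2} and to $\mu$ being an algebra morphism. Part (iii) is almost immediate afterwards: with $T^{\operatorname{co}H}=K$ the Galois map of Definition~\ref{d7} is $\beta(x\otimes y)=(x\otimes 1)\rho(y)=xy^{(1)}\otimes y^{(2)}\otimes y^{(3)}$, which is exactly $D$ corestricted to $T\otimes H$; but Lemma~\ref{l6} identifies this corestriction with $f^{-1}$, an isomorphism, so $\beta$ is bijective and $T$ is a right $H$-Galois object.

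For part (i) I would first make $H$ a $K$-flat bialgebra and then upgrade it to a Hopf algebra. That $H$ is a subalgebra of $T^{\operatorname{op}}\otimes T$ containing $1\otimes 1$ uses that $\mu$ is an algebra map together with \eqref{neq2}--\eqref{neq3}; the counit $\varepsilon(x\otimes y)=xy$ takes values in $K$ because applying $\operatorname{id}_T\otimes m$ to the defining relation of $H$ and using \eqref{neq4} yields $xy\otimes 1=1\otimes xy$, and it is multiplicative on $H$ precisely because $\varepsilon(H)\subseteq K\,1_T$ is central. Coassociativity of $\Delta(x\otimes y)=x\otimes y^{(1)}\otimes y^{(2)}\otimes y^{(3)}$ is the associativity axiom \eqref{e42}, counitality comes from \eqref{neq4} and \eqref{neq3}, and the bialgebra compatibility follows by the same manipulations; its $K$-flatness is inherited from $T\otimes_K H\cong T\otimes T$ by faithfully flat descent of flatness along $K\to T$. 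At this point $H$ is a $K$-flat bialgebra and, by part (iii), $K\subset T$ is a faithfully flat $H$-Galois extension, so Remark~\ref{r1} forces $H$ to be a Hopf algebra; this is cleaner than exhibiting the antipode by hand from $\mu$.

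The step I expect to be the main obstacle is the well-definedness of $\Delta$ and $\rho$, that is, showing $\Delta$ maps into $H\otimes H$ rather than merely into $T^{\otimes 4}$, and $\rho$ into $T\otimes H$ rather than $T^{\otimes 3}$. This is exactly where the torsor axioms \eqref{neq2}--\eqref{neq3} must be fed into the flatness identification of $T\otimes_K H$ (and of $T\otimes_K(H\otimes H)$) with the appropriate equalizers inside tensor powers of $T$; verifying that the candidate images satisfy the equalizer conditions, rather than assuming they do, is the technical heart of the argument.
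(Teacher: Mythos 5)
Your proposal is correct and takes essentially the same approach as the paper: both arguments rest on Lemmas~\ref{l7} and~\ref{l6} as the descent engine, establish well-definedness of $\rho$ and $\Delta$ by checking the equalizer conditions supplied by flatness of $T$, identify the Galois map with the descent datum $D$ so that Lemma~\ref{l6} gives bijectivity, and invoke Remark~\ref{r1} to upgrade the $K$-flat bialgebra $H$ to a Hopf algebra instead of constructing the antipode by hand. Even the auxiliary computations (the coinvariants $T^{\operatorname{co}H}=K$ via \eqref{neq3} and faithful flatness, and $\varepsilon(x\otimes y)=xy\in K$ via \eqref{neq4}) coincide with the paper's.
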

	
	\begin{proof}
		(i) Since $\mu$ is a morphism of algebras, given $x\otimes y, a\otimes b\in H$, we have
		\begin{align*}
			D[(x\otimes y)(a\otimes b)]&=D(xa\otimes yb)=xa(yb)^{(1)} \otimes (yb)^{(2)} \otimes (yb)^{(3)} \\
			&= xa y^{(1)} b^{(1)} \otimes b^{(2)} y^{(2)}  \otimes y^{(3)} b^{(3)}= ab^{(1)}\otimes b^{(2)}x \otimes yb^{(3)} \\&= 1 \otimes ax \otimes yb = 1 \otimes xa \otimes yb  = 1 \otimes (x\otimes y)(a\otimes b).
		\end{align*}
		Here we used that the second coordinate of the tensor is in $T^{\operatorname{op}}$. The above proves that $H$ is a subalgebra of $T^{\operatorname{op}}\otimes T$. Now, since $H$ is faithfully flat, it is the equalizer of
		\begin{equation*}
			\begin{tikzcd}[column sep=6em]
				T\otimes T \otimes H \arrow[r, shift left=-1ex, "D\otimes \operatorname{id}_H"'] \arrow[r,shift left=1ex, "u_T \otimes \operatorname{id}_T  \otimes \operatorname{id}_T \otimes \operatorname{id}_H"] & T \otimes T \otimes T \otimes H
			\end{tikzcd}
		\end{equation*}
		and thus the image of $\Delta$ is contained in $H\otimes H$, showing that $\Delta$ is well defined. Now, if $x\otimes y \in H$, then
		\begin{align*}
			[(\operatorname{id}_H \otimes \Delta)\Delta](x\otimes y)&=(\operatorname{id}_H\otimes \Delta)(x\otimes y^{(1)} \otimes y^{(2)} \otimes y^{(3)})\\&= x\otimes y^{(1)} \otimes y^{(2)} \otimes (y^{(3)})^{(1)} \otimes (y^{(3)})^{(2)} \otimes (y^{(3)})^{(3)},
		\end{align*}
		while
		\begin{align*}
			[(\Delta\otimes \operatorname{id}_H)\Delta](x\otimes y)&= (\Delta\otimes \operatorname{id}_H)(x\otimes y^{(1)} \otimes y^{(2)} \otimes y^{(3)})\\&= x\otimes (y^{(1)})^{(1)} \otimes (y^{(1)})^{(2)} \otimes (y^{(1)})^{(3)} \otimes y^{(2)} \otimes y^{(3)}.
		\end{align*}
		By \eqref{e42} these two expressions are equivalent, proving the coassociativity of $\Delta$. Moreover, $\Delta$ is an algebra map since $\mu$ is so. On the other hand, if $x\otimes y \in H$, we have
		\begin{equation*}
			xy\otimes 1 = xy^{(1)} \otimes y^{(2)}y^{(3)}=1\otimes xy,
		\end{equation*}
		whence $xy\in K$ by faithful flatness of $T$. This proves that $\varepsilon$ is well defined. Moreover,
		\begin{align*}
			[(\varepsilon \otimes \operatorname{id}_H)\Delta](x\otimes y)&=  (\varepsilon\otimes\operatorname{id}_H)(x\otimes y^{(1)} \otimes y^{(2)} \otimes y^{(3)}) \\&= xy^{(1)} \otimes y^{(2)} \otimes y^{(3)} = 1\otimes x \otimes y,\\
			[(\operatorname{id}_H \otimes \varepsilon)\Delta](x\otimes y)&=(\operatorname{id}_H \otimes \varepsilon) (x\otimes y^{(1)} \otimes y^{(2)} \otimes y^{(3)}) \\&= x \otimes y^{(1)} \otimes y^{(2)} y^{(3)} = x \otimes y \otimes 1,
		\end{align*}
		which proves the main property of the counit. It is easy to check that $\varepsilon$ is an algebra morphism. Therefore $H$ is a $K$-bialgebra. Once (ii)-(iii) are proven below, we can invoke Remark~\ref{r1} to guarantee that $H$ is in fact a Hopf algebra.
		
		(ii) In order to prove that $\rho: T\rightarrow T\otimes H$ is well defined, we have to check that the image of $\mu$ is contained in $T\otimes H$, which is, by faithful flatness of $T$, the equalizer of
		\begin{equation*}
			\begin{tikzcd}[column sep=6em]
				T\otimes T \otimes T \arrow[r, shift left=-1ex, "\operatorname{id}_T \otimes D"'] \arrow[r,shift left=1ex, "\operatorname{id}_T \otimes u_T  \otimes \operatorname{id}_T \otimes \operatorname{id}_T"] & T \otimes T \otimes T \otimes T
			\end{tikzcd}
		\end{equation*} 
		But in the proof of Lemma~\ref{l7} was shown that $(\operatorname{id}_T\otimes D) = (\operatorname{id}_T\otimes 1_T \otimes \operatorname{id}_T\otimes\operatorname{id}_T )$. On the other hand, since $\mu$ is an algebra morphism, so is $\rho$, which implies that $T$ is a right $H$-comodule. Moreover, if $x\in T^{\operatorname{co}H}$ then
		\begin{equation*}
			x\otimes 1 = x^{(1)}x^{(2)}\otimes x^{(3)}=1\otimes x \in T \otimes T,
		\end{equation*}
		whence $x\in K$ by faithful flatness of $T$; the other inclusion is straightforward.
		
		(iii) The Galois map $\beta:T \otimes T \rightarrow T \otimes H$ is given by
		\begin{equation*}
			\beta(x\otimes y)=(x\otimes 1 \otimes 1)\rho(y)=xy^{(1)} \otimes y^{(2)} \otimes y^{(3)} = D(x\otimes y).
		\end{equation*}
		By Lemma~\ref{l6}, $\beta$ is an isomorphism. It follows that $H$ is faithfully flat over $K$.
	\end{proof}
	
	In the original definition of quantum torsor an algebra map $\theta: T\rightarrow T$ (called the \emph{Grunspan map}) satisfying
	\begin{gather}
		(\operatorname{id}_T\otimes \operatorname{id}_{T^{\operatorname{op}}}\otimes \theta \otimes \operatorname{id}_{T^{\operatorname{op}}} \otimes \operatorname{id}_T)(\mu \otimes \operatorname{id}_{T^{\operatorname{op}}} \otimes \operatorname{id}_T)\mu=(\operatorname{id}_T \otimes \mu^{\operatorname{op}}\otimes \operatorname{id}_T)\mu, \label{e59} \\
		(\theta\otimes\theta \otimes \theta )\mu = \mu\theta, \label{e60}
	\end{gather}
	was additionally required. However, $\theta$ is fully determined by the multiplication of $T$ and $\mu$, via
	\begin{equation*}
		(m \otimes \operatorname{id}_T \otimes m)(\operatorname{id}_T \otimes \mu^{\operatorname{op}}\otimes \operatorname{id}_T)\mu(x)=1_T \otimes \theta(x) \otimes 1_T,
	\end{equation*}
 i.e., $\theta(x) = x^{(1)} {x^{(2)}}^{(3)} {x^{(2)}}^{(2)} {x^{(2)}}^{(1)} x^{(3)}$ (see \cite[Note 2.3]{Gru2}). However, the existence of the Grunspan map is demonstrable (see Corollary~\ref{cor1}), so it is not a necessary condition in Definition~\ref{d13}. If $T$ is either commutative or equipped with a commutative law, then $\theta=\operatorname{id}_T$. If $\theta$ is bijective, the quantum torsor is said to be \emph{autonomous}.
 
 We prove some preliminaries in order to enunciate the converse of Theorem~\ref{t6}. Let $H$ be an arbitrary $K$-Hopf algebra and $T$ a right $H$-Galois object with bijective Galois map $\beta: T \otimes T\rightarrow T \otimes H$. We define $\gamma: H\rightarrow T\otimes T$ by
	\begin{equation}\label{e43}
		\gamma(h):=\beta^{-1}(1\otimes h), \qquad \mbox{for all } h\in H,
	\end{equation}
	and write $\gamma(h)=h^{[1]} \otimes h^{[2]} \in T\otimes T$. Notice that $h^{[1]} \otimes h^{[2]}$ is not necessary a simple tensor. With this notation, we obtain the following formulas.
	
	\begin{lemma}[{\cite[Remark 3.4]{Schn}}]
		Let $H$ be an arbitrary $K$-Hopf algebra and $T$ a $H$-Galois object. Then $\gamma: H\rightarrow T\otimes T$ defined by \eqref{e43} satisfies the following relations:
		\begin{align}
			x_{(0)} {x_{(1)}}^{[1]} \otimes {x_{(1)}}^{[2]}&=1\otimes x, & \mbox{for all } x\in T, \label{e44} \\
			h^{[1]}h^{[2]}&=\varepsilon(h), & \mbox{for all } h\in H, \label{e45}\\
			h^{[1]}{h^{[2]}}_{(0)} \otimes {h^{[2]}}_{(1)} &= 1\otimes h, & \mbox{for all } h\in H, \label{e107}\\
			h^{[1]} \otimes {h^{[2]}}_{(0)} \otimes {h^{[2]}}_{(1)} &= {h_{(1)}}^{[1]} \otimes {h_{(1)}}^{[2]} \otimes h_{(2)}, & \mbox{for all } h\in H,\label{e46}\\
			{h^{[1]}}_{(0)} \otimes h^{[2]} \otimes {h^{[1]}}_{(1)}&={h_{(2)}}^{[1]}\otimes {h_{(2)}}^{[2]} \otimes S(h_{(1)}), & \mbox{for all } h\in H, \label{e47}\\
			(gh)^{[1]} \otimes (gh)^{[2]} &= h^{[1]} g^{[1]} \otimes g^{[2]} h^{[2]}, & \mbox{for all } h,g\in H,\label{e48}\\
			1^{[1]} \otimes 1^{[2]} &= 1\otimes 1.&\label{e49}
		\end{align}
	\end{lemma}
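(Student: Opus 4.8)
The plan is to read every one of the seven identities as an avatar of the single fact that $\beta$ is a bijective morphism of Hopf modules, so that the translation map $\gamma=\beta^{-1}(1\otimes-)$, written $\gamma(h)=h^{[1]}\otimes h^{[2]}$, inherits compatibility with all the structure maps in sight. The tools I would fix at the outset are: $\beta$ is left $T$-linear, since $\beta(ax\otimes y)=(ax)y_{(0)}\otimes y_{(1)}=a\,\beta(x\otimes y)$, hence so is $\beta^{-1}$; the counit axiom gives $(\operatorname{id}_T\otimes\varepsilon)\beta=m_T$; and $\rho$ is an algebra map because $T$ is a comodule algebra. Two of the relations are then purely formal: applying $\beta$ to $\gamma(h)=\beta^{-1}(1\otimes h)$ is just $\beta\gamma=\operatorname{id}$, which is exactly \eqref{e107}, and since $\beta(1\otimes 1)=1_{(0)}\otimes 1_{(1)}=1\otimes 1_H$ one reads off $\gamma(1_H)=1\otimes 1$, which is \eqref{e49}.

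For \eqref{e44} I would observe that $\rho(x)=x_{(0)}\otimes x_{(1)}=\beta(1\otimes x)$, so $\beta^{-1}\rho=1\otimes\operatorname{id}_T$; expanding $\beta^{-1}(x_{(0)}\otimes x_{(1)})=x_{(0)}\beta^{-1}(1\otimes x_{(1)})=x_{(0)}\gamma(x_{(1)})$ by left $T$-linearity yields the claim. For \eqref{e45} I would apply $\operatorname{id}_T\otimes\varepsilon$ to the identity $\beta\gamma(h)=1\otimes h$ and use $(\operatorname{id}_T\otimes\varepsilon)\beta=m_T$ on the left-hand side, obtaining $h^{[1]}h^{[2]}=\varepsilon(h)1_T$.

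The core of the argument is \eqref{e46} and \eqref{e47}, which I would deduce from Proposition~\ref{p9}: there $\beta$ is shown to be a morphism both in $\operatorname{Mod}^H_A$ and in ${}_A\operatorname{Mod}^H$, with the comodule structures given by \eqref{e51}, \eqref{e53}, \eqref{e55}, \eqref{e57}. Because $\beta$ is bijective, so is $\beta\otimes\operatorname{id}_H$, whence $\beta^{-1}$ is again colinear for the same pairs of structures. Evaluating the right-right colinearity of $\beta^{-1}$ at $1\otimes h$ and using \eqref{e57} and \eqref{e55}, the left side becomes $h^{[1]}\otimes {h^{[2]}}_{(0)}\otimes {h^{[2]}}_{(1)}$ while the right side becomes $\gamma(h_{(1)})\otimes h_{(2)}$, which is \eqref{e46}; evaluating the left-right colinearity at $1\otimes h$, using \eqref{e53} together with $\rho(1_T)=1\otimes 1$, produces $\eqref{e47}$, the antipode appearing precisely because the coaction \eqref{e53} carries the factor $x_{(1)}S(h_{(1)})$.

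I expect \eqref{e48} to be the main obstacle, as it is the (anti)multiplicativity of $\gamma$ and cannot be read off from a single colinearity. My plan is to prove $\beta\bigl(h^{[1]}g^{[1]}\otimes g^{[2]}h^{[2]}\bigr)=1\otimes gh$ and then invoke injectivity of $\beta$. Expanding the left side with $\rho(g^{[2]}h^{[2]})=\rho(g^{[2]})\rho(h^{[2]})$ gives $h^{[1]}g^{[1]}{g^{[2]}}_{(0)}{h^{[2]}}_{(0)}\otimes {g^{[2]}}_{(1)}{h^{[2]}}_{(1)}$; I would then substitute the three $g$-legs $g^{[1]}\otimes {g^{[2]}}_{(0)}\otimes {g^{[2]}}_{(1)}$ using \eqref{e46}, collapse ${g_{(1)}}^{[1]}{g_{(1)}}^{[2]}=\varepsilon(g_{(1)})1_T$ by \eqref{e45}, apply the counit identity $\varepsilon(g_{(1)})g_{(2)}=g$, and finally recognize the outcome as $(\operatorname{id}_T\otimes L_g)(h^{[1]}{h^{[2]}}_{(0)}\otimes {h^{[2]}}_{(1)})=1\otimes gh$ via \eqref{e107} for $h$, where $L_g$ is left multiplication by $g$ on the $H$-tensorand. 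The delicate point is the Sweedler bookkeeping through the entangled tensor, so that \eqref{e46}, \eqref{e45} and \eqref{e107} are each applied to the correct legs; once that is organized the reduction to $1\otimes gh$ is mechanical.
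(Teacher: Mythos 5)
Your proposal is correct and follows essentially the same route as the paper: \eqref{e107} and \eqref{e49} as formal consequences of $\beta\gamma=\operatorname{id}$ and $\rho(1)=1\otimes 1$, \eqref{e44} and \eqref{e45} from left $T$-linearity and the counit identity $(\operatorname{id}_T\otimes\varepsilon)\beta=m_T$, \eqref{e46} and \eqref{e47} from the two Hopf-module colinearities of $\beta$ in Proposition~\ref{p9}, and \eqref{e48} by applying $\beta$ and invoking injectivity. The only cosmetic differences are that you phrase the colinearity arguments in terms of $\beta^{-1}$ rather than applying $\beta\otimes\operatorname{id}_H$ to both sides, and your reduction of \eqref{e48} to $1\otimes gh$ via \eqref{e46}, \eqref{e45} and the counit could be shortened by simply applying \eqref{e107} twice (once in the $g$-legs, once in the $h$-legs), but both computations are valid.
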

	
	\begin{proof}
		Since $T$ is a $H$-Galois object, there exists an structure map $\rho: T\rightarrow T\otimes H$ endowing $T$ with a right $H$-comodule algebra structure. Hence, $\beta=(m_T \otimes \operatorname{id}_H)(\operatorname{id}_T \otimes \rho)$ is an algebra morphism. Then, for any $x\in T$ we have
		\begin{align*}
			\beta( x_{(0)} {x_{(1)}}^{[1]} \otimes {x_{(1)}}^{[2]}) &= \beta(x_{(0)}\otimes 1) \beta(\gamma(x_{(1)})) = (x_{(0)} \otimes 1)\beta(\beta^{-1}(1\otimes x_{(1)} ))\\
			&= (x_{(0)} \otimes 1) (1\otimes x_{(1)})= x_{(0)} \otimes x_{(1)}=1x_{(0)} \otimes x_{(1)} \\
			&= (1\otimes 1) \rho(x) = \beta(1\otimes x),
		\end{align*}
		and hence, applying $\beta^{-1}$, we get \eqref{e44}.
		
		Now, since for every $x\otimes y \in T\otimes T$,
		\begin{align*}
			[(\operatorname{id}_T \otimes \varepsilon)\beta](x\otimes y)=(\operatorname{id}_T \otimes \varepsilon)(xy_0 \otimes y_1) = xy_0 \otimes \varepsilon(y_1) = xy_0 \varepsilon(y_1) \otimes 1_H = xy \otimes 1_H,
		\end{align*}
		i.e., $(\operatorname{id}_T \otimes \varepsilon)\beta=(m\otimes 1_H)$. Applying $\beta^{-1}$ we get \eqref{e45}.
		
		Relation \eqref{e107} immediately follows from
		\begin{align*}
			h^{[1]}{h^{[2]}}_{(0)} \otimes {h^{[2]}}_{(1)} = (h^{[1]} \otimes 1)\rho(h^{[2]}) = \beta(h^{[1]} \otimes h^{[2]})=\beta(\beta^{-1})(1\otimes h)=1\otimes h.
		\end{align*}
		 Now, if $\rho_{T\otimes T}$ (resp. $\rho_{T\otimes H}$) denotes the structure map of $T\otimes T$ (resp. $T\otimes H$) as a right $H$-comodule via \eqref{e55} (resp. \eqref{e57}), the $H$-collinearity of $\beta$ (see Proposition~\ref{p9}) implies that $\rho_{T\otimes H}\beta = (\beta\otimes \operatorname{id}_H)\rho_{T\otimes T}$. Hence, for all $h\in H$ we have
		\begin{align*}
			(\beta \otimes \operatorname{id}_T)(h^{[1]} \otimes {h^{[2]}}_{(0)} \otimes {h^{[2]}}_{(1)}) &=[(\beta \otimes \operatorname{id}_H)\rho_{T\otimes T}](h^{[1]} \otimes h^{[2]})\\
			&= \rho_{T\otimes H} \beta(\gamma(h)) = \rho_{T\otimes H} (1\otimes h) = 1\otimes h_{(1)} \otimes h_{(2)} \\&=\beta(\gamma(h_{(1)})) \otimes h_{(2)} \\
			&= (\beta \otimes \operatorname{id}_T)({h_{(1)}}^{[1]} \otimes {h_{(1)}}^{[2]} \otimes h_{(2)}),
		\end{align*}
		which proves \eqref{e46}.
		
		Similarly, \eqref{e47} follows from the collinearity of $\beta$ in the sense of \eqref{e51} and \eqref{e53}. Finally, to prove \eqref{e48}, apply $\beta$ and use \eqref{e46}.
	\end{proof}
	
	Notice that \eqref{e48}-\eqref{e49} basically say that $\gamma: H \rightarrow T^{\operatorname{op}}\otimes T$ is an algebra morphism.
	
	\begin{lemma}[{\cite[Lemma 3.1]{Sch5}}]
		Let $T$ be a faithfully flat $H$-Galois object. Then
		\begin{equation}\label{e50}
			S(x_{(1)})^{[1]}\otimes x_{(0)} S(x_{(1)})^{[2]} \in T \otimes K \subset T\otimes T, \qquad \mbox{for all }x\in T,
		\end{equation}
		and
		\begin{equation}\label{e52}
			h_{(1)}^{[1]} \otimes S(h_{(2)})^{[1]} \otimes h_{(1)}^{[2]} S(h_{(2)})^{[2]} \in T\otimes T \otimes K \subset T\otimes T \otimes T, \qquad \mbox{for all } h\in H.
		\end{equation}
	\end{lemma}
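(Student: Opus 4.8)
The plan is to reduce both memberships to a single coinvariance criterion and then to verify that criterion by pushing the coaction $\rho$ through the rightmost tensorand, using relation~\eqref{e46} and the antipode axioms.

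First I would record the criterion. Since $T$ is a right $H$-Galois object, $T^{\operatorname{co}H}=K$, and by definition $K$ is the equalizer of $\rho\colon T\to T\otimes H$ and the map $j\colon T\to T\otimes H$, $j(t)=t\otimes 1_H$. By Proposition~\ref{p11} the module $T$ is (faithfully) flat over $K$, so the functors $T\otimes_K-$ and $T\otimes_K T\otimes_K-$ are exact and preserve this equalizer. Consequently an element $w\in T\otimes T$ lies in $T\otimes K$ if and only if
\[ (\operatorname{id}_T\otimes\rho)(w)=w\otimes 1_H, \]
and an element $v\in T\otimes T\otimes T$ lies in $T\otimes T\otimes K$ if and only if $(\operatorname{id}_T\otimes\operatorname{id}_T\otimes\rho)(v)=v\otimes 1_H$, where in each case $\rho$ acts on the last factor. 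This converts \eqref{e50} and \eqref{e52} into identities to be checked after applying $\rho$.

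For \eqref{e50}, set $w=S(x_{(1)})^{[1]}\otimes x_{(0)}S(x_{(1)})^{[2]}$ and compute $(\operatorname{id}_T\otimes\rho)(w)$. Since $\rho$ is an algebra map, $\rho\big(x_{(0)}S(x_{(1)})^{[2]}\big)=\rho(x_{(0)})\,\rho\big(S(x_{(1)})^{[2]}\big)$; the factor $\rho(x_{(0)})$ is treated by coassociativity of the coaction, while to $S(x_{(1)})^{[1]}\otimes\rho\big(S(x_{(1)})^{[2]}\big)$ I would apply \eqref{e46} with $h=S(x_{(1)})$, combined with the anti-coalgebra identity $\Delta S(a)=S(a_{(2)})\otimes S(a_{(1)})$. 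After regrouping, the $H$-tag on the last slot becomes $x_{(1)}S(x_{(2)})$ (with an $S(x_{(3)})$ surviving inside the $\gamma$-terms), and the antipode relation in the form $h_{(1)}S(h_{(2)})\otimes h_{(3)}=1_H\otimes h$ collapses this tag to $1_H$, returning exactly $w\otimes 1_H$; by the criterion, $w\in T\otimes K$. For \eqref{e52} the argument is the symmetric two-$\gamma$ version: writing $v=h_{(1)}^{[1]}\otimes S(h_{(2)})^{[1]}\otimes h_{(1)}^{[2]}S(h_{(2)})^{[2]}$ and applying $\operatorname{id}_T\otimes\operatorname{id}_T\otimes\rho$ to the third slot, I would use \eqref{e46} on both $\rho\big(h_{(1)}^{[2]}\big)$ and $\rho\big(S(h_{(2)})^{[2]}\big)$ (the latter once more through $\Delta S$). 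Relabelling via $\Delta^{(3)}(h)=h_{(1)}\otimes h_{(2)}\otimes h_{(3)}\otimes h_{(4)}$, the tag produced is the middle product $h_{(2)}S(h_{(3)})$, and the identity $h_{(1)}\otimes h_{(2)}S(h_{(3)})\otimes h_{(4)}=h_{(1)}\otimes 1_H\otimes h_{(2)}$ collapses it, leaving $v\otimes 1_H$, so $v\in T\otimes T\otimes K$.

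The manipulations of $\rho$ as an algebra map together with the two antipode displays are routine; the only real friction I expect is the index bookkeeping, namely keeping the $\gamma$-indices $[1],[2]$, the coaction indices, and the coproduct indices of $h$ separate while substituting \eqref{e46} into a sub-expression embedded in a larger tensor. That substitution is legitimate because \eqref{e46} is an identity in $T\otimes T\otimes H$ and the surrounding factors carry independent summation indices; the one conceptual step worth writing out carefully is deriving the coinvariance criterion cleanly from flatness of $T$ over $K$.
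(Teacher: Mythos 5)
Your proposal is correct and follows essentially the same route as the paper's proof: apply $\rho$ to the last tensor factor (using that $\rho$ is an algebra map), rewrite via \eqref{e46} together with the anti-coalgebra property of $S$, collapse the resulting $H$-tag $x_{(1)}S(x_{(2)})$ (resp. $h_{(2)}S(h_{(3)})$) by the antipode axiom, and conclude from $T^{\operatorname{co}H}=K$ and flatness of $T$ over $K$. The only difference is presentational: you state the flatness/equalizer criterion for membership in $T\otimes K$ explicitly at the outset, whereas the paper invokes it implicitly in one closing sentence.
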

	
	\begin{proof}
		If $\rho: T \rightarrow T \otimes H$ is the structure map of $T$ as $H$-comodule algebra, for $x\in T$ we have
		\begin{align*}
			S(x_{(1)})^{[1]} \otimes \rho(x_{(0)}S(x_{(1)})^{[2]}) &= S(x_{(2)})^{[1]} \otimes x_{(0)} {S(x_{(2)})^{[2]}}_{(0)} \otimes x_{(1)} {S(x_{(2)})^{[2]}}_{(1)}\\
			&\overset{\mbox{\eqref{e46}}}{=} {S(x_{(2)})_{(1)}}^{[1]} \otimes x_{(0)} {S(x_{(2)})_{(1)}}^{[2]} \otimes x_{(1)} S(x_{(2)})_{(2)}\\
			&= S(x_{(3)})^{[1]} \otimes x_{(0)} S(x_{(3)})^{[2]} \otimes x_{(1)} S(x_{(2)})\\
			&= S(x_{(1)})^{[1]} \otimes x_{(0)} S(x_{(1)})^{[2]} \otimes 1,
		\end{align*}
		in $T\otimes T \otimes H$. Since $T^{\operatorname{co}H}=K$ and $T$ is flat over $K$, this proves the first claim. Similarly, for $h\in H$ we have
		\begin{align*}
			&{h_{(1)}}^{[1]} \otimes S(h_{2})^{[1]} \otimes \rho({h_{(1)}}^{[2]} {S(h_{(2)})}^{[2]}) \\
			&\overset{\mbox{\eqref{e46}}}{=} h_{(1)}^{[1]} \otimes {S(h_{(3)})_{(1)}}^{[1]} \otimes {h_{(1)}}^{[2]} {S(h_{(3)})_{(1)}}^{[2]} \otimes h_{(2)} S(h_{(3)})_{(2)}\\
			&= {h_{(1)}}^{[1]} \otimes S(h_{(4)})^{[1]} \otimes {h_{(1)}}^{[2]} S(h_{(4)})^{[2]} \otimes h_{(2)} S(h_{(3)})\\
			&= {h_{(1)}}^{[1]} \otimes S(h_{(2)})^{[1]} \otimes {h_{(1)}}^{[2]} S(h_{(2)})^{[2]} \otimes 1,
		\end{align*}
		proving the second claim, again by flatness of $T$.
	\end{proof}
	
	Roughly speaking, the previous result says that the elements $x_{(0)}S(x_{(1)})^{[2]}$ and ${h_{(1)}}^{[2]} S(h_{(2)})^{[2]}$ behave like scalars and hence, in the calculation below, we will be able to move these around freely in any $K$-multilinear expression.
	
	\begin{theorem}[{\cite[Theorem 3.2]{Sch5}}]\label{t10}
		Let $T$ be a faithfully flat $H$-Galois object. Then $T$ is a quantum $K$-torsor with associated map $\mu:T \rightarrow T \otimes T^{\operatorname{op}} \otimes T$ defined by
		\begin{equation*}
			\mu(x)=(\operatorname{id}_T\otimes \gamma) \rho(x) = x_{(0)} \otimes {x_{(1)}}^{[1]} \otimes {x_{(1)}}^{[2]}, \qquad \mbox{for all } x\in T.
		\end{equation*}
		Moreover, $T$ has a Grunspan map $\theta: T\rightarrow T$ given by
		\begin{equation*}
			\theta(x) = ( x_{(0)} S(x_{(1)})^{[2]} ) S(x_{(1)})^{[1]} =  S(x_{(1)})^{[1]} ( x_{(0)} S(x_{(1)})^{[2]} ).
		\end{equation*}
	\end{theorem}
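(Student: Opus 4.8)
The plan is to build the torsor structure directly from the two maps already in play, the comodule structure map $\rho$ and the map $\gamma$ of \eqref{e43}, and to treat the Grunspan map last. First I would observe that $\mu=(\operatorname{id}_T\otimes\gamma)\rho$ is an algebra morphism $T\to T\otimes T^{\operatorname{op}}\otimes T$: the map $\rho:T\to T\otimes H$ is an algebra map because $T$ is an $H$-comodule algebra, while the remark following \eqref{e48}--\eqref{e49} records that $\gamma:H\to T^{\operatorname{op}}\otimes T$ is an algebra map, and tensoring with $\operatorname{id}_T$ preserves this. Writing $\mu(x)=x_{(0)}\otimes {x_{(1)}}^{[1]}\otimes {x_{(1)}}^{[2]}$, the two unit-type axioms become immediate reformulations of relations already proved: condition \eqref{neq3} reads $x_{(0)}{x_{(1)}}^{[1]}\otimes {x_{(1)}}^{[2]}=1\otimes x$, which is exactly \eqref{e44}, and condition \eqref{neq4} reads $x_{(0)}\otimes {x_{(1)}}^{[1]}{x_{(1)}}^{[2]}=x_{(0)}\otimes\varepsilon(x_{(1)})1_T=x\otimes 1_T$, using \eqref{e45} and then the comodule counit law \eqref{e32}.

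The coassociativity axiom \eqref{neq2}, equivalently \eqref{e42}, is the one genuine computation among the torsor axioms, and I would carry it out in Heyneman--Sweedler notation. For the left-hand side $(\operatorname{id}_T\otimes\operatorname{id}_{T^{\operatorname{op}}}\otimes\mu)\mu(x)=x_{(0)}\otimes {x_{(1)}}^{[1]}\otimes\mu({x_{(1)}}^{[2]})$, I would apply $\operatorname{id}\otimes\operatorname{id}\otimes\gamma$ to \eqref{e46} (with $h=x_{(1)}$) to rewrite ${x_{(1)}}^{[1]}\otimes\mu({x_{(1)}}^{[2]})$ as ${(x_{(1)})_{(1)}}^{[1]}\otimes {(x_{(1)})_{(1)}}^{[2]}\otimes\gamma((x_{(1)})_{(2)})$, and then collapse the iterated coaction by comodule coassociativity to arrive at $x_{(0)}\otimes {x_{(1)}}^{[1]}\otimes {x_{(1)}}^{[2]}\otimes {x_{(2)}}^{[1]}\otimes {x_{(2)}}^{[2]}$. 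For the right-hand side $\mu(x_{(0)})\otimes {x_{(1)}}^{[1]}\otimes {x_{(1)}}^{[2]}$ only comodule coassociativity is needed, and it yields the same symmetric expression; hence the two sides agree and $(T,\mu)$ is a quantum $K$-torsor in the sense of Definition~\ref{d13}.

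For the Grunspan map I would first check that $\theta$ is well defined, i.e.\ that the two displayed expressions coincide. This is exactly where the scalar lemma \eqref{e50} is used: since $S(x_{(1)})^{[1]}\otimes x_{(0)}S(x_{(1)})^{[2]}$ lies in $T\otimes K$ and $K$ sits centrally in $T$, the factor $x_{(0)}S(x_{(1)})^{[2]}$ commutes past $S(x_{(1)})^{[1]}$, so the left and right products agree. It then remains to verify that this $\theta$ is an algebra map satisfying \eqref{e59} and \eqref{e60}. The verification of \eqref{e60}, namely $(\theta\otimes\theta\otimes\theta)\mu=\mu\theta$, and above all of \eqref{e59}, is the main obstacle: each requires a long manipulation in which one expands $\mu$ and $\theta$, rewrites the coaction-of-$\gamma$ terms through \eqref{e46} and \eqref{e47}, and repeatedly invokes the scalar lemmas \eqref{e50} and \eqref{e52} to slide the scalar-like factors $x_{(0)}S(x_{(1)})^{[2]}$ and ${h_{(1)}}^{[2]}S(h_{(2)})^{[2]}$ freely through the tensor. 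The delicate bookkeeping is keeping track of the antipode factors together with the $T^{\operatorname{op}}$-twist in the middle leg; the faithful-flatness hypothesis does not reappear explicitly here, since it has already been absorbed into the scalar lemmas \eqref{e50} and \eqref{e52} on which this last step relies.
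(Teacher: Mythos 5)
Your treatment of the torsor structure itself is correct and coincides with the paper's own argument: $\mu=(\operatorname{id}_T\otimes\gamma)\rho$ is an algebra map because $\rho$ and $\gamma$ are, the two unit axioms are direct restatements of \eqref{e44} and of \eqref{e45} combined with the counit law \eqref{e32}, and coassociativity \eqref{neq2} follows by applying $\operatorname{id}\otimes\operatorname{id}\otimes\gamma$ to \eqref{e46} and collapsing the iterated coaction, both sides becoming $x_{(0)}\otimes {x_{(1)}}^{[1]}\otimes {x_{(1)}}^{[2]}\otimes {x_{(2)}}^{[1]}\otimes {x_{(2)}}^{[2]}$. Your observation that the two displayed expressions for $\theta(x)$ agree because of the scalar lemma \eqref{e50} is also correct.

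The gap is in the Grunspan half, which is most of the actual content of the theorem and which you defer entirely: multiplicativity of $\theta$ and the identities \eqref{e59}--\eqref{e60} do not follow from merely ``expanding $\mu$ and $\theta$ and sliding scalars''; each requires a specific intermediate identity that your plan never identifies. First, to see that $\theta$ is an algebra map one must use the anti-multiplicativity \eqref{e48} of $\gamma$ to split $\theta(xy)=x_{(0)}y_{(0)}\bigl(S(y_{(1)})S(x_{(1)})\bigr)^{[2]}\bigl(S(y_{(1)})S(x_{(1)})\bigr)^{[1]}$ into an $x$-block and a $y$-block, and only then does \eqref{e50} let the $y$-block migrate. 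Second, the pivot for \eqref{e59} is the identity $h^{[1]}\otimes\theta(h^{[2]})=S(h)^{[2]}\otimes S(h)^{[1]}$ (the paper's \eqref{e86}, proved from \eqref{e46}, \eqref{e52} and \eqref{e45}): it converts $(\operatorname{id}_T\otimes\operatorname{id}_{T^{\operatorname{op}}}\otimes\theta)\mu(x)$ into $x_{(0)}\otimes S(x_{(1)})^{[2]}\otimes S(x_{(1)})^{[1]}$, after which \eqref{e59} reduces to comparing this with the expansion of $(\operatorname{id}_T\otimes\mu^{\operatorname{op}}\otimes\operatorname{id}_T)\mu(x)$ obtained from \eqref{e47}. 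Third, \eqref{e60} additionally needs the twisted colinearity $\rho\theta(x)=\theta(x_{(0)})\otimes S^{2}(x_{(2)})$ (the paper's \eqref{e87}, from \eqref{e50} and \eqref{e47}); only by applying \eqref{e86} twice and then \eqref{e87} does $(\theta\otimes\theta\otimes\theta)\mu(x)$ reassemble into $\mu\theta(x)$. So while your toolbox (\eqref{e46}, \eqref{e47}, \eqref{e48}, \eqref{e50}, \eqref{e52}) is the right one, the argument that organizes these lemmas into a proof --- precisely the part you label ``the main obstacle'' --- is missing, and it is not routine bookkeeping: without isolating \eqref{e86} and \eqref{e87} the computations do not close.
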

	
	\begin{proof}
		For all calculations, we take $x,y\in T$ and $h\in H$. Since $\rho$ and $\gamma$ are algebra morphisms, so is $\mu$. We have
		\begin{align*}
			(\operatorname{id}_T\otimes \operatorname{id}_{T^{\operatorname{op}}} \otimes  \mu )\mu(x)&=(\operatorname{id}_T\otimes \operatorname{id}_{T^{\operatorname{op}}} \otimes  \mu ) ( x_{(0)} \otimes {x_{(1)}}^{[1]} \otimes {x_{(1)}}^{[2]}) \\
			&= x_{(0)} \otimes {x_{(1)}}^{[1]} \otimes \mu({x_{(1)}}^{[2]})  \\
			&= x_{(0)} \otimes {x_{(1)}}^{[1]} \otimes {x_{(1)}}^{[2]} \otimes \gamma( {{x_{(1)}}^{[2]}}_{(1)} ) \\
			&\overset{\mbox{\eqref{e46}}}{=} x_{(0)} \otimes {x_{(1)}}^{[1]}  \otimes {x_{(1)}}^{[2]} \otimes \gamma(x_{(2)})\\
			& = \mu(x_{(0)}) \otimes \gamma(x_{(1)}) = (\mu \otimes \operatorname{id}_{T^{\operatorname{op}}} \otimes \it_T )\mu(x),
		\end{align*}
		which proves \eqref{neq2}. Additionally,
		\begin{align*}
			(\operatorname{id}_T\otimes m) \mu(x)&= (\operatorname{id}_T \otimes m)(x_{(0)} \otimes {x_{(1)}}^{[1]} \otimes {x_{(1)}}^{[2]})\\
			&= x_{(0)} \otimes {x_{(1)}}^{[1]} {x_{(1)}}^{[2]} \overset{\mbox{\eqref{e45}}}{=} x_{(0)} \otimes \varepsilon(x_{(1)}) = x\otimes 1.
		\end{align*}
		On the other hand,
		\begin{equation*}
			(m\otimes \operatorname{id}_T)\mu(x)= (m\otimes \operatorname{id}_T)(x_{(0)} \otimes {x_{(1)}}^{[1]} \otimes {x_{(1)}}^{[2]}) = x_{(0)} {x_{(1)}}^{[1]} \otimes {x_{(1)}}^{[2]} \overset{\mbox{\eqref{e44}}}{=} 1 \otimes x.
		\end{equation*}
		These two relations prove \eqref{neq4} and \eqref{neq3}. Hence, $T$ is a quantum $K$-torsor. Now, since
		\begin{align*}
			\theta(xy)&=( (xy)_{(0)} S((xy)_{(1)})^{[2]}) S((xy)_{(1)})^{[1]} = x_{(0)}y_{(0)} S(x_{(1)}y_{(1)})^{[2]} S(x_{(1)}y_{(1)})^{[1]} \\
			&= x_{(0)}y_{(0)} (S(y_{(1)})S(x_{(1)}))^{[2]} (S(y_{(1)})S(x_{(1)}))^{[1]} \\
			&\overset{\mbox{\eqref{e48}}}{=}  x_{(0)} [y_{(0)} S(y_{(1)})^{[2]} ]S(x_{(1)})^{[2]} S(x_{(1)})^{[1]} S(y_{(1)})^{[1]}\\
			&\overset{\mbox{\eqref{e50}}}{=} x_{(0)} S(x_{(1)})^{[2]} S(x_{(1)})^{[1]} [y_{(0)} S(y_{(1)})^{[2]} ] S(y_{(1)})^{[1]} = \theta(x)\theta(y),
		\end{align*}
		$\theta$ is an algebra morphism. Moreover,
		\begin{align}
			h^{[1]} \otimes \theta(h^{[2]}) &= h^{[1]} \otimes {h^{[2]}}_{(0)} S({h^{[2]}}_{(1)})^{[2]} S({h^{[2]}}_{(1)})^{[1]}\\
			&\overset{\mbox{\eqref{e46}}}{=} {h_{(1)}}^{[1]} \otimes [{h_{(1)}}^{[2]} S(h_{(2)})^{[2]}] S(h_{(2)})^{[1]} \nonumber \\
			&\overset{\mbox{\eqref{e52}}}{=} {h_{(1)}}^{[1]} [{h_{(1)}}^{[2]} S(h_{(2)})^{[2]}] \otimes S(h_{(2)})^{[1]} \\
			&\overset{\mbox{\eqref{e45}}}{=} \varepsilon(h_{(1)}) S(h_{(2)})^{[2]} \otimes S(h_{(2)})^{[1]} \nonumber \\
			&= S(h)^{[2]} \otimes S(h)^{[1]}, \label{e86}
		\end{align}
		so we conclude that
		\begin{align*}
			(\operatorname{id}_T \otimes \operatorname{id}_{T^{\operatorname{op}}} \otimes \theta) \mu(x)&= (\operatorname{id}_T \otimes \operatorname{id}_{T^{\operatorname{op}}} \otimes \theta) ( x_{(0)} \otimes {x_{(1)}}^{[1]} \otimes {x_{(1)}}^{[2]} )\\
			& = x_{(0)} \otimes {x_{(1)}}^{[1]} \otimes \theta({x_{(1)}}^{[2]})\\
			&= x_{(0)} \otimes S(x_{(1)})^{[2]} \otimes S(x_{(1)})^{[1]}. 
		\end{align*}
		Hence,
		\begin{align*}
			&(\operatorname{id}_T\otimes \operatorname{id}_{T^{\operatorname{op}}}\otimes \theta \otimes \operatorname{id}_{T^{\operatorname{op}}} \otimes \operatorname{id}_T)(\mu \otimes \operatorname{id}_{T^{\operatorname{op}}} \otimes \operatorname{id}_T)\mu(x)\\
			&= (\operatorname{id}_T\otimes \operatorname{id}_{T^{\operatorname{op}}}\otimes \theta \otimes \operatorname{id}_{T^{\operatorname{op}}} \otimes \operatorname{id}_T)(\mu \otimes \operatorname{id}_{T^{\operatorname{op}}} \otimes \operatorname{id}_T)(\operatorname{id}_T\otimes \gamma)(x_{(0)} \otimes x_{(1)})\\
			&= (\operatorname{id}_T \otimes \operatorname{id}_{T^{\operatorname{op}}} \otimes \theta)\mu(x_{(0)}) \otimes \gamma(x_{(1)}) = x_{(0)} \otimes S(x_{(1)})^{[2]} \otimes S(x_{(1)})^{[1]} \otimes \gamma(x_{(2)}).
		\end{align*}
	On the other hand,
		\begin{align*}
			(\operatorname{id}_T \otimes \mu^{\operatorname{op}}\otimes \operatorname{id}_T)\mu(x) &= x_{(0)} \otimes  \mu^{\operatorname{op}}({x_{(1)}}^{[1]}) \otimes {x_{(1)}}^{[2]}\\
			&= x_{(0)} \otimes {{{x_{(1)}}^{[1]}}_{(1)}}^{[2]} \otimes {{{x_{(1)}}^{[1]}}_{(1)}}^{[1]} \otimes {{x_{(1)}}^{[1]}}_{(0)} \otimes {x_{(1)}}^{[2]}\\
			& \overset{\mbox{\eqref{e47}}}{=} x_{(0)} \otimes S(x_{(1)})^{[2]} \otimes S(x_{(1)})^{[1]} \otimes {x_{(2)}}^{[1]} \otimes {x_{(2)}}^{[2]}.
		\end{align*}
		Comparing these two equalities, we get \eqref{e59}. To prove \eqref{e60}, we first see that
		\begin{align}
			\rho\theta(x) & = \rho( [x_{(0)} S(x_{1})^{[2]}] S(x_{(1)})^{[1]} ) \overset{\mbox{\eqref{e50}}}{=} x_{(0)} S(x_{(1)})^{[2]} \rho( S(x_{(1)})^{[1]} ) \nonumber \\
			& = x_{(0)} S(x_{(1)})^{[2]}{S(x_{(1)})^{[1]}}_{(0)} \otimes {S(x_{(1)})^{[1]}}_{(1)}\nonumber\\
			& \overset{\mbox{\eqref{e47}}}{=} x_{(0)} {S(x_{(1)})_{(2)}}^{[2]} {S(x_{(1)})_{(2)}}^{[1]} \otimes S(S(x_{(1)})_{(1)})\nonumber\\
			& = x_{(0)} S(x_{(1)})^{[2]} S(x_{(1)})^{[1]} \otimes S^{2}(x_{(2)}) = \theta( x_{(0)} ) \otimes S^2(x_{(2)}) \label{e87}
		\end{align}
		and therefore
		\begin{align*}
			(\theta \otimes \theta \otimes \theta)\mu(x) &= \theta(x_{(0)}) \otimes \theta({x_{(1)}}^{[1]}) \otimes \theta( {x_{(1)}}^{[2]} )\\
			&\overset{\mbox{\eqref{e86}}}{=} \theta(x_{(0)}) \otimes \theta(S(x_{(1)})^{[2]}) \otimes S(x_{(1)})^{[1]}\\
			&\overset{\mbox{\eqref{e86}}}{=} \theta(x_{(0)}) \otimes S^2(x_{(1)})^{[1]} \otimes S^2(x_{(1)})^{[2]} \\
			&= \theta(x_{(0)}) \otimes \gamma(S^2(x_{(1)})) \\
			&\overset{\mbox{\eqref{e87}}}{=} \theta(x)_{(0)} \otimes \gamma(\theta(x)_{(1)}) = \mu \theta(x).
		\end{align*}
		This proves that $\theta$ is a Grunspan map.
	\end{proof}
	
	Under conditions of faithful flatness, we have thus shown that
	\begin{equation*}
		\mbox{Quantum torsors} \Leftrightarrow \mbox{Hopf Galois objects}.
	\end{equation*}
	
	Moreover, if $T$ is a quantum torsor, by Theorem~\ref{t6}, it is a faithfully flat right $H$-Galois object. But Theorem~\ref{t10} says that every faithfully flat right $H$-Galois object is a quantum torsor with Grunspan map. Hence, we have the following.
	
	\begin{corollary}\label{cor1}
		Every quantum torsor has a Grunspan map.
	\end{corollary}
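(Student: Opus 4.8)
The plan is to obtain the Grunspan map by a single round trip through the correspondence set up in Theorems~\ref{t6} and~\ref{t10}. Given a quantum $K$-torsor $(T,\mu)$ --which is automatically faithfully flat over a field and which I assume faithfully flat in general-- Theorem~\ref{t6} produces a Hopf algebra $H={}^D(T\otimes T)$ for which $T$ is a faithfully flat right $H$-Galois object, with comodule structure map $\rho=\mu$ and Galois map $\beta=D$. Feeding this $H$-Galois object back into Theorem~\ref{t10} yields a torsor structure $\mu'=(\operatorname{id}_T\otimes\gamma)\rho$ together with an explicit map $\theta$ that Theorem~\ref{t10} verifies to be a Grunspan map for $(T,\mu')$. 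The whole corollary then rests on a single compatibility: that the torsor reconstructed by Theorem~\ref{t10} is the one we started with, i.e. $\mu'=\mu$. Once this is known, $\theta$ satisfies the defining relations \eqref{e59}--\eqref{e60} with respect to the \emph{original} $\mu$, and the statement follows.

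To prove $\mu'=\mu$ I would apply $\operatorname{id}_T\otimes\beta$ to both sides and use that $\beta$ is bijective, so that $\operatorname{id}_T\otimes\beta$ is injective. On the reconstructed side, the identity $\beta\gamma=1_T\otimes\operatorname{id}_H$ gives immediately
\[
(\operatorname{id}_T\otimes\beta)\mu'(x)=x_{(0)}\otimes\beta(\gamma(x_{(1)}))=x_{(0)}\otimes 1_T\otimes x_{(1)}.
\]
On the original side, writing $\beta(a\otimes b)=ab^{(1)}\otimes b^{(2)}\otimes b^{(3)}$ and unfolding $(\operatorname{id}_T\otimes\beta)\mu(x)=x^{(1)}\otimes x^{(2)}{x^{(3)}}^{(1)}\otimes {x^{(3)}}^{(2)}\otimes{x^{(3)}}^{(3)}$, I would rewrite the underlying fivefold tensor $(\operatorname{id}_T\otimes\operatorname{id}_{T^{\operatorname{op}}}\otimes\mu)\mu(x)$ via the coassociativity relation \eqref{neq2} (equivalently \eqref{e42}) as $(\mu\otimes\operatorname{id}_{T^{\operatorname{op}}}\otimes\operatorname{id}_T)\mu(x)$, and then collapse the two inner legs using the counit-type identity \eqref{neq4}. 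This produces $x^{(1)}\otimes 1_T\otimes x^{(2)}\otimes x^{(3)}$, which under $\rho=\mu$ is exactly $x_{(0)}\otimes 1_T\otimes x_{(1)}$. Comparing the two computations and cancelling the injective map $\operatorname{id}_T\otimes\beta$ yields $\mu'=\mu$.

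The main obstacle is precisely this round-trip identity $\mu'=\mu$: Theorems~\ref{t6} and~\ref{t10} individually only assert that each structure gives rise to one of the other kind, and the Grunspan map manufactured in Theorem~\ref{t10} a priori governs the reconstructed torsor $\mu'$ rather than the given $\mu$. Everything else is bookkeeping: the faithful flatness needed to invoke the two theorems is automatic over a field and assumed otherwise, and the injectivity of $\operatorname{id}_T\otimes\beta$ follows from $\beta$ being an isomorphism. Hence, combining the two theorems with the verification that their constructions are mutually inverse on the torsor data establishes that every (faithfully flat) quantum torsor carries a Grunspan map.
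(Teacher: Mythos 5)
Your proof is correct and takes essentially the same route as the paper: compose Theorem~\ref{t6} (torsor $\Rightarrow$ faithfully flat Galois object) with Theorem~\ref{t10} (faithfully flat Galois object $\Rightarrow$ torsor with Grunspan map). The round-trip compatibility $\mu'=\mu$ that you single out as the crux, and prove by applying the invertible map $\operatorname{id}_T\otimes\beta$, is precisely the identity $(\operatorname{id}_T\otimes D)\mu(x)=x^{(1)}\otimes 1_T\otimes x^{(2)}\otimes x^{(3)}$ already recorded in Lemma~\ref{l7} (and restated in Section~\ref{sec2.10.2} as part (ii) of the proposition quoted from Schauenburg), so your extra verification makes explicit a step the paper's one-line argument leaves implicit --- a worthwhile precision, since the Grunspan map produced by Theorem~\ref{t10} is a priori a Grunspan map for the reconstructed torsor structure rather than for the given one.
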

	
	\subsubsection{Examples of quantum torsors}\label{sec2.10.1}
	
	In this section we present examples of quantum torsors. These were adapted from \cite{Gru2}. Some of them evidence that this new perspective of Hopf Galois extensions may include examples not studied as such before.
	
	\begin{example}[Hopf algebras]
		Let $H$ be a $K$-Hopf algebra. $H$ becomes a $K$-torsor by taking $\mu=(\operatorname{id}_H \otimes S \otimes \operatorname{id}_H)\Delta_2$. Indeed, for every $h\in H$,
		\begin{align*}
			[ (\mu \otimes \operatorname{id}_{H^{\operatorname{op}}} \otimes \operatorname{id}_{H} )\mu ](h) &= (\mu \otimes \operatorname{id}_{H^{\operatorname{op}}} \otimes \operatorname{id}_{H} )(h_{(1)} \otimes S(h_{(2)}) \otimes h_{(3)} )\\
			& = {h_{(1)}}_{(1)}  \otimes S({h_{(1)}}_{(2)}) \otimes {h_{(1)}}_{(3)} \otimes S(h_{(2)}) \otimes h_{(3)} \\
			&= h_{(1)} \otimes S(h_{(2)}) \otimes h_{(3)} \otimes S(h_{(4)}) \otimes h_{(5)} \\
			&= h_{(1)} \otimes S(h_{(2)}) \otimes {h_{(3)}}_{(1)} \otimes S({h_{(3)}}_{(2)}) \otimes {h_{(3)}}_{(3)} \\
			&=	(\operatorname{id}_{H}\otimes \operatorname{id}_{H^{\operatorname{op}}} \otimes \mu )(h_{(1)} \otimes S(h_{(2)}) \otimes h_{(3)}) \\&= [(\operatorname{id}_{H}\otimes \operatorname{id}_{H^{\operatorname{op}}} \otimes \mu )\mu](h),
		\end{align*}
		which proves \eqref{neq2}. Similarly,
		\begin{align*}
			[(\operatorname{id}_H\otimes m)\mu](h)&=(\operatorname{id}_H\otimes m)(h_{(1)} \otimes S(h_{(2)}) \otimes h_{(3)} ) = h_{(1)}  \otimes S(h_{(2)})h_{(3)}\\
			&= h_{(1)} \otimes \varepsilon(h_{(2)}) = h_{(1)} \varepsilon(h_{(2)}) \otimes 1 = h\otimes 1,\\
			[(m\otimes \operatorname{id}_H)\mu](h)&= (m\otimes \operatorname{id}_H)(h_{(1)} \otimes S(h_{(2)}) \otimes h_{(3)} ) =h_{(1)} S(h_{(2)}) \otimes h_{(3)} \\
			&= \varepsilon(h_{(1)}) \otimes h_{(2)} = 1\otimes \varepsilon(h_{(1)}) h_{(2)} = 1\otimes h,
		\end{align*}
		showing \eqref{neq4} and \eqref{neq3}. Moreover, since $S$ is an anti-morphism of coalgebras,
		\begin{align*}
			\mu(S(h))&=(\operatorname{id}_H \otimes S \otimes \operatorname{id}_H)\Delta_2(S(h))\\&=(\operatorname{id}_H \otimes S \otimes \operatorname{id}_H)(S(h_{(3)})\otimes S(h_{(2)})\otimes S(h_{(1)}))\\
			&=S(h_{(3)})\otimes S^2(h_{(2)}) \otimes S(h_{(1)}).
		\end{align*}
 Hence
		\begin{align*}
			\theta(h) &= h^{(1)} {h^{(2)}}^{(3)} {h^{(2)}}^{(2)} {h^{(2)}}^{(1)} h^{(3)} = h_{(1)} S(h_{(2)})S^2(h_{(3)})S(h_{(4)})h_{(5)}\\
			&= \varepsilon(h_{(1)})S(S(h_{(2)}))\varepsilon(h_{(3)}) = S^2(\varepsilon(h_{(1)})h_{(2)}) \varepsilon(h_{(3)}) \\
			&= S^2( h_1 )\varepsilon(h_{(2)})= S^2(h).
		\end{align*}
		Thus, $\theta=S^2$.
	\end{example}
	
	\begin{example}[Simple algebraic field extensions]
		Recall that a simple field extension $E$ of a field $\Bbbk$ is one obtained by the adjunction of a single  element, i.e., $E=\Bbbk(\alpha)$. In this case $\alpha$ is called primitive. It is known that if $\alpha$ is algebraic over $\Bbbk$, then
		\begin{equation*}
			E=\Bbbk(\alpha) = \left\{ \dfrac{f(\alpha)}{g(\alpha)} : f,g\in \Bbbk[x], \, g(\alpha) \neq 0 \right\} \cong \Bbbk[x]/\langle q_\alpha(x) \rangle ,
		\end{equation*}
		where $\Bbbk[x]$ is the classical univariate polynomial algebra over $\Bbbk$ and $q_\alpha(x)$ is the minimal polynomial of $\alpha$ over $\Bbbk$, i.e., the unique monic $\Bbbk$-polynomial of smallest degree satisfied by $\alpha$ (see e.g. \cite[Theorem 2.4.1]{Rom}). Moreover, if $d=\operatorname{dg}(q_\alpha(x))$, then the set $\{1,\alpha,\ldots,\alpha^{d-1}\}$ is a $\Bbbk$-basis for $E$.
		
		Suppose that $\Bbbk \subset E$ is a Galois extension of fields. Then, if $G=\{g_1,\ldots,g_n\}$ is the associated Galois group and $\{ p_1,\ldots,p_n \} \subset \Bbbk^G $ is the dual basis, by Theorem~\ref{t9}, we known that the Galois map $\beta : E \otimes E \rightarrow E \otimes \Bbbk^G$, given by
		\begin{equation*}
			F\otimes G\mapsto \sum_{i=1}^n Fg_i(G)\otimes p_i, \qquad \mbox{for all } F,G\in E,
		\end{equation*}
		is bijective. Notice that the simple extension
		\begin{equation*}
			E(\gamma)=\Bbbk(\alpha)(\gamma)=\Bbbk(\alpha,\gamma)=\left\{ \dfrac{f(\alpha,\gamma)}{g(\alpha,\gamma)} : f,g\in \Bbbk[x,y], \, g(\alpha,\beta) \neq 0 \right\}
		\end{equation*}
		can be identified with the algebra $E \otimes E$ in such a way that $\alpha \mapsto \alpha \otimes 1$ and $\gamma \mapsto 1\otimes \alpha$. Then, the inverse of $1_E \otimes p_k$ by $\beta$ is
		\begin{equation*}
			P_k:= \prod_{j\neq k} \dfrac{\beta-g_j(\alpha)}{g_k(\alpha)-g_j(\alpha)}, \qquad \mbox{for all } k=1,\ldots,n.
		\end{equation*}
		Indeed, having in mind the identification, for all $1\leq k \leq n$ we have
		\begin{align*}
			\beta(P_k) &= \sum_{i=1}^n \prod_{j\neq k} \dfrac{1g_i(\alpha)-g_j(\alpha)g_i(1)}{g_k(\alpha)g_i(1)-g_j(\alpha)g_i(1)} \otimes p_i = \sum_{i=1}^n \prod_{j\neq k} \dfrac{g_i(\alpha)-g_j(\alpha)}{g_k(\alpha)-g_j(\alpha)} \otimes p_i\\
			&= \prod_{j\neq k} \dfrac{g_k(\alpha)-g_j(\alpha)}{g_k(\alpha)-g_j(\alpha)} \otimes p_k = 1_E \otimes p_k.
		\end{align*}
		Hence, following the proof of Theorem~\ref{t10}, we know that the map $\mu: E \rightarrow E \otimes E \otimes E$, given by
		\begin{equation*}
			F \mapsto \sum_{i=1}^n g_i(F) \otimes P_i, \qquad \mbox{for all } F\in E,
		\end{equation*}
		makes $E$ into a $\Bbbk$-torsor.
	\end{example}
	
	\begin{example}[Noncommutative quantum torsor with no character]
		For a fixed non-negative integer $n$, suppose that the field $\Bbbk$ contains a $n$-th primitive root of unity  $q\neq1$. For any $\alpha,\beta\in \Bbbk^\times$, the $\Bbbk$-algebra generated by the elements $x$ and $y$ together with the relations $x^n = \alpha$, $y^n=\beta$, and $xy=qyx$, is called the \emph{noncommutative algebra without character $A_{\alpha,\beta}^{(n)}$}. It is known that this algebra is a non-trivial cyclic algebra and $\dim_\Bbbk A_{\alpha,\beta}^{(n)} =n^2$. If $n=2$, it is an algebra of quaternions (cf. Grunspan, 2003, Example 2.8).
		
		Taking $T=A_{\alpha,\beta}^{(n)}$, we define $\mu: T\rightarrow T \otimes T^{\operatorname{op}} \otimes T$ as
		\begin{equation*}
			\mu(x)=x\otimes x^{-1} \otimes x \qquad \mbox{and} \qquad \mu(y)=y\otimes y^{-1} \otimes y.
		\end{equation*}
		Then
		\begin{align*}
			(\operatorname{id}_T \otimes \operatorname{id}_{T^{\operatorname{op}}} \otimes \mu)\mu(x)&=(\operatorname{id}_T \otimes \operatorname{id}_{T^{\operatorname{op}}} \otimes \mu)(x\otimes x^{-1}\otimes x)=x\otimes x^{-1} \otimes x \otimes x^{-1} \otimes x \\&= (\mu \otimes \operatorname{id}_{T^{\operatorname{op}}} \otimes \operatorname{id}_T)(x\otimes x^{-1}\otimes x)= (\mu \otimes \operatorname{id}_{T^{\operatorname{op}}} \otimes \operatorname{id}_T)\mu(x),\\
			(\operatorname{id}_T \otimes m)\mu(x)&=(\operatorname{id}_T \otimes m)(x\otimes x^{-1}\otimes x)=x\otimes 1,\\
			(m\otimes \operatorname{id}_T)\mu(x)&=(m\otimes \operatorname{id}_T)(x\otimes x^{-1} \otimes x) = 1\otimes x.
		\end{align*}
		The same calculations are valid for $y$, so $T=A_{\alpha,\beta}^{(n)}$ together with $\mu$ is a quantum $\Bbbk$-torsor.
	\end{example}
	
	\subsubsection{More on quantum torsors}\label{sec2.10.2}
	
	Now we discuss some further results on quantum torsors. For instance, Grunspan found that if the base ring is a field then we can attach two Hopf algebras to any quantum torsor.
	
	\begin{theorem}[Reconstruction Theorem, {\cite[Theorem 2.10]{Gru2}}]\label{t11}
		Let $T$ be a quantum $\Bbbk$-torsor with associated map $\mu:T \rightarrow T \otimes T^{\operatorname{op}} \otimes T$ and Grunspan map $\theta: T\rightarrow T$. Take
		\begin{equation}\label{e108}
			H_l(T):=\{ z\in T\otimes T^{\operatorname{op}} : (\operatorname{id}_T \otimes \operatorname{id}_{T^{\operatorname{op}}} \otimes \theta \otimes \operatorname{id}_{T^{\operatorname{op}}})(\mu \otimes \operatorname{id}_{T^{\operatorname{op}}})(z)=(\operatorname{id}_T \otimes \mu^{\operatorname{op}})(z) \}.
		\end{equation}
		Then, the following assertions hold:
		\begin{enumerate}[label=\normalfont(\roman*)]
			\item If $z\in H_l(T)$, then both $m_T(z)$ and $m_{T^{\operatorname{op}}}(\theta\otimes \operatorname{id}_{T^{\operatorname{op}}})(z)$ are equal to a common scalar denoted by $\varepsilon(z)1_T$.
			\item If $z\in H_l(T)$, then $\Delta(z):=(\mu \otimes \operatorname{id}_{T^{\operatorname{op}}})(z)\in H_l(T) \otimes H_l(T)$.
			\item By defining $m_{H_{l}(T)}$ as the restriction of $m_T \otimes m_T^{\operatorname{op}}$ to $H_l(T)$ and $u_{H_l(T)}:\Bbbk \rightarrow H_l(T)$ as $u_{H_l(T)}(1)=1_T\otimes 1_T$, $H_l(T)$ becomes a bialgebra.
			\item $\operatorname{Im}(u_T) \subset H_l(T)\otimes T$ and $\gamma_T:= \mu: T \rightarrow H_l(T)\otimes T $ embeds $T$ with a left $H_l(T)$-comodule structure.
		\end{enumerate}
		Moreover, if we set $S_{H_l(T)}(z):=\tau_{T}(\theta \otimes \operatorname{id}_{T^{\operatorname{op}}})(z)$, for all $z\in H_l(T)$, then we obtain $\operatorname{Im}(S_{H_l(T)}) \subset H_l(T)$ and hence $H_l(T)$ is a Hopf algebra.
	\end{theorem}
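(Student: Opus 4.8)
The plan is to reconstruct the Hopf structure on $H_l(T)$ by direct computation in Heyneman--Sweedler notation, reusing the template already established for the right-handed algebra in Theorem~\ref{t6} and feeding in the Grunspan relations \eqref{e59} and \eqref{e60} wherever $\theta$ intervenes. Writing $\mu(x)=x^{(1)}\otimes x^{(2)}\otimes x^{(3)}$ and a generic element of $T\otimes T^{\operatorname{op}}$ as $z=z_{1}\otimes z_{2}$ (summation suppressed), the definition \eqref{e108} unwinds to the condition
\begin{equation*}
 {z_{1}}^{(1)}\otimes {z_{1}}^{(2)}\otimes \theta({z_{1}}^{(3)})\otimes z_{2}
 = z_{1}\otimes {z_{2}}^{(3)}\otimes {z_{2}}^{(2)}\otimes {z_{2}}^{(1)}
 \tag{$\star$}
\end{equation*}
in $T^{\otimes 4}$. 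Since $\Bbbk$ is a field, $T$ is faithfully flat, so throughout I may treat $H_l(T)$ as an equalizer and use that $H_l(T)\otimes H_l(T)$ is the intersection of the two ``one-legged'' versions of $(\star)$ inside $(T\otimes T^{\operatorname{op}})^{\otimes 2}$, exactly as in the proof of Theorem~\ref{t6}(i); this is what will make the structure maps below well defined.

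For (i) I would multiply the two outer pairs of legs of $(\star)$. Contracting legs $3,4$ with $m_{T^{\operatorname{op}}}$ and invoking the counit relation $x^{(1)}x^{(2)}\otimes x^{(3)}=1_T\otimes x$ of \eqref{neq3} collapses the right-hand side to $z_{1}\otimes z_{2}\otimes 1_T$; contracting legs $1,2$ of the resulting identity with $m_T$ and using \eqref{neq3} again turns the left-hand side into $1_T\otimes z_{2}\theta(z_{1})$. The outcome is an identity of the shape $1_T\otimes z_{2}\theta(z_{1})=z_{1}z_{2}\otimes 1_T$ in $T\otimes T$, which over a field forces both $m_T(z)=z_{1}z_{2}$ and $m_{T^{\operatorname{op}}}(\theta\otimes\operatorname{id}_{T^{\operatorname{op}}})(z)=z_{2}\theta(z_{1})$ to equal one and the same scalar $\varepsilon(z)1_T$. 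I flag that (i) is the load-bearing step: it is precisely the scalarity of $\varepsilon(w)$ that later renders $\varepsilon$ multiplicative, via $\varepsilon(zw)=z_{1}(w_{1}w_{2})z_{2}=\varepsilon(w)\,z_{1}z_{2}=\varepsilon(z)\varepsilon(w)$.

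I would then read off (iv) and (ii) almost for free. After applying coassociativity \eqref{e42}, the assertion $\operatorname{Im}(\gamma_T)\subset H_l(T)\otimes T$ is \emph{literally} the Grunspan relation \eqref{e59}: checking $(\star)$ for $x^{(1)}\otimes x^{(2)}$ with spectator leg $x^{(3)}$ is exactly \eqref{e59}. The unit lands correctly because $1_T\otimes 1_T\in H_l(T)$ (both $\mu$ and $\theta$ being unital algebra maps), and the comodule axioms for $\gamma_T=\mu$ are the coassociativity \eqref{neq2} and the counit relation \eqref{neq3}. For (ii), $\Delta(z)=(\mu\otimes\operatorname{id}_{T^{\operatorname{op}}})(z)=({z_1}^{(1)}\otimes {z_1}^{(2)})\otimes ({z_1}^{(3)}\otimes z_{2})$: the first tensor factor lies in $H_l(T)$ by \eqref{e59} as above, and the second lies in $H_l(T)$ by combining $(\star)$ for $z$ with \eqref{e42}; faithful flatness then upgrades ``each factor in $H_l(T)$'' to ``$\Delta(z)\in H_l(T)\otimes H_l(T)$''. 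Part (iii) is afterwards bookkeeping: closure of $H_l(T)$ under the multiplication of $T\otimes T^{\operatorname{op}}$ and the bialgebra axioms follow because $\mu$ and $\theta$ are algebra maps (so $\Delta$ is one too), coassociativity of $\Delta$ is \eqref{e42}, and the two counit axioms are the contractions already performed in (i).

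The main obstacle will be the antipode. I would take the prescribed $S_{H_l(T)}(z)=\tau_T(\theta\otimes\operatorname{id}_{T^{\operatorname{op}}})(z)=z_{2}\otimes\theta(z_{1})$ and first prove $\operatorname{Im}(S_{H_l(T)})\subset H_l(T)$; this is the genuinely asymmetric computation, requiring both \eqref{e59} and \eqref{e60} together with $\theta$ being an algebra endomorphism, and it is where the torsor identity $\theta=S^2$ is felt. Having placed $S_{H_l(T)}$ inside $H_l(T)$, there are two ways to finish: verify the convolution identity $S_{H_l(T)}*\operatorname{id}=\operatorname{id}*S_{H_l(T)}=u_{H_l(T)}\varepsilon$ directly from \eqref{e59} and \eqref{e60}, or, mirroring the end of the proof of Theorem~\ref{t6}, first establish by the left-handed analogue of Theorem~\ref{t6}(ii)--(iii) that $K\subset T$ is a faithfully flat left $H_l(T)$-Galois object and then invoke Remark~\ref{r1} to conclude that the bialgebra $H_l(T)$ is automatically a Hopf algebra, identifying the displayed $S_{H_l(T)}$ as its (necessarily unique) antipode. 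I expect the direct convolution check to be the most delicate point, since it is there that the $\theta$-twist built into $(\star)$ must conspire with coassociativity to reproduce $u_{H_l(T)}\varepsilon$.
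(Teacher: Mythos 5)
The first thing to say is that the paper contains no proof of this theorem: it is stated with the citation to Grunspan \cite[Theorem 2.10]{Gru2} and the text moves on immediately (only remarking on the $\Bbbk[[x]]$-case), so there is no internal argument to compare yours against. Judged on its own merits, your reconstruction is essentially correct and is, in outline, Grunspan's argument. Your condition $(\star)$ is the correct unwinding of \eqref{e108}; the two-step contraction in (i) really does yield $1_T\otimes z_2\theta(z_1)=z_1z_2\otimes 1_T$, and over a field this forces both sides to be a common scalar multiple of $1_T$ (this is exactly where the hypothesis that $\Bbbk$ is a field enters); the first-pair membership in (ii) and (iv) is \emph{literally} \eqref{e59} (coassociativity is not even needed for that leg, only for the second-pair membership, which follows from $(\star)$ together with \eqref{neq2}); and the stability $\operatorname{Im}(S_{H_l(T)})\subset H_l(T)$ is obtained, as you say, by reversing the four legs of $(\star)$ and applying $\theta$ to the last two, using \eqref{e60} --- though note that \eqref{e59} is not actually needed in that step.

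Two points need tightening. In (ii), ``each tensor factor lies in $H_l(T)$'' is not a meaningful statement about a sum of simple tensors; what your computations actually give is $\Delta(z)\in H_l(T)\otimes(T\otimes T^{\operatorname{op}})$ and $\Delta(z)\in(T\otimes T^{\operatorname{op}})\otimes H_l(T)$, and one then invokes the subspace identity $(U\otimes X)\cap(V\otimes W)=U\otimes W$ for subspaces $U\subseteq V$, $W\subseteq X$ (valid over a field), rather than a loose appeal to faithful flatness. More seriously, your alternative ending for the antipode is not a genuine shortcut: Remark~\ref{r1} only gives the \emph{existence} of an antipode on the bialgebra $H_l(T)$, and uniqueness identifies it with the displayed map only after you know that $S_{H_l(T)}$ satisfies at least one convolution identity, say $S_{H_l(T)}*\operatorname{id}=u\varepsilon$ (then $S_{H_l(T)}=(S_{H_l(T)}*\operatorname{id})*S'=S'$ for the true antipode $S'$); so the convolution computation can at best be halved, never avoided. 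Fortunately the direct check is short: applying $a\otimes b\otimes c\otimes d\mapsto ad\otimes cb$ to both sides of $(\star)$ gives $m(\operatorname{id}\otimes S_{H_l(T)})\Delta(z)=z_1{z_2}^{(1)}\otimes {z_2}^{(2)}{z_2}^{(3)}=\varepsilon(z)\,1_T\otimes 1_T$ by \eqref{neq4} and the identity $m_T(z)=\varepsilon(z)1_T$ of (i), while $m(S_{H_l(T)}\otimes\operatorname{id})\Delta(z)={z_1}^{(2)}{z_1}^{(3)}\otimes z_2\theta({z_1}^{(1)})=\varepsilon(z)\,1_T\otimes 1_T$ by \eqref{neq4} and the other identity $z_2\theta(z_1)=\varepsilon(z)1_T$ --- which is precisely why part (i) insists that \emph{both} expressions equal one and the \emph{same} scalar.
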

	
	In fact, the result is also valid for $\Bbbk[[x]]$-torsors, providing $T$ is topologically free over $\Bbbk[[x]]$ \cite{Gru2}.
	
	Similarly, for every $\Bbbk$-torsor $T$, we can define a Hopf algebra structure on the set
	\begin{equation}\label{e109}
		H_r(T)=\{ z\in T^{\operatorname{op}} \otimes T : (\operatorname{id}_{T^{\operatorname{op}}} \otimes \theta \otimes \operatorname{id}_{T^{\operatorname{op}}} \otimes \operatorname{id}_T)(\operatorname{id}_{T^{\operatorname{op}}} \otimes \mu)(z)=(\mu^{\operatorname{op}}\otimes \operatorname{id}_T)(z) \}.
	\end{equation}
	We also have $\operatorname{Im}(\mu)\subset T\otimes H_r(T)$ and hence the map $\delta_T:= \mu=T \rightarrow T \otimes H_r(T)$ equips $T$ with a right $H_r(T)$-comodule algebra structure. Moreover, relation~\ref{neq2} implies that the two structures of left $H_l(T)$-comodule algebra and right $H_r(T)$-comodule algebra are compatible. Therefore, we get the following result.
	
	\begin{corollary}[{\cite[Corollary 4.13]{Gru2}}]
		Let $T$ be a quantum $\Bbbk$-torsor. Then $T$ is a $(H_l(T),H_r(T))$-bicomodule algebra of $\Bbbk$.
	\end{corollary}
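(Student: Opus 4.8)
The plan is to assemble the statement from pieces that are already in hand rather than to compute anything new from scratch. By Theorem~\ref{t11}(iv) the map $\gamma_T:=\mu$, with its codomain read as $H_l(T)\otimes T$ (using $H_l(T)\subseteq T\otimes T^{\operatorname{op}}$), makes $T$ a left $H_l(T)$-comodule; by the right-handed analogue recorded after \eqref{e109}, the map $\delta_T:=\mu$, with its codomain read as $T\otimes H_r(T)$ (using $H_r(T)\subseteq T^{\operatorname{op}}\otimes T$), makes $T$ a right $H_r(T)$-comodule. So only two things remain to be checked: first, that each of these structure maps is multiplicative, i.e. that condition~\ref{CA2} holds on both sides; and second, the bicomodule compatibility, which for a $(H_l(T),H_r(T))$-bicomodule takes the form $(\gamma_T\otimes\operatorname{id}_{H_r(T)})\delta_T=(\operatorname{id}_{H_l(T)}\otimes\delta_T)\gamma_T$.

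The crux is the compatibility, and I would dispatch it by expanding both sides in the notation $\mu(x)=x^{(1)}\otimes x^{(2)}\otimes x^{(3)}$, always grouping the first two legs as an element of $H_l(T)$ and the last two as an element of $H_r(T)$. Applying $\delta_T$ first and then $\gamma_T$ to the surviving $T$-leg produces, after regrouping inside $H_l(T)\otimes T\otimes H_r(T)$,
\[
{x^{(1)}}^{(1)}\otimes {x^{(1)}}^{(2)}\otimes {x^{(1)}}^{(3)}\otimes x^{(2)}\otimes x^{(3)},
\]
whereas applying $\gamma_T$ first and then $\delta_T$ to the surviving $T$-leg produces
\[
x^{(1)}\otimes x^{(2)}\otimes {x^{(3)}}^{(1)}\otimes {x^{(3)}}^{(2)}\otimes {x^{(3)}}^{(3)}.
\]
These are exactly the left- and right-hand sides of the coassociativity relation \eqref{neq2} (equivalently \eqref{e42}) for the torsor map $\mu$, so they coincide and the compatibility follows. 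The multiplicativity required in condition~\ref{CA2} is then immediate, since $\mu$ is an algebra morphism by the very definition of a quantum torsor (Definition~\ref{d13}); regrouping its codomain as $H_l(T)\otimes T$ or as $T\otimes H_r(T)$ does not affect this, so both $\gamma_T$ and $\delta_T$ are algebra maps. This upgrades both one-sided structures from comodules to comodule algebras.

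The main obstacle is not analytic but purely organizational: one must track, leg by leg, which adjacent pair of tensor factors of $\mu$ is being interpreted inside $H_l(T)$ and which inside $H_r(T)$, and confirm that the intermediate expressions genuinely land in $H_l(T)\otimes T\otimes H_r(T)$ rather than merely in $T^{\otimes 5}$. This is guaranteed by Theorem~\ref{t11}(iv) together with the observation $\operatorname{Im}(\mu)\subseteq T\otimes H_r(T)$ noted before the statement, so the codomain restrictions are already known to be well defined. Once this bookkeeping is settled, the compatibility reduces verbatim to \eqref{neq2} and no genuine computation remains, yielding that $T$ is a $(H_l(T),H_r(T))$-bicomodule algebra over $\Bbbk$.
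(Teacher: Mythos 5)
Your proposal is correct and takes essentially the same route as the paper: the left and right comodule algebra structures are exactly those supplied by Theorem~\ref{t11}(iv) and the discussion following \eqref{e109} (both with structure map $\mu$), and the bicomodule compatibility $(\gamma_T\otimes\operatorname{id}_{H_r(T)})\delta_T=(\operatorname{id}_{H_l(T)}\otimes\delta_T)\gamma_T$ is precisely relation \eqref{neq2}, which is how the paper disposes of it. Your explicit remark that multiplicativity of both structure maps is inherited from $\mu$ being an algebra morphism, and that the intermediate expressions land in $H_l(T)\otimes T\otimes H_r(T)$, only makes explicit what the paper leaves implicit.
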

	
	Grunspan proved that, in fact, $T$ is a $(H_l(T),H_r(T))$-biGalois object, while also providing properties of $H_l(T)$ and $H_r(T)$ and their explicit calculation for examples.
	
	Roughly speaking, the next result establishes that these Hopf algebras attached to quantum torsors and the ones found in Hopf biGalois extensions (see e.g. \cite{Sch}) are the same. Since the proof uses either Hopf biGalois theory, or Miyashita--Ulbrich techniques, and those topics are not covered in this document, we shall omit it (see \cite[Section 4]{Sch5}).
	
	\begin{proposition}[{\cite[Proposition 3.4]{Sch5}}]
		\begin{enumerate}[label=\normalfont(\roman*)]
			\item Let $T$ be a faithfully flat $H$-Galois object and consider the quantum torsor structure associated in Theorem~\ref{t10}. Then $H_r(T)\cong H$ and $H_l(T)=(T \otimes T)^{\operatorname{co}H}$.
			\item If $T$ is a quantum torsor, then the quantum torsor associated in Theorem~\ref{t10} to the $H_r(T)$-Galois object $T$ coincides with $T$.
		\end{enumerate}
	\end{proposition}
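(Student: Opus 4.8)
The plan is to realize both claims through the translation map $\gamma\colon H\to T^{\operatorname{op}}\otimes T$, $\gamma(h)=\beta^{-1}(1\otimes h)=h^{[1]}\otimes h^{[2]}$, and to force all the equalities using faithful flatness. For part (i), I would first show that $\gamma$ is a Hopf algebra isomorphism onto Grunspan's $H_r(T)$ of \eqref{e109}. That $\gamma$ is an algebra map into $T^{\operatorname{op}}\otimes T$ is already recorded (relations \eqref{e48}, \eqref{e49}), and injectivity is immediate since $\beta$ is bijective (equivalently, \eqref{e107} recovers $h$ from $\gamma(h)$). The cleanest way to control the image is to check directly that $D(\gamma(h))=1\otimes\gamma(h)$: expanding $D$ of Lemma~\ref{l7} with the torsor map $\mu=(\operatorname{id}_T\otimes\gamma)\rho$ of Theorem~\ref{t10}, one rewrites the first two tensor slots by \eqref{e46} and collapses them by \eqref{e45}, obtaining $\varepsilon(h_{(1)})\otimes\gamma(h_{(2)})=1\otimes\gamma(h)$. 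Hence $\gamma$ lands in the coinvariant subalgebra $\,{}^D(T\otimes T)$, which by Theorem~\ref{t6} together with the descent equivalence of Lemma~\ref{l6} is exactly the Hopf algebra over which $T$ is right Galois. I would then check that $\gamma$ intertwines the remaining structure: the comultiplication, counit, and antipode match $\Delta_H$, $\varepsilon_H$, $S_H$ via \eqref{e46}/\eqref{e48}, \eqref{e45}, and \eqref{e86}, respectively.

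Surjectivity of $\gamma$, and the reconciliation of $\,{}^D(T\otimes T)$ with Grunspan's $\theta$-defined $H_r(T)$ of \eqref{e109}, are where faithful flatness enters decisively, and I expect this to be the main obstacle. For surjectivity I would use that $\beta=D$ is the Galois map of the descent datum, so Lemma~\ref{l6} identifies $\,{}^D(T\otimes T)$ with $\beta^{-1}(1\otimes H)=\gamma(H)$; to see that Grunspan's $H_r(T)$ coincides with this coinvariant space set-theoretically, I would substitute the formula $\rho\theta(x)=\theta(x_{(0)})\otimes S^2(x_{(2)})$ of \eqref{e87} into the defining relation of \eqref{e109} to trade the Grunspan map for the antipode and recognise the $\theta$-condition as the coinvariance condition. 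The identification $H_l(T)=(T\otimes T)^{\operatorname{co}H}$ is handled symmetrically: I would unwind \eqref{e108}, again use \eqref{e87} to eliminate $\theta$, and thereby recognise \eqref{e108} as the coinvariance condition for the codiagonal right $H$-coaction $a\otimes b\mapsto a_{(0)}\otimes b_{(0)}\otimes a_{(1)}b_{(1)}$ on $T\otimes T$, whose coinvariants form Schauenburg's left biGalois Hopf algebra; faithful flatness guarantees that the two descriptions give the same subalgebra.

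For part (ii) I would argue directly and avoid the biGalois machinery. Starting from a quantum torsor $T$, Theorem~\ref{t6} (with $H=H_r(T)$) makes $T$ a right $H_r(T)$-Galois object whose coaction is $\rho=\mu$ and whose Galois map is $\beta=D$. Applying Theorem~\ref{t10} to this Galois object produces the torsor map $\mu'=(\operatorname{id}_T\otimes\gamma)\rho=(\operatorname{id}_T\otimes\gamma)\mu$, and the claim is $\mu'=\mu$. I would prove this by composing with $\operatorname{id}_T\otimes\beta$: on one side $\beta\gamma=\operatorname{id}$ gives $(\operatorname{id}_T\otimes\beta)\mu'(x)=x^{(1)}\otimes 1_T\otimes x^{(2)}\otimes x^{(3)}$, while on the other side $(\operatorname{id}_T\otimes\beta)\mu(x)=x^{(1)}\otimes x^{(2)}(x^{(3)})^{(1)}\otimes(x^{(3)})^{(2)}\otimes(x^{(3)})^{(3)}$ collapses to the same element after one use of the coassociativity \eqref{e42} followed by the counit axiom $x^{(1)}\otimes x^{(2)}x^{(3)}=x\otimes 1_T$. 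Since $\beta$ is bijective and $T$ is flat, $\operatorname{id}_T\otimes\beta$ is injective, forcing $\mu'=\mu$; thus the round trip returns the original torsor.
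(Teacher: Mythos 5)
A preliminary remark on the comparison you were asked to survive: the paper offers \emph{no} proof of this proposition. Immediately before stating it, the authors say that the proof ``uses either Hopf biGalois theory, or Miyashita--Ulbrich techniques'' and omit it, deferring to \cite[Section 4]{Sch5}. So your attempt cannot be matched against an in-paper argument; the real question is whether you have produced a proof inside the paper's toolkit that avoids the machinery the authors deemed indispensable. Much of what you wrote is correct, but the step you yourself call ``the main obstacle'' is a genuine gap, and it sits exactly where the authors resorted to citation.

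What works. In part (i), $\gamma$ is an injective algebra map by \eqref{e48}--\eqref{e49} and \eqref{e107}; your computation $D(\gamma(h))=\varepsilon(h_{(1)})\otimes\gamma(h_{(2)})=1\otimes\gamma(h)$ via \eqref{e46} and \eqref{e45} is right; surjectivity onto ${}^D(T\otimes T)$ does follow by transporting $D$ along $\beta$ to the canonical descent datum on $T\otimes H$, whose coinvariants are $1\otimes H$ by flatness of $H$ and faithful flatness of $T$ (note this is an argument one must make, not a literal quotation of Lemma~\ref{l6}); and comultiplication and counit match by \eqref{e46} and \eqref{e45}. This establishes $H\cong{}^D(T\otimes T)$ as Hopf algebras. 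Part (ii) is also essentially correct: composing with $\operatorname{id}_T\otimes\beta$, using $\beta\gamma=\operatorname{id}$ on one side and \eqref{e42} together with $x^{(1)}\otimes x^{(2)}x^{(3)}=x\otimes 1_T$ on the other, forces $\mu'=\mu$. But the hypothesis that $T$ is an $H_r(T)$-Galois object at all must be taken from Theorem~\ref{t14} (Grunspan), not from ``Theorem~\ref{t6} with $H=H_r(T)$'': Theorem~\ref{t6} produces ${}^D(T\otimes T)$, and writing $H=H_r(T)$ there silently presupposes the identification you are trying to prove in (i).

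The gap. The proposition concerns Grunspan's Hopf algebras defined by the $\theta$-relations \eqref{e108} and \eqref{e109}, so after $H\cong{}^D(T\otimes T)$ you still owe the set-theoretic equalities $H_r(T)={}^D(T\otimes T)$ and $H_l(T)=(T\otimes T)^{\operatorname{co}H}$. Your proposed mechanism --- substitute \eqref{e87} into \eqref{e109} and ``recognise'' the $\theta$-condition as coinvariance --- delivers only the easy inclusion: for $z=\gamma(h)$ one verifies \eqref{e109} directly from \eqref{e46}, \eqref{e47} and \eqref{e86}. The substance of the statement is the reverse inclusion, that an \emph{arbitrary} solution of \eqref{e109} lies in $\gamma(H)$. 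Formula \eqref{e87} computes $\rho\theta$, so it only applies when the relevant tensor factors are already in the image of $\theta$; to run your substitution backwards you would have to invert $\theta$ or $S^{2}$, and neither is bijective in general (the torsor is not assumed autonomous, nor is $S$ assumed bijective). A repair that stays within the paper: by Theorem~\ref{t14}, $\mu$ corestricts to a coaction $T\rightarrow T\otimes H_r(T)$ making $T$ a right $H_r(T)$-Galois object whose Galois map is again $D$; hence $T\otimes H_r(T)=D(T\otimes T)=T\otimes\gamma(H)$ inside $T\otimes T^{\operatorname{op}}\otimes T$, and faithful flatness of $T$ forces $H_r(T)=\gamma(H)$ (tensoring the composite $H_r(T)\hookrightarrow T^{\operatorname{op}}\otimes T\rightarrow(T^{\operatorname{op}}\otimes T)/\gamma(H)$ with $T$ gives zero, hence it is zero, and symmetrically). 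The same defect, and the same kind of repair via the left-hand half of Theorem~\ref{t14} or via the biGalois theory of \cite{Sch}, affects your treatment of $H_l(T)=(T\otimes T)^{\operatorname{co}H}$.
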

	
	We end this section by discussing a generalization of quantum torsors proposed by Schauenburg~\cite{Sch4}. For that, let $B$ be a $K$-algebra and $B \subset T$ an algebra extension such that $T$ is a faithfully flat $K$-module. Since $T \otimes_B T$ is obviously a $(B,B)$-bimodule, we can consider the \emph{centralizer}
	\begin{equation*}
		C_B(T\otimes_B T):= \{ t\in T\otimes_B T : b\cdot t=t\cdot b, \, \mbox{for all } b\in B \}.
	\end{equation*}
	This module is endowed with an algebra structure.
	
	\begin{lemma}
		The centralizer $C_B(T\otimes_B T)$ is a $K$-algebra with multiplication given by $(x\otimes y)(z\otimes w):=zx\otimes yw$, for all $x\otimes y$ and $z\otimes w \in C_B(T\otimes_B T)$, and unit $1_T\otimes 1_T$.
	\end{lemma}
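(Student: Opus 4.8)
The plan is to realize the proposed multiplication as the transport of operator composition under an embedding of $C_B(T\otimes_B T)$ into $\operatorname{End}_K(T\otimes_B T)$, which disposes of well-definedness, associativity, closure and unitality in one stroke. First I would record that $T\otimes_B T$ is a $(T,T)$-bimodule via $p\cdot(a\otimes c)\cdot q:=pa\otimes cq$; both one-sided actions are well defined because pushing a factor of $B$ from one leg to the other is exactly the balancing relation $ab\otimes c=a\otimes bc$, and the $(B,B)$-bimodule structure used to define the centralizer is merely the restriction of this one along $B\hookrightarrow T$. Writing $Z:=C_B(T\otimes_B T)$, for each $t\in Z$ I would introduce the $K$-linear map
\begin{equation*}
\Theta_t:T\otimes_B T\longrightarrow T\otimes_B T,\qquad \Theta_t(p\otimes q):=p\cdot t\cdot q,
\end{equation*}
and observe that for $t=x\otimes y$ one gets $\Theta_t(z\otimes w)=zx\otimes yw$, reproducing the asserted product. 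Thus I would simply \emph{define} the product by $ts:=\Theta_t(s)$.

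The central point---and the main obstacle---is the well-definedness of $\Theta_t$, equivalently of the formula $(x\otimes y)(z\otimes w)=zx\otimes yw$ on classes rather than representatives. Dependence on the representative of $t$ is harmless, since $\Theta_t(p\otimes q)=p\cdot t\cdot q$ is manifestly a function of $t\in T\otimes_B T$. The delicate check is that $\Theta_t$ descends over the balancing in its argument: one needs $\Theta_t(pb\otimes q)=\Theta_t(p\otimes bq)$ for $b\in B$, i.e. $p\,(bt)\,q=p\,(tb)\,q$, where $bt$ and $tb$ denote the left and right $B$-actions on $t$. This is precisely where the centralizer hypothesis enters: $bt=tb$ holds by definition of $Z$, whereas on a generic element of $T\otimes_B T$ a factor $b$ trapped between the two legs (as in $zbx\otimes yw$) cannot be moved by balancing alone. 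I would emphasise that this is exactly why the algebra structure lives on the centralizer and not on all of $T\otimes_B T$.

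With $\Theta_t$ well defined and $K$-linear, the remaining assertions follow formally. A short computation on a representative $s=\sum_j z_j\otimes w_j$ gives $\Theta_t\Theta_s(p\otimes q)=\sum_j (pz_j)\cdot t\cdot(w_j q)=p\cdot(ts)\cdot q$, so that $\Theta_{ts}=\Theta_t\circ\Theta_s$; since each $\Theta_t$ commutes with the left and right $B$-actions, $b\cdot(ts)=\Theta_t(b\cdot s)=\Theta_t(s\cdot b)=(ts)\cdot b$, which proves the closure $ts\in Z$. Associativity is then immediate from $\Theta_{(ts)r}=\Theta_{ts}\Theta_r=\Theta_t\Theta_s\Theta_r=\Theta_t\Theta_{sr}=\Theta_{t(sr)}$ together with the injectivity of $t\mapsto\Theta_t$, the latter holding because $\Theta_t(1_T\otimes 1_T)=t$. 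Finally $1_T\otimes 1_T\in Z$ (as $b\otimes 1=1\otimes b$ by balancing) and $\Theta_{1_T\otimes 1_T}=\operatorname{id}$, so $1_T\otimes 1_T$ is a two-sided unit. In short, $t\mapsto\Theta_t$ embeds $Z$ as a unital subalgebra of $\bigl(\operatorname{End}_K(T\otimes_B T),\circ\bigr)$, endowing $C_B(T\otimes_B T)$ with the stated $K$-algebra structure; note that no flatness hypothesis on $T$ is required for this purely formal claim.
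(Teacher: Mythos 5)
Your proof is correct, and it takes a genuinely different route from the paper's. The paper's argument is a bare-hands computation on simple tensors: it verifies \emph{only} the closure property, sliding $b$ across the product $b[(x\otimes y)(z\otimes w)] = (x\otimes y)[b(z\otimes w)] = (x\otimes y)[(z\otimes w)b] = [(x\otimes y)(z\otimes w)]b$ using the centrality of the outer factor $z\otimes w$, and then declares that ``the other properties are immediate to check.'' You instead realize the product as $ts:=\Theta_t(s)$ for the sandwich operator $\Theta_t(p\otimes q)=p\cdot t\cdot q$, and transport the algebra structure from $\operatorname{End}_K(T\otimes_B T)$ along the injection $t\mapsto\Theta_t$. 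What your approach buys is precisely the point the paper passes over in silence: a general element of $C_B(T\otimes_B T)$ need not be a sum of simple tensors each lying in the centralizer, so the displayed formula cannot simply be ``extended by bilinearity,'' and one must check that multiplication descends through the balancing relation in the middle. Your observation that $B$-balancedness of $(p,q)\mapsto p\cdot t\cdot q$ is \emph{exactly} the centralizer condition $b\cdot t=t\cdot b$ isolates where the hypothesis is used, and the operator picture then yields closure, associativity and unitality uniformly from $\Theta_{ts}=\Theta_t\Theta_s$, $\Theta_t(1_T\otimes 1_T)=t$ and $\Theta_{1_T\otimes 1_T}=\operatorname{id}$. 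Note also that your closure computation $b\cdot(ts)=\Theta_t(b\cdot s)=\Theta_t(s\cdot b)=(ts)\cdot b$ is the same in substance as the paper's chain of equalities, so the two proofs agree where they overlap; the paper's version buys brevity, while yours is the one that would survive scrutiny as a complete proof, and your closing remark that no flatness of $T$ is needed for this purely formal statement is accurate.
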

	
	\begin{proof}
		If $x\otimes y, z\otimes w \in C_B(T\otimes_B T)$, then
		\begin{align*}
			b[(x\otimes y)(z\otimes w)]&=b(zx\otimes yw)= bzx \otimes yw = (x\otimes y)(bz\otimes w)=(x\otimes y)[b(x\otimes w)]\\&=(x\otimes y)[(z\otimes w)b]=(x\otimes y)(z\otimes wb)=zx\otimes ywb=(zx\otimes yw)b\\&=[(x\otimes y)(z\otimes w)]b.
		\end{align*}
		Thus, $C_B(T\otimes_B T)$ is indeed closed under this operation. The other properties are immediate to check.
	\end{proof}
	
	\begin{definition}[Generalized quantum torsor]\label{d20}
		Let $B$ be a $K$-algebra and $B \subset T$ an algebra extension such that $T$ is a faithfully flat $K$-module. A \emph{generalized quantum $B$-torsor} structure on $T$ is a map $\mu: T \rightarrow T \otimes C_B(T\otimes_B T)$ such that if the induced map $\mu_0: T\rightarrow T \otimes T \otimes_B T$ is denoted by $\mu_0(x):=x^{(1)} \otimes x^{(2)} \otimes x^{(3)}$ for any $x\in T$, then the following relations hold:
		\begin{align}
			x^{(1)}x^{(2)} \otimes x^{(3)} = &\ 1\otimes x \in T \otimes_B T, \nonumber\\
			x^{(1)} \otimes x^{(2)}x^{(3)} = &\ x\otimes 1 \in T\otimes T \nonumber,\\
			\mu(b) = &\ b\otimes 1\otimes 1, \, \forall b\in B,\label{e97}\\
			\mu(x^{(1)}) \otimes x^{(2)} \otimes x^{(3)} = &\ x^{(1)} \otimes x^{(2)} \otimes \mu(x^{(3)}) \in T \otimes T \otimes_B T \otimes_B T.\label{e98}
		\end{align}
	\end{definition}
	
	Note that \eqref{e97} implies that $\mu$ is a left $B$-module map and hence the relation \eqref{e98} actually makes sense. This generalization of quantum torsor also induces a descend datum.
	
	\begin{lemma}[{\cite[Lemma 2.8.3]{Sch4}}]
		Let $T$ be a generalized quantum $B$-torsor with associated map $\mu$. Then $D(x\otimes y)=xy^{(1)} \otimes y^{(2)} \otimes y^{(3)}$ defines a $T/K$-descent datum on $T \otimes_B T$. Moreover, it satisfies $(T\otimes D)\mu(x)=x^{(1)} \otimes 1 \otimes x^{(2)} \otimes x^{(3)}$, and $D(T\otimes_B T) \subset T \otimes C_B(T\otimes_B T)$.
	\end{lemma}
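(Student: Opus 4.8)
The plan is to follow the proof of Lemma~\ref{l7} for ordinary quantum torsors almost verbatim, the only genuinely new care being the distinction between $\otimes_K$ and $\otimes_B$. Writing $\mu_0(y)=y^{(1)}\otimes y^{(2)}\otimes y^{(3)}$, the map $D(x\otimes y)=xy^{(1)}\otimes y^{(2)}\otimes y^{(3)}$ is to be read as $x$ multiplying (on the left, in $T$) the first tensorand of $\mu(y)$, with $y^{(2)}\otimes y^{(3)}$ retained in $T\otimes_B T$; that is, $D$ is the composite of $\mu$ on the second variable, left multiplication by $x$ on the first slot, and the inclusion $T\otimes C_B(T\otimes_B T)\hookrightarrow T\otimes(T\otimes_B T)$. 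I must then verify that $D$ is an $S/K$-descent datum in the sense of Definition~\ref{d14}, check the two parts of \eqref{dd}, and prove the auxiliary formula and the inclusion into the centralizer.

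First I would record the easy structural facts. Left $T$-linearity is immediate, since $D(tx\otimes y)=txy^{(1)}\otimes y^{(2)}\otimes y^{(3)}=t\cdot D(x\otimes y)$. Well-definedness over $B$ is checked on the generator $xb\otimes y=x\otimes by$ with $b\in B$: because $\mu$ is a left $B$-module map (as remarked after Definition~\ref{d20}), $\mu(by)=b\cdot\mu(y)$, so both $D(xb\otimes y)$ and $D(x\otimes by)$ reduce to $xby^{(1)}\otimes y^{(2)}\otimes y^{(3)}$. The inclusion $D(T\otimes_B T)\subset T\otimes C_B(T\otimes_B T)$ then costs nothing: the factor $y^{(2)}\otimes y^{(3)}$ is by definition the $C_B(T\otimes_B T)$-component of $\mu(y)$, and multiplying the first tensorand by $xy^{(1)}$ leaves it untouched.

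The content lies in the two conditions of \eqref{dd}. For $\gamma D=\operatorname{id}$ I would compute $\gamma_{T\otimes_B T}D(x\otimes y)=xy^{(1)}y^{(2)}\otimes y^{(3)}=x\cdot(y^{(1)}y^{(2)}\otimes y^{(3)})=x\cdot(1\otimes y)=x\otimes y$, invoking the relation $y^{(1)}y^{(2)}\otimes y^{(3)}=1\otimes y$ of Definition~\ref{d20}. For the cocycle condition I would expand
\[
(\operatorname{id}_T\otimes D)D(x\otimes y)=xy^{(1)}\otimes y^{(2)}(y^{(3)})^{(1)}\otimes (y^{(3)})^{(2)}\otimes (y^{(3)})^{(3)},
\]
use the coassociativity relation \eqref{e98} to replace the string $y^{(1)}\otimes y^{(2)}\otimes\mu(y^{(3)})$ by $\mu(y^{(1)})\otimes y^{(2)}\otimes y^{(3)}$, and then collapse $(y^{(1)})^{(2)}(y^{(1)})^{(3)}$ to $1_T$ via the counit relation $x^{(1)}\otimes x^{(2)}x^{(3)}=x\otimes 1$. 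The outcome is $xy^{(1)}\otimes 1_T\otimes y^{(2)}\otimes y^{(3)}=(\operatorname{id}_T\otimes\eta\otimes\operatorname{id})D(x\otimes y)$, as required. Running the identical bookkeeping on $\mu(x)$ rather than on $x\otimes y$ yields the stated auxiliary identity $(\operatorname{id}_T\otimes D)\mu(x)=x^{(1)}\otimes 1\otimes x^{(2)}\otimes x^{(3)}$.

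The main obstacle is not any single computation---each reduces to one application of \eqref{e98} together with one counit relation---but the legitimacy of the Sweedler manipulations across the mixed tensor product. In particular, the expression $\mu(y^{(3)})$ that drives the cocycle computation only makes sense because $\mu$ is left $B$-linear, so that $\operatorname{id}\otimes\operatorname{id}\otimes\mu$ is defined on $T\otimes T\otimes_B T$ and \eqref{e98} is a genuine equality in $T\otimes T\otimes_B T\otimes_B T$. I would flag this well-definedness explicitly before the cocycle calculation, and throughout treat the counit relations as identities of maps (for instance $(\operatorname{id}_T\otimes m)\mu_0=\operatorname{id}_T\otimes u$), so that they may be tensored with the spectator factors $y^{(2)}\otimes y^{(3)}$ without ever leaving the class of well-defined $K$-linear maps.
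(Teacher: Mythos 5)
Your proposal is correct and follows exactly the route the paper takes: the paper's own proof simply states that the calculations are completely similar to those of Lemma~\ref{l7}, with the only new ingredient being the centralizer inclusion, which is immediate because $y^{(2)}\otimes y^{(3)}$ is by definition the $C_B(T\otimes_B T)$-component of $\mu(y)$ --- precisely the observation you make. Your added care about well-definedness over $B$ (via left $B$-linearity of $\mu$, as remarked after Definition~\ref{d20}) and about the legitimacy of the Sweedler manipulations across $\otimes_B$ is a faithful expansion of what the paper leaves implicit.
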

	\begin{proof}
		
		The calculations are completely similar to those made for Lemma~\ref{l7}. The only new part is the last inclusion, which is immediately obtained from noticing that, by definition, $y^{(2)} \otimes y^{(3)}$ is in the centralizer.
	\end{proof}
	
	Recall that for an arbitrary descent datum $D$ on $\operatorname{DD}(S/R)$ over $M$, Lemma~\ref{l6} gives an associated $R$-module $ {}^D M=\{m\in M: D(m)=1\otimes m \}$. From the above, we conclude that in our setup ${}^D (T \otimes_B T)\subset C_B(T\otimes_B T)$. Hence, the following result generalizes Theorem~\ref{t6}.
	
	\begin{theorem}[{\cite[Theorem 2.8.4]{Sch4}}]
		Let $T$ be a generalized quantum $B$-torsor with associated map $\mu$ such that $T$ is faithfully flat as right $B$-module. If
		\begin{equation*}
			H:={}^D (T \otimes_B T) = \{ x\otimes y \in T \otimes_B T : xy^{(1)} \otimes y^{(2)} \otimes y^{(3)} = 1\otimes x \otimes y \},
		\end{equation*}
		then the following assertions hold:
		\begin{enumerate}[label=\normalfont(\roman*)]
			\item $H$ is a $K$-flat Hopf algebra. The algebra structure is that of a subalgebra of the algebra $C_B(T\otimes_B T)$; comultiplication and counit are given by
			\begin{equation*}
				\Delta(x\otimes y)=x\otimes y^{(1)} \otimes y^{(2)} \otimes y^{(3)} \qquad \mbox{and} \qquad \varepsilon(x\otimes y)=xy.
			\end{equation*}
			\item $T$ is a right $H$-comodule algebra with structure map $\rho:=\mu$. Moreover, $T^{\operatorname{co}H}=B$.
			\item $B\subset T$ is a right $H$-Galois extension.
		\end{enumerate}
	\end{theorem}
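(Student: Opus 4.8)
The plan is to transcribe the proof of Theorem~\ref{t6} to this relative setting, replacing $T^{\operatorname{op}}\otimes T$ by the centralizer $C_B(T\otimes_B T)$, the tensor square $T\otimes T$ by the balanced product $T\otimes_B T$, relation~\eqref{e42} by~\eqref{e98}, and the $K$-coinvariants by $B$. The whole argument rests on faithfully flat descent: by Definition~\ref{d20} the module $T$ is faithfully flat over $K$, and by hypothesis it is also faithfully flat as a right $B$-module; these two flatness assumptions enter at different steps, so keeping track of which one is needed where is the principal bookkeeping task.

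First I would secure assertion (iii) together with the $K$-flatness of $H$, since everything else uses it. The Galois map is $\beta(x\otimes y)=(x\otimes 1_H)\rho(y)=xy^{(1)}\otimes y^{(2)}\otimes y^{(3)}$, which is exactly the descent datum $D$ of the preceding lemma. Applying Lemma~\ref{l6} to the $T/K$-descent datum $D$ on $M=T\otimes_B T$ (legitimate because $T$ is faithfully flat over $K$) shows that $\beta=D$ is an isomorphism onto $T\otimes H$ with $H={}^D M$, which is precisely assertion (iii). For $K$-flatness I would first note that $T\otimes_B T$ is $K$-flat: writing $(T\otimes_B T)\otimes_K N = T\otimes_B(T\otimes_K N)$ and using that $T$ is flat over both $K$ and $B$, the functor $N\mapsto (T\otimes_B T)\otimes_K N$ is exact. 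Then $T\otimes H\cong T\otimes_B T$ is $K$-flat, and faithfully flat descent along $T/K$ transfers $K$-flatness to $H$.

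With $H$ known to be $K$-flat I would build its bialgebra structure. To see that $H$ is a subalgebra of $C_B(T\otimes_B T)$, take $x\otimes y,\,a\otimes b\in H$ and compute $D\bigl((x\otimes y)(a\otimes b)\bigr)=D(ax\otimes yb)$; using that $\mu$ is an algebra morphism and that the middle tensorand lies in the centralizing factor, this collapses to $1\otimes(x\otimes y)(a\otimes b)$, exactly as in Theorem~\ref{t6}(i). Flatness then identifies $H$ with an equalizer, so $\Delta(x\otimes y)=x\otimes y^{(1)}\otimes y^{(2)}\otimes y^{(3)}$ genuinely lands in $H\otimes H$; coassociativity is a verbatim reading of~\eqref{e98}, and $\Delta$ is multiplicative because $\mu$ is. For the counit, if $x\otimes y\in H$ then multiplying the last two legs of the defining identity $xy^{(1)}\otimes y^{(2)}\otimes y^{(3)}=1\otimes x\otimes y$ and invoking the second torsor relation gives $xy\otimes 1=1\otimes xy$ in $T\otimes T$, whence $xy\in K$ by faithful flatness over $K$; the counit axioms and multiplicativity of $\varepsilon$ follow as before.

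It remains to handle (ii) and the antipode. That $\rho:=\mu$ corestricts to $T\to T\otimes H$ is precisely the identity $(\operatorname{id}_T\otimes D)\mu(x)=(\operatorname{id}_T\otimes u_T\otimes\operatorname{id})\mu(x)$ recorded in the preceding lemma, which places $\operatorname{Im}(\mu)$ inside the equalizer $T\otimes H$; the comodule-algebra axioms are inherited from $\mu$ being an algebra map and from the torsor relations. For $T^{\operatorname{co}H}=B$ I would use the \emph{other} flatness: if $\rho(x)=x\otimes 1_H$ then the first torsor relation forces $x\otimes 1=1\otimes x$ in $T\otimes_B T$, so $x\in B$ because $T$ is faithfully flat over $B$, while~\eqref{e97} gives the reverse inclusion. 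Finally, having a $K$-flat bialgebra $H$ and, by (iii), a faithfully flat (over $K$) $H$-Galois extension $B\subset T$, Remark~\ref{r1} upgrades $H$ to a Hopf algebra, completing (i). The main obstacle I anticipate is not conceptual but the careful verification that every map in sight is well-defined on the balanced products $\otimes_B$ and actually takes values in $C_B(T\otimes_B T)$ rather than merely in $T\otimes_B T$, together with the discipline of deploying each of the two faithful-flatness hypotheses at the correct step.
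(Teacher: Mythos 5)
Your proposal is correct and follows essentially the same route as the paper, whose proof simply states that the calculations of Theorem~\ref{t6} carry over verbatim, with the faithful flatness of $T$ as a right $B$-module being the new ingredient needed (together with the bijectivity of $\beta$) to obtain the $K$-flatness of $H$ and the identification $T^{\operatorname{co}H}=B$. Your careful tracking of which flatness hypothesis enters at which step, and your use of Lemma~\ref{l6}, the relations \eqref{e97}--\eqref{e98}, and Remark~\ref{r1} to produce the antipode, is exactly the transcription the paper intends.
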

	
	\begin{proof}
		Again, the calculations are not essentially different from the ones for Theorem~\ref{t6}, the only difference being that, in this case, the assumption of faithful flatness of $T_B$ is used to deduce, along the bijectivity of the Galois map $\beta: T \otimes_B T \rightarrow T \otimes H$, that $H$ is a faithful flat $K$-module.
	\end{proof}
	
	Finally, we have the following result which, together with the previous one, establishes the equivalence
	\begin{equation*}
		\mbox{Generalized quantum torsors} \Leftrightarrow \mbox{Hopf Galois extensions},
	\end{equation*}
	provided conditions of faithful flatness.
	
	\begin{theorem}[{\cite[Lemma 2.8.5]{Sch4}}]
		Let $H$ be a faithfully flat $K$-Hopf algebra and $T^{\operatorname{co}H}\subset T$ a faithfully flat right $H$-Galois extension. If $B:=T^{\operatorname{co}H}$, then $T$ is a generalized quantum $B$-torsor with associated map $\mu:T \rightarrow T \otimes C_B(T \otimes_B T)$ defined by
		\begin{equation}
			\mu(x)=x_{(0)} \otimes {x_{(1)}}^{[1]} \otimes {x_{(1)}}^{[2]} , \qquad \mbox{for all } x\in T,
		\end{equation}
		where $h^{[1]} \otimes h^{[2]}:=\beta^{-1}(1\otimes h) \in T \otimes_B T$, whit $\beta: T\otimes_B T \rightarrow T\otimes H$ the Galois map.
	\end{theorem}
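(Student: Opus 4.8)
The plan is to run the proof of Theorem~\ref{t10} again, transposing each computation from the absolute setting (where $B=K$ and $T\otimes T$ is an algebra) to the relative one, in which $T\otimes_B T$ is merely a $(B,B)$-bimodule while its centralizer $C_B(T\otimes_B T)$ is an algebra. First I would re-establish the translation-map identities \eqref{e44}--\eqref{e49} for $\gamma(h)=\beta^{-1}(1\otimes h)=h^{[1]}\otimes h^{[2]}\in T\otimes_B T$, now for the relative Galois map $\beta\colon T\otimes_B T\to T\otimes H$. The identities \eqref{e107}, \eqref{e45} and \eqref{e49} are immediate from $\beta\gamma=\operatorname{id}$, from $(\operatorname{id}_T\otimes\varepsilon)\beta=m$, and from $\beta(1\otimes1)=1\otimes1$, none of which requires $\beta$ to be an algebra map. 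For \eqref{e44} I would use that $\beta$, hence $\beta^{-1}$, is left $T$-linear by Proposition~\ref{p9}: writing $\rho(x)=\sum_i a_i\otimes h_i$ and applying $\beta^{-1}$ gives $\sum_i a_i\gamma(h_i)=\beta^{-1}(\rho(x))=1\otimes x$, that is, $x_{(0)}{x_{(1)}}^{[1]}\otimes{x_{(1)}}^{[2]}=1\otimes x$. The identities \eqref{e46} and \eqref{e47} follow from the right $H$-colinearity of $\beta$ recorded in Proposition~\ref{p9}, and \eqref{e48} is obtained by applying the injective map $\beta$ to both sides; equivalently, $\gamma\colon H\to C_B(T\otimes_B T)$ is an algebra morphism for the product $(x\otimes y)(z\otimes w)=zx\otimes yw$ introduced above.

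Next I would settle the well-definedness of $\mu=(\operatorname{id}_T\otimes\gamma)\rho$. The one genuinely new point compared with Theorem~\ref{t10} is that the image of $\mu$ must lie in $T\otimes C_B(T\otimes_B T)$, i.e.\ that $\gamma(h)$ belongs to the centralizer. For $b\in B=T^{\operatorname{co}H}$ one has $\rho(b)=b\otimes1$, so applying the injective Galois map to $b\cdot\gamma(h)=bh^{[1]}\otimes h^{[2]}$ and to $\gamma(h)\cdot b=h^{[1]}\otimes h^{[2]}b$ yields, using \eqref{e107} for the former and $\rho(h^{[2]}b)=\rho(h^{[2]})(b\otimes1)$ for the latter, the common value $b\otimes h$ in $T\otimes H$. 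By injectivity of $\beta$ this gives $b\gamma(h)=\gamma(h)b$ in $T\otimes_B T$, so $\gamma(h)\in C_B(T\otimes_B T)$ and $\mu(x)=x_{(0)}\otimes{x_{(1)}}^{[1]}\otimes{x_{(1)}}^{[2]}$ is a well-defined map $T\to T\otimes C_B(T\otimes_B T)$, whose image in $T\otimes T\otimes_B T$ I would denote $\mu_0(x)=x^{(1)}\otimes x^{(2)}\otimes x^{(3)}$.

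The four defining relations of Definition~\ref{d20} then fall out as before. The relation $x^{(1)}x^{(2)}\otimes x^{(3)}=1\otimes x$ is exactly \eqref{e44}, read inside $T\otimes_B T$, and $x^{(1)}\otimes x^{(2)}x^{(3)}=x\otimes1$ is \eqref{e45} together with the counit axiom $x_{(0)}\varepsilon(x_{(1)})=x$. For \eqref{e97} I would use $\rho(b)=b\otimes1$ and \eqref{e49} to get $\mu(b)=b\otimes 1^{[1]}\otimes 1^{[2]}=b\otimes1\otimes1$. Finally, the coassociativity-type relation \eqref{e98} is the verbatim computation of \eqref{neq2} carried out in the proof of Theorem~\ref{t10}: expanding $(\operatorname{id}_T\otimes\operatorname{id}_{T^{\operatorname{op}}}\otimes\mu)\mu(x)$, rewriting $\mu({x_{(1)}}^{[2]})={{x_{(1)}}^{[2]}}_{(0)}\otimes\gamma({{x_{(1)}}^{[2]}}_{(1)})$, and applying \eqref{e46} with $h=x_{(1)}$ to collapse it to $\mu(x_{(0)})\otimes\gamma(x_{(1)})=(\mu\otimes\operatorname{id}_{T^{\operatorname{op}}}\otimes\operatorname{id}_T)\mu(x)$, now interpreted in $T\otimes T\otimes_B T\otimes_B T$.

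The main obstacle is precisely that $T\otimes_B T$ is no longer an algebra once $B\neq K$, so the supporting lemma's repeated appeals to ``$\beta$ is an algebra morphism'', used freely in the absolute case, are unavailable. The plan circumvents this by replacing those appeals with the module-theoretic description of $\beta$ from Proposition~\ref{p9} (left $T$-linearity and right $H$-colinearity, which hold for arbitrary $H$-Galois extensions, not just Galois objects) and by keeping all multiplicative bookkeeping inside the algebra $C_B(T\otimes_B T)$ rather than in $T\otimes_B T$ itself. Faithful flatness of $T$ over $B$, together with $H$ being flat over $K$, is what supplies the $K$-faithful-flatness required by Definition~\ref{d20} and legitimises the relative tensor manipulations; note in particular that, unlike in Theorem~\ref{t10}, no Grunspan map is needed here, so the identities \eqref{e50}--\eqref{e52} play no role and the flatness hypothesis is used only for these structural purposes.
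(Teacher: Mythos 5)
The paper never actually proves this statement: it is quoted from Schauenburg, and the two companion results in the same subsection are disposed of with one-line proofs saying the calculations are ``not essentially different'' from the absolute case (Theorem~\ref{t6}, Theorem~\ref{t10}). Your proposal carries out precisely the program the paper leaves implicit --- relativize the proof of Theorem~\ref{t10} --- and it does so correctly. In particular, you rightly isolate the two points where the absolute argument genuinely breaks: the supporting lemma's repeated use of ``$\beta$ is an algebra morphism'' is meaningless once $T\otimes_B T$ is not an algebra, and you correctly replace it by the left $T$-linearity and right $H$-colinearity of $\beta$ supplied by Proposition~\ref{p9} (which is stated for arbitrary Galois extensions, not just Galois objects); and the new obligation that $\gamma(h)=\beta^{-1}(1\otimes h)$ lie in $C_B(T\otimes_B T)$, which your argument (compare $\beta(b\gamma(h))$ and $\beta(\gamma(h)b)$ via \eqref{e107} and multiplicativity of $\rho$, then use injectivity of $\beta$) settles correctly. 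The verification of the four axioms of Definition~\ref{d20} from \eqref{e44}, \eqref{e45}, \eqref{e49} and the colinearity identity \eqref{e46} is then exactly as you describe, and your observation that no Grunspan map, hence none of \eqref{e50}--\eqref{e52}, is needed in the relative setting is also correct.

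Two small repairs. First, an ordering issue: \eqref{e48} should come \emph{after} centrality, not before, because the expression $h^{[1]}g^{[1]}\otimes g^{[2]}h^{[2]}$ is only a well-defined element of $T\otimes_B T$ (independent of the chosen representatives of $\gamma(g)$, $\gamma(h)$) once one knows $\gamma(g)\in C_B(T\otimes_B T)$; since your centrality proof does not invoke \eqref{e48}, the reordering is harmless. Second, the closing flatness remark is not correct as stated: faithful flatness of $T$ over $B$ together with ($faithful$) flatness of $H$ over $K$ does \emph{not} imply that $T$ is faithfully flat over $K$ --- take $H=K=\mathbb{Z}$ with trivial coaction and $B=T=\mathbb{Q}$, which is flat but not faithfully flat over $\mathbb{Z}$. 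The $K$-faithful-flatness demanded by Definition~\ref{d20} has to be read as part of the hypotheses rather than derived from them. Since, as you note, flatness enters nowhere in the verification of the four torsor axioms, this affects only the bookkeeping of hypotheses, not the substance of your proof.
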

	
	\subsection{Hopf Galois systems}\label{sec2.11}
	
	Almost parallel to the development of quantum torsors, Bichon presented another formulation for noncommutative torsors \cite{Bic}. That approach was made by noticing that a classical torsor naturally gives rise to a grupoid with two objects. Although the axiomatisation looks slightly complicated, it is also more natural an easier to handle.
	
	\begin{definition}[Hopf Galois system]\label{d12}
		A \emph{$K$-Hopf Galois system} consists of four $K$-algebras $(A,B,Z,T)$ satisfying the following axioms:
		\begin{enumerate}[label=(HGS\arabic*), align=parleft, leftmargin=*]
			\item\label{HGS1} $A$ and $B$ are $K$-bialgebras,
			\item\label{HGS2} $Z$ is an $(A,B)$-bicomodule algebra with respective structure maps $\alpha_Z : Z \rightarrow A\otimes Z$ and $\beta_Z : Z \rightarrow Z\otimes B$.
			\item\label{HGS3} There exist algebra morphisms $\gamma: A \rightarrow Z\otimes T$ and $\delta: B \rightarrow T\otimes Z$ such that the following relations hold:
			\begin{align}
				(\gamma \otimes \operatorname{id}_Z)\alpha_Z &= (\operatorname{id}_Z \otimes \delta)\beta_Z,\label{e100}\\
				(\operatorname{id}_A \otimes \gamma)\Delta_A &= (\alpha_Z \otimes \operatorname{id}_T)\gamma,\label{e101}\\
				(\delta \otimes \operatorname{id}_B) \Delta_B &= (\operatorname{id}_T \otimes \beta_Z) \delta.\label{e102}
			\end{align}
			\item\label{HGS4} There exists a $K$-linear map $S:T\rightarrow Z$ such that the following relations hold:
			\begin{align}
				m_Z(\operatorname{id}_Z\otimes S)\gamma &= u_Z \varepsilon_A,\label{e103}\\
				m_Z(S\otimes \operatorname{id}_Z)\delta &= u_Z \varepsilon_B.\label{e104}
			\end{align}
		\end{enumerate}
	\end{definition}
	
	We may extend Heyneman--Sweedler notation to Hopf Galois systems by writing
	\begin{equation*}
		\gamma(a)=a_Z \otimes a_T \qquad \mbox{and} \qquad \delta(b)=b_T \otimes b_Z, \qquad \mbox{for all } a\in A \mbox{ and } b\in B.
	\end{equation*}
	
	It can be proved that in any Hopf Galois system $(A,B,Z,T)$ the bialgebras $A$ and $B$ are in fact Hopf algebras, and $S:T\rightarrow Z^{\operatorname{op}}$ is an algebra morphism \cite[Corollaries 1.3 and 1.10]{Bic}.
	
	The following result relates Hopf Galois systems with Hopf biGalois objects.
	
	\begin{theorem}[{\cite[Theorem 1.2]{Bic}}] \label{th1}
		Let $(A,B,Z,T)$ be a $K$-Hopf Galois system with $Z$ faithfully flat over $K$. Then $Z$ is an $(A,B)$-biGalois object.
	\end{theorem}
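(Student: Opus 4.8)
The plan is to verify the three defining conditions of an $(A,B)$-biGalois object (Definition~\ref{d11}) for $Z$. Condition~\ref{BG1} is essentially free: by~\ref{HGS2} the algebra $Z$ is already an $(A,B)$-bicomodule algebra, and faithful flatness over $K$ is part of the hypothesis, so the only real work lies in establishing the two one-sided Galois conditions~\ref{BG2} and~\ref{BG3}. Since the situation is completely symmetric under interchanging $(A,\alpha_Z,\gamma)$ with $(B,\beta_Z,\delta)$, I would treat the right-handed condition~\ref{BG3} in detail and obtain~\ref{BG2} by the analogous argument.

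For~\ref{BG3}, first I would exhibit an explicit inverse of the right Galois map $\beta_r\colon Z\otimes_K Z\to Z\otimes B$, $\beta_r(x\otimes y)=xy_{(0)}\otimes y_{(1)}$. The natural candidate, mimicking the formula $\beta^{-1}(a\otimes b)=aS(b_{(1)})\otimes b_{(2)}$ from Proposition~\ref{ex13}, is
\[
\beta_r^{-1}(x\otimes b)=xS(b_T)\otimes b_Z,\qquad \delta(b)=b_T\otimes b_Z.
\]
To check $\beta_r\beta_r^{-1}=\operatorname{id}$ I would push $\beta_Z$ through $\delta$ and rewrite $b_T\otimes\beta_Z(b_Z)$ by~\eqref{e102} as ${b_{(1)}}_T\otimes{b_{(1)}}_Z\otimes b_{(2)}$, then collapse ${b_{(1)}}_Z$ against $S({b_{(1)}}_T)$ using the counit relation~\eqref{e104}. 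To check $\beta_r^{-1}\beta_r=\operatorname{id}$ I would instead use~\eqref{e100} to replace $y_{(0)}\otimes\delta(y_{(1)})$ by $\gamma(y_{(-1)})\otimes y_{(0)}$, so that the factor to be collapsed becomes $(y_{(-1)})_Z S((y_{(-1)})_T)$, which equals $\varepsilon_A(y_{(-1)})1_Z$ by~\eqref{e103}; the left-comodule counit axiom then returns $1_Z\otimes y$. Thus $\beta_r$ is bijective. The left version is identical with the roles of $\gamma$ and $\delta$ interchanged: the map $\beta_l\colon Z\otimes_K Z\to A\otimes Z$, $\beta_l(x\otimes y)=x_{(-1)}\otimes x_{(0)}y$, has inverse $a\otimes x\mapsto a_Z\otimes S(a_T)x$, verified using~\eqref{e101}, \eqref{e100}, \eqref{e103} and~\eqref{e104}.

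It remains to identify the coinvariants as $K$, which is where faithful flatness enters. Given $z\in Z^{\operatorname{co}B}$, i.e.\ $\beta_Z(z)=z\otimes 1$, I would compute $\beta_r(1\otimes z)=\beta_Z(z)=z\otimes1=\beta_r(z\otimes1)$; injectivity of $\beta_r$ then forces $1\otimes z=z\otimes1$ in $Z\otimes_K Z$. Since $Z$ is faithfully flat over $K$, the Amitsur-type sequence $0\to K\to Z\to Z\otimes_K Z$ built from the two structure maps $z\mapsto z\otimes 1$ and $z\mapsto 1\otimes z$ is exact, so this equality yields $z\in K\cdot1_Z$; the reverse inclusion is immediate. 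Hence $Z^{\operatorname{co}B}=K$, and symmetrically ${}^{\operatorname{co}A}Z=K$, so $Z$ is simultaneously a right $B$-Galois object and a left $A$-Galois object. At this stage $A$ and $B$ are genuine Hopf algebras, either by the cited results of Bichon or, once the Galois property is in hand, by Remark~\ref{r1}. Assembling~\ref{BG1}--\ref{BG3} completes the argument. The main obstacle I anticipate is purely combinatorial: keeping the Heyneman--Sweedler bookkeeping straight as one shuttles between the left- and right-comodule structures and the three maps $\gamma$, $\delta$, $S$ through~\eqref{e100}--\eqref{e104}, together with the care needed to invoke faithfully flat descent correctly when pinning down the coinvariants.
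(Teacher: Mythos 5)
Your proposal is correct and follows essentially the same route as the paper's proof: your candidate inverses $x\otimes b\mapsto xS(b_T)\otimes b_Z$ and $a\otimes x\mapsto a_Z\otimes S(a_T)x$ are exactly the paper's maps $\eta_r$ and $\eta_l$, and your verifications invoke the same relations \eqref{e100}--\eqref{e104} in the same order. Where you genuinely add something is the treatment of coinvariants: the paper's proof only establishes bijectivity of the two Galois maps and then appeals to faithful flatness for condition~\ref{BG1}, whereas Definition~\ref{d11} (through Definition~\ref{d7} and the notion of Galois object) also demands $Z^{\operatorname{co}B}=K$ and ${}^{\operatorname{co}A}Z=K$; your descent argument --- injectivity of $\beta_r$ forces $1\otimes z=z\otimes 1$ for a coinvariant $z$, and exactness of the Amitsur sequence for the faithfully flat extension $K\subset Z$ then forces $z\in K\cdot 1_Z$ --- supplies precisely this step, which the paper leaves implicit. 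One cosmetic slip: your check of $\beta_r^{-1}\beta_r=\operatorname{id}$ applied to $x\otimes y$ should end with $x\otimes y$ rather than $1_Z\otimes y$; the computation you describe (\eqref{e100}, then \eqref{e103}, then the counit axiom of the left $A$-comodule structure) does produce $x\otimes y$, so nothing in the argument is affected.
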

	
	\begin{proof}
		First, we have to prove that the composition
		\begin{equation*}
			\beta_l : \begin{tikzcd}[column sep=huge]
				Z\otimes Z \arrow[r,"\alpha_Z\otimes \operatorname{id}_Z"] & A\otimes Z \otimes Z \arrow[r,"\operatorname{id}_A \otimes m_Z"] & A \otimes Z
			\end{tikzcd}
		\end{equation*}
		is bijective. For that, let $\eta_l: A\otimes Z \rightarrow Z\otimes Z$ be the map defined as the composition
		\begin{equation*}
			\eta_l : \begin{tikzcd}[column sep=huge]
				A\otimes Z \arrow[r,"\gamma \otimes \operatorname{id}_Z"] & Z\otimes T \otimes Z \arrow[r,"\operatorname{id}_Z \otimes S \otimes \operatorname{id}_Z"] & Z \otimes Z \otimes Z \arrow[r,"\operatorname{id}_Z \otimes m_Z"] & Z \otimes Z.
			\end{tikzcd}
		\end{equation*}
		For all $a\in A$ and $z,w\in Z$ we have the equalities
		\begin{align*}
			&(\operatorname{id}_Z \otimes S \otimes \operatorname{id}_Z)(\gamma \otimes \operatorname{id}_Z)(\operatorname{id}_A \otimes m_Z)(a\otimes z \otimes w) \\&= (\operatorname{id}_Z \otimes S \otimes \operatorname{id}_Z)(\gamma \otimes \operatorname{id}_Z)(a\otimes zw) \\
			&= (\operatorname{id}_Z \otimes S \otimes \operatorname{id}_Z)(a_Z \otimes a_T \otimes zw) = a_Z \otimes S(a_T) \otimes zw \\&= (\operatorname{id}_Z \otimes \operatorname{id}_Z \otimes m_Z)(a_Z \otimes S(a_T) \otimes z\otimes w)\\
			&= (\operatorname{id}_Z \otimes \operatorname{id}_Z \otimes m_Z)(\operatorname{id}_Z \otimes S \otimes \operatorname{id}_Z \otimes \operatorname{id}_Z)(a_Z \otimes a_T \otimes z \otimes w)\\
			&= (\operatorname{id}_Z \otimes \operatorname{id}_Z \otimes m_Z)(\operatorname{id}_Z \otimes S \otimes \operatorname{id}_Z \otimes \operatorname{id}_Z)(\gamma \otimes \operatorname{id}_Z \otimes \operatorname{id}_Z)(a\otimes z \otimes w),
		\end{align*}
		so,
		\begin{align}
			&(\operatorname{id}_Z \otimes S \otimes \operatorname{id}_Z)(\gamma \otimes \operatorname{id}_Z)(\operatorname{id}_A \otimes m_Z) \nonumber\\
			&=(\operatorname{id}_Z \otimes \operatorname{id}_Z \otimes m_Z)(\operatorname{id}_Z \otimes S \otimes \operatorname{id}_Z \otimes \operatorname{id}_Z)(\gamma \otimes \operatorname{id}_Z \otimes \operatorname{id}_Z). \label{e105}
		\end{align}
		Hence,
		\begin{align*}
			\eta_l \beta_l &= (\operatorname{id}_Z \otimes m_Z)(\operatorname{id}_Z \otimes S \otimes \operatorname{id}_Z)(\gamma \otimes \operatorname{id}_Z)(\operatorname{id}_A \otimes m_Z)(\alpha_Z \otimes\operatorname{id}_Z )\\
			&\overset{\mbox{\eqref{e105}}}{=} (\operatorname{id}_Z \otimes m_Z)(\operatorname{id}_Z \otimes m_Z \otimes \operatorname{id}_Z)(\operatorname{id}_Z \otimes S \otimes \operatorname{id}_Z \otimes \operatorname{id}_Z)(\operatorname{id}_Z \otimes \delta \otimes \operatorname{id}_Z) (\beta \otimes \operatorname{id}_Z)\\
			&\overset{\mbox{\eqref{e104}}}{=}  (\operatorname{id}_Z \otimes m_Z)(\operatorname{id}_Z \otimes u_Z \varepsilon_B \otimes \operatorname{id}_Z)(\beta_Z \otimes \operatorname{id}_Z) \\
			&\overset{\mbox{\eqref{e32}}}{=} \operatorname{id}_{Z\otimes Z}.
		\end{align*}
		Similarly, one can show that $\beta_l \eta_l = \operatorname{id}_{A\otimes Z}$ and thus $\beta_l$ is bijective. On the other hand, we also have to prove that the composition 
		\begin{equation*}
			\beta_r : \begin{tikzcd}[column sep=huge]
				Z\otimes Z \arrow[r,"\operatorname{id}_Z\otimes \beta_Z"] & Z\otimes Z \otimes B \arrow[r,"m_Z \otimes \operatorname{id}_B"] & Z \otimes B
			\end{tikzcd}
		\end{equation*}
		is bijective. For that, we define $\eta_r: Z\otimes B \rightarrow Z\otimes Z$ as the composition
		\begin{equation*}
			\eta_r : \begin{tikzcd}[column sep=huge]
				Z \otimes B \arrow[r,"\operatorname{id}_Z \otimes \delta"] & Z \otimes T\otimes Z \arrow[r,"\operatorname{id}_Z \otimes S \otimes \operatorname{id}_Z"] & Z \otimes Z \otimes Z \arrow[r,"m_Z \otimes \operatorname{id}_Z"] & Z \otimes Z,
			\end{tikzcd}
		\end{equation*}
		and similarly to the first part, one can show that $\eta_r$ is the inverse of $\beta_r$. Finally, since $Z$ is $K$-faithfully flat, by Proposition~\ref{p11}, it is $(A,B)$-faithfully flat.
	\end{proof}
	
	The converse is proven using techniques of Tannaka duality (see \cite{Sch3} and \cite[Remark 1.9]{Bic}) that are not covered in this document.
	
	\begin{theorem}[{\cite[Corollary 1.8]{Bic} and \cite[Corollary 1]{Gru}}]\label{t12}
		Let $A$ be a faithfully flat $K$-Hopf algebra and $Z$ a faithfully flat left $A$-Galois object. Then there exists a Hopf algebra $B$ and an algebra $T$ such that $(A,B,Z,T)$ is a Hopf Galois system.
	\end{theorem}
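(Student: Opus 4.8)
The plan is to obtain the missing data $(B,T,\gamma,\delta,S)$ as the \emph{inverse biGalois datum} of $Z$. The decisive first move is to upgrade the one-sided object $Z$ to a two-sided one: I would produce a Hopf algebra $B$ for which $Z$ becomes an $(A,B)$-biGalois object. Since $Z$ is a faithfully flat left $A$-Galois object, the left-handed mirror of Theorem~\ref{t10} makes $Z$ into a quantum $K$-torsor, and the left-handed mirror of Theorem~\ref{t6} then exhibits a Hopf algebra $B$ as the descent-datum coinvariants of $Z\otimes Z$, with $Z$ a right $B$-comodule algebra whose right Galois map is bijective. (When $K=\Bbbk$ is a field this is exactly the reconstructed Hopf algebra $H_r(Z)$ of Theorem~\ref{t11}.) Granting that the original left coaction is recovered as the left structure of this biGalois object, axioms~\ref{HGS1} and~\ref{HGS2} of Definition~\ref{d12} hold: $A,B$ are Hopf algebras and $Z$ is an $(A,B)$-bicomodule algebra via $\alpha_Z,\beta_Z$.

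For the remaining pieces I would take $T:=Z^{\operatorname{op}}$ as underlying algebra, let $S:T\to Z$ be the identity of underlying $K$-modules (an algebra morphism $Z^{\operatorname{op}}\to Z^{\operatorname{op}}$, as \cite{Bic} requires), and let $\gamma,\delta$ be the two translation maps. Writing $\gamma_A(a)=a^{[1]}\otimes a^{[2]}$ and $\gamma_B(b)=b^{[1]}\otimes b^{[2]}$ for the inverses of the left and right Galois maps in the style of \eqref{e43}, I set $\gamma:=\gamma_A:A\to Z\otimes Z^{\operatorname{op}}=Z\otimes T$ and $\delta:=\gamma_B:B\to Z^{\operatorname{op}}\otimes Z=T\otimes Z$. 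By the computation recorded after \eqref{e49} and its left-handed mirror, each translation map is an algebra morphism into the tensor product with the appropriate factor made opposite, which is exactly the algebra-map requirement in~\ref{HGS3}.

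It then remains to check the five structural identities, and the point is that each reduces to a translation-map identity already proved in the excerpt. The coassociativity relations~\eqref{e101} and~\eqref{e102} are precisely~\eqref{e46} for the left and right structures, read through $\alpha_Z$, $\beta_Z$ and the comultiplications. The antipode relations~\eqref{e103} and~\eqref{e104} reduce, since $S$ is the identity, to $a^{[1]}a^{[2]}=\varepsilon_A(a)1_Z$ and $b^{[1]}b^{[2]}=\varepsilon_B(b)1_Z$, i.e.\ to~\eqref{e45} for the two translation maps. The coupling relation~\eqref{e100}, which links $\gamma$ and $\delta$ through the two coactions, is where the construction of $B$ is really used: it expresses the $(A,B)$-bicomodule compatibility of $\alpha_Z$ and $\beta_Z$ together with~\eqref{e46}--\eqref{e47}, the right $B$-coaction having been manufactured from the left-$A$-coinvariants of $Z\otimes Z$ exactly so that this holds. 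Every such verification is a direct Heyneman--Sweedler computation using~\eqref{e44}--\eqref{e49}.

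The genuine obstacle is the first step: producing $B$ as an honest Hopf algebra, proving its right Galois map bijective, and \emph{identifying} the reconstructed left Hopf algebra with the original $A$, i.e.\ promoting a left $A$-Galois object to an $(A,B)$-biGalois object. This rests on faithfully flat descent (Lemma~\ref{l6}) together with either the quantum-torsor reconstruction of Theorem~\ref{t11} or the Tannaka--Krein reconstruction of $B$ from the comodule category of $Z$---precisely the machinery that the present exposition defers to \cite{Sch3} and \cite{Bic}. Once $B$ is in hand, Theorem~\ref{th1} applied to the constructed system $(A,B,Z,T)$ returns that $Z$ is indeed $(A,B)$-biGalois, giving a consistency check on the construction.
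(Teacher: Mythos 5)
The paper offers no proof of Theorem~\ref{t12} to compare against: immediately after the statement it says the result ``is proven using techniques of Tannaka duality (see \cite{Sch3} and \cite[Remark 1.9]{Bic}) that are not covered in this document.'' Your proposal therefore supplies an argument where the paper supplies only a citation, and its second half is correct as written. Once $Z$ is known to be an $(A,B)$-biGalois object, taking $T:=Z^{\operatorname{op}}$, $S:=\operatorname{id}$, and for $\gamma,\delta$ the two translation maps does yield a Hopf Galois system: \eqref{e101} and \eqref{e102} are exactly \eqref{e46} and its left-handed mirror; \eqref{e103} and \eqref{e104} are \eqref{e45}; multiplicativity of $\gamma:A\to Z\otimes Z^{\operatorname{op}}$ and $\delta:B\to Z^{\operatorname{op}}\otimes Z$ is the remark following \eqref{e49}; and the coupling relation \eqref{e100}, which you rightly flag as the only place the biGalois hypothesis enters, follows by applying the bijective map $\beta_l\otimes\operatorname{id}_Z$ to both sides and using the bicomodule compatibility together with (the mirror of) \eqref{e44}. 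This is Bichon's construction made concrete, and it is cleaner than the route through Grunspan's Theorem~\ref{t14} (the source cited as \cite[Corollary 1]{Gru}), where the given object lands in the \emph{fourth} slot of a system $(H_r(Z),H_l(Z),Z',Z)$ and one must invoke the symmetry of total systems plus the isomorphism $Z'\cong Z$ to reshuffle into the form $(A,B,Z,T)$; your choice $T=Z^{\operatorname{op}}$ avoids that reshuffling entirely.

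The caveat is your first step. Producing $B$ and identifying the reconstructed left-hand Hopf algebra with the original $A$ --- that is, promoting $Z$ from a left $A$-Galois object to an $(A,B)$-biGalois object --- is precisely the part the paper itself leaves unproven: Theorem~\ref{t11} is stated without proof, and the identification of the reconstructed Hopf algebra with the original one is \cite[Proposition 3.4]{Sch5}, which the paper explicitly omits because it needs biGalois or Miyashita--Ulbrich techniques. (A small imprecision here: it is Theorem~\ref{t6} applied \emph{directly} to the torsor obtained from the mirrored Theorem~\ref{t10} that produces the right-handed Hopf algebra $B$; the mirror of Theorem~\ref{t6} would reproduce the left-handed structure, i.e.\ $A$.) You acknowledge this dependence and locate it correctly, so your write-up is as self-contained as the paper's framework permits; but, cited results aside, what you have is a correct reduction of Theorem~\ref{t12} to Schauenburg's biGalois theorem (or to the torsor reconstruction machinery), not an independent proof of it --- which is, in fairness, exactly the status of the paper's own treatment.
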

	
	Hence with certain assumptions of faithful flatness, if we glue together Theorem~\ref{t6} (quantum torsors $\Rightarrow$ Galois objects), Theorem~\ref{t10} (Galois objects $\Rightarrow$ quantum torsors), Theorem~\ref{th1} (Hopf Galois systems $\Rightarrow$ (bi)Galois objects) and Theorem~\ref{t12} (Galois objects $\Rightarrow$ Hopf Galois systems), we have the following equivalences:
	\begin{equation}\label{e110}
		\begin{tikzcd}[row sep=scriptsize,column sep=-4mm]
			\begin{matrix}
				\mbox{Hopf Galois}\\
				\mbox{systems}
			\end{matrix}  && \begin{matrix}
				\mbox{Hopf Galois}\\
				\mbox{objects}
			\end{matrix} \arrow[ll,Leftrightarrow]\\
			& \begin{matrix}
				\mbox{Quantum}\\
				\mbox{torsors}
			\end{matrix} \arrow[ur,Leftrightarrow]
		\end{tikzcd}
	\end{equation}
	Clearly, this means that every Hopf Galois system gives rise to a quantum torsor and vice versa. However, for the sake of completeness, and because some interesting interactions arise, we will study the explicit equivalence between quantum torsors and Hopf Galois systems in Section~\ref{sec2.11.2}. Also, notice that we are not saying that the correspondences of \eqref{e110} are necessarily bijective or functorial.
	
	\subsubsection{Examples of Hopf Galois systems}\label{sec2.11.1}
	
	In this section we address some examples of Hopf Galois systems, which are adapted from \cite{Bic,Sch}. As with quantum torsors, the novelty of this new approach is the inclusion of new examples.
	
	\begin{example}[Hopf algebras]
		Let $H$ be a $K$-Hopf algebra. If we put $A=B=Z=T=H$, $\alpha_Z=\beta_Z=\gamma=\delta=\Delta_H$ and $S=S_H$, then $(A,B,Z,T)$ is a Hopf Galois system. Indeed, \eqref{e100}-\eqref{e102} correspond to the coassociativity and \eqref{e103}-\eqref{e104} to the main property of the antipode.
	\end{example}
	
	\begin{example}[Hopf algebras twisted by $2$-cocycles]\label{twist}
		Let $H$ be a faithfully flat $K$-Hopf algebra. Using the universal property of the tensor product, a given $\sigma \in \operatorname{Hom}_K(H\otimes H,K)$ corresponds to an unique bilinear $K$-form, so we shall write $\sigma(h\otimes k)=\sigma(h,k)$, for all $h, k\in H$. Since $H\otimes H$ is a coalgebra and $K$ is an algebra, $\operatorname{Hom}_K(H\otimes H,K)$ is also an algebra with the convolution product. We say that $\sigma: H \otimes H \rightarrow K$ is a \emph{$2$-cocycle} if $\sigma$ is a convolution invertible $K$-linear map satisfying
		\begin{equation*}
			\sigma(g_{(1)},h_{(1)} ) \sigma(g_{(2)}h_{(2)},k)= \sigma(h_{(1)},k_{(1)})\sigma(g,h_{(2)}k_{(2)}) \quad \mbox{and} \quad \sigma(h,1)=\sigma(1,h)=\varepsilon(h)1,
		\end{equation*}
		for all $g,h,k\in H$. By \cite[Theorem 1.6]{Doi2}, if $\overline{\sigma}$ denotes the convolution inverse of $\sigma$, then it satisfies
		\begin{equation*}
			\overline{\sigma}(f_{(1)}g_{(1)},h)\overline{\sigma}(f_{(2)},g_{(2)}) = \overline{\sigma}(f,g_{(1)}h_{(1)})\overline{\sigma}(g_{(2)}h_{(2)}) \quad \mbox{and} \quad \overline{\sigma}(h,1)=\overline{\sigma}(1,h)=\varepsilon(h)1,
		\end{equation*}
	for all $f,g,h\in H$.
		
		For a fixed $2$-cocycle $\sigma$ over $H$, we consider a new multiplication over the $K$-module $H$, given by
		\begin{equation*}
			h \cdot_\sigma k:=\sigma(h_{(1)},k_{(1)}) h_{(2)}k_{(2)}, \qquad \mbox{for all } h, k \in H.
		\end{equation*}
		This new algebra is denoted by ${}_{\sigma}{H}$. Similarly, another possible product for $H$ is
		\begin{equation*}
			h \cdot_{\overline{\sigma}} k := \overline{\sigma}(h_{(2)},k_{(2)})h_{(1)}k_{(1)}, \qquad \mbox{for all } h, k\in H,
		\end{equation*}
		 and the induced algebra is denoted by $H_{\overline{\sigma}}$. Notice that $H_{\overline{\sigma}}$ is a $H$-comodule algebra with structure map $\rho_l=\Delta_H$.
		
		Finally, we can also define a new Hopf algebra ${}_{\sigma}{H}_{\overline{\sigma}}$, which is isomorphic to $H$ as coalgebra, with multiplication
		\begin{equation*}
			k\cdot h := \sigma(h_{(1)},k_{(1)}) \overline{\sigma}(h_{(3)},k_{(3)})h_{(2)}k_{(2)}, \qquad \mbox{for all } h, k\in H,
		\end{equation*}
		and antipode $S^{\sigma}(h):=\sigma(h_{(1)},S(h_{(2)}))\overline{\sigma}(S(h_{(4)}),h_{(5)})S(h_{(3)})$. It can be shown that ${H}_{\overline{\sigma}}$ is a right ${}_{\sigma}{H}_{\overline{\sigma}}$-comodule algebra with structure map $\rho_r=\Delta$, and that it is a $(H,{}_{\sigma}{H}_{\overline{\sigma}})$-bicomodule algebra. The details of these constructions can be found in \cite[Section 2]{Doi2} and \cite[Section 3]{Sch}. 		From \cite[Proposition 2.1]{Bic} and \cite[Theorem 1.6.(a5)]{Doi} we know that $(H,{}_{\sigma}{H}_{\overline{\sigma}}, {H}_{\overline{\sigma}},{}_{\sigma}{H})$ is a Hopf Galois system.
	\end{example}
	
	\begin{example}[Hopf algebras of a non-degenerate bilinear form]
		Let $\Bbbk$ be an algebraically closed field and $n,m>1$. For two fixed invertible matrices $E_{m\times m}$ and $F_{n\times n}$, we denote by $\mathcal{B}(E,F)$ the $\Bbbk$-algebra generated by $\{x_{ij} : 1\leq i \leq m, \, 1\leq j \leq n\}$ together with the relations $F^{-1t}XEX=I_n$ and $ XF^{-1t}XE=I_m$, where $X$ is the matrix $(x_{ij})$ and $I_n$ and $I_m$ are the identity matrices of size $n$ and $m$, respectively.  For the particular case $n=m$ and $E=F$ we simply write $\mathcal{B}(E)$. This (Hopf) algebra was introduced by Dubois-Violette and Launer \cite{DL}, and it turns out to be the function algebra on the quantum (symmetry) group of a non-degenerate bilinear form \cite[Section 2]{Bic2}. For any matrix $A=(a_{ij})$ over $\mathcal{B}(E)$, the comultiplication, counit and antipode are given by
		\begin{equation*}
			\Delta(a_{ij})=\sum_{k=1}^n a_{ik}\otimes a_{ik}, \qquad \varepsilon(a_{ij})=\delta_{ij} \qquad \mbox{and} \qquad S(A)=E^{-1t}AE,
		\end{equation*}
		where $\delta_{ij}$ denotes the Kronecker delta.
		
		If $\operatorname{tr}(E^tE^{-1})=\operatorname{tr}(F^tF^{-1})$, Bichon proved that $$(\mathcal{B}(E), \mathcal{B}(F), \mathcal{B}(E,F), \mathcal{B}(F,E))$$ is a Hopf Galois system \cite[Proposition 3.1]{Bic}. Even without the assumption on the traces it can be shown that $\mathcal{B}(E,F)$ is a $(\mathcal{B}(E),\mathcal{B}(F))$-biGalois object \cite[Proposition 3.3]{Bic2}.
	\end{example}
	
	\begin{example}[Free Hopf algebras generated by dual matrix coalgebras]
		Let $C$ be a $K$-coalgebra.  We say that a $K$-Hopf algebra $H(C)$ is a \emph{free Hopf algebra generated by $C$} if there exists a coalgebra map $i:C\rightarrow H(C)$ such that the following universal property is satisfied: \textit{for any $K$-Hopf algebra $H$ and any coalgebra morphism $f: C \rightarrow H$ there exists an unique Hopf algebra morphism $\overline{f}: H(C)\rightarrow H$ such that the following diagram is commutative:}
		\begin{equation*}
			\begin{tikzcd}
				C \arrow[d,"f"'] \arrow[r,"i"] & H(C) \arrow[dl,dashed,"\overline{f}"]\\
				H
			\end{tikzcd}
		\end{equation*}
		From the above, it follows that $H(C)$ is unique up to isomorphism. The existence of such free Hopf algebra is explicitly shown in \cite[Section 1]{Tak} by constructing $H(C)$ as follows. Let $\{ V_i \}_{i\geq 0}$ be the sequence of coalgebras	$V_0:=C$ and $V_{i+1}:=V_i^{\operatorname{op}}$, for $i\geq 0$. We define $V:=\bigoplus_{i\geq0} V_i$, which is also a coalgebra via the induced pointwise operations. Considering the tensor algebra $T(V)$, we have a coalgebra map $S: V \rightarrow V^{\operatorname{op}}$ given by $(x_0,x_1,\ldots) \mapsto (0,x_0,x_1,\ldots)$, which induces a bialgebra map $S: T(V)\rightarrow T(V)^{\operatorname{op}}$. Now, let
		\begin{equation*}
			I=\langle x_{(1)}S(x_{(2)})-\varepsilon(x)1,S(x_{(1)})x_{(2)}-\varepsilon(x)1 : x\in V\rangle.
		\end{equation*}
		One can check that $I$ is in fact a Hopf ideal of $T(V)$, and therefore $H(C):=T(V)/I$ is a Hopf algebra with antipode induced by $S$.
		
		A particular case of the above is when $C=(M_n(\Bbbk))^*$, where $M_n(\Bbbk)$ denotes the algebra of $n\times n$ matrices over $\Bbbk$. In such case, $H(C)$ is denoted by $H(n)$ and corresponds to the $\Bbbk$-algebra generated by $\{u_{ij}^{(\alpha)} : 1\leq i,j\leq n, \, \alpha\in \mathbb{N} \}$ satisfying the relations
		\begin{equation*}
			(u^{(\alpha)})^t u^{(\alpha+1)}=I_n=u^{(\alpha +1)}(u^{(\alpha)})^t,
		\end{equation*}
		where $u^{(\alpha)}$ is the $n\times n$ matrix $(u_{ij}^{(\alpha)})$ \cite[Theorem 3.1]{DW}. More generally, for $m,n\geq 1$ we define $H(m,n)$ as the algebra generated by $\{u_{ij}^{(\alpha)} : 1\leq i\leq m, \, 1\leq j \leq n, \, \alpha\in \mathbb{N} \}$ together with the relations
		\begin{equation*}
			(u^{(\alpha)})^t u^{(\alpha+1)}=I_m \quad \mbox{and} \quad u^{(\alpha +1)}t(u^{(\alpha)})=I_n,
		\end{equation*}
		where $u^{(\alpha)}$ is the $n\times m$ matrix $(u_{ij}^{(\alpha)})$. For $m,n\geq 2$, $(H(m),H(n),H(m,n),H(n,m))$ is a Hopf Galois system \cite[Proposition 5.2]{Bic}.
	\end{example}

	\subsubsection{More on Hopf Galois systems}\label{sec2.11.2}
	
	We explore further the connection between Hopf Galois systems and quantum torsors, by mentioning some results of Grunspan \cite{Gru2,Gru}. Recall that any quantum $K$-torsor $T$ has an associated map $\mu:T\rightarrow T\otimes T^{\operatorname{op}} \otimes T$, which is denoted by $\mu(x)=x^{(1)} \otimes x^{(2)} \otimes x^{(3)}$, and a Grunspan map $\theta: T\rightarrow T$ satisfying
	\begin{equation*}
		\theta(x)=x^{(1)}x^{(2)(3)} x^{(2)(2)} x^{(2)(1)} x^{(3)}.
	\end{equation*}
	
	\begin{theorem}[{\cite[Theorem 4.2]{Gru2}}]\label{t13}
		Let $(A,B,Z,T)$ be a $K$-Hopf Galois system. Then the map $\mu: Z\rightarrow Z\otimes Z^{\operatorname{op}} \otimes Z$ given by
		$\mu = (\operatorname{id}_Z \otimes S \otimes \operatorname{id}_Z)(\gamma \otimes \operatorname{id}_Z)\alpha_Z$ makes $Z$ into a quantum $K$-torsor.
	\end{theorem}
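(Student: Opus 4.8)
The plan is to verify that the stated $\mu$ satisfies the three axioms \eqref{neq2}, \eqref{neq4}, \eqref{neq3} of Definition~\ref{d13}, together with being an algebra morphism. Writing the left $A$-comodule structure of $Z$ as $\alpha_Z(x)=x_{(-1)}\otimes x_{(0)}$ and using $\gamma(a)=a_Z\otimes a_T$, the map reads in Heyneman--Sweedler notation
\begin{equation*}
	\mu(x)={x_{(-1)}}_Z\otimes S({x_{(-1)}}_T)\otimes x_{(0)}.
\end{equation*}
First I would observe that $\mu$ is an algebra morphism: $\alpha_Z$ and $\gamma$ are algebra maps by~\ref{HGS2} and~\ref{HGS3}, and $S\colon T\to Z^{\operatorname{op}}$ is an algebra map (recalled after Definition~\ref{d12}), so the composite $(\operatorname{id}_Z\otimes S\otimes\operatorname{id}_Z)(\gamma\otimes\operatorname{id}_Z)\alpha_Z$ is an algebra morphism into $Z\otimes Z^{\operatorname{op}}\otimes Z$. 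The second preliminary, and the key to symmetry, is that applying $\operatorname{id}_Z\otimes S\otimes\operatorname{id}_Z$ to the compatibility~\eqref{e100} produces the alternative description
\begin{equation*}
	\mu(x)=x_{(0)}\otimes S({x_{(1)}}_T)\otimes {x_{(1)}}_Z,
\end{equation*}
now expressed through the right $B$-comodule structure $\beta_Z(x)=x_{(0)}\otimes x_{(1)}$ and $\delta(b)=b_T\otimes b_Z$.

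For the counit-type axioms I would use one description for each. Using the first, $(m_Z\otimes\operatorname{id}_Z)\mu(x)={x_{(-1)}}_Z S({x_{(-1)}}_T)\otimes x_{(0)}$; relation~\eqref{e103} in the form ${x_{(-1)}}_Z S({x_{(-1)}}_T)=\varepsilon_A(x_{(-1)})1_Z$ and the left-comodule counit axiom collapse this to $1_Z\otimes x$, which is~\eqref{neq4}. Dually, using the second description, $(\operatorname{id}_Z\otimes m_Z)\mu(x)=x_{(0)}\otimes S({x_{(1)}}_T){x_{(1)}}_Z$; relation~\eqref{e104} together with the right-comodule counit axiom~\eqref{e32} gives $x\otimes 1_Z$, which is~\eqref{neq3}. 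Here it matters that the products are computed in $Z$ in the written order, so that~\eqref{e103} and~\eqref{e104} apply verbatim.

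The hard part will be the coassociativity~\eqref{neq2}, since this is where the comodule structure and the compatibility~\eqref{e101} must dovetail. I would expand both sides in $Z^{\otimes 5}$. Writing $a=x_{(-1)}$, on the left $(\operatorname{id}_Z\otimes\operatorname{id}_{Z^{\operatorname{op}}}\otimes\mu)\mu(x)$ involves $\mu$ applied to $x_{(0)}$, and the coassociativity of the left $A$-comodule, $(\operatorname{id}_A\otimes\alpha_Z)\alpha_Z=(\Delta_A\otimes\operatorname{id}_Z)\alpha_Z$, rewrites the outcome as
\begin{equation*}
	(a_{(1)})_Z\otimes S((a_{(1)})_T)\otimes (a_{(2)})_Z\otimes S((a_{(2)})_T)\otimes x_{(0)}.
\end{equation*}
On the right, $(\mu\otimes\operatorname{id}_{Z^{\operatorname{op}}}\otimes\operatorname{id}_Z)\mu(x)$ involves $\mu$ applied to $a_Z$, and the compatibility~\eqref{e101}, namely $\alpha_Z(a_Z)\otimes a_T=a_{(1)}\otimes(a_{(2)})_Z\otimes(a_{(2)})_T$, rewrites that outcome into exactly the same element. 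Matching the two yields~\eqref{neq2}. With $\mu$ an algebra morphism satisfying~\eqref{neq2}, \eqref{neq4} and~\eqref{neq3}, this shows $Z$ is a quantum $K$-torsor, as claimed. The only real care needed throughout is the consistent bookkeeping of the antipode-like $S$ (an anti-morphism, hence valued in $Z^{\operatorname{op}}$) and of which comodule structure is being used in each step.
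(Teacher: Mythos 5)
Your proof is correct, but there is no in-paper argument to compare it against: the paper states Theorem~\ref{t13} with a citation to Grunspan's Theorem~4.2 and omits the proof entirely, so your verification fills a genuine gap in the exposition. Checking your steps against the paper's definitions: the multiplicativity of $\mu$ does rely on the fact, recorded after Definition~\ref{d12}, that $S\colon T\to Z^{\operatorname{op}}$ is an algebra morphism, so your composite lands in $Z\otimes Z^{\operatorname{op}}\otimes Z$ as required; the two Sweedler descriptions of $\mu$ obtained by applying $\operatorname{id}_Z\otimes S\otimes\operatorname{id}_Z$ to \eqref{e100} are exactly right, and pairing the first with \eqref{e103} and the left-comodule counit law, and the second with \eqref{e104} and \eqref{e32}, yields the two unit axioms with the products taken in $Z$ in the correct order; and for \eqref{neq2}, applying the linear map $c\otimes c'\otimes z\mapsto c_Z\otimes S(c_T)\otimes c'_Z\otimes S(c'_T)\otimes z$ to the coassociativity identity $x_{(-1)}\otimes (x_{(0)})_{(-1)}\otimes (x_{(0)})_{(0)}=(x_{(-1)})_{(1)}\otimes (x_{(-1)})_{(2)}\otimes x_{(0)}$ on one side, and to \eqref{e101} on the other, makes both sides equal
\begin{equation*}
(a_{(1)})_Z\otimes S\bigl((a_{(1)})_T\bigr)\otimes (a_{(2)})_Z\otimes S\bigl((a_{(2)})_T\bigr)\otimes x_{(0)}, \qquad a=x_{(-1)},
\end{equation*}
as you claim. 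Two remarks worth noting: first, your argument never invokes \eqref{e102} nor the bicomodule compatibility of $\alpha_Z$ and $\beta_Z$, which shows this direction of the equivalence needs less than the full axiom system; second, Grunspan's original notion of torsor also demanded the Grunspan map $\theta$, so his Theorem~4.2 has an extra component to construct, but under the paper's Definition~\ref{d13} (where existence of $\theta$ is automatic by Corollary~\ref{cor1}) your three-axiom verification plus multiplicativity is exactly what the statement requires.
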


To see the converse of Theorem~\ref{t13}, recall the construction of the Hopf algebra $H_l(T)$ (resp. $H_r(T)$) defined in \eqref{e108} (resp. \eqref{e109}) as certain subalgebra of $T\otimes T^{\operatorname{op}}$ (resp. $T^{\operatorname{op}}\otimes T$). Summarizing Theorem~\ref{t11}, we have that the simple elements of $H_l(T)$ are of the form $x_i\otimes y_i \in T\otimes T^{\operatorname{op}}$ satisfying
\begin{equation*}
	x_i^{(1)} \otimes x_i^{(2)} \otimes \theta(x_i^{(3)}) \otimes y_i = x_i \otimes y_i^{(3)} \otimes y_i^{(2)} \otimes y_i^{(1)}.
\end{equation*}
Moreover, the comultiplication, counit and antipode on $H_l(T)$ are given by
\begin{align*}
	\Delta_{H_l(T)}(x_i\otimes y_i) = &\ x_i^{(1)}\otimes x_i^{(2)} \otimes x_i^{(3)} \otimes y_i,\\
	u_T \varepsilon_{H_l(T)}(x_i \otimes y_i) = &\ x_iy_i,\\
	S_{H_l(T)}(x_i \otimes y_i) = &\ y_i \otimes \theta(x_i).
\end{align*}
Similarly, the elements of $H_r(T)$ are of the form $x_i \otimes y_i \in T^{\operatorname{op}} \otimes T$ satisfying
\begin{equation*}
	x_i \otimes \theta(y_i^{(1)}) \otimes y_i^{(2)} \otimes y_i^{(3)} = x_i^{(3)} \otimes x_i^{(2)} \otimes x_i^{(1)} \otimes y_i,
\end{equation*}
and the comultiplication, counit and antipode are given by
\begin{align*}
	\Delta_{H_r(T)}(x_i\otimes y_i) = &\ x_i \otimes y_i^{(1)} \otimes y_i^{(2)} \otimes y_i^{(3)},\\
	u_T \varepsilon_{H_r(T)}(x_i \otimes y_i) = &\ x_iy_i,\\
	S_{H_r(T)}(x_i \otimes y_i) = &\ \theta(y_i) \otimes x_i.
\end{align*}

\begin{theorem}[{\cite[Theorem 2]{Gru}}]\label{t14}
	Let $T$ be a faithfully flat quantum $K$-torsor with associated map $\mu$ and Grunspan map $\theta$. Consider $A=H_r(T)$, $B=H_l(T)$, and $\rho_r: T\rightarrow T\otimes A$ and $\rho_l: T\rightarrow B\otimes T$ given by
	\begin{gather*}
	\rho_r(x)=x^{(1)}\otimes x^{(2)} \otimes x^{(3)} \in T \otimes A \subset T \otimes T^{\operatorname{op}} \otimes T,\\
	\rho_l(x)=x^{(1)} \otimes x^{(2)} \otimes x^{(3)} \in B\otimes T \subset T \otimes T^{\operatorname{op}} \otimes T,
	\end{gather*}
	for all $x\in T$, which define a structure of $(B,A)$-biGalois object on $T$. Then there is a natural structure of $K$-algebra on $Z:=(A\otimes T)^{\operatorname{co}A}=(T\otimes B)^{\operatorname{co}B}$ as subalgebra of $T^{\operatorname{op}} \otimes T \otimes T^{\operatorname{op}}$, so that $Z$ becomes an $(A,B)$-biGalois object (in particular, an $(A,B)$-bicomodule algebra) via the morphisms $\alpha_Z=\Delta_A \otimes \operatorname{id}_T: Z\rightarrow A\otimes Z$ and $\beta_Z=\operatorname{id}_Z\otimes \Delta_B : Z \rightarrow Z \otimes B$. Moreover, the algebra morphisms $\gamma: A \to Z\otimes T$ and $\delta: B \to T\otimes Z$ given by 
	\begin{gather*}
		\gamma(h)=x_i \otimes y_i^{(1)} \otimes y_i^{(2)} \otimes y_i^{(3)}, \qquad \mbox{for } h=x_i\otimes y_i \in A \subset T^{\operatorname{op}} \otimes T,\\
		\delta(h)=x_i^{(1)} \otimes x_i^{(2)}\otimes x_i^{(3)} \otimes y_i , \qquad \mbox{for } h=x_i\otimes y_i \in B \subset T \otimes T^{\operatorname{op}},
	\end{gather*}
	and the $K$-linear map $S_T: T\rightarrow Z$ given by
	\begin{equation*}
		S_T(x)=[(\theta \otimes \operatorname{id}_T \otimes \theta)\mu^{\operatorname{op}}](x), \qquad \mbox {for all} x\in T,
	\end{equation*}
	make $(A,B,Z,T)$ into a $K$-Hopf Galois system. Furthermore, the quantum torsor associated to this Hopf Galois system by Theorem~\ref{t13} is isomorphic to $T$.
\end{theorem}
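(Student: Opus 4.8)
The plan is to assemble the four data and check the axioms \ref{HGS1}--\ref{HGS4} of Definition~\ref{d12} one at a time, leaning on the reconstruction machinery already in place, and then to carry out the roundtrip comparison. First I would make precise the biGalois structure on $T$. By the Reconstruction Theorem (Theorem~\ref{t11}) and its right-handed counterpart built from \eqref{e109}, both $A=H_r(T)$ and $B=H_l(T)$ are Hopf algebras, which is already \ref{HGS1}. The single map $\mu$ simultaneously exhibits $T$ as a right $A$-comodule algebra via $\rho_r$ and as a left $B$-comodule algebra via $\rho_l$, and the torsor coassociativity \eqref{e42} forces these two coactions to commute, so that $T$ is a $(B,A)$-bicomodule algebra. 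Exactly as in the proof of Theorem~\ref{t6}, faithful flatness of $T$ together with the unit relations \eqref{neq4}--\eqref{neq3} makes the associated Galois maps bijective and collapses the coinvariants on both sides to $K$; hence $T$ is a $(B,A)$-biGalois object, which legitimizes placing $A\subseteq T^{\operatorname{op}}\otimes T$ and $B\subseteq T\otimes T^{\operatorname{op}}$ as in the statement.

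Next I would construct $Z$ and settle \ref{HGS2}. I would put on $A\otimes T$ the diagonal right $A$-coaction (and on $T\otimes B$ the diagonal left $B$-coaction) and take coinvariants, endowing $Z$ with the restriction of the algebra structure of $T^{\operatorname{op}}\otimes T\otimes T^{\operatorname{op}}$. Closure under this product is checked from the explicit simple-element description of $H_r(T)$ and $H_l(T)$ recalled just before the theorem, in the same spirit as the centralizer lemma preceding Theorem~\ref{t6}. The identification $(A\otimes T)^{\operatorname{co}A}=(T\otimes B)^{\operatorname{co}B}$ is where faithful flatness is indispensable: both sides are equalizers of coaction maps, and flatness allows the comparison after tensoring, using the compatibility of $\rho_l$ and $\rho_r$. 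With $Z$ fixed, $\alpha_Z=\Delta_A\otimes\operatorname{id}_T$ and $\beta_Z=\operatorname{id}_Z\otimes\Delta_B$ are coactions and algebra maps because $\Delta_A,\Delta_B$ are, and Schauenburg's biGalois correspondence (the identification closing Section~\ref{sec2.10.2}, via the descent picture of Lemma~\ref{l6}) yields that $Z$ is an $(A,B)$-biGalois object.

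The bulk of the work lies in \ref{HGS3} and \ref{HGS4}. Writing everything in the extended notation $\mu(x)=x^{(1)}\otimes x^{(2)}\otimes x^{(3)}$ and inserting the explicit formulas for $\gamma$, $\delta$ and $S_T=(\theta\otimes\operatorname{id}_T\otimes\theta)\mu^{\operatorname{op}}$, I would verify \eqref{e100}--\eqref{e102} by expanding both sides and reducing them with \eqref{e42} and the defining relations of $H_l(T)$ and $H_r(T)$, which couple $\mu$, $\mu^{\operatorname{op}}$ and $\theta$. For \ref{HGS4}, the two identities \eqref{e103}--\eqref{e104} are precisely where the Grunspan map earns its keep: the unit relations \eqref{neq4}, \eqref{neq3} together with the Grunspan axioms \eqref{e59}--\eqref{e60} collapse $m_Z(\operatorname{id}_Z\otimes S_T)\gamma$ and $m_Z(S_T\otimes\operatorname{id}_Z)\delta$ to $u_Z\varepsilon_A$ and $u_Z\varepsilon_B$. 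I expect this computational block to be the main obstacle: the opposite-algebra factors and the appearances of $\theta$ and $S$ make the bookkeeping delicate, and one must repeatedly invoke the ``scalar'' lemmas (the torsor analogues of \eqref{e50} and \eqref{e52}) to slide factors across tensor slots.

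Finally, for the roundtrip assertion I would feed the system $(A,B,Z,T)$ just produced into Theorem~\ref{t13}, obtaining the torsor $\mu_Z=(\operatorname{id}_Z\otimes S_T\otimes\operatorname{id}_Z)(\gamma\otimes\operatorname{id}_Z)\alpha_Z$ on $Z$, and then exhibit a torsor isomorphism $Z\cong T$. The candidate map comes from the Galois/descent identifications of the second step, and the verification reduces to showing it intertwines $\mu_Z$ with $\mu$; faithful flatness again lets me pull this back to the already-established fact that the torsor reconstructed from $A=H_r(T)$ agrees, under the natural identification, with the original $\mu$. The conceptual subtlety I anticipate here is tracking which copy of $T$ carries the opposite algebra structure, so that the isomorphism respects the algebra maps and not merely the underlying $K$-modules.
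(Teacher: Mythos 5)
You should know at the outset that the paper offers no proof of this theorem: like Theorems~\ref{t11}, \ref{t12} and \ref{t13} surrounding it, Theorem~\ref{t14} is quoted from Grunspan (\cite[Theorem 2]{Gru}) as a survey item, with no argument given. So your attempt can only be measured against what a complete argument would require, not against an internal proof.

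Measured that way, your outline has the right architecture --- reconstruction of $A=H_r(T)$ and $B=H_l(T)$, the $(B,A)$-biGalois structure on $T$, construction of $Z$, verification of \ref{HGS3} and \ref{HGS4}, and the roundtrip through Theorem~\ref{t13} --- but it is a plan rather than a proof: every step that carries the content of the theorem is named and then deferred. Three gaps are concrete. First, $Z$ itself: the $A$-coaction on $A\otimes T$ whose coinvariants define $Z$ is specified only as ``diagonal'', and the two claims that actually need work --- that $(A\otimes T)^{\operatorname{co}A}$ is closed under the multiplication of $T^{\operatorname{op}}\otimes T\otimes T^{\operatorname{op}}$ (which is \emph{not} the tensor-product multiplication of $A\otimes T$, whose third leg is $T$ rather than $T^{\operatorname{op}}$), and that it equals $(T\otimes B)^{\operatorname{co}B}$ --- are passed off to ``faithful flatness'' with no argument; moreover, the biGalois property of $Z$ is attributed to the proposition closing Section~\ref{sec2.10.2}, but that result identifies $H_r(T)\cong H$ and $H_l(T)=(T\otimes T)^{\operatorname{co}H}$ and says nothing about $Z$, so no cited result delivers this step. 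Second, the identities \eqref{e100}--\eqref{e104} are precisely where $\mu$, $\mu^{\operatorname{op}}$ and $\theta$ must be played against one another, and ``expanding both sides'' is not a verification; in particular \eqref{e103}--\eqref{e104} demand an actual computation with $S_T=(\theta\otimes\operatorname{id}_T\otimes\theta)\mu^{\operatorname{op}}$ and the Grunspan relations \eqref{e59}--\eqref{e60}, which you do not carry out. Third, the final assertion requires an explicit map between $T$ and $Z$ (the latter carrying the torsor structure of Theorem~\ref{t13}) together with a check that it is an algebra map intertwining the two torsor structures; ``the candidate map comes from the Galois/descent identifications'' names no map at all, and this is exactly where the bookkeeping of opposite factors, which you yourself flag as delicate, has to be done. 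Until these three blocks are filled with formulas, the proposal reproduces the scaffolding of the statement but proves none of it.
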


In fact, the Hopf Galois system obtained here satisfies some additional conditions of symmetry that make it \emph{total} (see \cite[Definition 3.1]{Gru}).

Therefore, by adding Theorem~\ref{t13} (Hopf Galois systems $\Rightarrow$ quantum torsors) and Theorem~\ref{t14} (quantum torsors $\Rightarrow$ (total) Hopf Galois systems) to former results, we have explicitly closed diagram \eqref{e110}:
 \begin{equation*}
	\begin{tikzcd}[row sep=scriptsize,column sep=-4mm]
		\begin{matrix}
			\mbox{Hopf Galois}\\
			\mbox{systems}
		\end{matrix} \arrow[dr,Leftrightarrow]  && \begin{matrix}
			\mbox{Hopf Galois}\\
			\mbox{objects}
		\end{matrix} \arrow[ll,Leftrightarrow]\\
		& \begin{matrix}
			\mbox{Quantum}\\
			\mbox{torsors}
		\end{matrix} \arrow[ur,Leftrightarrow]
	\end{tikzcd}
\end{equation*}
	
	\section{Families of noncommutative rings}\label{ch2}
	
	In the last century, noncommutative rings and algebras have appeared in almost every subject of research, and not only mathematical contexts, but also in theoretical physics. Therefore, the study of certain algebras given by their generators and relations has become useful. However, within a more practical approach, some general families of noncommutative rings have been defined and studied along the years.
	
	Although these collections do not cover, in general, every remarkable example, most of such families contain a notorious amount of distinguished algebras, even having the case that one object can be endowed with two or more different structures, as we shall discuss in the examples. In this section, we will address several of such families of rings, most of them having a polynomial behavior.
	
	Popular for describing a broad number of algebras and for having a pioneering role in the systematic research on noncommutative rings, in Section~\ref{s21} we address skew polynomial rings. Section~\ref{s22} is dedicated to PBW extensions, which comprehend rings with the PBW basis property, while Section~\ref{s23} reviews a generalization of that setup. Finally, in Section~\ref{s24} we present a family of algebras that generalizes enveloping universal algebras of Lie algebras.
	
	\subsection{Skew polynomial rings}\label{s21}
	
	Introduced by Ore \cite{Ore}, skew polynomial rings (also known as Ore extensions) are distinguished by their elements, which have a polynomial aspect but not necessarily the variable is assumed to commute with coefficients. For such ``commutation'' to take place, certain rule involving an endomorphism and a derivation of the ground ring is established.
	
	Let $R$ be a ring and $\sigma: R\rightarrow R$ a ring endomorphism. An additive map $\delta:R\rightarrow R$ is called a \emph{$\sigma$-derivation of $R$} if $\delta(r s)=\sigma(r) \delta(s) + \delta(r)s$, for all $r,s\in R$. Notice that, in particular, $\delta(1)=\delta(1\cdot 1)=\sigma(1)\delta(1)+\delta(1)1=2\delta(1)$, whence $\delta(1)=0$.
	
	\begin{definition}[Skew polynomial ring] \label{d8}
		Let $R$ be a ring, $\sigma:R\rightarrow R$ a ring endomorphism and $\delta:R\rightarrow R$ a $\sigma$-derivation of $R$. A ring $A$ such that
		\begin{enumerate}[label=(O\arabic*), align=parleft, leftmargin=*]
			\item \label{O1} $A$ contains $R$ as a proper subring and $1_R=1_A$,
			\item \label{O2} There is a distinguishable element $x\in A$ such that $A$ is a left free $R$-module with basis $\{ 1, x, x^2, x^3, \ldots \}$,
			\item \label{O3} $xr=\sigma(r)x + \delta(r)$, for all $r\in R$,
		\end{enumerate}
		is called a \emph{skew polynomial ring over $R$}. In this case we write $A:=R[x;\sigma,\delta]$.
	\end{definition}
	
There are some constructive proofs showing the existence of skew polynomial rings, which verify the ring structure of $A=R[x;\sigma,\delta]$ without falling in the tedious calculations of a direct proof \cite[Proposition 2.3]{GW}. Moreover, such constructions guarantee that, given any ring $R$, any ring endomorphism $\sigma$ of $R$ and any $\sigma$-derivation of $R$, the skew polynomial ring $R[x;\sigma,\delta]$ always exists.

	Unless otherwise stated, for the remainder of the section we let $A:=R[x;\sigma,\delta]$ be a skew polynomial ring over a fixed ring $R$. From~\ref{O3} it is natural to ask for a general formula that allows us to express $x^ir$ ($i\in \mathbb{N}$ and $r\in R$) as a \emph{polynomial} with \emph{left coefficients}. Nevertheless, those calculations can be tricky. For instance, with $i=3$ we end up with
	\begin{equation*}
		x^3r= \sigma^3(r)x^3 + [\delta\sigma^2(r)+\sigma\delta\sigma(r)+\sigma^2\delta(r)]x^2   + [\delta^2\sigma(r)+\delta\sigma\delta(r)+\sigma\delta^2(r)]x + \delta^3(r).
	\end{equation*}
	However, using an inductive argument, it can be shown that the \emph{multiplication rule} can be written as follows. Given $r\in R$ and $i,k\in \mathbb{N}$, we denote by $W[\delta^k\sigma^{i-k}](r)$ the evaluation of $r$ in the function given by the sum of all possible \emph{words} that can be constructed with the alphabet formed by $k$-times the symbol $\delta$ and $(i-k)$-times the symbol $\sigma$, where the concatenation is understood as the composition of functions. For instance, if $i=5$ and $k=2$ we get
	\begin{align*}
		W[\delta^2\sigma^3](r) &= \delta^2 \sigma^3 (r) + \delta\sigma\delta\sigma^2 (r) + \delta \sigma^2\delta\sigma(r) + \delta \sigma^3 \delta (r) + \sigma \delta^2 \sigma^2 (r)\\ &\qquad + \sigma\delta\sigma\delta\sigma(r) + \sigma\delta\sigma^2 \delta (r) + \sigma^2 \delta^2 \sigma (r) + \sigma^2 \delta \sigma \delta (r) + \sigma^3 \delta^2 (r).
	\end{align*}
	In the general case, if $r\in R$ and $i\in \mathbb{N}$, the following formula holds:
	\begin{equation}\label{e24}
		x^i r = \sum_{k=0}^i W[\delta^k \sigma^{i-k}](r)x^{i-k}.
	\end{equation}
	Moreover, if $r, s\in R$ and $i,j \in \mathbb{N}$, then:
	\begin{equation}\label{e25}
		(rx^i)(sx^j) = r\sum_{k=0}^i W[\delta^k \sigma^{i-k}](s) x^{i+j-k}.
	\end{equation}
	This discussion shows that in $A$ the product of two \emph{terms} $rx^n$ and $sx^m$ is not necessarily another term, yet in general it will be a polynomial. However, by~\ref{O2}, it is quite clear that every element $p \in A$ can be uniquely written as
	\begin{equation*}
		p=\sum_{i=0}^n r_ix^i = r_0 + r_1 x + r_2 x^2 + \cdots + r_{n-1}x^{n-1}+r_nx^n, \quad r_i \in R \mbox{ and } 0\leq i \leq n.
	\end{equation*}
	The element $p$ is usually denoted $p(x)$ to emphasize the \emph{indeterminate} $x$. Following the classical terminology, the $r_i$ are called the \emph{coefficients} of $p(x)$. Hence, we conclude that the elements of $A$ have a polynomial expression, which justifies the name given to this construction.
	
	\begin{definition}
		If $p(x)=\sum_{i=0}^n r_ix^i$ is an element of $A$ such that $r_n \neq 0$, we define:
		\begin{enumerate}[label=\normalfont(\roman*)]
			\item  $\operatorname{dg}(p(x)):=n$ as the \emph{degree of $p(x)$},
			\item $\operatorname{lc}(p(x)):=r_n$ as the \emph{leading coefficient},
			\item $\operatorname{lm}(p(x)):=x^n$ as the \emph{leading monomial},
			\item $\operatorname{lt}(p(x)):=\operatorname{lc}(p(x))\operatorname{lm}(p(x))=r_nx^n$ as the \emph{leading term}.
		\end{enumerate}
		If all coefficient of $p(x)$ are zero, we say that $p(x):=0$ is the \emph{zero polynomial} and in this case $\operatorname{lc}(0):=0$, $\operatorname{lm}(0):=0$ and $\operatorname{lt}(0):=0$.
	\end{definition}
	
	\begin{lemma}
		If $p(x), q(x)\in A$ and $p(x),q(x)\neq 0$, then
		\begin{enumerate}[label=\normalfont(\roman*)]
			\item $\operatorname{dg}(p(x))\geq 0$,
			\item $\operatorname{dg}(p(x)+q(x)) \leq \max\left\{\operatorname{dg}(p(x)),\operatorname{dg}(q(x))\right\}$,
			\item $\operatorname{dg}(p(x)q(x)) \leq \operatorname{dg}(p(x))+\operatorname{dg}(q(x))$.
		\end{enumerate}
	\end{lemma}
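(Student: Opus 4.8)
The plan is to dispatch the three claims in increasing order of difficulty, using the multiplication rule \eqref{e25} for the last one. Claim (i) is immediate from the definitions: for $p(x)\neq 0$ the degree $\operatorname{dg}(p(x))$ is, by construction, the largest index $n$ with $r_n\neq 0$, and such an index is a nonnegative integer, so $\operatorname{dg}(p(x))\geq 0$.

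For claim (ii), I would write $p(x)=\sum_{i=0}^{n}r_ix^i$ and $q(x)=\sum_{i=0}^{m}s_ix^i$ and set $N:=\max\{n,m\}$, padding the shorter polynomial with zero coefficients so that both are indexed up to $N$. Since addition in the free left $R$-module $A$ is coefficient-wise on the basis $\{1,x,x^2,\dots\}$ (condition~\ref{O2}), we have $p(x)+q(x)=\sum_{i=0}^{N}(r_i+s_i)x^i$, and no basis element $x^i$ with $i>N$ appears. Hence $\operatorname{dg}(p(x)+q(x))\leq N=\max\{\operatorname{dg}(p(x)),\operatorname{dg}(q(x))\}$; the inequality can be strict, e.g.\ when $n=m$ and $r_n+s_n=0$.

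The substance is in claim (iii). Expanding the product on the $R$-basis and using bilinearity, $p(x)q(x)=\sum_{i=0}^n\sum_{j=0}^m (r_ix^i)(s_jx^j)$, and \eqref{e25} gives $(r_ix^i)(s_jx^j)=r_i\sum_{k=0}^{i}W[\delta^k\sigma^{i-k}](s_j)x^{i+j-k}$. Every monomial occurring here is a left scalar multiple of $x^{i+j-k}$ with $0\leq k\leq i$, so its degree is at most $i+j\leq n+m$, with equality only when $i=n$, $j=m$ and $k=0$. The unique top candidate is therefore the coefficient of $x^{n+m}$, which equals $r_n\,W[\sigma^n](s_m)=r_n\sigma^n(s_m)$. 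Collecting the expansion degree by degree then yields $\operatorname{dg}(p(x)q(x))\leq n+m=\operatorname{dg}(p(x))+\operatorname{dg}(q(x))$.

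The only real subtlety --- and the reason the statement asserts an inequality rather than an equality --- is that the would-be leading coefficient $r_n\sigma^n(s_m)$ can vanish even though $r_n,s_m\neq 0$: this happens whenever $\sigma$ fails to be injective (so that $\sigma^n(s_m)=0$) or $R$ has zero divisors. I would close with the remark that if $\sigma$ is injective and $R$ is a domain, then $r_n\sigma^n(s_m)\neq 0$ and the inequality in (iii) becomes an equality, recovering the familiar additivity of degree for ordinary polynomials.
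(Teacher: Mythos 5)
Your proof is correct, and since the paper states this lemma without proof (treating it as routine), your argument supplies exactly the intended one: (i) directly from the definition of degree, (ii) from coefficient-wise addition in the free left $R$-module with basis $\{1,x,x^2,\ldots\}$ given by~\ref{O2}, and (iii) from the multiplication rule \eqref{e25}, which bounds every monomial of $(r_ix^i)(s_jx^j)$ by degree $i+j$. Two small remarks: your side claim that non-injectivity of $\sigma$ makes $\sigma^n(s_m)=0$ is stated too strongly (it only makes this \emph{possible} for some $s_m$), and statements (ii)--(iii) tacitly require $p(x)+q(x)\neq 0$ and $p(x)q(x)\neq 0$ for the left-hand sides to be defined --- a point the paper handles via the convention $\operatorname{dg}(0):=-\infty$ mentioned immediately after the lemma; otherwise your closing observation about equality in (iii) when $\sigma$ is injective and $R$ is a domain matches the paper's remark following Proposition~\ref{p13}.
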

	
		Notice that no degree was defined for the zero polynomial. However, some authors put $\operatorname{dg}(0):=-\infty$, so (ii) and (iii) holds for every $p,q\in R[x;\sigma,\delta]$ (see e.g. \cite[p. 37]{GW}).
	
	Since our work concerns algebras, the next result establishes when $A$ has an algebra structure induced by the ring of coefficients $R$. We were not able to find a proof of it in the literature.
	
	\begin{lemma}\label{lemmanew}
		Let $R$ be a $K$-algebra. $A=R[x;\sigma,\delta]$ is a $K$-algebra having $R$ as subalgebra if and only if $\sigma$ and $\delta$ are $K$-linear maps.
	\end{lemma}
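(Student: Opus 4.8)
The plan is to recognize that equipping $A$ with a $K$-algebra structure compatible with that of $R$ amounts to a single centrality requirement, and then to read off that requirement directly from the commutation rule~\ref{O3}. First I would fix notation: let $u_R\colon K\to R$ denote the structure morphism of the $K$-algebra $R$, so that $u_R(K)\subseteq Z(R)$ and $k\cdot r = u_R(k)r = r\,u_R(k)$ for $k\in K$, $r\in R$. Recall that a ring is a $K$-algebra exactly when it carries a ring homomorphism from $K$ into its center; equivalently, the $K$-module law $k\cdot a := u_R(k)a$ makes multiplication $K$-bilinear precisely when $u_R(K)\subseteq Z(A)$. Since~\ref{O1} gives $1_A=1_R$, and since requiring $R$ to be a $K$-subalgebra forces the $K$-action on $A$ to restrict to that of $R$ (so $u_A(k)=k\cdot 1_A=k\cdot 1_R=u_R(k)$), the structure morphism of $A$ is necessarily $u_R$ followed by the inclusion $R\hookrightarrow A$. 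Hence the whole statement reduces to showing that $u_R(k)\in Z(A)$ for all $k\in K$ if and only if $\sigma$ and $\delta$ are $K$-linear.

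Next I would exploit that $A$ is generated as a ring by $R$ together with the distinguished element $x$. Because $u_R(k)$ already lies in $Z(R)$ and commutes with every element of $R$, and because an element commuting with a generating set of a ring commutes with the whole ring, $u_R(k)$ is central in $A$ if and only if it commutes with $x$. I would then compute $x\,u_R(k)$ using~\ref{O3}:
\[
x\,u_R(k)=\sigma(u_R(k))\,x+\delta(u_R(k)).
\]
Comparing this with $u_R(k)\,x$ and invoking~\ref{O2} (so that $A$ is left free over $R$ on the basis $\{1,x,x^2,\dots\}$ and coefficients are unique), the identity $u_R(k)\,x=x\,u_R(k)$ holds if and only if $\sigma(u_R(k))=u_R(k)$ and $\delta(u_R(k))=0$ for every $k\in K$.

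It remains to match these two pointwise conditions with the $K$-linearity of $\sigma$ and $\delta$, which is where the only genuine bookkeeping lies. For the forward implication, assuming $\sigma(u_R(k))=u_R(k)$ and $\delta(u_R(k))=0$, I would use multiplicativity of $\sigma$ and the $\sigma$-derivation rule to obtain $\sigma(u_R(k)r)=\sigma(u_R(k))\sigma(r)=u_R(k)\sigma(r)=k\cdot\sigma(r)$ and $\delta(u_R(k)r)=\sigma(u_R(k))\delta(r)+\delta(u_R(k))r=u_R(k)\delta(r)=k\cdot\delta(r)$, so both maps are $K$-linear. Conversely, evaluating $K$-linearity at $r=1_R$ returns exactly $\sigma(u_R(k))=\sigma(k\cdot 1_R)=k\cdot\sigma(1_R)=u_R(k)$ and $\delta(u_R(k))=k\cdot\delta(1_R)=0$, using $\sigma(1_R)=1_R$ together with the already noted fact $\delta(1_R)=0$. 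The main obstacle is conceptual rather than computational: pinning down that the $K$-algebra structure on $A$ is uniquely determined, so that the statement really is about the centrality of $u_R(K)$, and justifying the reduction \emph{central $\iff$ commutes with $x$} from the fact that $R$ and $x$ generate $A$. Once these points are secured, the twisted Leibniz rule upgrades the pointwise conditions on $u_R(K)$ to full $K$-linearity without difficulty.
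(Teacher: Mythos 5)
Your proposal is correct and follows essentially the same route as the paper's proof: both reduce the question to whether $k1_R$ commutes with $x$, use~\ref{O3} together with the uniqueness of coefficients from~\ref{O2} to translate this into the pointwise conditions $\sigma(k1_R)=k1_R$ and $\delta(k1_R)=0$, and then identify these with $K$-linearity. Your writeup is in fact somewhat more complete than the paper's, since you make explicit two steps the paper leaves implicit --- that the $K$-algebra structure on $A$ is forced to be the one induced by $u_R$, and that the twisted Leibniz rule upgrades the pointwise conditions on $u_R(K)$ to full $K$-linearity of $\sigma$ and $\delta$.
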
 
	
	\begin{proof}
		Suppose first that $A$ is a $K$-algebra. Hence, for a given $k\in K$ we must have $(k1_R)x=x(k1_R)$, but by~\ref{O3}, $x(k1_R)=\sigma(k1_R)x+\delta(k1_R)$. Comparing and using~\ref{O2} we get $\sigma(k1_R)=k1_R$ and $\delta(k1_R)=0$.
		
		Conversely, if $\sigma$ is a $K$-linear map and $\delta(k1_R)=0$ for every $k\in K$, we must guarantee a ring morphism $\phi:K\rightarrow A$ such that $\operatorname{Im}(\phi) \subseteq Z(A)$. Since $1_A=1_R$ and $R$ is already a $K$-algebra, define $\phi(k)=k1_R$, for all $k\in K$. Obviously, $\phi$ is a (unitary) ring morphism. Moreover, by~\ref{O3}, $x\phi(k)=\sigma(\phi(k))x + \delta(\phi(k))=\sigma(k1_R)x+\delta(k1_R)=(k1_R)x=\phi(k)x$. Hence, $\operatorname{Im}(\phi)\subset Z(A)$.
	\end{proof}
	
	For the remainder of this document, every time we have the hypothesis that $R$ is a $K$-algebra, we will automatically assume that $\sigma$ and $\delta$ are $K$-linear, so $A$ is also a $K$-algebra.
	
	The following result is known as the universal property of skew polynomial rings.
	
	\begin{theorem}[e.g. {\cite[Proposition 2.4]{GW}}]\label{t1}
		Assume that $B$ is a ring such that the following assertions hold:
		\begin{enumerate}[label=\normalfont(\roman*)]
			\item There is a ring morphism $\phi:R \rightarrow B$,
			\item There is a distinguished element $y\in B$ such that $y\phi(r)=\phi(\sigma(r))y+\phi(\delta(r))$ for all $r\in R$.
		\end{enumerate}
		Then there is a unique ring morphism $\psi: A \rightarrow B$ such that $\psi(x)=y$ and $\psi|_R=\phi$. The last relation can be represented by the following commutative diagram:
		\begin{equation}\label{e26}
			\begin{tikzcd}
				R \arrow[d,"\phi"'] \arrow[r,"\iota"] & A\arrow[dl,dashed,"\psi"]\\
				B
			\end{tikzcd}
		\end{equation}
		Here $\iota:R \rightarrow A$ is the natural inclusion $\iota(r):=r$ for all $r\in R$. Moreover, if $R$ and $B$ are $K$-algebras and $\phi$ is a $K$-algebra morphism, then $\psi$ is also a $K$-algebra morphism.
	\end{theorem}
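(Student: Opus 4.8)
The plan is to define $\psi$ explicitly using the left $R$-basis $\{1,x,x^2,\dots\}$ of $A$ guaranteed by~\ref{O2}, and then verify that it has the required properties. Since every $p\in A$ is written uniquely as $p=\sum_{i=0}^n r_ix^i$ with $r_i\in R$, I set $\psi(p):=\sum_{i=0}^n \phi(r_i)y^i$; uniqueness of the expansion makes this well defined, and the definition forces $\psi|_R=\phi$ (the case $n=0$) and $\psi(x)=y$. Additivity of $\psi$ is immediate, since addition in $A$ is coefficient-wise and $\phi$ is additive, and $\psi(1_A)=\psi(1_R)=\phi(1_R)=1_B$ because $\phi$ is unitary.

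The heart of the argument is multiplicativity, and the key step is to transport the commutation formula~\eqref{e24} from $A$ to $B$. Concretely, I would first prove by induction on $i$ the identity
\begin{equation*}
  y^i\phi(r)=\sum_{k=0}^i \phi\bigl(W[\delta^k\sigma^{i-k}](r)\bigr)\,y^{i-k},\qquad r\in R,\ i\ge 0,
\end{equation*}
in $B$. The base case $i=0$ is trivial, and the inductive step multiplies the inductive hypothesis on the left by $y$ and applies hypothesis~(ii), namely $y\phi(s)=\phi(\sigma(s))y+\phi(\delta(s))$, to each summand with $s=W[\delta^k\sigma^{i-k}](r)$; using that $\phi$ is additive together with the word-sum recursion $W[\delta^k\sigma^{i+1-k}]=\sigma\,W[\delta^k\sigma^{i-k}]+\delta\,W[\delta^{k-1}\sigma^{i+1-k}]$ (obtained by splitting each length-$(i{+}1)$ word according to its outermost symbol), the terms reassemble into the asserted formula. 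This is precisely the same induction that produced~\eqref{e24} inside $A$, now carried out in $B$ through $\phi$.

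With this in hand, multiplicativity reduces by bi-additivity to terms $p=rx^i$ and $q=sx^j$. On one side, \eqref{e25} gives $(rx^i)(sx^j)=\sum_{k=0}^i r\,W[\delta^k\sigma^{i-k}](s)\,x^{i+j-k}$, so applying $\psi$ and using that $\phi$ is multiplicative yields
\begin{equation*}
  \psi\bigl((rx^i)(sx^j)\bigr)=\sum_{k=0}^i \phi(r)\,\phi\bigl(W[\delta^k\sigma^{i-k}](s)\bigr)\,y^{i+j-k}.
\end{equation*}
On the other side, $\psi(rx^i)\psi(sx^j)=\phi(r)y^i\phi(s)y^j$, and inserting the $B$-commutation formula for $y^i\phi(s)$ produces the identical sum. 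Hence $\psi(pq)=\psi(p)\psi(q)$ and $\psi$ is a ring morphism. Uniqueness then follows formally: any ring morphism $\psi'$ with $\psi'|_R=\phi$ and $\psi'(x)=y$ must send $\sum r_ix^i$ to $\sum\phi(r_i)y^i$, hence coincides with $\psi$.

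Finally, for the $K$-algebra refinement I invoke Lemma~\ref{lemmanew}: under the assumption that $\sigma,\delta$ are $K$-linear, the $K$-action on $A$ is $k\cdot p=(k1_R)p$ with $k1_R$ central, so $\psi(k\cdot p)=\phi(k1_R)\psi(p)=(k1_B)\psi(p)=k\cdot\psi(p)$ because $\phi$ is a $K$-algebra morphism; thus $\psi$ is $K$-linear and therefore a $K$-algebra morphism. The main obstacle is the inductive commutation identity in $B$ together with keeping the word-sum recursion straight; once that is established, everything else is routine bookkeeping.
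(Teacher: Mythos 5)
Your proposal is correct, but it verifies multiplicativity by a genuinely different route than the paper. Both proofs define $\psi$ identically on the left $R$-basis of~\ref{O2}, namely $\psi\bigl(\sum_i r_ix^i\bigr)=\sum_i\phi(r_i)y^i$, and both dispatch uniqueness and the $K$-algebra refinement in the same formal way. The divergence is in the key step: the paper proves $\psi(rx^n sx^m)=\psi(rx^n)\psi(sx^m)$ by induction on $n$, at each stage rewriting $x\cdot sx^m=[\sigma(s)x+\delta(s)]x^m$ via~\ref{O3} inside $A$ and applying hypothesis~(ii) once in $B$; it never invokes the explicit word-sum formulas \eqref{e24}--\eqref{e25}. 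You instead transport \eqref{e24} into $B$, proving $y^i\phi(r)=\sum_{k=0}^i\phi\bigl(W[\delta^k\sigma^{i-k}](r)\bigr)y^{i-k}$ by induction using the word recursion, and then match both sides of $\psi(pq)=\psi(p)\psi(q)$ against \eqref{e25}. Your inductive identity is valid (the reindexing of the $\delta$-branch and the edge cases $k=0$, $k=i+1$ work out, and each $W[\delta^k\sigma^{i-k}](r)$ lies in $R$ so hypothesis~(ii) applies), so the argument goes through. What the paper's route buys is self-containedness and economy: it needs only the defining relation~\ref{O3}, whereas you lean on \eqref{e25}, a formula the paper itself only asserts with a sketched inductive argument, plus the combinatorial bookkeeping of the word recursion. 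What your route buys is a sharper conceptual statement: it makes explicit that the pair $(\phi,y)$ reproduces in $B$ the entire commutation calculus of $A$, not merely the degree-one relation, which is the structural reason the evaluation map is multiplicative.
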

	
	\begin{proof}
		$B$ has a right $R$-module structure via $r\cdot b := \phi(r)b$, for all $r\in R$ and $b\in B$. On the other hand, by~\ref{O2}, $A$ is a right free $R$-module with basis $\{ x^i : i\geq 0 \}$. Hence we can define a morphism of left $R$-modules $\psi: A \rightarrow B$ via $\psi(x^i)=y^i$, for all $i\geq 0$. Then $\psi$ is given by
		\begin{align*}
			\psi(r_0+r_1x+\cdots+r_nx^n) &= r_0\cdot \psi(1) + r_1 \cdot \psi(x) + \cdots r_n \cdot \psi(x^n)\\
			&= \phi(r_0)+\phi(r_1)y+\cdots+\phi(r_n)y^n.
		\end{align*}
		By definition, the diagram \eqref{e26} is commutative. Furthermore, $\psi$ preserves the unity element since $\psi(1)=\psi(1x^0)=\phi(1)y^0=1$. We want $\psi$ to be a ring morphism, so the only thing left to check is that
		\begin{equation}\label{e27}
			\psi(rx^nsx^m)=\psi(rx^n)\psi(sx^m),
		\end{equation}
		for all $r,s\in R$ and $n,m\in\mathbb{N}$. This is done by induction over $n$. The case $n=0$ is trivial. If $n=1$, we have
		\begin{align*}
			\psi(rxsx^m) &= \psi(r[\sigma(s)x + \delta(s)]x^m) = \psi(r\sigma(s)x^{m+1} + r\delta(s)x^m) \\ &= \psi(r\sigma(s)x^{m+1}) + \psi(r\delta(s)x^m) = \phi(r\sigma(s))y^{m+1} + \phi(r\delta(s))y^m\\&= \phi(r)[\phi(\sigma(s))y+\phi(\delta(s))]y^m = \phi(r)y\phi(s)y^m
			\\ &= \psi(rx)\psi(sx^m).
		\end{align*}
		Assume now that \eqref{e27} holds for a fixed $n$. Then
		\begin{align*}
			\psi(rx^{n+1}sx^m) &= \psi(rx^nxsx^m) = \psi(rx^n[\sigma(s)x+\delta(s)]x^m) \\&= \psi(rx^n \sigma(s)x^{m+1}) + \psi(rx^n\delta(s)x^m) \\
			&= \psi(rx^n)\psi(\sigma(s)x^{m+1}) + \psi(rx^n)\psi(\delta(s) x^m) \\&=\psi(rx^n)[\phi(\sigma(s))y+\phi(\delta(s))]y^m \\
			&=\psi(rx^n)y\phi(s)y^m = \psi(rx^{n+1})\psi(sx^m).
		\end{align*}
		Therefore $\psi$ is a ring morphism. By construction, $\psi$ is uniquely determined.
		
		Finally, suppose that $R$ and $B$ are $K$-algebras, and that $\phi$ is an algebra morphism. Then, $\psi( krx^i )=\phi(kr)y^i=k\phi(r)y^i=k\psi(rx^i)$, for all $r\in R$, $k\in K$ and $i\geq 0$. This guarantees that $\psi$ is $K$-linear and, since it is already a ring morphism, we have shown that it is an algebra map.
	\end{proof}
	
	\begin{corollary}[e.g. {\cite[Corollary 2.5]{GW}}]
	Assume that $B$ is a ring such that the following assertions hold:
		\begin{enumerate}[label=\normalfont(\roman*)]
			\item There is a ring morphism $\phi:R \rightarrow B$,
			\item There is a distinguished element $y\in B$ such that $y\phi(r)=\phi(\sigma(r))y+\phi(\delta(r))$ for all $r\in R$,
			\item $B$ satisfies the universal property of Theorem~\ref{t1}.
		\end{enumerate}
		Then there exists a ring isomorphism between $B$ and $A$. Moreover, if $R$ and $B$ are $K$-algebras, then the isomorphism is an algebra map.
	\end{corollary}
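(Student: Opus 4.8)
The plan is to run the standard ``uniqueness of a universal object'' argument, invoking Theorem~\ref{t1} twice to build mutually inverse morphisms. First I would produce an arrow in each direction. Since $A=R[x;\sigma,\delta]$ enjoys the universal property and $B$ satisfies hypotheses (i)--(ii), Theorem~\ref{t1} gives a unique ring morphism $\psi:A\to B$ with $\psi(x)=y$ and $\psi|_R=\phi$. For the reverse arrow I would apply the universal property of $B$ (hypothesis (iii)) to the test datum consisting of the ring $A$, the natural inclusion $\iota:R\to A$, and the distinguished element $x\in A$. The compatibility condition required here, namely $x\iota(r)=\iota(\sigma(r))x+\iota(\delta(r))$, is precisely the defining relation~\ref{O3} of $A$, so the hypotheses of $B$'s universal property are met; this yields a unique ring morphism $\psi':B\to A$ with $\psi'(y)=x$ and $\psi'\phi=\iota$.

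Next I would show that these two maps are mutually inverse, again by uniqueness. The composite $\psi'\psi:A\to A$ is a ring morphism satisfying $(\psi'\psi)(x)=\psi'(y)=x$ and $(\psi'\psi)|_R=\psi'\phi=\iota=\operatorname{id}_A|_R$. Since $\operatorname{id}_A$ satisfies these same two conditions, the uniqueness clause of Theorem~\ref{t1} (applied with target ring $A$, morphism $\iota$, and element $x$) forces $\psi'\psi=\operatorname{id}_A$. Symmetrically, $\psi\psi':B\to B$ satisfies $(\psi\psi')(y)=\psi(x)=y$ and $(\psi\psi')\phi=\psi\iota=\psi|_R=\phi$, so the uniqueness clause in the universal property of $B$ forces $\psi\psi'=\operatorname{id}_B$. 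Hence $\psi$ is a ring isomorphism with inverse $\psi'$.

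Finally, for the algebra statement, if $R$ and $B$ are $K$-algebras and $\phi$ is a $K$-algebra morphism, then the last assertion of Theorem~\ref{t1} already gives that $\psi$ is a $K$-algebra morphism. Its inverse $\psi'$ is then automatically $K$-linear (the set-theoretic inverse of a $K$-linear bijection is $K$-linear) and is a ring morphism, hence an algebra map; alternatively, one notes that $\iota:R\to A$ is itself a $K$-algebra morphism by Lemma~\ref{lemmanew}, so that $\psi'$ is a $K$-algebra morphism directly from the universal property of $B$. I do not expect a genuine obstacle: the entire content is formal manipulation of universal properties. The only point demanding care is the correct reading of hypothesis (iii)---that $(B,\phi,y)$ satisfies the universal property with respect to the \emph{same} endomorphism $\sigma$ and $\sigma$-derivation $\delta$---together with the verification, via~\ref{O3}, that $(A,\iota,x)$ constitutes a legitimate test datum to which that universal property may be applied.
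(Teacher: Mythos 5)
Your proposal is correct and follows essentially the same route as the paper's proof: apply Theorem~\ref{t1} to get $\psi:A\to B$, apply hypothesis (iii) to the test datum $(A,\iota,x)$ using~\ref{O3} to get the reverse arrow, and then use the uniqueness clauses (comparing each composite with the identity map) to conclude the two maps are mutually inverse, with the algebra statement following because the universal property lifts algebra maps.
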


	\begin{proof}
		Since $A$ satisfies the universal property and the condition (ii) of Theorem~\ref{t1} holds for $B$, there is an uniquely ring morphism $\phi: A \rightarrow B$ such that $\phi(x)=y$ and the diagram
		\begin{equation*}
			\begin{tikzcd}
				R \arrow[d,"\phi"'] \arrow[r,"\iota"] & A \arrow[dl,dashed,"\psi"]\\
				B
			\end{tikzcd}
		\end{equation*}
		is commutative. Similarly, since $B$ satisfies the universal property and the relation
		\begin{equation*}
			xr=\sigma(r)x+\delta(r)=\iota(\sigma(r))x+\iota(\delta(r)), \qquad \mbox{for all } r\in R,
		\end{equation*}
		holds in $A$, then there exists an uniquely ring morphism  $\varphi:B \rightarrow A$ such that $\varphi(y)=x$ and the diagram
		\begin{equation*}
			\begin{tikzcd}
				R \arrow[d,"\iota"'] \arrow[r,"\phi"] & B \arrow[dl,dashed,"\varphi"]\\
				A
			\end{tikzcd}
		\end{equation*}
		is commutative. Moreover, using the respective universal properties of $A$ and $B$ with themselves, we get two additional commutative diagrams:
		\begin{equation*}
			\begin{tikzcd}
				R \arrow[d,"\iota"'] \arrow[r,"\iota"] & A \arrow[dl,dashed,"\operatorname{id}_{A}"]\\
				A
			\end{tikzcd} \qquad \begin{tikzcd}
				R \arrow[d,"\phi"'] \arrow[r,"\phi"] & B \arrow[dl,dashed,"\operatorname{id}_{B}"]\\
				B
			\end{tikzcd}
		\end{equation*}
		Since $\varphi\psi(x)=x$ and $\varphi\psi\iota=\iota$, by uniqueness $\varphi\psi=\operatorname{id}_{A}$. Similarly, $\psi\varphi(y)=y$ and $\psi\varphi\phi=\phi$, so $\psi\varphi=\operatorname{id}_B$. Hence, we conclude $B \cong A$ as ring. The last claim is clear from the fact that the universal property lifts algebra maps.
	\end{proof}
	
	Now we list some basic ring-theoretical properties of skew polynomial rings.
	
	\begin{proposition}\label{p13}
		Assume that $\sigma$ is injective. If $R$ is a domain, then $A$ is also a domain.
	\end{proposition}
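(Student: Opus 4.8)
The plan is to argue via degrees and leading terms, exactly as in the commutative polynomial case, but carefully tracking how $\sigma$ and $\delta$ affect the top-degree coefficient. First I would observe that $A$ is nonzero: since $R$ is a domain we have $1_A = 1_R \neq 0$. Hence it suffices to show that the product of any two nonzero elements of $A$ is again nonzero. To this end, take $p(x)=\sum_{i=0}^n r_i x^i$ and $q(x)=\sum_{j=0}^m s_j x^j$ in $A$ with $r_n=\operatorname{lc}(p(x))\neq 0$ and $s_m=\operatorname{lc}(q(x))\neq 0$.

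The key computational step is to identify the coefficient of $x^{n+m}$ in the product $p(x)q(x)$. Expanding bilinearly and applying the multiplication rule \eqref{e25}, each summand satisfies
\begin{equation*}
	(r_i x^i)(s_j x^j) = r_i \sum_{k=0}^i W[\delta^k \sigma^{i-k}](s_j)\, x^{\,i+j-k},
\end{equation*}
which has degree at most $i+j\leq n+m$, and equality $i+j-k=n+m$ forces $i=n$, $j=m$ and $k=0$. Since $W[\sigma^n](s_m)=\sigma^n(s_m)$ by \eqref{e24}, the coefficient of $x^{n+m}$ in $p(x)q(x)$ is precisely $r_n\sigma^n(s_m)$, and no other term of the product reaches degree $n+m$, so there is no cancellation at the top.

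It then remains to check that $r_n\sigma^n(s_m)\neq 0$. Because $\sigma$ is injective, so is its $n$-fold composite $\sigma^n$; hence $s_m\neq 0$ gives $\sigma^n(s_m)\neq 0$. As $R$ is a domain and $r_n\neq 0$, the product $r_n\sigma^n(s_m)$ is nonzero. Consequently $p(x)q(x)$ has nonzero leading coefficient $r_n\sigma^n(s_m)$ in degree $n+m$, so $p(x)q(x)\neq 0$, and therefore $A$ has no zero divisors, i.e., $A$ is a domain.

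The only genuinely delicate point, and the step I would double-check, is the claim that degree $n+m$ is attained solely by the product of the leading terms. This is exactly where the injectivity hypothesis on $\sigma$ becomes indispensable: the words $W[\delta^k\sigma^{i-k}]$ involving at least one $\delta$ strictly lower the exponent of $x$, so they cannot contribute to degree $n+m$, while the pure $\sigma^n$ word survives precisely because $\sigma^n$ does not annihilate $s_m$. Were $\sigma$ not injective, $\sigma^n(s_m)$ could vanish and the leading coefficient collapse to zero, so the argument would break down; everything else is a routine degree bookkeeping.
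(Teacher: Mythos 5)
Your proof is correct and follows essentially the same route as the paper's: both identify the leading term of $p(x)q(x)$ as $r_n\sigma^n(s_m)x^{n+m}$ and conclude it is nonzero since $\sigma^n$ is injective and $R$ is a domain. You simply spell out the degree bookkeeping from the multiplication rule \eqref{e25} that the paper leaves implicit.
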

	
	\begin{proof}
		Let $p(x)=p_0+p_1x+\cdots+p_nx^n \neq 0$ and $q(x)=q_0+q_1x+\cdots+q_mx^m \neq 0$ be two elements of $A$ such that $p_n,q_m\neq 0$. Then $\operatorname{lt}(pq)=p_n\sigma^n(q_m)x^{n+m}\neq 0$, by the injectivity of $\sigma$. Hence, $pq\neq 0$.
	\end{proof}
	
	It is clear that under these conditions, $\operatorname{gr}(pq)=\operatorname{gr}(p)+\operatorname{gr}(q)$ for all $p,q\in A-\{0\}$. Moreover, the units of $A$ coincide with the units of $R$.
	
	Recall that $R$ is said to be \emph{left Noetherian} if any ascending chain of left ideals stabilizes. We mention a result of huge relevance which generalizes the well known Hilbert's Basis Theorem.
	
	\begin{theorem}[Hilbert's Basis Theorem for skew polynomial rings, e.g. {\cite[Theorem 2.6]{GW}}]\label{t2}
		If $R$ is a left (resp. right) Noetherian ring and $\sigma$ is bijective, then $A$ is also a left (resp. right) Noetherian ring.
	\end{theorem}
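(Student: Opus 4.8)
The plan is to adapt the classical proof of the Hilbert Basis Theorem via leading coefficients; the one genuinely new feature is the $\sigma$-twist that the endomorphism introduces on leading terms, and this is precisely where bijectivity of $\sigma$ will be forced upon us. I would treat the left Noetherian case in full and then dispatch the right case by the evident symmetry (writing polynomials with right coefficients $\sum_i x^i r_i$, which is legitimate because $\sigma$ is bijective, the r\^ole of $\sigma$ being played by $\sigma^{-1}$; equivalently, one applies the left case to $A^{\mathrm{op}}$, again a skew polynomial ring with bijective endomorphism). So let $I$ be a left ideal of $A$; the goal is to show $I$ is finitely generated.

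First I would record how leading terms behave under the two operations a left ideal is closed under. If $p=\sum_{i=0}^n r_ix^i\in I$ with $\operatorname{lc}(p)=r_n$, then $sp\in I$ has leading coefficient $sr_n$ for every $s\in R$ (when nonzero), while the multiplication rule \eqref{e24} shows that $x^mp\in I$ has leading term $\sigma^m(r_n)x^{n+m}$, that is $\operatorname{lc}(x^mp)=\sigma^m(\operatorname{lc}(p))$. Guided by this, for each $n\geq 0$ I set
\[
I_n:=\{\operatorname{lc}(p):p\in I,\ \operatorname{dg}(p)=n\}\cup\{0\}.
\]
Closure of $I$ under left multiplication by $R$ and under sums of degree-$n$ elements shows each $I_n$ is a left ideal of $R$, and the computation $\operatorname{lc}(xp)=\sigma(\operatorname{lc}(p))$ gives $\sigma(I_n)\subseteq I_{n+1}$. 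Here bijectivity enters: since $\sigma$ is an automorphism, $\sigma$ and $\sigma^{-1}$ send left ideals to left ideals, so each $\sigma^{-n}(I_n)$ is again a left ideal, and the inclusion $\sigma(I_n)\subseteq I_{n+1}$ untwists into the \emph{genuinely ascending} chain
\[
I_0\subseteq \sigma^{-1}(I_1)\subseteq \sigma^{-2}(I_2)\subseteq\cdots
\]
of left ideals of $R$. As $R$ is left Noetherian this chain stabilizes at some $N$; reapplying $\sigma^{n}$ this says $I_n=\sigma^{\,n-N}(I_N)$ for all $n\geq N$.

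Next I would build the finite generating set. For each $n$ with $0\leq n\leq N$, using that $R$ is Noetherian I pick finitely many $p_{n,1},\dots,p_{n,k_n}\in I$ of degree $n$ whose leading coefficients generate the left ideal $I_n$. I claim this finite family generates $I$ as a left ideal of $A$, which I prove by induction on $d:=\operatorname{dg}(f)$ for $f\in I$, writing $r:=\operatorname{lc}(f)$. If $d\leq N$, then $r=\sum_j s_j\operatorname{lc}(p_{d,j})$ for some $s_j\in R$, and $g:=\sum_j s_j p_{d,j}$ lies in the generated ideal with the same degree-$d$ leading coefficient. If $d>N$, I use $I_d=\sigma^{\,d-N}(I_N)$ to write $r=\sigma^{\,d-N}\!\bigl(\sum_j s_j\operatorname{lc}(p_{N,j})\bigr)$ and take $g:=\sum_j \sigma^{\,d-N}(s_j)\,x^{\,d-N}p_{N,j}$, whose leading coefficient is $\sum_j \sigma^{\,d-N}(s_j)\,\sigma^{\,d-N}(\operatorname{lc}(p_{N,j}))=r$ by the leading-term formula recorded above. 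In both regimes $f-g\in I$ has strictly smaller degree, so by induction it lies in the generated ideal, hence so does $f$.

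The main obstacle, and the only place the hypotheses are truly used, is exactly this $\sigma$-twisting of leading coefficients: because $\operatorname{lc}(x^mp)=\sigma^m(\operatorname{lc}(p))$ rather than $\operatorname{lc}(p)$, the naive sequence of leading-coefficient ideals need not ascend, so one is compelled to \emph{untwist} it by $\sigma^{-n}$ before invoking Noetherianity (this is where injectivity alone would not suffice, since $\sigma^{-1}$ must exist), and then to \emph{retwist} by $\sigma^{\,d-N}$ when manufacturing the comparison element $g$ in the high-degree step. Once this bookkeeping is arranged, the remainder is the classical division-by-leading-term argument, and the derivation $\delta$ plays no essential r\^ole because it only contributes lower-degree terms.
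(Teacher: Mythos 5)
The paper never actually proves Theorem~\ref{t2}: it is stated as a quoted result with the reference \cite[Theorem 2.6]{GW}, so there is no in-paper argument to compare yours against. Judged on its own merits, your proof is correct, and it is in substance the standard leading-coefficient (Hilbert basis) argument found in the cited reference. The two delicate points are both handled properly: first, since $\operatorname{lc}(x^mp)=\sigma^m(\operatorname{lc}(p))$, the degree-indexed sets $I_n$ do not ascend on the nose, and your passage to the chain $I_0\subseteq\sigma^{-1}(I_1)\subseteq\sigma^{-2}(I_2)\subseteq\cdots$ is exactly where surjectivity of $\sigma$ (and not mere injectivity) is required, with $\sigma^{-n}(I_n)$ a left ideal because $\sigma^{-n}$ is a ring automorphism; second, in the high-degree step the element $g=\sum_j\sigma^{d-N}(s_j)\,x^{d-N}p_{N,j}$ has $x^d$-coefficient $\sigma^{d-N}\bigl(\sum_j s_j\operatorname{lc}(p_{N,j})\bigr)=\operatorname{lc}(f)$, so $f-g$ genuinely drops degree, and injectivity of $\sigma$ guarantees $\operatorname{dg}(x^mp)=m+\operatorname{dg}(p)$ throughout. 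Your disposal of the right-handed case via $A^{\operatorname{op}}\cong R^{\operatorname{op}}[x;\sigma^{-1},-\delta\sigma^{-1}]$ (equivalently, writing polynomials with right coefficients, which is legitimate precisely because $\sigma$ is bijective) is likewise standard and correct, though stated only in outline. The one cosmetic remark is that the reference organizes the leading-coefficient bookkeeping slightly differently; your version has the merit of making completely explicit where bijectivity of $\sigma$, as opposed to injectivity, enters the argument.
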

	
	The construction of skew polynomial rings can be applied several times to obtain an \emph{iterated skew polynomial ring} of the form $R[x_1;\sigma_1,\delta_1]\cdots[x_n;\sigma_n,\delta_n]$. For $1\leq i \leq n$, notice that $\sigma_i$ and $\delta_i$ must be defined as
	\begin{equation*}
		\sigma_i,\delta_i : R[x_1;\sigma_1,\delta_1]\cdots[x_{i-1};\sigma_{i-1},\delta_{i-1}] \longrightarrow R[x_1;\sigma_1,\delta_1]\cdots[x_{i-1};\sigma_{i-1},\delta_{i-1}].
	\end{equation*}
	For iterated skew polynomial rings an explicit basis over the original base ring is given.
	
	\begin{lemma}\label{l8}
		If $A=R[x_1;\sigma_1,\delta_1]\cdots[x_n;\sigma_n,\delta_n]$ is an iterated skew polynomial ring over $R$, then the set $\operatorname{Mon}(x_1,\ldots,x_n):=\left\{ x_1^{\alpha_1} \cdots x_n^{\alpha_n} :  (\alpha_1,\ldots,\alpha_n)\in\mathbb{N}^n \right\}$ is a left $R$-basis of $A$.
	\end{lemma}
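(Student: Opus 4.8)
The plan is to prove the statement by induction on the number $n$ of adjoined indeterminates, exploiting at each stage only the defining property~\ref{O2}, which furnishes a left free module structure; in particular no use of the explicit commutation rule~\ref{O3} or of the multiplication formulas~\eqref{e24}--\eqref{e25} will be needed. The base case $n=1$ is immediate: by~\ref{O2}, the ring $A=R[x_1;\sigma_1,\delta_1]$ is a left free $R$-module with basis $\{1,x_1,x_1^2,\dots\}=\operatorname{Mon}(x_1)$.

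For the inductive step I would set $B:=R[x_1;\sigma_1,\delta_1]\cdots[x_{n-1};\sigma_{n-1},\delta_{n-1}]$ and assume $\operatorname{Mon}(x_1,\dots,x_{n-1})$ is a left $R$-basis of $B$. Since $A=B[x_n;\sigma_n,\delta_n]$, property~\ref{O2} asserts that $A$ is a left free $B$-module with basis $\{x_n^k : k\geq 0\}$, so each $p\in A$ has a unique expression $p=\sum_k b_k x_n^k$ with $b_k\in B$. Replacing each $b_k$ by its unique expansion $b_k=\sum_\alpha r_{k,\alpha}\,x_1^{\alpha_1}\cdots x_{n-1}^{\alpha_{n-1}}$ with $r_{k,\alpha}\in R$, and using associativity together with the inclusion $R\subseteq B$, I obtain $p=\sum_{k,\alpha} r_{k,\alpha}\,x_1^{\alpha_1}\cdots x_{n-1}^{\alpha_{n-1}}x_n^k$, a left $R$-linear combination of the elements of $\operatorname{Mon}(x_1,\dots,x_n)$. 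This shows the monomials span $A$ over $R$.

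For linear independence, suppose a finite combination $\sum_{k,\alpha} r_{k,\alpha}\,x_1^{\alpha_1}\cdots x_{n-1}^{\alpha_{n-1}}x_n^k$ vanishes. Grouping the terms by the power of $x_n$ rewrites this as $\sum_k\bigl(\sum_\alpha r_{k,\alpha}\,x_1^{\alpha_1}\cdots x_{n-1}^{\alpha_{n-1}}\bigr)x_n^k=0$, where each parenthesized coefficient lies in $B$. Since $\{x_n^k\}$ is a left $B$-basis, every such coefficient must be zero in $B$, and then the inductive hypothesis that $\operatorname{Mon}(x_1,\dots,x_{n-1})$ is a left $R$-basis forces every $r_{k,\alpha}=0$. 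I expect the only delicate point to be precisely this bookkeeping: one must first peel off the powers of $x_n$ using the $B$-basis property and only afterwards invoke the inductive hypothesis on $B$, keeping all scalars on the left throughout, which is legitimate because $R$ sits inside $B$ and the two module structures are compatible by associativity. Combining spanning and independence completes the induction.
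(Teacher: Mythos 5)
Your proposal is correct and follows essentially the same route as the paper's proof: induction on $n$, using~\ref{O2} to peel off powers of $x_n$ over the subring $B=A_{n-1}$, then expanding the $B$-coefficients via the inductive hypothesis for spanning, and reversing the two steps (first the $B$-basis property, then the inductive hypothesis) for linear independence. The only cosmetic difference is that the paper treats $n=2$ as a separate warm-up case before the general step, which your argument subsumes.
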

	
	\begin{proof}
		We proceed by induction over $n$, the number of variables. We denote
		\begin{equation*}
			A_i:= R[x_1;\sigma_1,\delta_1]\cdots[x_i;\sigma_i,\delta_i], \qquad \mbox{for all } 1\leq i \leq n.
		\end{equation*}
		Since for $n=1$ the statement reduces to~\ref{O2}, there is nothing to prove.
		
		Let $n=2$. Then, again by~\ref{O2}, the powers of $x_2$ form an left basis of $A_2$ over $A_1$, meaning that every element $p\in A_2$ can be written as
		\begin{equation*}
			p= p_0(x_1) + p_1(x_1)x_2 + p_2(x_1)x_2^2+\cdots+p_m(x_1)x_2^m,
		\end{equation*}
		with all $p_j(x_1) \in A_1$, $0\leq j \leq m$. Since every $p_j(x_1)$ can be generated by powers of $x_1$, by distributivity, it is clear that $p$ is generated by $\{ x_1^{\alpha_1}x_2^{\alpha_2} :  \alpha_1,\alpha_2 \in \mathbb{N}  \}$. Now, suppose that $0=\sum_{i,j=0}^{n,m} r_{ij}x_1^i x_2^j$, for some $r_{ij}\in R$. By associativity,
		\begin{equation*}
			\sum_{i,j=0}^{n,m} r_{ij}x_1^i x_2^j =\sum_{j=0}^m \left( \sum_{i=0}^n r_{ij}x_1^i \right)x_2^j,
		\end{equation*}
		so by linearly independence of the powers of $x_2$ on $A$, $\sum_{i=0}^n r_{ij}x_1^i=0$, for every $1\leq j \leq m$. But now by the linearly independence of the powers of $x_1$ on $A_1$, every $r_{ij}=0$.
		
		Now, assume that $\operatorname{Mon}(x_1,\ldots,x_{n-1})$ is a left basis for $A_{n-1}$ over $R$. As the previous case, every $p\in A$ can be written as
		\begin{align*}
			p&= p_0(x_1,\ldots,x_{n-1}) + p_1(x_1,\ldots,x_{n-1})x_n\\
			& \qquad  + p_2(x_1,\ldots,x_{n-1})x_{n-1}^2+\cdots+p_m(x_1,\ldots,x_{n-1})x_{n-1}^m,
		\end{align*}
		with all $p_j \in A_{n-1}$, $0\leq j \leq m$. Using the induction hypothesis for every $p_j$, it is clear that $\operatorname{Mon}(x_1,\ldots,x_n)$ generates $p$. Now, suppose that
		\begin{equation*}
			0=\sum_{\substack{
					0\leq \alpha_i \leq m_i\\
					1\leq i \leq n}} r_{\alpha}x_1^{\alpha_1}\cdots x_n^{\alpha_n}, \qquad \mbox{for some } r_{\alpha}\in R, \, \alpha:=(\alpha_1,\ldots,\alpha_n)\in\mathbb{N}^n.
		\end{equation*}
		Associating,
		\begin{equation*}
			\sum_{\substack{
					0\leq \alpha_i \leq m_i\\
					1\leq i \leq n}} r_{\alpha}x_1^{\alpha_1}\cdots x_n^{\alpha_n} =  \sum_{\alpha_n=0}^n \left( \sum_{\substack{
					0\leq \alpha_i \leq m_i\\
					1\leq i \leq n-1}} r_{\alpha}x_1^{\alpha_1}\cdots x_{n-1}^{\alpha_{n-1}} \right) x_n^{\alpha_n}.
		\end{equation*}
		Since the powers of $x_n$ are a left basis for $A$ over $A_{n-1}$, we must have
		\begin{equation*}
			\sum_{\substack{
					0\leq \alpha_i \leq m_i\\
					1\leq i \leq n-1}} r_{\alpha}x_1^{\alpha_1}\cdots x_{n-1}^{\alpha_{n-1}}=0,
		\end{equation*}
		but again by induction hypothesis, that only happens if and only if every $r_\alpha=0$, which shows the linearly independence of $\operatorname{Mon}(x_1,\ldots,x_n)$.
	\end{proof}
	
	We end this section with some examples, adapted from \cite{GW,MR}, which illustrate that skew polynomial rings are indeed a generalization of more particular well-known cases.
	
	\begin{example}[Classical polynomial ring]\label{ex15}
		Take $\sigma=\operatorname{id}_R$ and $\delta=0$. Therefore~\ref{O3} reduces to $xr=rx$, for all $r\in R$. This is simply the \emph{classical univariate polynomial ring over $R$}, and we write $R[x;\operatorname{id}_R,0]=R[x]$. Moreover the formula \eqref{e25} corresponds to usual multiplication of monomials. In this case, Theorem~\ref{t2} becomes the classical Hilbert's Basis Theorem. Furthermore, we consider the \emph{classical multivariate polynomial ring over $R$}, $R[x_1,\ldots,x_n]$, as an iterated skew polynomial ring over $R$, where $\sigma_i=\operatorname{id}$ and $\delta_i=0$, for all $1\leq i\leq n$.
	\end{example}
	
	\begin{example}[Polynomial ring of endomorphism type]\label{ex11}
		Take $\delta=0$. Then~\ref{O3} becomes $xr=\sigma(r)x$, for all $r\in R$. In this case we write $R[x;\sigma,0]=R[x;\sigma]$. The formula \eqref{e25} reduces to $(rx^n)(sx^m)=r\sigma^n(s)x^{n+m}$, for all $r,s\in R$ and $n,m\in\mathbb{N}$. A widely studied, particular case is when $\sigma$ is an automorphism of $R$.
	\end{example}
	
	\begin{example}[Polynomial ring of derivation type]\label{ex12}
		Take $\sigma=\operatorname{id}_R$. Then~\ref{O3} becomes $xr=rx+\delta(r)$, for all $r\in R$. In this case we write $R[x;\operatorname{id}_R,\delta]=R[x;\delta]$. Moreover, formula~\eqref{e25} simplifies to
		\begin{equation*}
			(rx^n)(sx^m)=r\sum_{k=0}^n \binom{n}{k} \delta^k (s) x^{n+m-k}, \qquad \mbox{for all } r,s\in R \mbox{ and } n,m\in\mathbb{N}.
		\end{equation*}
	\end{example}
	
	The generalizations of Examples~\ref{ex11} and~\ref{ex12} to several variables (i.e., iterated skew polynomial rings) is straightforward and therefore omitted; a particular case of the later is discussed in Example~\ref{ex43}.
	
	\begin{example}[Ore algebras]\label{ex37}\label{ex38}
		Since the setup in general iterated skew polynomial rings can be cumbersome, usually additional conditions may be imposed:
		\begin{align}
			\sigma_i(x_j) = &\ x_j, \qquad j<i, \label{e28}\\
			\delta_i(x_j) = &\ 0, \qquad j<i,\\
			\sigma_i\sigma_j = &\ \sigma_j\sigma_i, \qquad 1\leq i \leq n,\\
			\delta_i\delta_j = &\ \delta_j\delta_i, \qquad 1\leq i \leq n,\label{e29}
		\end{align}
		where the two last relations are understood to be restricted to $R$. Iterated skew polynomial rings satisfying these relations are common, but it does not seem to exist a standard name in the literature. In the case of one single variable (i.e., no iteration) the relations trivialize.
		
		It can be shown that \eqref{e28}-\eqref{e29} are equivalent to the following relations:
		\begin{gather}
			x_ix_j = x_jx_i, \qquad 1\leq i,j\leq n,\label{e200}\\
			\sigma_i(R),\delta_i(R)\subseteq R, \qquad 1\leq i \leq n.\label{e201}
		\end{gather}
		Therefore, under these conditions the maps $\sigma_i,\delta_i$ can be seen as maps $\sigma_i,\delta_i:R\rightarrow R$.
		
		We mention a particular case of the above, distinguished by its well behavior on computational implementations (see e.g. \cite{KJJ}). Let $\Bbbk[t_1,\ldots,t_n]$ be a classical multivariate polynomial ring over $\Bbbk$. If $A=\Bbbk[t_1,\ldots,t_n][x_1;\sigma_n,\delta]\cdots[x_n;\sigma_n,\delta_n]$ is an iterated Ore extension satisfying \eqref{e28}-\eqref{e29}, then $A$ is called an \emph{Ore algebra}.
	\end{example}
	
	We present concrete cases of the above.
	
	\begin{example}[Enveloping universal algebra of $\mathfrak{sl}_2(\Bbbk)$]\label{ex36}
		Recall from Example~\ref{ex35} that a $\Bbbk$-basis for $\mathfrak{sl}_2(\Bbbk)$ is formed by
		\begin{equation*}
			x=\left(\begin{array}{c c}
				0 & 1\\
				0 & 0
			\end{array}\right), \qquad
			y=\left(\begin{array}{c c}
				0 & 0\\
				1 & 0
			\end{array}\right), \qquad
			h=\left(\begin{array}{c c}
				1 & 0\\
				0 & -1
			\end{array}\right),
		\end{equation*}
		and thus $U:=U(\mathfrak{sl}_2(\Bbbk))$ can be seen as a the $\Bbbk$-algebra generated by $x,y,h$ subject to the relation $[x,y]=h$, $[h,x]=2x$ and $[h,y]=-2y$. It is possible to show that $U$ is isomorphic to either of the following iterated polynomial rings:
		\begin{gather*}
			\Bbbk[x][h;\delta_1][y;\sigma_2,\delta_2] \cong \Bbbk[h][x;\sigma_1][y;\sigma_2,\delta_2],
		\end{gather*}
		where
		\begin{align*}
			\delta_1&=2x\frac{d}{dx},&\sigma_1(h)&=h-2, &\sigma_2(x)&=x, \\
			\sigma_2(h)&=h+2, & \delta_2(x) &= -h, & \delta_2(h)&=0.
		\end{align*}
		By Theorem~\ref{t1} and Proposition~\ref{p13}, $U$ is a Noetherian domain.
	\end{example}
	
	\begin{example}[Quantum enveloping algebra of $\mathfrak{sl}_2(\Bbbk)$]\label{ex39}
		Recall from Example~\ref{ex32} that for $q\in \Bbbk$ an invertible element such that $q \neq \pm 1$, $U_q:=U_q(\mathfrak{sl}_2(\Bbbk))$ is the $\Bbbk$-algebra generated by $e,f,k,k^{-1}$ subject to the relations
		\begin{gather}
			kk^{-1}=k^{-1}k=1,\label{e88}\\
			kek^{-1}=q^2e,\label{e89}\\
			kfk^{-1}=q^{-2}f,\label{e90}\\
			\left[e,f\right]=ef-fe=\frac{k-k^{-1}}{q-q^{-1}}\label{e91}.
		\end{gather}
		We realized that this algebra is in fact a Hopf algebra. In this example we will show that it can also be seen as an iterated skew polynomial ring.
		
		Let $A_0:=\Bbbk[k,k^{-1}]$ be the Laurent polynomial ring in the variable $k$, in which \eqref{e88} is satisfied. Notice  that $A_0$ is a Noetherian domain and that $\{ k^l \}_{l\in \mathbb{Z}}$ is a $\Bbbk$-basis of $A_0$. Now, consider the automorphism $\sigma_1$ of $A_0$ given by $\sigma_1(k):=q^2k$ and the respective Ore extension $A_1:=A_0[f;\sigma_1]$. Then, using a similar argument to the one given in the proof of Lemma~\ref{l8}, we can prove that a $\Bbbk$-basis for $A_1$ is $\{ f^jk^l : j\in \mathbb{N},\, l\in\mathbb{Z} \}$. Moreover, by Theorem~\ref{t2}, $A_1$ is a Noetherian domain. We have $fk=\sigma_1(k)f=q^2kf$ which corresponds to the relation \eqref{e90}. By the universal property of free algebras and Theorem~\ref{t1}, $A_1$ is isomorphic to the free algebra generated by $f,k,k^{-1}$ subject to the relations \eqref{e88} and \eqref{e90}.
		
		Now we construct $A_2:=A_1[e;\sigma_2,\delta]$. Let
		\begin{equation}\label{e92}
			\sigma_2(f^jk^l):=q^{-2l}f^jk^l, \qquad j\in \mathbb{N}, \, l\in \mathbb{Z}.
		\end{equation}
		Then $\sigma_2$ is an automorphism of $A_1$. If we denote by $\delta(f)(k)$ the Laurent polynomial $\frac{k-k^{-1}}{q-q^{-1}}$, let
		\begin{equation}\label{e93}
			\delta(k^l):=0, \qquad \delta(f^jk^l):=\sum_{i=0}^{j-1} f^{j-1} \delta(f)(q^{-2i}k) k^l.
		\end{equation}
		We must verify that $\delta$ is a $\sigma_2$-derivation of $A_1$. For that, it suffices to check that for every $j,m\in \mathbb{N}$ and $l,n \in\mathbb{Z}$ we have
		\begin{equation}\label{e94}
			\delta(f^jk^lf^mk^n)=\sigma_2(f^jk^l)\delta(f^mk^n)+\delta(f^jk^l)f^mk^n.
		\end{equation}
		Indeed, starting from the right side of \eqref{e94} and using \eqref{e90}, \eqref{e92} and \eqref{e93}, we have
		\begin{align*}
			&\sigma_2(f^jk^l)\delta(f^mk^n)+\delta(f^jk^l)f^mk^n\\ &= \sum_{i=0}^{m-1} q^{-2l}f^jk^l f^{m-1}\delta(f)(q^{-2i}k) k^n + \sum_{i=0}^{j-1} f^{j-1}\delta(f)(q^{-2i}k)k^l f^m k^n\\
			&= \sum_{i=0}^{m-1} q^{-2l-2l(m-1)}f^{j+m-1} \delta(f)(q^{-2i}k) k^{l+n}+ \sum_{i=0}^{j-1}q^{-2lm}f^{m+j-1}\delta(f)(q^{-2i-2m}k)k^{l+n}\\
			&= \sum_{i=0}^{m-1}q^{-2lm}f^{m+j-1}\delta(f)(q^{-2i}k)k^{l+n} + \sum_{i=m}^{j+m-1}q^{-2lm}f^{m+j-1}\delta(f)(q^{-2i}k)k^{l+n}\\
			&=q^{-2lm}\left( \sum_{i=0}^{j+m-1} f^{j+m-1} \delta(f)(q^{-2i}k) k^{l+n} \right)= q^{-2lm}\delta(f^{j+m}k^{l+n}) = \delta(f^jk^lf^mk^n).
		\end{align*}
		Thus, in particular $\delta(f)=\frac{k-k^{-1}}{q-q^{-1}}$ and $ \delta(k)=0$, whence $ek=\sigma_2(k)e+\delta(k)=q^{-2}ke$, which corresponds to \eqref{e89}, and $ef=\sigma_2(f)e+\delta(f)=fe+\frac{k-k^{-1}}{q-q^{-1}} $, which is \eqref{e91}.
		
		Therefore, $U_q$ is isomorphic to $\Bbbk[k,k^{-1}][f;\sigma_1][e;\sigma_2,\delta]$ and hence, it is a Noetherian domain with $\Bbbk$-basis $\{ e^i f^j k^l : i,j\in\mathbb{N}, l\in\mathbb{Z} \}$.
	\end{example}
	
	\begin{example}[The algebra of shift operators]\label{ex16}
		Let $\Bbbk[t]$ the classical univariate polynomial ring over a field $\Bbbk$. If $\sigma_h:\Bbbk[t] \rightarrow \Bbbk[t]$ is the endomorphism defined by $\sigma_h(p(t))=p(t-h)$, with $p(t)\in \Bbbk[t]$, then the skew polynomial ring $S_h=\Bbbk[t][x_h;\sigma_h]$ over $\Bbbk[t]$ is known as the \emph{algebra of shift operators}. If $p(t),q(t) \in \Bbbk[t]$, the formula \eqref{e25} becomes
		\begin{equation*}
			p(t)x_h^nq(t)x_h^m=p(t)q(t-nh)x_h^{n+m}, \qquad \mbox{for all } n,m\in\mathbb{N}.
		\end{equation*}
		Notice that $S_h$ is an Ore algebra. When $\Bbbk=\mathbb{R}$ and $h>0$, it is used to model time-delays systems \cite{CQR}.
	\end{example}
	
	\begin{example}[Weyl algebra]\label{ex14}
		Let $\Bbbk[t]$ be as in Example~\ref{ex16} and denote by $\frac{d}{dt}$ the derivate operator with respect to $t$. The skew polynomial ring $A_1(\Bbbk)=\Bbbk[t][x;\frac{d}{dt}]$ over $\Bbbk[t]$ is known as the \emph{first Weyl algebra}. If $p(t),q(t) \in \Bbbk[t]$ the formula \eqref{e25} becomes
		\begin{equation*}
			p(t)x^nq(t)x^m=p(t) \sum_{k=0}^n \binom{n}{k} q^{(k)}(t) x^{n+m-k}, \qquad \mbox{for all }m,n\in \mathbb{N}.
		\end{equation*}
		Here, $q^{(k)}(t)$ is the $k$-th derivate of $q(t)$ with respect to $t$. The \emph{$n$-th Weyl algebra} ($n\geq 1$) is defined as the Ore algebra $A_n(\Bbbk):=\Bbbk[t_1,\ldots,t_n][x_1;\frac{\partial}{\partial t_1}]\cdots[x_n;\frac{\partial}{\partial t_n}] $. One of the main applications of Weyl algebras is the theory of $D$-modules (see e.g. \cite{Cou}).
	\end{example}
	
	\begin{example}[The mixed algebra]\label{ex40}
		For every $h\in \Bbbk$, we define the \emph{mixed algebra} (also known as the \emph{algebra of delayed differential operators} \cite{CQR}) as $D_h:=\Bbbk[t][x;\frac{d}{dt}][x_h;\sigma_h]$, where $\sigma_h$ is as in Example~\ref{ex16}. We have $D_h=A_1(\Bbbk)[x_h;\delta_h]$ and hence it is an Ore algebra.
	\end{example}
	
	\begin{example}[The algebra for multidimensional discrete linear systems]\label{ex41}
		The Ore algebra defined as $D:=\Bbbk[t_1,\ldots,t_n][x_1;\sigma_1]\cdots[x_n;\sigma_n]$, where
		\begin{equation*}
			\sigma_i(p(t_1,\ldots,t_n))=p(t_1,\ldots,t_{i-1},t_i+1,t_{i+1},\ldots,t_n), \qquad \mbox{for } 1\leq i \leq n,
		\end{equation*}
		is known as the \emph{algebra for multidimensional discrete linear systems} \cite{CQR}.
	\end{example}
	
	More properties and examples of skew polynomial rings can be found in the literature (e.g. \cite{GW,MR}).
	
	\subsection{PBW extensions}\label{s22}
	
	Although (iterated) skew polynomial rings describe a large amount of noncommutative algebras, these do not cover some remarkable examples, such as the generalized differential operator ring or the enveloping algebra of a finite dimensional Lie algebra. Hence, Bell and Goodearl defined a new family of rings that cover those having the PBW property and polynomial aspect \cite{BG}.
	
	\begin{definition}[PBW extension]
		Let $R$ and $A$ be two rings. We say that \emph{$A$ is a Poincar\'e-Birkhoff-Witt (PBW) extension of $R$} if the following conditions hold:
		\begin{enumerate}[label=(PBW\arabic*), align=parleft, leftmargin=*]
			\item\label{C1}  $A$ contains $R$ as a proper subring and $1_R=1_A$,
			\item\label{C2} (\emph{PBW property}) There exist finitely many elements $x_1,\ldots,x_n\in A$ such that $A$ is a free left $R$-module with basis
			\begin{equation*}
				\operatorname{Mon}(A):=\left\{ x_1^{\alpha_1} \cdots x_n^{\alpha_n} : \alpha:=(\alpha_1,\ldots,\alpha_n)\in\mathbb{N}^n \right\},
			\end{equation*}
			\item\label{C3} For each $r\in R$ and every $1\leq i \leq n$, $x_ir-rx_i \in R$,
			\item\label{C4} For every $1\leq i,j\leq n $, $x_ix_j-x_jx_i\in R+Rx_1+\cdots+Rx_n$.
		\end{enumerate}
		Under these conditions, we write $A=R\langle x_1,\ldots,x_n\rangle$, and $R$ will be called the \emph{ring of coefficients} of the extension $A$.
	\end{definition}
	
	The basis $\operatorname{Mon}(A)$ is usually called \emph{the set of standard monomials} (of $A$) and also denoted by $\operatorname{Mon}(x_1,\ldots,x_n)$. Inspired by the PBW Theorem (see Example~\ref{ex17}), $\operatorname{Mon}(A)$ is called a \emph{PBW basis} for $A$. In general, the elements $x_i$ and $x_j$ do not commute when $i\neq j$. If only~\ref{C1} and~\ref{C2} hold, we say that \emph{$A$ is a ring of left polynomial type over $R$} with respect to $\{x_1,\ldots,x_n\}$.
	
	Before giving some properties, we review a few examples of PBW extensions adapted from \cite{BG}.
	
	\begin{example}[Ore extensions of derivation type]\label{ex43}
		Let $R$ be a ring and let $$A:=R[x_1;\sigma_1,\delta_1]\cdots[x_n;\sigma_n,\delta_n]$$ be an iterated skew polynomial ring of $R$ satisfying \eqref{e28}-\eqref{e29} (or equivalently, \eqref{e200}-\eqref{e201}). We say that $A$ is \emph{an (iterated) Ore extension of derivation type} if $\sigma_i=\operatorname{id}_R$, for all $1\leq i \leq n$. These extensions are all PBW extension, since for every $r\in R$ and $1\leq i,j \leq n$ we have $x_ir - rx_i = \delta_i(r)$ and $
		x_ix_j-x_jx_i = 0$, proving~\ref{C3} and~\ref{C4}. Condition~\ref{C1} is trivial and~\ref{C2} is Lemma~\ref{l8}. In particular, the classical multivariate polynomial ring (see Example~\ref{ex15}) and  Weyl algebras (see Example~\ref{ex14}) are PBW extensions.
	\end{example}
	
	Nevertheless, not every (iterated) skew polynomial ring is a PBW extension. Indeed, by taking $A=R[x;\sigma,\delta]$ with $\sigma\neq\operatorname{id}_R$, condition~\ref{C3} does not hold. A particular example of this is the algebra of shift operators (see Example~\ref{ex16}). The other inclusion is also not true, as the next example shows.
	
	\begin{example}[Universal enveloping algebra of a finite dimensional Lie algebra]
		Let $\mathfrak{g}$ be a finite dimensional $\Bbbk$-Lie algebra with ordered basis $X=\{x_1,\ldots,x_n\}$. The PBW Theorem for the $U(\mathfrak{g})$ (see Example~\ref{ex17}) guarantees that~\ref{C1} and~\ref{C2} are satisfied when we take $R=\Bbbk$. With this, it is immediate that $U(\mathfrak{g})$ is a PBW extension of $\Bbbk$, since for all $k\in \Bbbk$ and $x_i,x_j\in X$, $x_ik - kx_i = 0 \in \Bbbk$, and $x_ix_j-x_jx_i = [x_i,x_j] \in \mathfrak{g} = \Bbbk x_1 + \cdots + \Bbbk x_n \subseteq \Bbbk + \Bbbk x_1 + \cdots + \Bbbk x_n$, which are precisely~\ref{C3} and~\ref{C4}. However, in general, $U(\mathfrak{g})$ is not necessarily an iterated skew polynomial ring, since in the expansion of the product $x_ix_j$, the variables $x_k$ (with $k>j$) may appear. Nonetheless, for some particular Lie algebras, the enveloping algebra can be seen both as PBW extension and as iterated skew polynomial ring (e.g. Example~\ref{ex36}).
	\end{example}
	
	We end this section by giving two additional examples of PBW extensions involving the algebra $U(\mathfrak{g})$.
	
	\begin{example}[Tensor product with the universal enveloping algebra of a finite-dimensional Lie algebra]\label{tensorproductLiealgebra}
		Let $\mathfrak{g}$ be a $\Bbbk$-Lie algebra with basis $X=\{x_i\}_i$ and let $R$ be an arbitrary $\Bbbk$-algebra. The $\Bbbk$-algebra $R\otimes U(\mathfrak{g})$ is also a left $R$-module via the multiplication by elements of $R$.
		
		If $W:=\{ x_{i_1}^{\alpha_1}\cdots x_{i_t}^{\alpha_t} : x_{i_j}\in X, \, \alpha_i \geq 0, \, t\geq 1 \}$ is the $\Bbbk$-basis for $U(\mathfrak{g})$ given by the PBW Theorem, then $1\otimes W:=\{ 1\otimes z : z \in W \}$ is an $R$-basis for $R \otimes U(\mathfrak{g})$. Indeed, if $M$ is a left $R$-module and $f:1\otimes W \rightarrow M$ is any function, then we can induce a $\Bbbk$-bilinear map $\overline{f}:R\times U(\mathfrak{g})\rightarrow M$ given by
		\begin{equation*}
			\left( r, \sum_i \lambda_iX_i \right) \mapsto r \cdot \sum_i \lambda_i f(1\otimes X_i),
		\end{equation*}
		with $X_i\in W$ and $\lambda_i\in \Bbbk$. Hence, by the universal property of tensor products, we can uniquely induce a $\Bbbk$-linear map $f': R \otimes U(\mathfrak{g})\rightarrow M$ such that the diagram
		\begin{equation*}
			\begin{tikzcd}
				R\times U(\mathfrak{g}) \arrow[d,"\overline{f}"'] \arrow[r,"\iota"] & R\otimes U(\mathfrak{g}) \arrow[dl,dashed,"f'"]\\
				M
			\end{tikzcd}
		\end{equation*}
		is commutative, where $\iota$ is the canonical map. In fact, $f'$ is a $R$-morphism since for every $s\in R$,
		\begin{align*}
			f'\left( s \cdot \left( r \otimes \sum_i \lambda_i X_i \right) \right) &= f'\left( sr\otimes  \sum_i \lambda_i X_i  \right) = \overline{f}\left( sr,  \sum_i \lambda_i X_i  \right) \\&= (sr) \cdot \sum_i \lambda_i f(1\otimes X_i) = s \cdot \left( r \cdot \sum_i \lambda_i f(1\otimes X_i) \right) \\&= s\cdot \overline{f}\left( r,  \sum_i \lambda_i X_i \right) = s\cdot f'\left( r\otimes  \sum_i \lambda_i X_i \right).
		\end{align*}
		Moreover, it is clear that the diagram
		\begin{equation*}
			\begin{tikzcd}
				1\otimes W \arrow[d,"f"'] \arrow[r,"j"] & R\otimes U(\mathfrak{g}) \arrow[dl,dashed,"f'"]\\
				M
			\end{tikzcd}
		\end{equation*}
		is commutative, where $j$ is the inclusion map. Additionally, by the uniqueness of $\overline{f}$, the map $f'$ is the only one satisfying such commutativity. Hence, since every function from $1\otimes W$ to an arbitrary left module of $R$ can be extended to a $R$-morphism from $R \otimes U(\mathfrak{g})$ to such module, $1\otimes W$ is indeed an $R$-basis.
		
		Notice that $R \hookrightarrow R \otimes U(\mathfrak{g})$ via $r\mapsto r\otimes 1 = r \cdot (1\otimes 1)$, which corresponds to~\ref{C1}. If $\mathfrak{g}$ is finite-dimensional with $X=\{x_1,\ldots,x_n\}$, then we just proved that $$1\otimes W=\{ (1\otimes x_1)^{\alpha_1}\cdots (1\otimes x_n)^{\alpha_n} : \alpha=(\alpha_1,\ldots,\alpha_n)\in \mathbb{N}^n \}=\operatorname{Mon}(1\otimes x_1,\ldots,1\otimes x_n)$$ is an $R$-basis for $R\otimes U(\mathfrak{g})$, which is~\ref{C2}. Furthermore,~\ref{C3} and~\ref{C4} hold, for if $r\in R$ and $1\leq i,j\leq n$, then
		\begin{equation*}
			(r\otimes 1)(1\otimes x_i)-(1\otimes x_i)(r\otimes 1)=r\otimes x_i - r\otimes x_i =0 \in R,
		\end{equation*}
		\begin{align*}
			(1\otimes x_i)(1\otimes x_j)-(1\otimes x_j)(1\otimes x_i)&=1\otimes x_ix_j - x_jx_i = 1\otimes[x_i,x_j] \\&\in R+ R(1\otimes x_1)+\cdots +R(1\otimes x_n).
		\end{align*}
		Thus $R\otimes U(\mathfrak{g})$ is a PBW extension of $R$.
	\end{example}
	
	Example \ref{tensorproductLiealgebra} holds if we change $U(\mathfrak{g})$ to any PBW extension with a finite number of indeterminates.
	
	\begin{example}[Crossed product with the universal enveloping algebra of a finite-dimensional Lie algebra]
		Let $\mathfrak{g}$ be a $\Bbbk$-Lie algebra with basis $X=\{x_i\}_i$ and let $R$ be an arbitrary $\Bbbk$-algebra. We say that a $\Bbbk$-algebra $S$ is a \emph{crossed product} of $R$ by $U(\mathfrak{g})$ if the following conditions hold:
		\begin{enumerate}[label=\normalfont(\roman*)]
			\item $S$ contains $R$ as a proper subalgebra,
			\item There exists an injective $\Bbbk$-algebra morphism $\mathfrak{g}\rightarrow S$, denoted by $x\mapsto \overline{x}$,
			\item $\overline{x}r-r\overline{x}\in R$ and $r\mapsto \overline{x}r-r\overline{x}$ is a $\Bbbk$-derivation of $R$, for all $r\in R$,
			\item $\overline{x}\overline{y}-\overline{y}\overline{x} \in \overline{[x,y]}+R$, for all $x,y\in \mathfrak{g}$,
			\item $S$ is a free right left $R$-module with the standard monomials over $\{\overline{x_i}\}$ as a basis.
		\end{enumerate}
		In such case, we write $S=R * U(\mathfrak{g})$. By definition if $X$ is finite (that is, $\mathfrak{g}$ is finite-dimensional), then $R*U(\mathfrak{g})$ is a PBW extension of $R$. Particular examples of crossed products with the universal enveloping algebra of a Lie algebra can be found in \cite[1.7.13]{MR}.
	\end{example}
	
	\subsection{Skew PBW extensions}\label{s23}
	
	We saw in the previous section that if a skew polynomial ring $A=R[x;\sigma,\delta]$ is such that $\sigma\neq\operatorname{id}_R$, then $A$ is not a PBW extension of $R$. In order to solve this incompatibility without losing the polynomial behavior, Gallego and Lezama introduced \emph{skew PBW extensions} as a generalization of PBW extensions \cite{GL} and Ore extensions \cite{Ore}. Since then, several authors have studied algebraic and geometrical properties of these objects \cite{Art,GL,AHK,AHK2,HKG,Lez20,GL3,LezG,LezR,LezV,Sua,TRS,Zam}. As a matter of fact, a book containing research results about these extensions has been recently published (see \cite{LezBook}).

	\begin{definition}[Skew PBW extension]\label{r20}
		Let $R$ and $A$ be two rings. We say that $A$ is a \emph{skew PBW extension of $R$} (also called $\sigma$-PBW extension) if the following conditions hold:
		\begin{enumerate}[label=(SPBW\arabic*), align=parleft, leftmargin=*]
			\item\label{SPBW1} $A$ contains $R$ as a proper subring and $1_R=1_A$,
			\item\label{SPBW2} There exist finitely many elements $x_1,\ldots,x_n\in A$ such that $A$ is a free left $R$-module with basis
			\begin{equation*}
				\operatorname{Mon}(A):=\operatorname{Mon}(x_1,\ldots,x_n)=\left\{ x_1^{\alpha_1} \cdots x_n^{\alpha_n} : \alpha:=(\alpha_1,\ldots,\alpha_n)\in\mathbb{N}^n \right\},
			\end{equation*}
			\item\label{SPBW3} For each $r\in R-\{0\}$ and every $1\leq i \leq n$, there exists $c_{i,r} \in R-\{0\}$ such that $x_ir- c_{i,r} x_i \in R$,
			\item\label{SPBW4} For every $1\leq i,j\leq n $, there exists $c_{i,j}\in R-\{0\}$ such that we have the relationship $x_ix_j- c_{i,j} x_jx_i\in R+Rx_1+\cdots+Rx_n$.
		\end{enumerate}
		Under these conditions we write $A=\sigma(R)\langle x_1,\ldots,x_n\rangle$, and $R$ is called the \emph{ring of coefficients} of the extension.
	\end{definition}
		
	\begin{remark}
		Several facts can be immediately deduced from Definition~\ref{r20}.
		\begin{enumerate}[label=\normalfont(\roman*)]
			\item By~\ref{SPBW2}, the elements $c_{i,r}$ and $c_{i,j}$ of~\ref{SPBW3} and~\ref{SPBW4} are unique.
			\item For $i=j$, in~\ref{SPBW4}, $c_{i,i}=1$. Indeed, since $x_i^2-c_{i,i}x_i^2 =0$, then $1-c_{i,i}=0$. If $r=0$, we define $c_{i,0}=0$.
			\item Every $c_{i,j}\in R$, with $1\leq i < j \leq n$, is left invertible. Indeed, $c_{i,j}$ and $c_{j,i}$ are such that
			\begin{gather*}
				x_ix_j- c_{i,j} x_jx_i\in R+Rx_1+\cdots+Rx_n,\\
				x_jx_i-c_{j,i}x_ix_j \in R+Rx_1+\cdots+Rx_n.
			\end{gather*}
			Since $\operatorname{Mon}(A)$ is an $R$-basis then $1=c_{i,j}c_{j,i}$.
			\item We denote the elements of $\operatorname{Mon}(A)$ as $x^{\alpha}$ when it is important to highlight the exponents $\alpha=(\alpha_1,\ldots,\alpha_n)\in \mathbb{N}^n$. An alternative notation for an arbitrary element of $\operatorname{Mon}(A)$ is using the capital letter $X$. By~\ref{SPBW2}, each element $f\in A-\{0\}$ has a unique representation in the form $f=c_1X_1+\ldots+c_tX_t$, with $c_{i}\in R-\{0\}$ and $X_i\in\operatorname{Mon}(A)$, for every $1\leq i \leq t$.
			\item It is clear that the verification of~\ref{SPBW2} in most cases can be cumbersome. There are several techniques for that purpose, including Lemma~\ref{l8} for skew polynomial rings, computation of Gr\"{o}bner bases of two-sided ideals for free algebras \cite{LezBook}, the Bergman's Diamond Lemma \cite{Ber} and the existence theorem for skew PBW extensions \cite{LA}.
		\end{enumerate}
	\end{remark}
	
	The following result justifies the notation for skew PBW extensions.
	
	\begin{proposition}[{\cite[Proposition 3]{GL}}]
		Let $A$ be a skew PBW extension of $R$. Then for each $1\leq i \leq n$ there exist an injective ring endomorphism $\sigma_i: R \rightarrow R$ and a $\sigma_i$-derivation $\delta_i: R\rightarrow R$ such that $x_ir=\sigma_i(r)x_i+\delta_i(r)$, for every $r\in R$.
	\end{proposition}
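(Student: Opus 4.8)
The plan is to extract $\sigma_i$ and $\delta_i$ straight from axiom~\ref{SPBW3} and then verify every required property by invoking the uniqueness of expansions in the free left $R$-basis $\operatorname{Mon}(A)$ provided by axiom~\ref{SPBW2}. For a fixed $1\leq i\leq n$ and any $r\in R$, I would set $\sigma_i(r):=c_{i,r}$ (using the convention $c_{i,0}=0$ from the Remark following Definition~\ref{r20}) and $\delta_i(r):=x_ir-c_{i,r}x_i$. Axiom~\ref{SPBW3} says exactly that $\delta_i(r)\in R$, so both are genuine maps $R\to R$, and the desired identity $x_ir=\sigma_i(r)x_i+\delta_i(r)$ holds by construction. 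From here the problem is purely one of checking algebraic structure.

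Next I would establish additivity of both maps simultaneously. Expanding $x_i(r+s)$ first via the definition applied to $r+s$, and then via $x_ir+x_is$, yields
\begin{equation*}
\sigma_i(r+s)x_i+\delta_i(r+s)=\bigl(\sigma_i(r)+\sigma_i(s)\bigr)x_i+\bigl(\delta_i(r)+\delta_i(s)\bigr).
\end{equation*}
Since $1$ (the monomial $x^{0}$) and $x_i$ are distinct elements of the basis $\operatorname{Mon}(A)$, comparing the coefficient of $x_i$ and the degree-zero term forces $\sigma_i(r+s)=\sigma_i(r)+\sigma_i(s)$ and $\delta_i(r+s)=\delta_i(r)+\delta_i(s)$. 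The multiplicative behaviour comes out the same way: expanding $x_i(rs)=(x_ir)s$ and pushing $x_i$ past $s$ once more gives
\begin{equation*}
\sigma_i(rs)x_i+\delta_i(rs)=\sigma_i(r)\sigma_i(s)x_i+\bigl(\sigma_i(r)\delta_i(s)+\delta_i(r)s\bigr),
\end{equation*}
and again uniqueness of the basis expansion yields $\sigma_i(rs)=\sigma_i(r)\sigma_i(s)$ together with the $\sigma_i$-derivation rule $\delta_i(rs)=\sigma_i(r)\delta_i(s)+\delta_i(r)s$. Finally, $x_i\cdot 1=x_i$ reads off as $\sigma_i(1)=1$ and $\delta_i(1)=0$, so $\sigma_i$ is a unital ring endomorphism and $\delta_i$ a $\sigma_i$-derivation. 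It then remains to argue injectivity: since $\sigma_i$ is additive it suffices to show its kernel is trivial, but axiom~\ref{SPBW3} requires $c_{i,r}\in R-\{0\}$ whenever $r\neq 0$, so $\sigma_i(r)=c_{i,r}\neq 0$ for every nonzero $r$, whence $\sigma_i(r)=0$ only for $r=0$.

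There is no genuinely hard step here; the whole argument is a bookkeeping exercise. The one point that deserves care—and the only place where the axioms do real work—is the repeated appeal to uniqueness of coefficients in $\operatorname{Mon}(A)$: each identity for $\sigma_i$ and $\delta_i$ is obtained by equating the $x_i$-coefficient and the constant term of two expansions, which is legitimate precisely because $\operatorname{Mon}(A)$ is a free $R$-basis by axiom~\ref{SPBW2}. The nonvanishing clause in axiom~\ref{SPBW3} is exactly what upgrades $\sigma_i$ from a mere endomorphism to an \emph{injective} one, so I would be careful to separate that clause from the others when writing up the verification.
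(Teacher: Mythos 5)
Your proof is correct and follows essentially the same route as the paper's own argument: both define $\sigma_i(r):=c_{i,r}$ and $\delta_i(r):=x_ir-c_{i,r}x_i$ directly from (SPBW3), appeal to the uniqueness of expansions over the free left $R$-basis $\operatorname{Mon}(A)$ guaranteed by (SPBW2) to see these maps are well defined and to compare coefficients, and extract injectivity of $\sigma_i$ from the clause that $c_{i,r}\neq 0$ whenever $r\neq 0$. The only difference is one of detail: the paper compresses the endomorphism and $\sigma_i$-derivation verifications into ``it is easy to check,'' whereas you carry out the coefficient comparisons explicitly.
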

	
	\begin{proof}
		By~\ref{SPBW3}, for each $1\leq i \leq n$ and every $r\in R$, there exist elements $c_{i,r},r_i \in R$ such that $x_ir=c_{i,r}x_i+r_i$. Since $\operatorname{Mon}(A)$ is a $R$-basis of $A$, these elements are unique for $r$, so we can define the maps $\sigma_i,\delta_i:R\rightarrow R$ by $\sigma_i(r):=c_{i,r}$ and $\delta_i(r):=r_i$. Moreover, it is clear that if $r\neq 0$, then $c_{i,r}\neq 0$, which means that $\sigma_i$ is injective. It is easy to check that $\sigma_i$ is an endomorphism and that $\delta_i$ is a $\sigma_i$-derivation.
	\end{proof}
	
	We mention some particular cases.
	
	\begin{definition}[Quasi-commutative skew PBW extension, bijective skew PBW extension] Let $A$ be a skew PBW extension of $R$.
		\begin{enumerate}[label=\normalfont(\roman*)]
			\item A is said to be \emph{quasi-commutative} if~\ref{SPBW3} and~\ref{SPBW4} are	replaced by
			\begin{enumerate}[label=(SPBW'\arabic*), align=parleft, leftmargin=*]
				\item[(SPBW3')]\label{SPBW5} For every $1\leq i \leq n$ and $r\in R-\{0\}$, there exists $c_{i,r} \in R-\{0\}$ such that $x_i r = c_{i,r}x_i$.
				\item[(SPBW4')]\label{SPBW6} For every $1\leq i,j\leq n$, there exists $c_{i,j}\in R-\{0\}$ such that $x_jx_i=c_{i,j}x_ix_j$.
			\end{enumerate}
			\item $A$ is said to be \emph{bijective} if $\sigma_i$ is bijective, for every $1\leq i \leq n$, and each $c_{i,j}$ is invertible, for any $1\leq i,j\leq n$.
		\end{enumerate}
	\end{definition}

\begin{remark}\label{comparison}
	If $A$ is a quasi-commutative skew PBW extension of $R$, then $A$ is isomorphic to an iterated skew polynomial ring of endomorphism type \cite[Theorem 2.3]{LezR}. Nevertheless, skew PBW extensions of endomorphism type (i.e., with all derivations $\delta_i$ zero) are indeed more general than iterated skew polynomial rings of the same type. To clearly illustrate this, we consider the situations of only two and three indeterminates.
	
	For a skew polynomial ring $R[x;\sigma_x][y;\sigma_y]$ of endomorphism type we have the relations
	\begin{equation*}
		xr = \sigma_x(r)x, \quad yr = \sigma_y(r)y, \quad yx = \sigma_y(x)y,
	\end{equation*}
	for any $r\in R$. On the other hand, for a skew PBW extension $\sigma(R)\langle x, y\rangle$ of endomorphism type we deduce from Definition~\ref{r20} the equations
	\begin{equation*}
		xr=\sigma_1(r)x, \quad yr=\sigma_2(r)y, \quad yx = d_{1,2}xy + r_0 + r_1x + r_2y,
	\end{equation*}
	for some elements $d_{1,2}, r_0, r_1,r_2 \in R$. When we compare the defining relations of both algebraic structures, it is clear that the former is more general.
	
	Similarly, for an iterated skew polynomial ring $R[x;\sigma_x][y;\sigma_y][z;\sigma_z]$ of endomorphism type we have
	\begin{gather*}
		xr = \sigma_x(r)x, \quad yr = \sigma_y(r)y, \quad zr = \sigma_z(r)z,\\
		yx = \sigma_y(x)y, \quad zx = \sigma_z(x)z, \quad zy = \sigma_z(y)z
	\end{gather*}
	for any $r\in R$. On the other hand, for a skew PBW extension $\sigma(R)\langle x, y, z\rangle$ of endomorphism type we deduce
	\begin{gather*}
		xr=\sigma_1(r)x, \quad yr=\sigma_2(r)y, \quad zr = \sigma_3(r)z,\\
		yx = d_{1,2}xy + r_0 + r_1x + r_2y + r_3z, \quad zx = d_{1,3}xz + r_0' + r_1'x + r_2'y + r_3'z\\
		zy = d_{2,3}yz + r_0'' + r_1''x + r_2''y + r_3''z,
	\end{gather*}
	for some elements $d_{1,2}, d_{1,3}, d_{2,3}, r_0, r_0', r_0'', r_1, r_1', r_1'', r_2, r_2', r_2'', r_3, r_3', r_3'' \in R$. As we can see, as the number of indeterminates increases the generality of skew PBW extensions of endomorphism type becomes more notorious.
\end{remark}
	
	We mention two remarkable properties of skew PBW extensions that are similar to those of classical polynomial rings and skew polynomial rings.
	
	By~\ref{SPBW4}, for every $1\leq i,j\leq n $, we know that there exist a unique finite set of constants $c_{i,j},d_{i,j},a_{ij}^k\in R-\{0\}$ such that $x_ix_j=c_{i,j} x_jx_i + a^{(1)}_{ij} x_1 +\cdots + a^{(n)}_{ij} x_n + d_{ij}$. Such constants, together with the coefficient ring $R$, the number of variables $n$, the injective endomorphism $\sigma_k$ and the $\sigma_k$-derivations $\delta_k$ are known as the \emph{parameters} of the extension.
	
	\begin{theorem}[Universal property of skew PBW extensions, {\cite[Theorem 3.1]{LA}}]
		Let $A=\sigma(R)\langle x_1,\ldots,x_n \rangle $ be a skew PBW extension of $R$ with corresponding parameters $R$, $n$, $\sigma_k$, $\delta_k$, $c_{ij}$, $d_{ij}$, $a_{ij}^{(k)}$, for $1\leq i,j\leq n$ and $1\leq k \leq n$. Let $B$ a ring together with a ring morphism $\phi: R\rightarrow B$ and elements $y_1,\ldots,y_n$ such that:
		\begin{enumerate}[label=\normalfont(\roman*)]
			\item $y_k \phi(r)=\phi(\sigma_k(r))y_k+\phi(\delta_k(r))$, for every $r\in R$ and $1\leq k \leq n$,
			\item $y_jy_i=\phi(c_{ij})y_iy_j + \phi(a_{ij}^{(1)})y_1+\cdots + \phi(a_{ij}^{n})y_n + \phi(d_{ij})$, for every $1\leq i,j\leq n$.
		\end{enumerate}
		Then, there exists a unique ring morphism $\psi: A\rightarrow B$ such that $\psi(x_i)=y_i$, for $1\leq i \leq n$, and the following diagram
		\begin{equation}
			\begin{tikzcd}
				R \arrow[d,"\phi"'] \arrow[r,"\iota"] & \sigma(R)\langle x_1,\ldots,x_n \rangle \arrow[dl,dashed,"\psi"]\\
				B
			\end{tikzcd}
		\end{equation}
		is commutative, where $\iota$ is the inclusion map.
	\end{theorem}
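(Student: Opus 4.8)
The plan is to mimic the proof of the single-variable universal property (Theorem~\ref{t1}), exploiting the free left $R$-module structure of $A$ guaranteed by~\ref{SPBW2}. First I would make $B$ into a left $R$-module by $r\cdot b:=\phi(r)b$ and, since $\operatorname{Mon}(A)=\{x^{\alpha}\}$ is a free $R$-basis of $A$, define $\psi$ as the unique $R$-linear map determined on this basis by $\psi(x^{\alpha}):=y^{\alpha}:=y_1^{\alpha_1}\cdots y_n^{\alpha_n}$; explicitly $\psi\big(\sum_{\alpha} r_{\alpha}x^{\alpha}\big)=\sum_{\alpha}\phi(r_{\alpha})y^{\alpha}$. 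With this definition the commutativity $\psi\iota=\phi$, the normalization $\psi(1_A)=1_B$, and the equalities $\psi(x_i)=y_i$ are immediate, and uniqueness is automatic because $A$ is generated as a ring by $R\cup\{x_1,\dots,x_n\}$, so any ring morphism restricting to $\phi$ on $R$ and sending $x_i\mapsto y_i$ must coincide with $\psi$.

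The substance of the proof is to show that $\psi$ is multiplicative, which I would obtain from two facts. The first is a \emph{coefficient-pushing} claim: $\psi(fr)=\psi(f)\phi(r)$ for all $f\in A$ and $r\in R$. By additivity this reduces to $f=x^{\alpha}$ and is proved by induction on the length $|\alpha|=\alpha_1+\cdots+\alpha_n$, writing $x^{\alpha}=x_i x^{\alpha'}$ for the least index $i$ with $\alpha_i>0$: since rewriting $x^{\alpha'}r$ into normal form (via $x_k r=\sigma_k(r)x_k+\delta_k(r)$) only involves the variables already present, the monomials produced never acquire an index $<i$, so prepending $x_i$ keeps them standard, and hypothesis (i) --- which is exactly $\phi$ applied to the relation of~\ref{SPBW3} --- lets me collapse $\phi(\sigma_i(t))y_i+\phi(\delta_i(t))=y_i\phi(t)$ at each step. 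The second fact is the \emph{right-multiplication} identity $\psi(g x_i)=\psi(g)y_i$; reducing to $g=x^{\alpha}$ by additivity, I would compute $x^{\alpha}x_i$ by moving the trailing $x_i$ leftward into its sorted position, each transposition using $x_k x_i=c_{k,i}x_ix_k+(\mbox{terms in }R+\sum_l Rx_l)$ from~\ref{SPBW4}. Hypothesis (ii) is precisely the $\phi$-image of this relation, so performing the very same sequence of substitutions on $y^{\alpha}y_i$ in $B$ transforms it into $\sum_{\gamma}\phi(v_{\gamma})y^{\gamma}=\psi(x^{\alpha}x_i)$.

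Granting these two facts, full multiplicativity follows formally: iterating the right-multiplication identity gives $\psi(g x^{\beta})=\psi(g)y^{\beta}$ for every standard monomial $x^{\beta}$, and then for $g=\sum_{\beta}s_{\beta}x^{\beta}$ one writes $\psi(fg)=\sum_{\beta}\psi(f s_{\beta})\,y^{\beta}=\sum_{\beta}\psi(f)\phi(s_{\beta})y^{\beta}=\psi(f)\psi(g)$, using coefficient-pushing in the middle step. Finally, if $R$ and $B$ are $K$-algebras and $\phi$ is a $K$-algebra map, then $\psi$ is $K$-linear since $\psi(kr_{\alpha}x^{\alpha})=\phi(kr_{\alpha})y^{\alpha}=k\psi(r_{\alpha}x^{\alpha})$, so $\psi$ is an algebra morphism.

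I expect the main obstacle to be the bookkeeping in the right-multiplication identity: unlike in $A$, the monomials $\{y^{\alpha}\}$ need not be linearly independent in $B$, so I cannot appeal to uniqueness of normal forms there. The argument must instead transport the \emph{specific} terminating rewriting used in $A$ (whose confluence is exactly the content of the basis property~\ref{SPBW2}) step by step into $B$, checking that each reordering via~\ref{SPBW4} and each coefficient move via~\ref{SPBW3} is a valid identity among the $y$'s by virtue of hypotheses (ii) and (i). Keeping the interleaving of these two kinds of steps consistent --- and verifying that moving $x_i$ past a power $x_k^{\alpha_k}$ produces only standard monomials of strictly smaller total degree, so that the nested induction is well founded --- is where the care lies.
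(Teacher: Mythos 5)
First, a framing remark: the paper does not prove this theorem at all --- it is quoted verbatim from \cite{LA} --- so your proposal can only be compared with the cited source and with the paper's own proof of the one-variable case (Theorem~\ref{t1}), which you explicitly set out to generalize. Measured that way, your strategy is sound and is genuinely different from the route of \cite{LA}: there the universal property accompanies an existence theorem that presents $A$ as $F/I$, where $F$ is the free ring on $R$ and $X_1,\dots,X_n$ (the coproduct $R*_{\mathbb{Z}}\mathbb{Z}\langle X_1,\dots,X_n\rangle$) and $I$ is the ideal generated by the defining relations; hypotheses (i)--(ii) say exactly that the evaluation $F\rightarrow B$, $r\mapsto \phi(r)$, $X_i\mapsto y_i$, kills $I$, so it factors through $A$. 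You instead define $\psi$ $R$-linearly on the PBW basis and pay for that directness with the multiplicativity induction. Your Fact 1 is solid as sketched: the observation that straightening $x^{\alpha'}r$ via~\ref{SPBW3} never introduces new variables (each step keeps a letter or deletes it), so that prepending $x_i$ preserves standardness, is exactly the right point, and hypothesis (i) then collapses everything to $y_i y^{\alpha'}\phi(r)$.

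Two places need tightening, though neither is fatal. First, your well-foundedness claim for Fact 2 is false as literally stated: moving $x_i$ past $x_k$ via~\ref{SPBW4} produces the principal term $c_{ki}x_ix_k$ of the \emph{same} total degree (only the contributions from $R+Rx_1+\cdots+Rx_n$ drop degree), so an induction on total degree alone does not terminate on the principal chain. You need a lexicographic measure --- total degree, then inversion count, then the positions of the coefficients $c_{ki}$, $a^{(l)}_{ki}$ that get deposited inside words and must be pushed left by~\ref{SPBW3}-steps --- with the outer degree induction absorbing the tails and the inner induction driving the principal term. Second, ``performing the very same sequence of substitutions in $B$'' is not an operation on elements of $A$: an element of $A$ has many representations, and substitution sequences act on formal expressions, not on elements. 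To make the transport precise you must either run the whole nested induction as identities among elements (possible with the measure just described, using the $R$-linearity of $\psi$ and Fact 1 to handle interior coefficients), or pass to the free ring $F$ with its two evaluations into $A$ and $B$: every rewriting step is an identity modulo $I$, both evaluations kill $I$, the standard words span $F/I$ over $R$ by termination, and~\ref{SPBW2} then forces $F/I\cong A$. Note that this second repair quietly reconstructs the presentation-theoretic proof of \cite{LA}; so your outline is correct and completable, but its completion either requires the finer induction bookkeeping or collapses into the cited argument.
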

	
	\begin{theorem}[Hilbert's Basis Theorem for skew PBW extensions, {\cite[Corollary 2.4]{LezR}}]
		 Let $A=\sigma(R)\langle x_1,\ldots,x_n \rangle$ be a bijective skew PBW extension of $R$. If $R$ is a left (resp. right) Noetherian ring then $A$ is also a left (resp. right) Noetherian ring.
	\end{theorem}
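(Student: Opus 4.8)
The plan is to pass to the associated graded ring of a natural filtration on $A$, reduce to the quasi-commutative situation covered by Remark~\ref{comparison}, and then invoke the Hilbert Basis Theorem for single Ore extensions (Theorem~\ref{t2}) one variable at a time. Throughout I would treat the left Noetherian case; the right case is entirely symmetric, using right-module bases and right ideals.

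First I would equip $A$ with the filtration by total degree. For $X=x_1^{\alpha_1}\cdots x_n^{\alpha_n}\in\operatorname{Mon}(A)$ set $|X|=\alpha_1+\cdots+\alpha_n$, and define $F_p:=\{\,f\in A : f=\sum_i c_iX_i \text{ with } |X_i|\le p\,\}$ for $p\ge 0$. Using~\ref{SPBW3} and~\ref{SPBW4} one checks that $F_pF_q\subseteq F_{p+q}$, that $1\in F_0=R$, and that $\bigcup_p F_p=A$, so $\{F_p\}$ is an exhaustive ring filtration. I would then form the associated graded ring $\operatorname{gr}(A)=\bigoplus_{p\ge0}F_p/F_{p-1}$ (with $F_{-1}=0$).

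The heart of the argument is identifying $\operatorname{gr}(A)$. In the quotients $F_p/F_{p-1}$ the lower-order correction terms of the defining relations are killed: the relation $x_ir=\sigma_i(r)x_i+\delta_i(r)$ of~\ref{SPBW3} degenerates to $\overline{x_i}\,r=\sigma_i(r)\,\overline{x_i}$, while the relation $x_ix_j=c_{i,j}x_jx_i+(\text{terms in }R+Rx_1+\cdots+Rx_n)$ of~\ref{SPBW4} degenerates to $\overline{x_i}\,\overline{x_j}=c_{i,j}\,\overline{x_j}\,\overline{x_i}$. Hence $\operatorname{gr}(A)$ is a \emph{quasi-commutative} skew PBW extension of $R$, and since $A$ is bijective the inherited endomorphisms $\sigma_i$ are bijective and the $c_{i,j}$ invertible, so $\operatorname{gr}(A)$ is bijective as well. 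By Remark~\ref{comparison} (Theorem~2.3 of~\cite{LezR}), $\operatorname{gr}(A)$ is isomorphic to an iterated skew polynomial ring $R[z_1;\sigma_1]\cdots[z_n;\sigma_n]$ of endomorphism type in which each endomorphism is bijective; applying Theorem~\ref{t2} successively at each of the $n$ stages shows that $\operatorname{gr}(A)$ is left Noetherian.

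Finally I would lift the conclusion from $\operatorname{gr}(A)$ to $A$ by the standard filtered-to-graded principle: if a ring carries an exhaustive filtration whose associated graded ring is left Noetherian, then the ring itself is left Noetherian (e.g.~\cite[Theorem~1.6.9]{MR}), the lift being obtained by choosing lifts of a finite generating set of the graded ideal of principal symbols. This yields that $A$ is left Noetherian. I expect the main obstacle to be the careful verification in the previous step that $\operatorname{gr}(A)$ is genuinely a quasi-commutative \emph{bijective} skew PBW extension---that is, confirming both that the symbols $\overline{x_i}$ still satisfy~\ref{SPBW2} for $\operatorname{gr}(A)$ (the standard monomials remain an $R$-basis, with no unexpected collapse of degree) and that bijectivity of the $\sigma_i$ survives the passage to the associated graded ring and then to the iterated Ore presentation.
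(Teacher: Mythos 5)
Your proposal is correct and takes essentially the same route the paper indicates for this theorem: filter $A$ by total degree, identify $\operatorname{gr}(A)$ as a quasi-commutative bijective skew PBW extension of $R$, use Remark~\ref{comparison} to present it as an iterated skew polynomial ring of endomorphism type with bijective endomorphisms, apply Theorem~\ref{t2} at each stage, and then lift left (resp.\ right) Noetherianity from $\operatorname{gr}(A)$ to $A$ by the standard filtered-graded transfer result. This is precisely the graduation-filtration argument of \cite[Theorem 2.3 and Corollary 2.4]{LezR} that the paper cites and sketches after the statement.
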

	
	The proof of this last result uses techniques of graduation-filtration, since the graded associated ring of $A$ is an iterated skew polynomial ring of endomorphism type (see e.g. \cite[Section 3.1]{LezBook}).
	
	We end this section by mentioning some examples of skew PBW extensions, which were adapted from \cite{LezBook,GL,LezR}.
	
	\begin{example}[PBW extensions]
		Any PBW extension is a bijective skew PBW extension since in that case $\sigma_i=\operatorname{id}_R$ ($1\leq i \leq n$) and $c_{i,j}=1$ ($1\leq i,j\leq n$).
	\end{example}
	
	\begin{example}[Skew polynomial rings of injective type]\label{ex42}
		Any skew polynomial ring $A=R[x; \sigma, \delta]$ with $\sigma$ injective is a skew PBW extension, $R[x; \sigma, \delta] = \sigma(R)\langle x \rangle$. If additionally $\delta=0$, then $R[x; \sigma]$ is quasi-commutative.
		
		Moreover, an iterated skew polynomial ring $A=R[x_1; \sigma_1, \delta_1] \cdots [x_n; \sigma_n, \delta_n]$ is a skew PBW extension of $R$ if the following conditions hold:
		\begin{enumerate}[label=\normalfont(\roman*)]
			\item $\sigma_i$ is injective, for $1\leq i \leq n$.
			\item $\sigma_i(R),\delta_i(R) \subseteq R$, for $1\leq i \leq n$.
			\item There exist $c_i, d_i\in R$ such that $c_i$ is left invertible and $\sigma_j(x_i) = c_ix_i+d_i$, for $i<j$.
			\item[(iv)] $\delta_j(x_i) \in R+Rx_1+\cdots+Rx_n$, for $i<j$.
		\end{enumerate}
		Under these conditions, we have $A=R[x_1; \sigma_1, \delta_1] \cdots [x_n; \sigma_n, \delta_n]=\sigma(R)\langle x_1,\ldots , x_n \rangle$ and $A$ is called \emph{of injective type}.
		
		A particular case of such situation is given by iterated skew polynomial rings satisfying \eqref{e28}-\eqref{e29} with each $\sigma_i$ being injective. If specifically $R=\Bbbk[t_1,\ldots,t_n]$, then we have an Ore algebra (cf. Example~\ref{ex38}), and
		\begin{equation*}
			\Bbbk[t_1,\ldots,t_n][x_1;\sigma_n,\delta]\cdots[x_n;\sigma_n,\delta_n]=\sigma(\Bbbk[t_1,\ldots,t_n])\langle x_1,\ldots,x_n \rangle.
		\end{equation*}
		Hence, concrete examples are the algebra of shift operators $S_h$ (Example~\ref{ex16}), the Weyl algebras $A_n(\Bbbk)$ (Example~\ref{ex14}), the mixed algebra $D_h$ ( Example~\ref{ex40}) and the algebra for multidimensional discrete linear systems $D$ (Example~\ref{ex41}). Observe that all of these examples are not PBW extensions.
	\end{example}
	
	\begin{example}[Additive analogue of the Weyl algebra]
		Given elements $q_1,\ldots,q_n\in \Bbbk-\{0\}$, let $A_n(q_1,\ldots,q_n)$ be the algebra generated by $x_1,\ldots,x_n,y_1,\ldots,y_n$ together with the relations
		\begin{gather*}
			x_jx_i=x_ix_j, \qquad y_jy_i=y_iy_j, \qquad \mbox{for } 1\leq i,j\leq n,\\
			y_ix_j=x_jy_i, \qquad \mbox{for } i\neq j,\\
			y_ix_i=q_ix_iy_i+1, \qquad \mbox{for } 1\leq i \leq n.
		\end{gather*}
		$A_n(q_1,\ldots,q_n)$ is known as the \emph{additive analogue of the Weyl algebra} \cite{Kury} and it is isomorphic to $\Bbbk[x_1,\ldots,x_n][y_1;\sigma_1,\delta_1] \cdots [y_n;\sigma_n,\delta_n]$ over $\Bbbk[x_1,\ldots,x_n]$, where
		\begin{gather*}
			\sigma_j(y_i)=y_i, \qquad \delta_j(y_i)=0, \qquad \mbox{for } 1\leq i <j \leq n,\\
			\sigma_i(x_j)=x_j,\qquad \delta_i(x_j)=0, \qquad \mbox{for } i\neq j,\\
			\sigma_i(x_i)=q_ix_i,\qquad \delta_i(x_i)=1, \qquad \mbox{for } 1\leq i \leq n.
		\end{gather*}
		Since $A_n(q_1,\ldots,q_n)$  is an iterated Ore extension of injective type, it is also a skew PBW extension of $\Bbbk[x_1,\ldots,x_n]$. Moreover, it is bijective and
		\begin{equation*}
			A_n(q_1,\ldots,q_n)=\sigma(\Bbbk[x_1,\ldots,x_n])\langle y_1,\ldots,y_n \rangle.
		\end{equation*}
		Nonetheless, notice that $A_n(q_1,\ldots,q_n)$ can also be viewed as a skew PBW extension of $\Bbbk$, by putting $A_n(q_1,\ldots,q_n)=\sigma(\Bbbk)\langle x_1,\ldots,x_n,y_1,\ldots,y_n \rangle$. If $q_i = q \neq 0$, for all $1\leq i \leq n$, then $A_n(q_1,\ldots,q_n)$ becomes the \emph{algebra of $q$-differential operators} \cite{JBS}.
	\end{example}
	
	\begin{example}[Multiplicative analogue of the Weyl algebra]
		Given $\lambda_{ij}\in \Bbbk-\{0\}$, with $1\leq i<j\leq n$, let $\mathcal{O}_n(\lambda_{ij})$ be the algebra generated by $x_1,\ldots,x_n$ and subject to the relations
		\begin{equation*}
			x_jx_i=\lambda_{ij}x_ix_j, \qquad \mbox{for } 1\leq i < j \leq n.
		\end{equation*}
		 The algebra $\mathcal{O}_n(\lambda_{ij})$ is known as the \emph{multiplicative analogue of the Weyl algebra} \cite{Jate} and it is isomorphic to the iterated skew polynomial ring $\Bbbk[x_1][x_2;\sigma_2] \cdots [x_n;\sigma_n]$ over $\Bbbk[x_1]$, where $\sigma_j(x_i)=\lambda_{ij}x_i$, for $1\leq i < j \leq n$. Since $\mathcal{O}_n(\lambda_{ij})$ satisfies conditions (i)-(iv) of Example~\ref{ex42}, it is also a skew PBW extension of $K[x_1]$ and hence $\mathcal{O}_n(\lambda_{ij})=\sigma(K[x_1])\langle x_2,\ldots,x_n \rangle$. Notice that $\mathcal{O}_n(\lambda_{ij})$ is quasi-commutative and bijective, and can also be viewed as a skew PBW extension of $\Bbbk$ by putting $\mathcal{O}_n(\lambda_{ij})=\sigma(\Bbbk)\langle x_1,\ldots,x_n \rangle$. $\mathcal{O}_n(\lambda_{ij})$ is also called the \emph{homogeneous solvable polynomial algebra}. If $n = 2$, then $\mathcal{O}_2(\lambda_{12})$ is the \emph{quantum plane} (e.g. \cite{Manin}). If all $\lambda_{ij} = q^{-2} \neq0$, for some $q \in \Bbbk - \{0\}$, then $\mathcal{O}_n(\lambda_{ij})$ becomes the well-known \emph{coordinate ring of the the quantum affine $n$-space} \cite{Smi}. 
	\end{example}
	
	\begin{example}[$q$-Heisenberg algebra]
		Given $q\in\Bbbk-\{0\}$, let $h_n(q)$ be the algebra generated by $x_1,\ldots,x_n,y_1,\ldots,y_n,z_1,\ldots,z_n$ together with the relations
		\begin{gather*}
			x_j x_i = x_ix_j , \qquad z_j z_i = z_iz_j , \qquad y_j y_i = y_iy_j ,\qquad \mbox{for } 1 \leq i, j \leq n,\\
			z_j y_i = y_iz_j ,\qquad z_j x_i = x_iz_j , \qquad y_j x_i = x_iy_j , \qquad \mbox{for } i\neq j,\\
			z_iy_i = qy_iz_i, \qquad z_ix_i = q^{-1} x_iz_i + y_i, \qquad y_ix_i = qx_iy_i, \qquad \mbox{for } 1 \leq i \leq n.
		\end{gather*}
		$h_n(q)$ is known as the \emph{$q$-Heisenberg algebra} \cite{Berg} and it is isomorphic to the iterated skew polynomial ring $\Bbbk[x_1,\ldots,x_n][y_1;\sigma_1]\cdots [y_n;\sigma_n][z_1;\theta_1,\delta_1] \cdots [z_n;\theta_n,\delta_n]$ over $\Bbbk[x_1,\ldots,x_n]$, where
		\begin{gather*}
			\theta_j(z_i) = z_i, \quad  \delta_j (z_i) = 0, \quad \sigma_j (y_i) = y_i, \quad \mbox{for } 1 \leq i < j \leq n,\\
			\theta_j(y_i) = y_i, \quad \delta_j (y_i) = 0, \quad \theta_j (x_i) = x_i,\quad \delta_j (x_i) = 0, \quad \sigma_j (x_i) = x_i, \quad \mbox{for } i\neq j,\\
			\theta_i(y_i) = qy_i, \quad  \delta_i(y_i) = 0,  \quad \theta_i(x_i) = q^{-1} x_i,  \quad \delta_i(x_i) = y_i,  \quad \sigma_i(x_i) = qx_i, \quad  1 \leq i \leq n.
		\end{gather*}
		Since $\delta_i(x_i)=y_i\notin \Bbbk[x_1,\ldots,x_n]$, considering the extension over $\Bbbk[x_1,\ldots,x_n]$, $h_n(q)$ does not satisfy the condition (iii) of Example~\ref{ex42}. However, if the base ring is $\Bbbk$, it does satisfy conditions (i)-(iv) and hence $h_n(q)$ is a bijective skew PBW extension of $\Bbbk$, that is, $h_n(q)=\sigma(\Bbbk)\langle x_1,\ldots,x_n,y_1,\ldots,y_n,z_1,\ldots,z_n \rangle$. This algebra has its roots in the study of $q$-calculus \cite{Wall}.
	\end{example}

	\begin{example}[Dispin algebra]\label{ex44}
	Let $U(\mathfrak{osp}(1,2))$ be the algebra generated by $x,y,z$ together with the relations
	\begin{equation*}
		yz - zy = z, \quad  zx + xz = y \quad \mbox{and} \quad xy - yx = x.
	\end{equation*}
	Then $U(\mathfrak{osp}(1,2)) = \sigma(\Bbbk)\langle x, y, z \rangle$. As was pointed to us by David A. Jordan, this algebra may also be seen as the skew polynomial ring $U(\mathfrak{osp}(1,2)) \cong \Bbbk[y][z;\sigma][x;\alpha,\delta]$ over $\Bbbk[y][z;\sigma]$, where $\sigma(y)=y-1$, $\alpha(z)=-z$. $\alpha(y)=y+1$ (it restricts to $\sigma^{-1}$ on $\Bbbk[y]$), $\delta(y)=0$, and $\delta(z)=y$ (see \cite[Example 1.2.(ii)]{Jor1} and \cite[Example 1.3]{Jor2}). As an algebra, $U(\mathfrak{osp}(1,2))$ corresponds to the universal enveloping algebra of the Lie superalgebra $\mathfrak{osp}(1,2)$ (e.g. \cite[C4.1]{Ros}).
\end{example}
	
	We already discussed in Remark~\ref{comparison} the relation of skew PBW extensions with iterated skew polynomial rings when no derivations are considered. However, from these examples one could think that in the general case skew PBW extensions coincide with (iterated) skew polynomial rings of injective type. However, that is also not the case as the following examples show.
	
	\begin{example}[Quantum algebra $U'_q(\mathfrak{so}_3)$]
		Given $q\in\Bbbk-\{0\}$, let $U'_q(\mathfrak{so}_3)$ be the algebra generated by $I_1,I_2,I_3$ subject to the relations
		\begin{equation*}
			I_2I_1 - qI_1I_2 = -q^{1/2} I_3, \quad I_3I_1 - q^{-1} I_1I_3 = q^{-1/2}I_2 \quad \mbox{and} \quad I_3I_2 - qI_2I_3 = -q^{1/2}I_1.
		\end{equation*}
		 This algebra is a skew PBW extension of $\Bbbk$, $U'_q(\mathfrak{so}_3) = \sigma(\Bbbk)\langle I_1,I_2,I_3 \rangle$. Moreover, from the relations it is clear that it cannot be expressed as a skew polynomial ring over $\Bbbk$, since the commutation rule of two variables involves the third. This algebra was introduced by Gavrilik and Klimyk \cite{GaK} and it is a nonstandard $q$-deformation of the universal enveloping algebra $U(\mathfrak{so}_3)$ of the Lie algebra $\mathfrak{so}_3$ \cite{HKP}.
	\end{example}
	
	\begin{example}[Hayashi algebra]
		Given $q \in \Bbbk-\{0\}$, let $W_q(J)$ be the algebra generated by $x_1,\ldots,x_n,y_1,\ldots,y_n,z_1,\ldots,z_n$ together with the relations
		\begin{gather*}
			x_j x_i = x_ix_j , \qquad z_j z_i = z_iz_j , \qquad y_j y_i = y_iy_j ,\qquad \mbox{for } 1 \leq i, j \leq n,\\
			z_j y_i = y_iz_j ,\qquad z_j x_i = x_iz_j , \qquad y_j x_i = x_iy_j , \qquad \mbox{for } i\neq j,\\
			z_iy_i = qy_iz_i, \qquad y_ix_i = qx_iy_i, \qquad \mbox{for } 1 \leq i \leq n,\\
			(z_ix_i - qx_iz_i)y_i = 1 = y_i(z_ix_i - qx_iz_i), \qquad \mbox{for } 1\leq i \leq n.
		\end{gather*}
		$W_q(J)$ is known as the \emph{Hayashi algebra} \cite{Hay}. Notice that $W_q(J)$ is a skew PBW extension of the multivariate Laurent polynomial ring $\Bbbk[y_1^{\pm 1},\ldots,y_n^{\pm 1}]$, since
		\begin{gather*}
			x_iy_j^{-1} = y_j^{-1}x_i, \qquad z_iy_j^{-1} = y_j^{-1}z_i, \qquad y_jy_j^{-1}= y_j^{-1}y_j = 1,\qquad \mbox{for } 1\leq i,j\leq n,\\
			z_ix_i = qx_iz_i + y_i^{-1}, \qquad \mbox{for } 1\leq i\leq n.
		\end{gather*}
		One can check that $W_q(J)=\sigma( \Bbbk[y_1^{\pm 1},\ldots,y_n^{\pm 1}] )\langle x_1, \ldots, x_n, z_1, \dotsc, z_n \rangle$.
	\end{example}

	\begin{example}[Diffusion algebras]
	A \emph{diffusion algebra} with \emph{parameters} $a_{ij}\in \mathbb{C} - \{0\}$ ($1\le i, j\le n$) is a $\mathbb{C}$-algebra $A$ generated by variables $x_1,\ldots,x_n$, subject to relations
	\begin{equation*}
		a_{ij}x_ix_j-b_{ij}x_jx_i=r_jx_i-r_ix_j, \quad i<j, b_{ij}, r_i\in \mathbb{C},
	\end{equation*}
	and such that the indeterminates $x$'s form a $\mathbb{C}$-basis of the algebra $A$. Diffusion algebras arise in physics as a way of understand a large class of $1$-dimensional stochastic process \cite{IPR}. In such applications, the parameters $a_{ij}$ are strictly positive reals and the parameters $b_{ij}$ are positive reals as they represent unnormalised measures of probability. These algebras are not skew polynomial rings over $\mathbb{C}[x_1,\dotsc, x_n]$ but are skew PBW extensions of it \cite{ReyS2}.	
\end{example}
	
	In the literature it has been shown that skew PBW extensions also generalize several families of noncommutative rings such as the \emph{almost normalizing extensions} defined by McConnell and Robson \cite{MR}, \emph{ambiskew polynomial rings} introduced by Jordan \cite{Jor1, Jor2}, \emph{solvable polynomial rings} introduced by Kandri-Rody and Weispfenning \cite{KRWeisp1990}, and others. As we saw in Definition~\ref{r20}, the advantage of skew PBW extensions is that they do not require the coefficients to commute with the variables and, moreover, those coefficients need not come from a field. In fact, skew PBW extensions contain well-known classes of algebras such as some types of Auslander-Gorenstein rings, several Calabi-Yau and skew Calabi-Yau algebras, certain Artin-Schelter regular algebras, some Koszul algebras, quantum polynomials, some quantum universal enveloping algebras, several examples of $G$-algebras, and various skew graded Clifford algebras. Several connections between skew PBW extensions and other algebras with PBW bases (such as PBW rings \cite{Buesoetal}) can be found in \cite{LezBook, Lez20, LezR, LL, Lev, MR}. Ring-theoretical properties and more examples of skew PBW extensions have been studied in \cite{ReyL,NRR,Rey19,ReyR,ReyS2}.

	We end this section by adressing \emph{3-dimensional skew polynomial algebras}, which is a family of rings included in the class of PBW extensions. Some remarkable examples are the universal enveloping algebra $U(\mathfrak{sl}(2,\Bbbk))$ (Example~\ref{ex35}), the Dispin algebra (Example~\ref{ex44}) $U(\mathfrak{osp}(1,2))$ and the Woronowicz's algebra $W_{\nu}(\mathfrak{sl}(2,\Bbbk))$ \cite{Ros,Wor}. These algebras were introduced by Bell and Smith, and are very important in noncommutative algebraic geometry (see e.g. \cite[Section C.4.3]{Ros}).
	
	%%%%%%%%%%%%%%%%%%%%%%%%%%%%%%%%%%
	\begin{definition}[3-dimensional skew polynomial algebra]\label{3dimensionaldimension}
		A \emph{3-dimensional skew polynomial algebra} $A$ is a $\Bbbk$-algebra generated by the variables $x,y,z$, subject to relations
		\begin{equation*}
			yz-\alpha zy=\lambda, \quad zx-\beta xz=\mu, \quad xy-\gamma yx=\nu,
		\end{equation*}
		and such that
		\begin{enumerate}[label=\normalfont(\roman*)]
			\item $\lambda, \mu, \nu\in \Bbbk+\Bbbk x+\Bbbk y+\Bbbk z$, and $\alpha, \beta, \gamma \in \Bbbk - \{0\}$,
			\item The set of standard monomials $\{x^iy^jz^l: i,j,l\ge 0\}$ is a $\Bbbk$-basis of the algebra.
		\end{enumerate}
	\end{definition}

	From Definition~\ref{3dimensionaldimension}, it is clear that a 3-dimensional skew polynomial algebra $A$ is a skew PBW extensions over $\Bbbk$, $A\cong \sigma(\Bbbk)\langle x,y,z\rangle$ \cite{RS}. These algebras can, in fact, be classified.

	\begin{proposition}[{\cite[Theorem C.4.3.1]{Ros}}]\label{3-dimensionalClassification}
		If $A$ is a 3-dimensional skew polynomial algebra, then $A$
		is one of the following algebras:
		\begin{enumerate}[label=\normalfont(\alph*)]
			\item If $|\{\alpha, \beta, \gamma\}|=3$, then $A$ is defined by $yz-\alpha zy=0$, $zx-\beta xz=0$, $xy-\gamma yx=0$.
			\item if $|\{\alpha, \beta, \gamma\}|=2$ and $\beta\neq \alpha =\gamma =1$, then $A$ is one of the following algebras:
			\begin{enumerate}[label=\normalfont(\roman*)]
				\item $yz-zy=z$, $zx-\beta xz=y$, $xy-yx=x$,
				\item $yz-zy=z$, $zx-\beta xz=b$, $xy-yx=x$,
				\item $yz-zy=0$, $zx-\beta xz=y$, $xy-yx=0$,
				\item $yz-zy=0$, $zx-\beta xz=b$, $xy-yx=0$,
				\item $yz-zy=az$, $zx-\beta xz=0$, $xy-yx=x$,
				\item $yz-zy=z$, $zx-\beta xz=0$, $xy-yx=0$,
			\end{enumerate}
			where $a, b$ are any elements of $\Bbbk$. All nonzero values of $b$ give isomorphic algebras.
			\item If $|\{\alpha, \beta, \gamma\}|=2$ and $\beta\neq \alpha=\gamma\neq 1$, then $A$ is one of the following algebras:
			\begin{enumerate}[label=\normalfont(\roman*)]
				\item $yz-\alpha zy=0$, $zx-\beta xz=y+b$, $xy-\alpha yx=0$,
				\item $yz-\alpha zy=0$, $zx-\beta xz=b$, $xy-\alpha yx=0$.
			\end{enumerate}
			In this case, $b$ is an arbitrary element of $\Bbbk$. Again, all nonzero values of $b$ give isomorphic algebras.
			\item If $\alpha=\beta=\gamma\neq 1$, then $A$ is the algebra defined by the relations  $yz-\alpha zy=a_1x+b_1$, $zx-\alpha xz=a_2y+b_2$, $xy-\alpha yx=a_3z+b_3$. If $a_i=0$ (for $i=1,2,3$), then all nonzero values of $b_i$ give isomorphic
			algebras.
			\item If $\alpha=\beta=\gamma=1$, then $A$ is isomorphic to one of the following algebras:
			\begin{enumerate}[label=\normalfont(\roman*)]
				\item $yz-zy=x$, $zx-xz=y$, $xy-yx=z$,
				\item $yz-zy=0$, $zx-xz=0$, $xy-yx=z$,
				\item $yz-zy=0$, $zx-xz=0$, $xy-yx=b$,
				\item $yz-zy=-y$, $zx-xz=x+y$, $xy-yx=0$,
				\item $yz-zy=az$, $zx-xz=z$, $xy-yx=0$.
			\end{enumerate}
			Parameters $a,b\in \Bbbk$ are arbitrary,  and all nonzero values of
			$b$ generate isomorphic algebras.
		\end{enumerate}
	\end{proposition}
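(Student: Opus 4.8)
The statement is a classification up to isomorphism, so the plan is to first extract, from the defining data $(\alpha,\beta,\gamma,\lambda,\mu,\nu)$, the precise algebraic constraints forced by condition~(ii) of Definition~\ref{3dimensionaldimension} (that the standard monomials $\{x^iy^jz^l\}$ form a basis), and then to normalize the linear terms $\lambda,\mu,\nu$ by admissible changes of variables. Since a $3$-dimensional skew polynomial algebra is presented by generators with quadratic-linear relations, the natural tool is the Bergman Diamond Lemma \cite{Ber}: order the generators $x<y<z$ and orient the relations as the rewriting rules
\[
zy \longmapsto \alpha^{-1}yz-\alpha^{-1}\lambda,\qquad zx\longmapsto \beta xz+\mu,\qquad yx\longmapsto \gamma^{-1}xy-\gamma^{-1}\nu,
\]
whose reduced words are exactly the standard monomials. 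Confluence of this system is equivalent to the PBW basis condition, so verifying it is not a hypothesis to be checked but rather the \emph{source} of the constraints we need.

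First I would identify the overlap ambiguities. The leading (out-of-order) parts of the three rules are $zy$, $zx$ and $yx$, and the only word in which two of these overlap is $zyx$ (the suffix $y$ of $zy$ meets the prefix $y$ of $yx$). Reducing $zyx$ along the two paths $(zy)x$ and $z(yx)$ and cascading to normal form produces two expressions in the standard monomials; the requirement that they coincide is a single system of equations. The cubic part matches identically (the purely quadratic quantum-affine relations are always associative), so the content lies in the coefficients of $1,x,y,z$. Matching these, the differences $\alpha-\beta$, $\beta-\gamma$, $\gamma-\alpha$ and the quantity $\alpha\beta\gamma-1$ (together with factors such as $\alpha-1$) appear as multipliers annihilating the various components of $\lambda,\mu,\nu$. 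This is the computational heart of the argument, and I expect it to be the main obstacle: the bookkeeping is heavy, and one must track exactly which linear coefficients are forced to vanish.

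With these constraints in hand, the case analysis follows the cardinality of $\{\alpha,\beta,\gamma\}$. When the three scalars are pairwise distinct (case (a)) the factors $\alpha-\beta$, $\beta-\gamma$, $\gamma-\alpha$ kill every linear term, forcing $\lambda=\mu=\nu=0$ and yielding the quantum-affine-space relations. When exactly two coincide (cases (b) and (c)) only certain linear terms may survive, and the subdivision according to whether the repeated value equals $1$ reflects whether factors like $\alpha-1$ vanish; when all three coincide (cases (d) and (e)) the surviving system degenerates into Jacobi-type identities, with the split $\alpha=1$ versus $\alpha\neq1$ again governed by which factors drop out. In each subcase I would finish by normalizing: affine substitutions $x\mapsto x+c$, $y\mapsto y+c'$, $z\mapsto z+c''$ together with diagonal scalings $x\mapsto tx$ (and the permutation symmetries of the presentation) reduce the surviving linear data to the canonical representatives displayed in (a)--(e), and the same scalings show that any two nonzero values of a free parameter $b$ give isomorphic algebras. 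The remaining delicate point, particularly in case (e) where the relations resemble those of $U(\mathfrak{sl}_2(\Bbbk))$ and of the Heisenberg algebra, is to verify that the resulting list is simultaneously exhaustive and irredundant, i.e.\ that distinct entries are genuinely non-isomorphic; this is where invariants of the algebra (such as the form of its commutators modulo scalars) must be invoked rather than mere manipulation of the defining relations.
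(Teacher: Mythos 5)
The first thing to note is that the paper does not prove this proposition at all: it is imported as a citation (\cite[Theorem C.4.3.1]{Ros}, the classification going back to Bell and Smith), so there is no internal proof to compare you against — only your strategy can be assessed. That strategy is indeed the standard route by which such classifications are established: order $x<y<z$, orient the three relations as rewriting rules, observe that $zyx$ is the unique overlap ambiguity, extract constraint equations from its two reductions, then split into cases according to $|\{\alpha,\beta,\gamma\}|$ and normalize by affine substitutions, scalings and permutations of the generators. The individual claims you do make check out: the rules are oriented correctly, $zyx$ really is the only ambiguity, and the cubic coefficient $\alpha^{-1}\beta\gamma^{-1}$ agrees on both reduction paths for arbitrary $\alpha,\beta,\gamma$, so the obstruction indeed lives entirely in the coefficients of $1,x,y,z$.

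However, as written the proposal has a genuine gap: everything that constitutes the actual content of the theorem is deferred. You never write down the constraint system obtained by fully reducing $(zy)x$ and $z(yx)$ and matching coefficients — the multipliers $(\alpha-\beta)$, $(\alpha\beta\gamma-1)$, $(\alpha-1)$ are conjectured (``I expect''), not derived; the case analysis converting those constraints into the precise lists (b)(i)--(vi) and (e)(i)--(v) is not carried out; and the normalization step is only named, not performed. In particular, reducing every $|\{\alpha,\beta,\gamma\}|=2$ configuration to the stated form $\beta\neq\alpha=\gamma$ requires checking that cyclic permutations of $(x,y,z)$ act on the scalars by $(\alpha,\beta,\gamma)\mapsto(\gamma,\alpha,\beta)$ (which suffices), whereas transpositions invert the scalars, e.g.\ $(\alpha,\beta,\gamma)\mapsto(\beta^{-1},\alpha^{-1},\gamma^{-1})$, so they must be handled with care — none of this appears. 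For a classification result these computations \emph{are} the proof: without them one cannot certify that the lists are exhaustive. Your closing concern about pairwise non-isomorphy of distinct entries is, by contrast, more than the statement asks — it claims only membership in the list together with the isomorphisms among nonzero values of $b$. In short: right strategy, correct skeleton, but the proof itself is not yet there.
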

		
	Ring and theoretical properties of 3-dimensional skew polynomial algebras and diffusion algebras have been studied in several papers \cite{Hinchcliffe2005, Jor1, Jor2, LezBook, PT02, RedmanPhD1996, Redman1999, ReyR, RS} and references therein.
	
	\subsection{Almost symmetric algebras}\label{s24}
	
	In this section, we introduce a certain class of $\mathbb{N}$-filtered algebras whose main purpose is to generalize universal enveloping algebras of Lie algebras \cite{Lod}. Several preliminaries of graded and filtered rings not included in this document will be used (see e.g. \cite{MR}). Throughout we assume that the base ring is a field $\Bbbk$. Given a $\mathbb{N}$-filtered algebra $A$ with filtration $\{F_n(A)\}_{n \in \mathbb{N}}$, we denote by $\operatorname{gr}(A)$ its associated graded algebra.
	
	\begin{definition}[Almost symmetric algebra]
		Let $A$ be a $\mathbb{N}$-filtered algebra. $A$ is said to be \emph{almost symmetric} if there exists a graded algebra isomorphism between $\operatorname{gr}(A)$ and the symmetric algebra $S(\operatorname{gr}(A)_1)$.
	\end{definition} 
	
	\begin{remark}\label{rem2}
		If $A$ is an almost symmetric algebra, then $F_0(A) \cong \Bbbk$. Indeed, since $S(\operatorname{gr}(A)_1)$ is connected we have $\operatorname{gr}(A)_0 \cong \Bbbk$. But
		\begin{equation*}
		 \Bbbk \cong \operatorname{gr}(A)_0 = F_0(A)/F_{-1}(A) = F_0(A)/0 \cong F_0(A).
		\end{equation*}
	\end{remark}
	
	In order to classify these algebras, we give some definitions. Recall that, for any $\Bbbk$-vector space $V$, a bilinear form $f:V\times V\rightarrow \Bbbk$ is said to be \emph{alternating} if $f(v,v)=0$, for all $v\in V$.
	
	\begin{definition}[2-cocycles of Lie algebras]
		Let $\mathfrak{g}$ be a Lie algebra and $f:\mathfrak{g} \times \mathfrak{g} \rightarrow \Bbbk$ be a bilinear alternating form. We say that $f$ is a \emph{2-cocycle} of $\mathfrak{g}$ if
		\begin{equation*}
			f(x,[y,z])+f(y,[z,x])+f(z,[x,y])=0, \quad \mbox{for all } x,y,z\in \mathfrak{g}.
		\end{equation*}
		The set of 2-cocycles of $\mathfrak{g}$ is denoted by $Z^2(\mathfrak{g},\Bbbk)$.
	\end{definition}

	We introduce Sridharan enveloping algebras \cite[Definition 2.1]{Sri} and our next goal is to prove that those coincide with almost symmetric algebras.
	
	\begin{definition}[Sridharan enveloping algebra]
		Let $\mathfrak{g}$ be a Lie algebra and $f\in Z^2(\mathfrak{g},\Bbbk)$. If $T(\mathfrak{g})$ is the tensor algebra over $\mathfrak{g}$ and
		\begin{equation*}
			I_f:=\langle x\otimes y - y\otimes x - [x,y] - f(x,y) : x,y\in \mathfrak{g} \rangle,
		\end{equation*}
		the (associative) algebra $U_f(\mathfrak{g}):=T(\mathfrak{g})/I_f$ is called a \emph{$f$-Sridharan enveloping algebra} of $\mathfrak{g}$.
	\end{definition}

 Notice that Sridharan enveloping algebras are a generalization of universal enveloping algebras of Lie algebras (see Example~\ref{ex17}). 
	
	\begin{lemma}\label{l10}
		If $\mathfrak{g}$ is a Lie algebra and $f\in Z^2(\mathfrak{g},\Bbbk)$, then $U_f(\mathfrak{g})$ is $\mathbb{N}$-filtered.
	\end{lemma}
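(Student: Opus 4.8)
The plan is to equip $U_f(\mathfrak{g})=T(\mathfrak{g})/I_f$ with the filtration induced by the canonical $\mathbb{N}$-grading on the tensor algebra. Recall that $T(\mathfrak{g})=\bigoplus_{i\geq 0}\mathfrak{g}^{\otimes i}$ carries the grading in which elements of $\mathfrak{g}^{\otimes i}$ have degree $i$. I would first define the standard ascending filtration $\{F_n(T(\mathfrak{g}))\}_{n\in\mathbb{N}}$ on the tensor algebra by setting
\begin{equation*}
	F_n(T(\mathfrak{g})):=\bigoplus_{i=0}^{n}\mathfrak{g}^{\otimes i},
\end{equation*}
which clearly satisfies $F_0\subseteq F_1\subseteq F_2\subseteq\cdots$, $\bigcup_{n}F_n=T(\mathfrak{g})$, and the multiplicativity condition $F_n\cdot F_m\subseteq F_{n+m}$ since the product of a degree-$i$ tensor with a degree-$j$ tensor lands in degree $i+j$. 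This makes $T(\mathfrak{g})$ an $\mathbb{N}$-filtered algebra.

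\textbf{Transporting the filtration to the quotient.} Next I would push this filtration down to $U_f(\mathfrak{g})$ via the canonical projection $\pi:T(\mathfrak{g})\to U_f(\mathfrak{g})$ by defining
\begin{equation*}
	F_n(U_f(\mathfrak{g})):=\pi(F_n(T(\mathfrak{g})))=\frac{F_n(T(\mathfrak{g}))+I_f}{I_f}.
\end{equation*}
This is the general principle that a filtration on an algebra descends to any quotient. The routine verifications are then: that the $F_n(U_f(\mathfrak{g}))$ form an ascending chain (immediate from $F_n(T(\mathfrak{g}))\subseteq F_{n+1}(T(\mathfrak{g}))$ and linearity of $\pi$), that their union is all of $U_f(\mathfrak{g})$ (since $\pi$ is surjective and the $F_n$ exhaust $T(\mathfrak{g})$), and that multiplicativity $F_n(U_f(\mathfrak{g}))\cdot F_m(U_f(\mathfrak{g}))\subseteq F_{n+m}(U_f(\mathfrak{g}))$ holds (because $\pi$ is an algebra map and multiplicativity holds upstairs in $T(\mathfrak{g})$). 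I would write each of these out in one line apiece.

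\textbf{The main point to check.} The only genuinely non-trivial feature is whether the defining ideal $I_f$ is compatible with the filtration — that is, whether the descended filtration is well-behaved rather than collapsing. Here the key observation is that the generators
\begin{equation*}
	x\otimes y - y\otimes x - [x,y] - f(x,y),\qquad x,y\in\mathfrak{g},
\end{equation*}
are \emph{inhomogeneous}: the leading term $x\otimes y-y\otimes x$ has degree $2$, while the correction terms $[x,y]\in\mathfrak{g}$ and $f(x,y)\in\Bbbk$ have degrees $1$ and $0$ respectively. Thus each generator lies in $F_2(T(\mathfrak{g}))$, and the filtration is precisely the one that allows these lower-order corrections to be absorbed. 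I expect this degree bookkeeping to be the only subtle part; once it is noted that every generator sits in $F_2$, the filtration descends cleanly and no generator forces an unexpected identification between filtration levels. It is worth emphasizing that the lemma asserts \emph{only} that $U_f(\mathfrak{g})$ is $\mathbb{N}$-filtered, not yet that $\operatorname{gr}(U_f(\mathfrak{g}))$ is symmetric; the deeper PBW-type statement (that the associated graded algebra is $S(\mathfrak{g})$, which would identify $U_f(\mathfrak{g})$ as almost symmetric) is presumably the content of a subsequent result and need not be addressed here.
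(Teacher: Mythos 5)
Your proposal is correct and follows essentially the same route as the paper: filter $T(\mathfrak{g})$ by $F_p = \bigoplus_{i\leq p}\mathfrak{g}^{\otimes i}$ and push this down through the canonical projection $\eta_f$ to get $F_p(U_f(\mathfrak{g})) := \eta_f\bigl(\bigoplus_{i\leq p}\mathfrak{g}^{\otimes i}\bigr)$. Your closing observation is also accurate — no compatibility of $I_f$ with the grading is needed merely to obtain an $\mathbb{N}$-filtration on the quotient; the inhomogeneity of the generators only becomes relevant later, when identifying $\operatorname{gr}(U_f(\mathfrak{g}))$ with $S(\mathfrak{g})$.
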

	
	\begin{proof}
		Since $T(\mathfrak{g})$ is $\mathbb{N}$-graded by $\{ \mathfrak{g}^{\otimes i}\}_{p\in\mathbb{N}}$, the family $\{\bigoplus_{i\leq p} \mathfrak{g}^{\otimes i}\}_{p\in\mathbb{N}}$ becomes a $\mathbb{N}$-filtration. Hence the quotient $U_f(\mathfrak{g})=T(\mathfrak{g})/I_f$ is also $\mathbb{N}$-filtered. Explicitly,
		\begin{equation*}
			F_p(U_f(\mathfrak{g})):= \eta_f \left( \bigoplus_{i\leq p} \mathfrak{g}^{\otimes i} \right) , \qquad \mbox{for all } p\in\mathbb{N},
		\end{equation*}
		where $\eta_f : T(\mathfrak{g})\rightarrow U_f(\mathfrak{g})=T(\mathfrak{g})/I_f$ is the canonical algebra map, i.e., $\eta_f(z)=\overline{z}:=z+I_f$, for every $z\in T(\mathfrak{g})$.
	\end{proof}
	
	The restriction of $\eta_f :T(\mathfrak{g})\rightarrow U_f(\mathfrak{g})$ to $\mathfrak{g}$ induces a $\Bbbk$-linear map $i_f:\mathfrak{g} \rightarrow U_f(\mathfrak{g})$ which, for every $x,y\in \mathfrak{g}$, satisfies
	\begin{align}
		i_f(x)i_f(y)-i_f(y)i_f(x)&=\overline{x}\overline{y}-\overline{y}\overline{x} =\overline{x\otimes y-y\otimes x} \nonumber \\&=\overline{[x,y]+f(x,y)} = i_f([x,y])+f(x,y)\cdot i_f(1).\label{e41}
	\end{align}
	
	\begin{lemma}[{\cite[Lemma 2.4]{Sri}}]\label{l5}
		Let $\mathfrak{g}$ be a $\Bbbk$-Lie algebra and $f\in Z^2(\mathfrak{g},\Bbbk)$. If $x_1,\ldots,x_p \in \mathfrak{g}$ and $\sigma$ is a permutation of $(1,\ldots,p)$, then
		\begin{equation*}
			i_f(x_1)\cdots i_f(x_p) - i_f(x_{\sigma(1)})\cdots i_f(x_{\sigma(p)}) \in F_{p-1}(U_f(\mathfrak{g})).
		\end{equation*}
	\end{lemma}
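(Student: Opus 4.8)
The plan is to reduce the claim to the case of interchanging two \emph{adjacent} factors, and then to build an arbitrary permutation out of such interchanges. First I would record the two structural facts that drive the argument, both immediate from Lemma~\ref{l10}: the filtration is multiplicative, $F_a(U_f(\mathfrak{g}))\,F_b(U_f(\mathfrak{g})) \subseteq F_{a+b}(U_f(\mathfrak{g}))$, and each $F_p(U_f(\mathfrak{g}))$ is a $\Bbbk$-subspace (hence closed under finite sums). Since $i_f(\mathfrak{g}) \subseteq F_1(U_f(\mathfrak{g}))$, any product of $q$ elements of the form $i_f(x)$ with $x\in\mathfrak{g}$ lies in $F_q(U_f(\mathfrak{g}))$.

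The key computation is the single adjacent swap. Using relation \eqref{e41} to rewrite the factors sitting in positions $k$ and $k+1$, the difference between a product $i_f(x_1)\cdots i_f(x_p)$ and the same product with those two factors interchanged equals
\begin{equation*}
 i_f(x_1)\cdots i_f(x_{k-1})\,i_f([x_k,x_{k+1}])\,i_f(x_{k+2})\cdots i_f(x_p) + f(x_k,x_{k+1})\,i_f(x_1)\cdots i_f(x_{k-1})i_f(x_{k+2})\cdots i_f(x_p).
\end{equation*}
The first summand is a product of $p-1$ factors drawn from $i_f(\mathfrak{g})$, hence lies in $F_{p-1}(U_f(\mathfrak{g}))$; the second is a scalar multiple of a product of $p-2$ such factors, hence lies in $F_{p-2}(U_f(\mathfrak{g})) \subseteq F_{p-1}(U_f(\mathfrak{g}))$. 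Thus interchanging two adjacent factors, in \emph{any} arrangement of the $x_i$ and not merely the original one, alters the product by an element of $F_{p-1}(U_f(\mathfrak{g}))$.

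To conclude, I would express the passage from the identity ordering to the ordering prescribed by $\sigma$ as a finite chain of adjacent transpositions: choose permutations $\pi_0 = \operatorname{id}, \pi_1, \ldots, \pi_m = \sigma$ in which each $\pi_j$ is obtained from $\pi_{j-1}$ by swapping the factors occupying two adjacent positions. Writing $P_\pi := i_f(x_{\pi(1)})\cdots i_f(x_{\pi(p)})$, the telescoping identity $P_{\operatorname{id}} - P_\sigma = \sum_{j=1}^m (P_{\pi_{j-1}} - P_{\pi_j})$ exhibits the desired difference as a finite sum of adjacent-swap differences, each in $F_{p-1}(U_f(\mathfrak{g}))$; since $F_{p-1}(U_f(\mathfrak{g}))$ is a subspace, the whole sum lies there as well.

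The argument is essentially routine once \eqref{e41} is in hand, so there is no single hard obstacle. The only point demanding care is that the adjacent-swap estimate must be applied at each step to the \emph{current} (already partially permuted) arrangement of factors, and one must verify that the correction terms still land in $F_{p-1}(U_f(\mathfrak{g}))$ regardless of that arrangement; this is so because the count depends only on the number of surviving $i_f$-factors, not on their order.
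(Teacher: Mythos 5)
Your proposal is correct and follows essentially the same route as the paper's proof: decompose $\sigma$ into adjacent transpositions, apply relation \eqref{e41} to a single swap, and observe that the correction terms (a product of $p-1$ factors from $i_f(\mathfrak{g})$ plus a scalar multiple of a product of $p-2$ such factors) lie in $F_{p-1}(U_f(\mathfrak{g}))$. You merely make explicit two points the paper leaves implicit — the telescoping sum over the chain of transpositions and the fact that the swap estimate holds for any current arrangement of the factors — which is a harmless (indeed welcome) elaboration, not a different argument.
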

	
	\begin{proof}
		Decomposing the permutation as a product of transpositions, it suffices to consider the case of a transposition interchanging two consecutive indexes $j$ and $j+1$. In this case, relation \eqref{e41} gives
		\begin{equation*}
			i_f(x_j)i_f(x_{j+1})-i_f(x_{j+1})i_f(x_j) = i_f([x_j,x_{j+1}])+f(x_j,x_{j+1})\cdot i_f(1).
		\end{equation*}
	Since $[x_j,x_{j+1}]\in \mathfrak{g}$ and $f(x_j,x_{j+1}) \in \Bbbk$, then $i_f([x_j,x_{j+1}])+f(x_j,x_{j+1})\cdot i_f(1) \in F_2(U_f(\mathfrak{g}))$, as required.
	\end{proof}
	
	\begin{proposition}[{\cite[Proposition 2.3]{Sri}}]
		Let $\mathfrak{g}$ be a $\Bbbk$-Lie algebra and $f\in Z^2(\mathfrak{g},\Bbbk)$. Then $\operatorname{gr}(U_f(\mathfrak{g}))$ is a commutative algebra.
	\end{proposition}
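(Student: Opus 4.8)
The plan is to reduce the statement entirely to Lemma~\ref{l5}. Recall that $\operatorname{gr}(U_f(\mathfrak{g})) = \bigoplus_{p\geq 0} F_p(U_f(\mathfrak{g}))/F_{p-1}(U_f(\mathfrak{g}))$ (with the convention $F_{-1}=0$), and that its multiplication is defined on homogeneous components: if $\xi \in F_p/F_{p-1}$ is represented by $a\in F_p$ and $\eta \in F_q/F_{q-1}$ by $b\in F_q$, then $\xi\eta$ is the class of $ab$ in $F_{p+q}/F_{p+q-1}$. Since every element of $\operatorname{gr}(U_f(\mathfrak{g}))$ is a $\Bbbk$-linear combination of homogeneous ones and the product is bilinear, it suffices to prove that homogeneous elements commute, and for these I would reduce further to a convenient spanning set.

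First I would exhibit such a spanning set. By Lemma~\ref{l10}, $F_p(U_f(\mathfrak{g}))=\eta_f\left(\bigoplus_{i\leq p}\mathfrak{g}^{\otimes i}\right)$, so $F_p(U_f(\mathfrak{g}))$ is spanned over $\Bbbk$ by the products $i_f(x_1)\cdots i_f(x_i)$ with $i\leq p$ and $x_1,\ldots,x_i\in\mathfrak{g}$. Consequently the degree-$p$ component $F_p/F_{p-1}$ is spanned by the classes of the length-$p$ products $i_f(x_1)\cdots i_f(x_p)$; in particular $\operatorname{gr}(U_f(\mathfrak{g}))$ is generated in degree one by the images of $i_f(\mathfrak{g})$.

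Next I would invoke Lemma~\ref{l5}. Take two homogeneous generators $\xi=[\,i_f(x_1)\cdots i_f(x_p)\,]\in F_p/F_{p-1}$ and $\eta=[\,i_f(y_1)\cdots i_f(y_q)\,]\in F_q/F_{q-1}$. Then $\xi\eta$ is the class of $i_f(x_1)\cdots i_f(x_p)\,i_f(y_1)\cdots i_f(y_q)$ and $\eta\xi$ is the class of $i_f(y_1)\cdots i_f(y_q)\,i_f(x_1)\cdots i_f(x_p)$, both in $F_{p+q}/F_{p+q-1}$. These two products of the same $p+q$ factors differ only by a permutation of those factors, so Lemma~\ref{l5} guarantees that their difference lies in $F_{p+q-1}(U_f(\mathfrak{g}))$, i.e. they represent the same class modulo $F_{p+q-1}$. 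Hence $\xi\eta=\eta\xi$. Since commutativity holds on a spanning set and the product is bilinear, $\operatorname{gr}(U_f(\mathfrak{g}))$ is commutative.

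The argument is essentially bookkeeping; the substantive content is already packaged in Lemma~\ref{l5}, whose proof rests on the defining relation \eqref{e41}. The only point requiring care — and what I expect to be the main thing to state cleanly, rather than a genuine obstacle — is the well-definedness of the induced product on $\operatorname{gr}(U_f(\mathfrak{g}))$ together with the observation that reordering $p+q$ factors keeps the element in filtration degree $p+q$ while altering it only by a term of $F_{p+q-1}$. This is exactly Lemma~\ref{l5} applied after decomposing the relevant permutation into transpositions of consecutive indices, so no new estimates are needed.
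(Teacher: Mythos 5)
Your proof is correct and follows essentially the same route as the paper's: the paper also reduces everything to Lemma~\ref{l5}, noting that the generators $i_f(x)$, $x\in\mathfrak{g}$, commute in $\operatorname{gr}(U_f(\mathfrak{g}))$ and hence the whole graded algebra is commutative. Your version merely spells out the bookkeeping (the spanning set of each $F_p/F_{p-1}$ by length-$p$ products and the passage from transpositions to general permutations) that the paper leaves implicit.
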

	
	\begin{proof}
		The set $\{ i_f(x) : x\in \mathfrak{g} \} \cup \{ i_f(1) \}$ generates $U_f(\mathfrak{g})$ as an algebra. By Lemma~\ref{l5} those generators commute in the associated graded algebra $\operatorname{gr}(U_f(\mathfrak{g}))$ and hence it must be commutative.
	\end{proof}
	
	Let $X=\{x_i\}_{i\in J}$ be a $\Bbbk$-basis for $\mathfrak{g}$ and $\leq$ be a total order in $J$. We write $y_i:=i_f(x_i) \in U_f(\mathfrak{g})$. Sridharan showed that the set containing 1 and all \emph{standard monomials} of the form
	\begin{equation*}
		y_{i_1}y_{i_2}\cdots y_{i_n}, \qquad \mbox{with }  i_1\leq i_2 \leq \cdots \leq i_n,
	\end{equation*}
	is a $\Bbbk$-basis of $U_f(\mathfrak{g})$ \cite[Theorem 2.6]{Sri}, i.e., the PBW Theorem holds for Sridharan enveloping algebras. This fact implies that $i_f$ is injective \cite[Corollary 2.8]{Sri} and the following result.
	
	\begin{theorem}[{\cite[Theorem 2.5]{Sri}}]\label{t4}
		Let $\mathfrak{g}$ be a $\Bbbk$-Lie algebra and $f\in Z^2(\mathfrak{g},\Bbbk)$. Then
		$\operatorname{gr}(U_f(\mathfrak{g})) \cong S(\mathfrak{g})$ as graded algebras.
	\end{theorem}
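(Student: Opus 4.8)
The plan is to construct a graded algebra morphism $\varphi\colon S(\mathfrak{g})\to\operatorname{gr}(U_f(\mathfrak{g}))$ from the universal property of the symmetric algebra, and then prove it is an isomorphism by checking bijectivity in each graded degree with the help of the PBW basis. First I would observe that, by the preceding proposition, $\operatorname{gr}(U_f(\mathfrak{g}))$ is a commutative $\Bbbk$-algebra, so the $\Bbbk$-linear map $h\colon\mathfrak{g}\to\operatorname{gr}(U_f(\mathfrak{g}))$ sending $x$ to the class $\overline{i_f(x)}\in F_1(U_f(\mathfrak{g}))/F_0(U_f(\mathfrak{g}))=\operatorname{gr}(U_f(\mathfrak{g}))_1$ is a well-defined linear map into a commutative algebra. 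By the universal property of $S(\mathfrak{g})$ (Example~\ref{ex19}) it extends uniquely to an algebra morphism $\varphi\colon S(\mathfrak{g})\to\operatorname{gr}(U_f(\mathfrak{g}))$. Since each generator is sent to a degree-$1$ element and $\operatorname{gr}(U_f(\mathfrak{g}))$ is graded, $\varphi$ carries $S^n(\mathfrak{g})$ into $\operatorname{gr}(U_f(\mathfrak{g}))_n$, so $\varphi$ is a graded morphism. For surjectivity I would note that $U_f(\mathfrak{g})$ is generated as an algebra by $i_f(\mathfrak{g})$ together with $1$, all lying in $F_1(U_f(\mathfrak{g}))$; hence $\operatorname{gr}(U_f(\mathfrak{g}))$ is generated in degrees $0$ and $1$, and both $\operatorname{gr}(U_f(\mathfrak{g}))_0=\Bbbk$ and $\operatorname{gr}(U_f(\mathfrak{g}))_1$ lie in the image of $\varphi$.

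The heart of the argument is injectivity, where the PBW basis enters. Fix a $\Bbbk$-basis $X=\{x_i\}_{i\in J}$ of $\mathfrak{g}$ with total order $\leq$, and write $y_i=i_f(x_i)$. The key lemma to establish is that $F_p(U_f(\mathfrak{g}))$ coincides with the $\Bbbk$-span of all standard monomials $y_{i_1}\cdots y_{i_m}$ (with $i_1\leq\cdots\leq i_m$) of length $m\leq p$. One inclusion is immediate from the definition of the filtration; for the other, I would take an arbitrary product $y_{j_1}\cdots y_{j_p}$ arising from a tensor in $\mathfrak{g}^{\otimes p}$ and reorder it into standard form, invoking Lemma~\ref{l5} to absorb the correction terms into $F_{p-1}(U_f(\mathfrak{g}))$, and then induct on $p$. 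Combined with Sridharan's PBW theorem (\cite[Theorem 2.6]{Sri}), which guarantees that the set of all standard monomials is a $\Bbbk$-basis of $U_f(\mathfrak{g})$, this shows that the standard monomials of length $\leq p$ form a basis of $F_p(U_f(\mathfrak{g}))$; consequently the classes of the standard monomials of length exactly $n$ form a basis of $\operatorname{gr}(U_f(\mathfrak{g}))_n=F_n(U_f(\mathfrak{g}))/F_{n-1}(U_f(\mathfrak{g}))$.

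To finish, I would compare bases degree by degree. In degree $n$, the ordered monomials $x_{i_1}\cdots x_{i_n}$ with $i_1\leq\cdots\leq i_n$ form a basis of $S^n(\mathfrak{g})$, and $\varphi$ sends each such monomial to the class $\overline{y_{i_1}\cdots y_{i_n}}$, which is precisely the basis of $\operatorname{gr}(U_f(\mathfrak{g}))_n$ just described. Thus $\varphi$ restricts to a bijection between bases in each degree, and being a graded algebra map it is therefore a graded isomorphism, as desired. The main obstacle is concentrated entirely in the filtration-versus-standard-monomials lemma: once Lemma~\ref{l5} and the PBW basis are in hand, the passage to the associated graded is a routine triangularity argument. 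It is worth remarking that this also yields $\operatorname{gr}(U_f(\mathfrak{g}))_1\cong\mathfrak{g}$, so that $U_f(\mathfrak{g})$ is in fact almost symmetric in the sense introduced above.
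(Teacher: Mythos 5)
Your proof is correct, but its architecture differs from the paper's. Both proofs construct the same universal-property map $S(\mathfrak{g}) \to \operatorname{gr}(U_f(\mathfrak{g}))$ (the paper obtains it, called $\phi$, by factoring the graded morphism $\operatorname{gr}(\eta_f): T(\mathfrak{g}) \to \operatorname{gr}(U_f(\mathfrak{g}))$ through $S(\mathfrak{g})$; you build it directly from the linear map $x \mapsto \overline{i_f(x)}$, which amounts to the same thing). Where you diverge is in establishing bijectivity. The paper uses Sridharan's PBW theorem to define an explicit $\Bbbk$-linear map $\psi': U_f(\mathfrak{g}) \to S(\mathfrak{g})$ sending standard monomials to the corresponding monomials, asserts that $\psi'$ is filtered, passes to $\psi = \operatorname{gr}(\psi')$, and checks that $\psi$ and $\phi$ are mutually inverse on classes of standard monomials. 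You instead prove bijectivity degree by degree: your key lemma — that $F_p(U_f(\mathfrak{g}))$ is exactly the span of standard monomials of length at most $p$, proved by reordering via Lemma~\ref{l5} and induction — combined with the PBW basis identifies a basis of each $\operatorname{gr}(U_f(\mathfrak{g}))_n$, which $\varphi$ matches bijectively with the ordered-monomial basis of $S^n(\mathfrak{g})$. Your route has the merit of making explicit a fact the paper's proof quietly relies on: the claim that $\psi'$ is filtered is precisely your spanning lemma (one must know that an element of $F_p$, expanded in the PBW basis, involves only standard monomials of length $\leq p$), so your write-up is arguably the more complete of the two; the paper's version, in exchange, produces the inverse map concretely rather than deducing its existence from a basis count.
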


	\begin{proof}
		With the notation above, we denote a standard monomial of $ U_f(\mathfrak{g})$ by $y:=y_{i_1}y_{i_2}\cdots y_{i_n}$ (with $i_1\leq i_2 \leq \cdots \leq i_n$ in the index set $J$). Similarly, since we have the identification $\mathfrak{g} \subset T(\mathfrak{g})$, we write $z_i:= \overline{x_i} \in S(\mathfrak{g})$ for every $i\in J$, and $z:=z_{i_1}z_{i_2}\cdots z_{i_n}$. Hence, PBW theorem for Sridharan enveloping algebras guarantees the existence of an unique $\Bbbk$-linear map $\psi': U_f(\mathfrak{g}) \to S(\mathfrak{g})$ such that $\psi'(y)=z$. With the canonical $\mathbb{N}$-filtration on $S(\mathfrak{g})$ (which comes from its graduation), it is clear that $\psi'$ is filtered. Then we can induce a graded map $\psi:=\operatorname{gr}(\psi') : 	\operatorname{gr}(U_f(\mathfrak{g})) \to S(\mathfrak{g})$.
		
		On the other hand, the filtered algebra map $\eta_f :T(\mathfrak{g})\rightarrow U_f(\mathfrak{g})$ mentioned above induces a graded algebra morphism $\operatorname{gr}(\eta_f) :T(\mathfrak{g})\rightarrow \operatorname{gr}(U_f(\mathfrak{g}))$. Using the universal property of $S(\mathfrak{g})$, we have a graded algebra morphism $\phi: S(\mathfrak{g}) \to \operatorname{gr}(U_f(\mathfrak{g}))$. Notice that, by construction, $\phi (z) = \overline{y} \in \operatorname{gr}(U_f(\mathfrak{g}))$. Thus, it is clear that $\psi$ is the inverse of $\phi$, and then 		$\operatorname{gr}(U_f(\mathfrak{g})) \cong S(\mathfrak{g})$ as graded algebras.
	\end{proof}
	
	Recall that, for any Lie algebra $\mathfrak{g}$, a $\Bbbk$-subspace $\mathfrak{I}$ is said to be a \emph{Lie ideal} if
	\begin{equation*}
		[\mathfrak{I},\mathfrak{g}]:=\operatorname{span}_\Bbbk \{ [x,y] : x\in\mathfrak{I}, y\in \mathfrak{g}  \} \subseteq \mathfrak{I}.
	\end{equation*}
	In this case, the quotient space $\mathfrak{g}/\mathfrak{I}$ has Lie algebra structure given by
	\begin{equation*}
		[\overline{x},\overline{y}]:=\overline{[x,y]}, \qquad \mbox{for all } x,y\in \mathfrak{g}.
	\end{equation*}
	
	Before the main theorem of this section, we recall a result on graded algebras.
	
	\begin{proposition}\label{p7}
		Let $A,B$ be two $\mathbb{N}$-filtered rings (resp. $\Bbbk$-algebras) with respective filtration $\{ F_p(A) \}_{p\in\mathbb{N}}$ and $\{ F_p(B) \}_{p\in\mathbb{N}}$. If $f: A\rightarrow B$ is a filtered map such that $\operatorname{gr}(f):\operatorname{gr}(A)\rightarrow \operatorname{gr}(B)$ is injective (resp. surjective, bijective), then $f$ is also injective (resp. surjective, bijective).
	\end{proposition}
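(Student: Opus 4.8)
The plan is to prove the injective case directly using the filtrations and then deduce the surjective and bijective cases from it, rather than treating each property independently. Throughout, write $\overline{a} \in \operatorname{gr}(A)_p = F_p(A)/F_{p-1}(A)$ for the symbol (principal part) of an element $a \in F_p(A) \setminus F_{p-1}(A)$, so that $\operatorname{gr}(f)(\overline{a}) = \overline{f(a)}$ whenever $f$ carries $F_p(A)$ into $F_p(B)$. The key structural fact I will exploit is that a filtered map being injective (or surjective) is governed degree-by-degree, and that an element of $F_p$ lands in the strictly lower piece $F_{p-1}$ precisely when its symbol in $\operatorname{gr}(A)_p$ vanishes.

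First I would treat injectivity. Suppose $\operatorname{gr}(f)$ is injective and let $a \in A$ with $f(a) = 0$ but $a \neq 0$. Since the filtration is exhaustive, there is a least $p$ with $a \in F_p(A)$; as $a \neq 0$ we have $a \notin F_{p-1}(A)$, so the symbol $\overline{a} \in \operatorname{gr}(A)_p$ is nonzero. Applying $f$ gives $f(a) \in F_p(B)$, and because $f(a) = 0 \in F_{p-1}(B)$, the symbol $\operatorname{gr}(f)(\overline{a}) = \overline{f(a)}$ is zero in $\operatorname{gr}(B)_p$. This contradicts injectivity of $\operatorname{gr}(f)$, so $a = 0$ and $f$ is injective.

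Next I would treat surjectivity by an induction on filtration degree. Assuming $\operatorname{gr}(f)$ is surjective, I claim $F_p(B) \subseteq \operatorname{Im}(f)$ for every $p$, proceeding by induction on $p$. For the inductive step, take $b \in F_p(B)$; by surjectivity of $\operatorname{gr}(f)$ in degree $p$ there exists $a \in F_p(A)$ with $\operatorname{gr}(f)(\overline{a}) = \overline{b}$, meaning $b - f(a) \in F_{p-1}(B)$. By the induction hypothesis $b - f(a) \in \operatorname{Im}(f)$, hence $b \in \operatorname{Im}(f)$. The base case $p=0$ (or whatever the bottom of the filtration is) follows the same way with the lower piece being zero. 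Taking the union over $p$ gives surjectivity of $f$. The bijective case is then immediate by combining the two arguments.

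The main obstacle, and the only place requiring care, is the bookkeeping around the bottom of the $\mathbb{N}$-filtration and the compatibility identity $\operatorname{gr}(f)(\overline{a}) = \overline{f(a)}$: one must verify that a filtered map indeed induces $\operatorname{gr}(f)$ with this symbol-level behavior, and that ``$f$ filtered'' means $f(F_p(A)) \subseteq F_p(B)$ so that symbols are well defined. This is essentially the definition of the associated graded functor, so no genuine difficulty arises, but it is the hinge on which both implications turn. Once that identity is in hand, both arguments are short contradiction/induction arguments and no delicate estimates are needed.
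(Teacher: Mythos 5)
Your proof is correct, and it reaches the conclusion by a route that differs in mechanism from the paper's. The paper runs a single induction on the filtration degree $p$: it restricts $f$ to maps $F_p(f): F_p(A) \to F_p(B)$, places $F_{p-1}(f)$, $F_p(f)$ and $\operatorname{gr}(f)_p$ as the vertical arrows in a commutative ladder of short exact sequences
\begin{equation*}
	0 \to F_{p-1} \to F_p \to \operatorname{gr}(\,\cdot\,)_p \to 0,
\end{equation*}
and invokes the Short Five Lemma to handle the injective, surjective and bijective cases uniformly in one stroke. You instead give two elementary hands-on arguments: for injectivity, a non-inductive minimal-degree argument (the symbol of a nonzero element at its least filtration degree is nonzero, hence cannot be killed by an injective $\operatorname{gr}(f)$), and for surjectivity, an induction that lifts a class through $\operatorname{gr}(f)_p$ and absorbs the error term $b - f(a) \in F_{p-1}(B)$ via the induction hypothesis --- which is essentially the diagram chase hidden inside the Five Lemma, made explicit. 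What the paper's version buys is brevity and uniformity (one induction, one lemma, three conclusions); what yours buys is self-containedness (no appeal to the Five Lemma) and, in the injective case, a genuinely shorter argument that avoids induction altogether. Both proofs share the same implicit hypotheses, which you correctly flag as the hinge: the filtrations are exhaustive (needed for your minimal $p$ and for the union over $p$, and equally needed by the paper to pass from all $F_p(f)$ to $f$ itself), the convention $F_{-1}=0$, and the fact that a filtered map satisfies $f(F_p(A)) \subseteq F_p(B)$ so that $\operatorname{gr}(f)_p(\overline{a}) = \overline{f(a)}$ is well defined; one small point worth stating explicitly in your write-up is that injectivity or surjectivity of the total graded map $\operatorname{gr}(f)$ is equivalent to the same property for each homogeneous component $\operatorname{gr}(f)_p$, since $\operatorname{gr}(f)$ is a degree-preserving map of graded objects.
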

	
	\begin{proof}
		For every $p\in \mathbb{N}$, we denote by $F_p(f): F_p(A)\rightarrow F_p(B)$ the restriction of $f$ to $F_p(A)$. Notice that $F_p(f)$ is a group morphism, for every $p\in \mathbb{N}$. Moreover, $F_0(f)$ is injective (resp. surjective, bijective). Indeed, since $\operatorname{gr}(f)$ is injective (resp. surjective, bijective), then every $\operatorname{gr}(f)_p$ is also injective (resp. surjective, bijective). In particular, the morphism  $\operatorname{gr}(f)_0: \operatorname{gr}(A)_0=F_0(A)\rightarrow \operatorname{gr}(B)=F_0(B)$ is injective (resp. surjective, bijective), which by construction coincides with $F_0(f)$.
		
		We assume by induction that $F_{p-1}(f)$ is injective (resp. surjective, bijective). Hence we can consider the following commutative diagram:
		\begin{equation*}
			\begin{tikzcd}
				0 \arrow[r] & F_{p-1}(A) \arrow[r,hook] \arrow[d,"F_{p-1}(f)"] & F_p(A) \arrow[r,two heads] \arrow[d,"F_p(f)"] & \operatorname{gr}(A)_p \arrow[r] \arrow[d," \operatorname{gr}(f)_p"] & 0\\
				0 \arrow[r] & F_{p-1}(B) \arrow[r,hook] & F_p(B) \arrow[r,two heads] & \operatorname{gr}(B)_p \arrow[r] & 0
			\end{tikzcd}
		\end{equation*}
		Since $F_{p-1}(f)$ and $\operatorname{gr}(f)_p$ are injective (resp. surjective, bijective) and each row is exact, the Short Five Lemma implies that the map $F_p(f)$ is also injective (resp. surjective, bijective). Since $p\in\mathbb{N}$ was arbitrary, the assertion has been proved.
	\end{proof}
	
	The next result gives a complete characterization of almost symmetric algebras.
	
	\begin{theorem}[Sridharan's classification, {\cite[Section 3]{Sri}}]\label{t3}
		Let $A$ be an almost symmetric algebra. Then there exist a Lie algebra $\mathfrak{g}$ and a 2-cocycle $f:\mathfrak{g}\otimes \mathfrak{g}\rightarrow \Bbbk$ such that $A\cong U_f(\mathfrak{g})$ as $\mathbb{N}$-filtered algebras.
	\end{theorem}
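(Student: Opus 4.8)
The plan is to reverse the construction leading to Theorem~\ref{t4}: starting from an almost symmetric algebra $A$, I would extract a Lie algebra $\mathfrak{g}$ and a $2$-cocycle $f$ directly from the filtration data of $A$, then build a comparison map $A \to U_f(\mathfrak{g})$ and prove it is a filtered isomorphism using Proposition~\ref{p7}. First I would set $\mathfrak{g}:=\operatorname{gr}(A)_1 = F_1(A)/F_0(A)$ as a vector space, noting by Remark~\ref{rem2} that $F_0(A) \cong \Bbbk$, so elements of $F_1(A)$ decompose as a scalar plus a representative of $\mathfrak{g}$. The Lie bracket on $\mathfrak{g}$ should be induced by the commutator in $A$: for $x,y \in F_1(A)$ the element $xy-yx$ lies in $F_2(A)$, but because $\operatorname{gr}(A) \cong S(\mathfrak{g})$ is commutative, the image of $xy-yx$ in $\operatorname{gr}(A)_2$ vanishes, so $xy-yx \in F_1(A)$. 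This forces $xy-yx = [x,y] + f(x,y)$ with $[x,y] \in \mathfrak{g}$ (its class in $\mathfrak{g}=F_1(A)/F_0(A)$) and $f(x,y) \in \Bbbk \cong F_0(A)$ the scalar part.

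Next I would verify that $[-,-]$ is a well-defined Lie bracket (antisymmetry and the Jacobi identity follow from the corresponding properties of the commutator in the associative algebra $A$, passing to the quotient) and that $f$ is a genuine $2$-cocycle in $Z^2(\mathfrak{g},\Bbbk)$. Antisymmetry of the commutator gives $f(x,x)=0$, so $f$ is alternating; the cocycle identity $f(x,[y,z])+f(y,[z,x])+f(z,[x,y])=0$ should emerge from the Jacobi identity for the commutator in $A$ by isolating the scalar ($F_0$-)components, exactly as the bracket emerges from the $\mathfrak{g}$-components. With $\mathfrak{g}$ and $f$ in hand, the defining relation \eqref{e41} of $U_f(\mathfrak{g})$ matches the relation $xy-yx = [x,y]+f(x,y)$ holding in $A$, so the inclusion $\mathfrak{g} \hookrightarrow F_1(A) \subset A$ satisfies the hypotheses needed to invoke the universal property of the tensor algebra $T(\mathfrak{g})$ and descend through the ideal $I_f$, yielding a filtered algebra morphism $\Psi: U_f(\mathfrak{g}) \to A$.

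Finally I would show $\Psi$ is an isomorphism via Proposition~\ref{p7}: it suffices to prove that $\operatorname{gr}(\Psi): \operatorname{gr}(U_f(\mathfrak{g})) \to \operatorname{gr}(A)$ is bijective. By Theorem~\ref{t4} the source is $\operatorname{gr}(U_f(\mathfrak{g})) \cong S(\mathfrak{g})$, and by the almost-symmetric hypothesis the target is $\operatorname{gr}(A) \cong S(\operatorname{gr}(A)_1) = S(\mathfrak{g})$; the map $\operatorname{gr}(\Psi)$ should be the identity on the degree-one generators by construction, hence an isomorphism of symmetric algebras. The main obstacle I anticipate is the careful bookkeeping of the filtration degrees when verifying that $f$ is well defined and satisfies the cocycle condition: the commutator $xy-yx$ a priori only sits in $F_2(A)$, so the entire argument hinges on using commutativity of $\operatorname{gr}(A)$ to drop it into $F_1(A)$, and then on cleanly separating the $\mathfrak{g}$-part from the $\Bbbk$-part in a way that is independent of the choice of representatives in $F_1(A)$. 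Checking that $[-,-]$ and $f$ do not depend on representatives modulo $F_0(A)$, and that the Jacobi identity distributes correctly across the two components, is the delicate technical core of the proof.
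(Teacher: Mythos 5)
Your proposal is correct, and its first half coincides with the paper's own proof: there too one takes $\mathfrak{g}:=F_1(A)/\Bbbk$ (using Remark~\ref{rem2} and the commutativity of $\operatorname{gr}(A)$ to see that commutators of elements of $F_1(A)$ fall back into $F_1(A)$), fixes a $\Bbbk$-linear splitting $t:\mathfrak{g}\rightarrow F_1(A)$ of the exact sequence $0\rightarrow\Bbbk\rightarrow F_1(A)\rightarrow\mathfrak{g}\rightarrow 0$, and defines $f(\xi,\eta):=[t(\xi),t(\eta)]-t([\xi,\eta])$ for $\xi,\eta\in\mathfrak{g}$. This explicit splitting is exactly what resolves the representative-dependence issue you single out as the delicate core: since scalars are central, the commutator in $A$ depends only on classes modulo $\Bbbk$, and once $t$ is fixed, $f$ is defined on $\mathfrak{g}\times\mathfrak{g}$ outright, with the cocycle identity falling out of the Jacobi identity in $A$ precisely as you describe. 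Where you genuinely diverge is in the construction of the comparison map. You build $\Psi:U_f(\mathfrak{g})\rightarrow A$ directly: the relation $t(\xi)t(\eta)-t(\eta)t(\xi)=t([\xi,\eta])+f(\xi,\eta)$ holds in $A$ by construction, so the universal property of $T(\mathfrak{g})$ kills $I_f$ and yields a filtered algebra map whose associated graded map is a graded endomorphism of $S(\mathfrak{g})$ fixing the degree-one generators, hence an isomorphism, and Proposition~\ref{p7} finishes. The paper instead routes through the symmetric algebra: it asserts that the graded isomorphism $\alpha:\operatorname{gr}(A)\rightarrow S(\mathfrak{g})$ induces a filtered algebra map $\alpha':A\rightarrow S(\mathfrak{g})$ with $\operatorname{gr}(\alpha')=\alpha$, composes it with a partially defined inverse of the map $\psi'$ from Theorem~\ref{t4}, and then applies Proposition~\ref{p7}. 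Your route is preferable, and is in fact the standard (Sridharan's) argument: besides being self-contained, it avoids the paper's lifting step, which as stated cannot hold in general --- if every graded isomorphism $\operatorname{gr}(A)\cong S(\mathfrak{g})$ lifted to a filtered algebra map with bijective associated graded, then Proposition~\ref{p7} applied to $\alpha'$ alone would force $A\cong S(\mathfrak{g})$ as filtered algebras for every almost symmetric $A$, contradicting, e.g., the Weyl algebra (type 7 of Table~\ref{tab1}), which is almost symmetric but noncommutative.
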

	
	\begin{proof}
		By definition $\operatorname{gr}(A)$ is isomorphic to a symmetric algebra, so it must be commutative and hence, for every $x,y\in F_1(A)$, we have $\overline{xy-yx}=\overline{0}$ in $\operatorname{gr}(A)_2 = F_2(A)/F_1(A)$. Thus $xy-yx\in F_1(A)$ and the $\Bbbk$-space $F_1(A)$ acquires a structure of Lie algebra given by
		\begin{equation*}
			[x,y]:=xy-yx, \qquad \mbox{for all }x,y\in F_1(A).
		\end{equation*}
		 Also, $F_0(A)=\Bbbk$ is a Lie ideal of $F_1(A)$. Indeed, $[k,x]=kx-xk=0$, for all $k\in\Bbbk$ and $x\in F_1(x)$. Hence $\mathfrak{g}:=F_1(A)/\Bbbk$ becomes a Lie algebra with induced Lie bracket given by
		\begin{equation*}
			[\overline{x},\overline{y}]:=\overline{[x,y]}=\overline{xy-yx}, \qquad \mbox{for all } x,y\in F_1(A).
		\end{equation*}
		Here $\overline{x}=x + \Bbbk \in \mathfrak{g}$. Then we have the exact short sequence
		\begin{equation*}
			\begin{tikzcd}
				0 \arrow[r] & \Bbbk \arrow [r,"\iota"] & F_1(A) \arrow[r,"j"] & \mathfrak{g} \arrow[r] & 0,
			\end{tikzcd}
		\end{equation*}
		where $\iota$ is the inclusion and $j$ is the quotient map. Since this sequence is made of $\Bbbk$-vector spaces, it splits and hence there exist a $\Bbbk$-linear map $t: \mathfrak{g}\rightarrow F_1(A)$ such that $jt=\operatorname{id}_{\mathfrak{g}}$. Define the $\Bbbk$-bilinear map $f: \mathfrak{g} \times \mathfrak{g} \rightarrow \Bbbk$ by
		\begin{equation*}
			f(\overline{x},\overline{y}):=[ t(x),t(y) ] - t([x,y]),\qquad \mbox{for all }x,y\in F_1(A).
		\end{equation*}
		A quick computation shows that $f$ is, in fact, a 2-cocycle, and then we can consider $U_f(\mathfrak{g})$.
		
		By definition, $\operatorname{gr}(A)\cong S(\mathfrak{g})$, i.e., there is a graded algebra isomorphism $\alpha: \operatorname{gr}(A) \to S(\mathfrak{g})$. This induces a filtered algebra map $\alpha' : A \to S(\mathfrak{g})$ such that $\operatorname{gr}(\alpha')=\alpha$. On the other hand, in the proof of Theorem~\ref{t4} we define a filtered algebra map $\psi': U_f(\mathfrak{g}) \to S(\mathfrak{g}) $ that was lifted up to a graded algebra isomorphism $\psi : \operatorname{gr}(U_f(\mathfrak{g})) \to S(\mathfrak{g})$ such that $\operatorname{gr}(\psi')=\psi$. Notice that $\phi':=\psi^{-1} |_{ \operatorname{Im}(\psi')}$ can be seen as a map from $S(\mathfrak{g})$ to $U_f(\mathfrak{g})$, and by construction it is filtered. This yields the filtered composition map $\phi' \alpha' : A \to U_f(\mathfrak{g})$. Moreover, since $\operatorname{gr}$ is a functor, we have the graded algebra isomorphism $\operatorname{gr}(\phi' \alpha') = \psi^{-1} \alpha$. By Proposition~\ref{p7}, we get $A\cong U_f(\mathfrak{g})$.
	\end{proof}
	
	This classification is used to endow almost any symmetric algebra with a comodule structure and thus obtain new examples of Hopf Galois extensions.
	
	\begin{theorem}[{\cite[Proposition 6.5]{JS}}]\label{t5}
		Let $A$ be an almost symmetric $\Bbbk$-algebra. Then there exists a Lie algebra $\mathfrak{g}$ such that $A$ is an $U(\mathfrak{g})$-Galois object.
	\end{theorem}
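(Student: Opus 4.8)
By Theorem~\ref{t3} (Sridharan's classification), since $A$ is almost symmetric, there exist a Lie algebra $\mathfrak{g}$ and a $2$-cocycle $f \in Z^2(\mathfrak{g},\Bbbk)$ such that $A \cong U_f(\mathfrak{g})$ as $\mathbb{N}$-filtered algebras. The plan is to exhibit on $U_f(\mathfrak{g})$ a right $U(\mathfrak{g})$-comodule algebra structure whose coinvariants are exactly $\Bbbk$, and whose Galois map is bijective, so that $\Bbbk \subset U_f(\mathfrak{g}) \cong A$ becomes a $U(\mathfrak{g})$-Galois object (i.e.\ a Galois extension of the base field $\Bbbk$). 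The central object is the natural comparison between the \emph{twisted} enveloping algebra $U_f(\mathfrak{g})$ and the \emph{untwisted} Hopf algebra $U(\mathfrak{g})$: both have the same underlying graded object $S(\mathfrak{g})$ (by Theorem~\ref{t4}), so one expects $U_f(\mathfrak{g})$ to be a cleft object --- essentially a crossed product $\Bbbk \#_\sigma U(\mathfrak{g})$ for the trivial action and a suitable $2$-cocycle built from $f$.

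First I would construct the coaction. The tensor algebra $T(\mathfrak{g})$ carries the Hopf structure of Example~\ref{ex18} with $\Delta(x) = x\otimes 1 + 1\otimes x$; projecting to $U(\mathfrak{g}) = T(\mathfrak{g})/J$ (Example~\ref{ex17}) gives the Hopf algebra $U(\mathfrak{g})$. I would define $\rho : U_f(\mathfrak{g}) \to U_f(\mathfrak{g}) \otimes U(\mathfrak{g})$ on generators by $\rho(i_f(x)) = i_f(x)\otimes 1 + 1 \otimes \overline{x}$, where $\overline{x}$ denotes the image of $x$ in $U(\mathfrak{g})$, and verify that this extends to an algebra map. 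The key computation is that $\rho$ respects the defining relation of $U_f(\mathfrak{g})$, namely $i_f(x)i_f(y) - i_f(y)i_f(x) = i_f([x,y]) + f(x,y)\cdot 1$ (relation~\eqref{e41}): applying $\rho$ to both sides, the scalar $f(x,y)$ term is killed on the $U(\mathfrak{g})$-side because $f(x,y)\in\Bbbk$ coacts trivially, while the commutator relation in $U(\mathfrak{g})$ (which has \emph{no} cocycle) matches the surviving terms. This is where the $2$-cocycle condition on $f$ is used, and it is the natural analogue of the trivial-cocycle comparison in a smash/crossed product.

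Next I would identify the coinvariants and the Galois map. Using the PBW basis of standard monomials $y_{i_1}\cdots y_{i_n}$ (with $y_i = i_f(x_i)$) from Sridharan's theorem, together with the identification $\operatorname{gr}(U_f(\mathfrak{g}))\cong S(\mathfrak{g})\cong\operatorname{gr}(U(\mathfrak{g}))$, the coaction $\rho$ is a filtered map whose associated graded is the standard coaction of $U(\mathfrak{g})$ on itself (the coproduct of Example~\ref{ex17}). Since $U(\mathfrak{g})$ is a $U(\mathfrak{g})$-Galois object over $\Bbbk$ (Proposition~\ref{ex13} with $K=\Bbbk$, giving $U(\mathfrak{g})^{\operatorname{co}U(\mathfrak{g})} = \Bbbk$ and bijective Galois map), I would argue that $U_f(\mathfrak{g})^{\operatorname{co}U(\mathfrak{g})} = \Bbbk$ by a filtration argument: a coinvariant of positive filtration degree would produce a nonzero coinvariant in $\operatorname{gr}$, contradicting $\operatorname{gr}(U(\mathfrak{g}))^{\operatorname{co}} = \Bbbk$. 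For the Galois map $\beta : U_f(\mathfrak{g}) \otimes_\Bbbk U_f(\mathfrak{g}) \to U_f(\mathfrak{g})\otimes U(\mathfrak{g})$ I would invoke Proposition~\ref{p7}: both sides are filtered, and $\operatorname{gr}(\beta)$ is the Galois map of the untwisted object $U(\mathfrak{g})$, which is bijective; hence $\beta$ is bijective.

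\textbf{Main obstacle.} The delicate point is the compatibility of $\rho$ with the twisted relation~\eqref{e41} and, more subtly, ensuring that $\rho$ is well defined as a \emph{filtered} algebra map taking values in $U_f(\mathfrak{g})\otimes U(\mathfrak{g})$ rather than in some completion --- i.e.\ checking that the cocycle $f$ does not obstruct coassociativity. Concretely, one must confirm that the scalar correction $f(x,y)$ lives in the coinvariant part $\Bbbk$ so that $(\rho\otimes\operatorname{id})\rho = (\operatorname{id}\otimes\Delta)\rho$ holds despite the twisting; this is exactly the statement that $f$ is a Hochschild/Lie $2$-cocycle valued in the trivial module, and it is where $Z^2(\mathfrak{g},\Bbbk)$ enters essentially. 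I expect the alternative route --- realizing $U_f(\mathfrak{g})$ as a cleft extension $\Bbbk \subset \Bbbk\#_\sigma U(\mathfrak{g})$ via Theorem~\ref{t8} and then citing that every cleft extension is $H$-Galois --- to be the cleanest way to package this, reducing the whole argument to producing a normalized convolution-invertible comodule map $\gamma : U(\mathfrak{g}) \to U_f(\mathfrak{g})$, with the reference \cite{JS} (Proposition~6.5) supplying the explicit $\gamma$.
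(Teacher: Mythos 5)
Your proposal is correct, but it closes the argument by a genuinely different route than the paper. Both proofs begin identically: Sridharan's classification (Theorem~\ref{t3}) gives $A\cong U_f(\mathfrak{g})$, and the coaction determined on generators by $\rho(i_f(x))=i_f(x)\otimes 1+1\otimes x$ is exactly the one the paper uses. From there the paper proceeds concretely: using the PBW bases on both sides it builds an explicit $\Bbbk$-linear comodule isomorphism $\theta:U(\mathfrak{g})\rightarrow U_f(\mathfrak{g})$ sending ordered monomials to ordered monomials, which yields $U_f(\mathfrak{g})^{\operatorname{co}U(\mathfrak{g})}=\Bbbk$, and then it obtains bijectivity of the Galois map by citing an external result of Bell characterizing faithfully flat $U(\mathfrak{g})$-Galois extensions by maps $\lambda:\mathfrak{g}\rightarrow B$ satisfying $\rho(\lambda(x))=\lambda(x)\otimes 1+1\otimes x$, applied to $\lambda=i_f$. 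You instead lift everything from the associated graded level: $\operatorname{gr}(\rho)$ is the comultiplication of $S(\mathfrak{g})$ (by Theorem~\ref{t4}), whose coinvariants are $\Bbbk$ and whose Galois map is bijective since every Hopf algebra is a Galois object over the base (Proposition~\ref{ex13}), and Proposition~\ref{p7} then transfers bijectivity of $\operatorname{gr}(\beta)$ to $\beta$ itself. This is a self-contained alternative that stays entirely within the filtration toolkit of Section~\ref{s24} and avoids the Bell citation, at the cost of the routine verifications that $\beta$ is a filtered map and that $\operatorname{gr}$ commutes with tensor products of exhaustively filtered vector spaces (both fine over a field; note also that Proposition~\ref{p7} applies because its proof only uses additivity, not multiplicativity, of the map). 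The paper's route is shorter and, through $\theta$ --- which is precisely the candidate cleaving map of your alternative ending --- comes closer to exhibiting the normal basis/cleft structure of Theorem~\ref{t8}, though the paper never verifies convolution invertibility of $\theta$ and leans on Bell instead.

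One small correction to your ``main obstacle'' discussion: the $2$-cocycle condition on $f$ is not what makes $\rho$ well defined or coassociative. The relation check you describe works for any alternating bilinear form (the scalar terms cancel by themselves), and coassociativity is automatic for two algebra maps that agree on generators; there is also no completion issue. Where $Z^2(\mathfrak{g},\Bbbk)$ genuinely enters is in Sridharan's PBW theorem, which underlies Theorems~\ref{t3} and~\ref{t4} and hence the filtration arguments your proof depends on.
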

	
	\begin{proof}
		By Theorem~\ref{t3}, there exist a Lie algebra $\mathfrak{g}$ and $f\in Z^2(\mathfrak{g},\Bbbk)$ such that $A\cong U_f(\mathfrak{g})$. Let $h: T(\mathfrak{g})\rightarrow U_f(\mathfrak{g})\otimes U(\mathfrak{g})$ be the $\Bbbk$-linear map induced by $x \mapsto \overline{x} \otimes 1 + \overline{1}\otimes x$, for all $x\in \mathfrak{g}$. This map factorizes through an algebra map $\rho: U_f(\mathfrak{g})\rightarrow U_f(\mathfrak{g}) \otimes U(\mathfrak{g})$ and hence $U_f(\mathfrak{g})$ is a $U(\mathfrak{g})$-comodule algebra such that $\rho(\overline{x})=\overline{x}\otimes 1 + \overline{1}\otimes x$, for all $x\in \mathfrak{g}$. Let $X=\{x_i\}_{i\in J}$ be a $\Bbbk$-basis for $\mathfrak{g}$ and $\leq$ a total order in $J$. By the PBW Theorem there exists an unique $\Bbbk$-linear map $\theta: U(\mathfrak{g})\rightarrow U_f(\mathfrak{g})$ such that $\theta(1)=1$ and
		\begin{equation*}
			\theta(x_{i_1}x_{i_2}\cdots x_{i_n})= \overline{x_{i_1}} \, \overline{x_{i_2}} \cdots \overline{x_{i_n}}, \qquad \mbox{for every } i_1\leq i_2 \leq \cdots \leq i_n.
		\end{equation*}
		One can easily check that $\theta$ is in fact a $U(\mathfrak{g})$-comodule morphism. Moreover, $\theta$ is bijective and hence $U_f(\mathfrak{g})^{co U(\mathfrak{g})}=\Bbbk$.
		
		A result from Bell \cite[Proposition 1.5]{Bell} states that all faithfully flat $U(\mathfrak{g})$-Galois extensions $B^{\operatorname{co}H}\subset B$, with $B$ a $U(\mathfrak{g})$-comodule algebra, are characterized by maps $\lambda:\mathfrak{g}\rightarrow B$ such that $\rho(\lambda(x))=\lambda(x)\otimes 1 + 1\otimes x$, for all $x\in \mathfrak{g}$. Then, by taking $B:=U_f(\mathfrak{g})$ and $\lambda:=i_f$, it immediately follows that $\Bbbk \subset U_f(\mathfrak{g})$ is an $U(\mathfrak{g})$-Galois object.
	\end{proof}
	
	We end this section mentioning a classification for Sridharan enveloping algebras (and thus for almost symmetric algebras) when the associated Lie algebra is of dimension three over an algebraic closed field $\Bbbk$ of characteristic 0.
	
	\begin{theorem}[{\cite[Theorem 1.3]{Nuss}}]
	 Let $\mathfrak{g}$ be a Lie algebra such that $\dim_\Bbbk(\mathfrak{g})=3$, and $f\in Z^2(\mathfrak{g},\Bbbk)$. Then the Sridharan enveloping algebra $U_f(\mathfrak{g})$ is isomorphic to one of the ten algebras presented in Table~\ref{tab1}.
	\end{theorem}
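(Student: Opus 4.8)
The plan is to recast this classification as a computation in low-degree Lie algebra cohomology, organized by the isomorphism type of the underlying Lie algebra $\mathfrak{g}$, and then to run through the (classically known) finite list of three-dimensional Lie algebras over $\Bbbk$. First I would record two structural facts. By \eqref{e41} the commutator of two elements of $F_1(U_f(\mathfrak{g}))$ again lies in $F_1(U_f(\mathfrak{g}))$, and modulo $F_0(U_f(\mathfrak{g}))\cong\Bbbk$ (Remark~\ref{rem2}) it equals $\overline{[x,y]}$; hence the commutator makes $F_1/F_0$ into a Lie algebra canonically isomorphic to $\mathfrak{g}$. Consequently any filtered algebra isomorphism $U_f(\mathfrak{g})\cong U_{f'}(\mathfrak{g}')$ induces a Lie algebra isomorphism $\mathfrak{g}\cong\mathfrak{g}'$, so the classification splits according to the isomorphism type of $\mathfrak{g}$.

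Second, for a fixed $\mathfrak{g}$ I would prove that the isomorphism type of $U_f(\mathfrak{g})$ depends only on the class of $f$ in $H^2(\mathfrak{g},\Bbbk):=Z^2(\mathfrak{g},\Bbbk)/B^2(\mathfrak{g},\Bbbk)$, where $B^2$ consists of the coboundaries $(dg)(x,y)=g([x,y])$ with $g\in\mathfrak{g}^*$, taken up to the natural action of $\operatorname{Aut}(\mathfrak{g})$ by pullback. The key lemma is that whenever $f'-f=dg$, the assignment $i_f(x)\mapsto i_{f'}(x)+g(x)\cdot 1$ respects the defining relations of $U_f(\mathfrak{g})$: the scalar cross terms cancel and \eqref{e41} converts the relation in $U_f(\mathfrak{g})$ into the one in $U_{f'}(\mathfrak{g})$, so the assignment extends to a filtered algebra isomorphism between $U_f(\mathfrak{g})$ and $U_{f'}(\mathfrak{g})$. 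Conversely, given a filtered isomorphism, the induced $\psi\in\operatorname{Aut}(\mathfrak{g})$, together with a comparison of commutators through a vector-space splitting $\mathfrak{g}\to F_1$, shows that $[f]$ and $\psi^{*}[f']$ coincide. This yields a bijection between isomorphism classes of Sridharan algebras $U_f(\mathfrak{g})$ with $\dim_\Bbbk\mathfrak{g}=3$ and the disjoint union $\coprod_{[\mathfrak{g}]}H^2(\mathfrak{g},\Bbbk)/\operatorname{Aut}(\mathfrak{g})$.

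With this reduction, I would invoke the classical classification of three-dimensional Lie algebras over an algebraically closed field of characteristic $0$ --- the abelian algebra, the Heisenberg algebra, the one-parameter family of solvable algebras with two-dimensional derived ideal together with their degenerations, and the simple algebra $\mathfrak{sl}_2(\Bbbk)$ --- and compute $H^2(\mathfrak{g},\Bbbk)$ and its $\operatorname{Aut}(\mathfrak{g})$-orbits in each case. For $\mathfrak{sl}_2(\Bbbk)$, Whitehead's second lemma gives $H^2=0$, so the only algebra is $U(\mathfrak{sl}_2(\Bbbk))$. For the abelian algebra every alternating form is a cocycle and there are no nonzero coboundaries, so $H^2\cong\Lambda^2\mathfrak{g}^*$ with $GL_3(\Bbbk)$ acting; the orbits are indexed by the (necessarily even) rank, giving exactly the symmetric algebra $S(\mathfrak{g})$ and a Weyl-type algebra $A_1(\Bbbk)\otimes\Bbbk[z]$. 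I would treat the Heisenberg and solvable cases analogously, each time listing the orbit representatives and reading off the corresponding filtered algebra, and finally match the resulting list against Table~\ref{tab1}, using Theorem~\ref{t4} and Proposition~\ref{p7} to identify each orbit with a concrete presentation and the recovered pair $(\mathfrak{g},\operatorname{rank}f)$ as an invariant confirming that the listed algebras are pairwise non-isomorphic.

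The main obstacle is the solvable bookkeeping: correctly computing $H^2$ and the automorphism orbits for the solvable three-dimensional algebras --- in particular handling the one-parameter family, where one must determine precisely which parameter values admit nontrivial cocycle classes and whether distinct values yield isomorphic Sridharan algebras after degeneration --- and then verifying that the total orbit count is exactly ten with representatives matching those tabulated. The conceptual framework (central extensions classified by $H^2$) is routine; the delicacy lies entirely in this finite but intricate case analysis and in confirming the normal forms.
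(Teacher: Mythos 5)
The paper offers no proof of this statement: it is quoted directly from the cited source, so there is nothing internal to compare against. Your proposal is, in substance, the standard argument for this classification (and the one underlying the cited reference): recover $\mathfrak{g}$ as $F_1/F_0$ with the commutator bracket, show that the filtered isomorphism type of $U_f(\mathfrak{g})$ depends only on the orbit of $[f]\in H^2(\mathfrak{g},\Bbbk)$ under $\operatorname{Aut}(\mathfrak{g})$, and then compute these orbit spaces across the classification of three-dimensional Lie algebras. Your key lemma is correct: if $f'-f=dg$ then $i_f(x)\mapsto i_{f'}(x)+g(x)\cdot 1$ preserves relation \eqref{e41} because the scalar terms are central, and the converse direction through a splitting $\mathfrak{g}\to F_1(U_f(\mathfrak{g}))$ does yield $[f]=\psi^*[f']$. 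The cases you carry out explicitly ($H^2=0$ for $\mathfrak{sl}_2(\Bbbk)$ by Whitehead, $H^2\cong\Lambda^2\mathfrak{g}^*$ with orbits indexed by rank $0$ or $2$ in the abelian case) are right, and the deferred computations do produce exactly the remaining entries of Table~\ref{tab1}: two orbits for the Heisenberg algebra, two for the algebra $[x,y]=x$ plus a central element, one for the non-diagonalizable solvable algebra, a nontrivial cohomology class in the diagonalizable solvable family only at the parameter value $-1$, and the untwisted family itself, giving ten types in total.

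One caveat: your closing assertion that the recovered pair $(\mathfrak{g},\operatorname{rank}f)$ certifies the ten listed algebras as pairwise non-isomorphic is not justified as stated, because your recovery of $\mathfrak{g}$ from $U_f(\mathfrak{g})$ uses the filtration, and an abstract algebra isomorphism need not respect it. This does not affect the theorem, which only asserts that each $U_f(\mathfrak{g})$ is isomorphic to one of the ten listed algebras (and for that a filtered isomorphism is more than enough); but distinguishing the ten from one another would require filtration-independent invariants (center, abelianization, homological invariants), which is genuinely additional work beyond your outline.
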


	\begin{table}[h!]
		\centering
		\begin{tabular}{|c|c|c|c|}
			\hline
			Type & $xy-yx=$ & $yz-zy=$   & $zx-xz=$ \\ \hline
			1    & 0        & 0          & 0        \\ \hline
			2    & 0        & $x$        & 0        \\ \hline
			3    & $x$      & 0          & 0        \\ \hline
			4    & 0        & $\alpha y$ & $-x$     \\ \hline
			5    & 0        & $-y$       & $-(x+y)$ \\ \hline
			6    & $z$      & $-2y$      & $-2x$    \\ \hline
			7    & 1        & 0          & 0        \\ \hline
			8    & 1        & $x$        & 0        \\ \hline
			9    & $x$      & 1          & 0        \\ \hline
			10   & 1        & $y$        & $x$      \\ \hline
		\end{tabular}
	\caption{Each row corresponds to an algebra generated by $x$, $y$ and $z$, together with the commutating relations presented. $\alpha$ denotes a non-zero scalar.}\label{tab1}
	\end{table}
	
	Although some of the algebras presented in Table~\ref{tab1} are iterated skew polynomial rings over $\Bbbk$ (e.g. type 1, 7 or 8), not all of them are so (e.g., type 6).  Nevertheless, all these are skew PBW extensions of $\Bbbk$, that is, $U_f(\mathfrak{g}) \cong \sigma(\Bbbk)\langle x,y,z\rangle$.
	
	\subsection{Some interactions with Hopf Galois theory}\label{s25}
	
	In this last section, we will review relations between Hopf Galois extensions defined in Part~\ref{ch1} and some families (and particular examples) discussed in Part~\ref{ch2}. Specifically, in Section~\ref{inter1} we describe coactions over skew polynomial rings. Section~\ref{inter2} relates almost symmetric algebras with Hopf Galois systems and Section~\ref{inter3} endows Kashiwara algebras with a quantum torsor structure. We remark that the focus here is on Hopf Galois theory, but relations between noncommutative rings and Hopf algebras in general have been also studied (see e.g. \cite{BOZZ,Hua,Pan,Sal}). Throughout, $H$ will denote an arbitrary $K$-Hopf algebra (faithfully flat, if needed).
	
	\subsubsection{Coactions over skew polynomial rings}\label{inter1}
	
	Our goal here is to describe those coactions of an arbitrary Hopf algebra $H$ over a skew polynomial ring induced by the algebra of coefficients. For that, we develop some preliminary facts. All the results of this section are probably new.
	
	\begin{lemma}\label{l9}
		Let $R,B$ be two $K$-algebras. If $A=R[x;\sigma,\delta]$ is a skew polynomial ring over $R$, then
		\begin{gather*}
			A\otimes_K B \cong (R\otimes_K B)[z;\sigma \otimes \operatorname{id}_B, \delta\otimes \operatorname{id}_B],\\
			B\otimes_K A \cong (B\otimes_K R)[z;\operatorname{id}_B \otimes \sigma,\operatorname{id}_B \otimes \delta]
		\end{gather*}
		as $K$-algebras.
	\end{lemma}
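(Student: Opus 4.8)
The plan is to prove each isomorphism by invoking the universal property of skew polynomial rings (Theorem~\ref{t1}) in the algebra version, since $R$, $B$ and hence all tensor products are $K$-algebras. I will treat only the first isomorphism $A\otimes_K B \cong (R\otimes_K B)[z;\sigma\otimes\operatorname{id}_B,\delta\otimes\operatorname{id}_B]$ in detail, as the second follows by the symmetric argument (swapping the roles of the two tensor factors). The key preliminary observation is that $\sigma\otimes\operatorname{id}_B$ is a $K$-algebra endomorphism of $R\otimes_K B$ and $\delta\otimes\operatorname{id}_B$ is a $(\sigma\otimes\operatorname{id}_B)$-derivation of $R\otimes_K B$; this is a routine check on simple tensors using that $\sigma$ is an endomorphism and $\delta$ a $\sigma$-derivation. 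By Lemma~\ref{lemmanew}, since $\sigma,\delta$ are $K$-linear, so are these extended maps, so the skew polynomial ring $(R\otimes_K B)[z;\sigma\otimes\operatorname{id}_B,\delta\otimes\operatorname{id}_B]$ exists as a $K$-algebra; call it $C$.

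\textbf{Constructing the map.} First I would exhibit a $K$-algebra morphism $\phi:R\rightarrow A\otimes_K B$ together with a distinguished element $y\in A\otimes_K B$ satisfying the commutation rule of Theorem~\ref{t1}, in order to produce a morphism out of $A$; but the cleaner route is to build the map in the other direction, from $C$ into $A\otimes_K B$. I set $\phi:R\otimes_K B\rightarrow A\otimes_K B$ to be the natural inclusion $\iota_R\otimes\operatorname{id}_B$ (where $\iota_R:R\hookrightarrow A$), which is a $K$-algebra morphism, and take $y:=x\otimes 1_B\in A\otimes_K B$. I must then verify hypothesis (ii) of Theorem~\ref{t1}, namely that for all $w\in R\otimes_K B$,
\begin{equation*}
	y\,\phi(w)=\phi\big((\sigma\otimes\operatorname{id}_B)(w)\big)\,y+\phi\big((\delta\otimes\operatorname{id}_B)(w)\big).
\end{equation*}
It suffices to check this on simple tensors $w=r\otimes b$: the left side is $(x\otimes 1_B)(r\otimes b)=xr\otimes b=(\sigma(r)x+\delta(r))\otimes b$, while the right side expands via \ref{O3} to the same expression, using that multiplication in $A\otimes_K B$ is componentwise. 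The universal property then yields a unique $K$-algebra morphism $\psi:C\rightarrow A\otimes_K B$ with $\psi(z)=x\otimes 1_B$ and $\psi|_{R\otimes_K B}=\phi$.

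\textbf{Proving bijectivity.} To show $\psi$ is an isomorphism I would compare $K$-bases. By~\ref{O2}, $A$ is a free left $R$-module on $\{x^i\}_{i\geq0}$, so $A\otimes_K B$ is spanned over $K$ by elements $r x^i\otimes b$; grouping the $x^i$, one sees $A\otimes_K B$ is a free left $(R\otimes_K B)$-module with basis $\{x^i\otimes 1_B\}_{i\geq0}$. On the other side, $C$ is by~\ref{O2} a free left $(R\otimes_K B)$-module on $\{z^i\}_{i\geq0}$. Since $\psi$ sends the basis $\{z^i\}$ to $\{(x\otimes 1_B)^i\}=\{x^i\otimes 1_B\}$ bijectively and is $(R\otimes_K B)$-linear on the left, it is an isomorphism of left $(R\otimes_K B)$-modules, hence bijective; being already a ring morphism, it is a $K$-algebra isomorphism.

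\textbf{The main obstacle} I anticipate is not conceptual but bookkeeping: one must take care that the tensor product is over $K$ (not over the possibly-noncommutative $R$), so that the ``componentwise'' multiplication formula $(a\otimes b)(a'\otimes b')=aa'\otimes bb'$ of Example~\ref{ex1} applies and the identification $x^i\otimes 1_B=(x\otimes 1_B)^i$ is valid. A second point requiring attention is confirming that $\{x^i\otimes 1_B\}$ really is a \emph{free} left $(R\otimes_K B)$-basis of $A\otimes_K B$ rather than merely a spanning set; this rests on the flatness/freeness of $A$ over $R$ together with the freeness of the tensor product, and is where I would be most careful to justify linear independence. The second isomorphism is proved identically after interchanging the tensor factors, replacing $\iota_R\otimes\operatorname{id}_B$ by $\operatorname{id}_B\otimes\iota_R$ and $y$ by $1_B\otimes x$.
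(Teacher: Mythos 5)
Your proposal is correct, and its first half coincides with the paper's proof: both verify that $\sigma\otimes\operatorname{id}_B$ is a $K$-algebra endomorphism and $\delta\otimes\operatorname{id}_B$ a $(\sigma\otimes\operatorname{id}_B)$-derivation of $R\otimes_K B$, and both produce the morphism $\psi$ with $\psi(z)=x\otimes 1_B$ by applying Theorem~\ref{t1} to the inclusion $\phi=\iota_R\otimes\operatorname{id}_B$ and checking the commutation hypothesis on simple tensors. Where you genuinely diverge is bijectivity. The paper never mentions freeness of $A\otimes_K B$ over $R\otimes_K B$: it constructs an explicit $K$-linear candidate inverse $\varphi: A\otimes_K B \rightarrow (R\otimes_K B)[z;\sigma\otimes\operatorname{id}_B,\delta\otimes\operatorname{id}_B]$, sending $\sum_{i} r_ix^i\otimes b \mapsto \sum_{i}(r_i\otimes b)z^i$, via the universal property of the tensor product, and then checks $\psi\varphi=\operatorname{id}$ and $\varphi\psi=\operatorname{id}$ by direct computation. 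You instead argue that $\psi$ is left $(R\otimes_K B)$-linear and carries the free basis $\{z^i\}_{i\geq 0}$ onto $\{x^i\otimes 1_B\}_{i\geq 0}$, hence is an isomorphism of modules and so bijective. This route is sound, but its entire weight rests on the point you flag at the end: that $\{x^i\otimes 1_B\}$ is a \emph{free} left $(R\otimes_K B)$-basis and not merely a spanning set. To close that step, note that~\ref{O2} gives $A=\bigoplus_{i\geq 0}Rx^i$ as $K$-modules, that $-\otimes_K B$ preserves direct sums, and that $Rx^i\otimes_K B\cong R\otimes_K B$ via $rx^i\otimes b\mapsto r\otimes b$; independence of the $x^i\otimes 1_B$ then follows componentwise. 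The trade-off between the two proofs: the paper's explicit inverse is computational but entirely self-contained, whereas your argument, once the freeness claim is justified, is shorter and isolates a fact the paper tacitly uses later anyway — in Proposition~\ref{p14} and Theorem~\ref{t15}, the steps of the form ``by~\ref{O2} we must have $\rho_R(r_i)=r_i\otimes 1$'' are exactly appeals to the freeness of $A\otimes_K H$ over $R\otimes_K H$ with basis $\{(x\otimes 1)^i\}$, so recording it inside Lemma~\ref{l9} would make those later arguments cleaner.
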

	
	\begin{proof}
		We shall prove the first isomorphism since the argument for the second one is quite similar. Provided that $\sigma$ is a $K$-algebra morphism, $\sigma \otimes \operatorname{id}_B$ is also of the same type. Also, since $\delta$ is additive and $\delta(k1)=0$, for all $k\in K$, it follows that $\delta\otimes \operatorname{id}_B$ is also additive and $(\delta\otimes \operatorname{id}_B)(k1\otimes 1)=0$. Furthermore, for all $r,s\in R$ and $b,c\in B$, we have
		\begin{align*}
			&(\delta\otimes\operatorname{id}_B)[(r\otimes b)(s\otimes c)]\\
			&\quad =(\delta\otimes\operatorname{id}_B)(rs\otimes bc)=\delta(rs)\otimes bc= (\sigma(r)\delta(s)+\delta(r)s) \otimes bc\\
			&\quad= \sigma(r)\delta(s) \otimes bc + \delta(r)s\otimes bc = (\sigma(r)\otimes b)(\delta(s) \otimes c) + (\delta(r)\otimes b)(s\otimes c)\\
			&\quad= [(\delta \otimes \operatorname{id}_B)(r\otimes b)][(\delta\otimes\operatorname{id}_B)(s\otimes c)] + [(\delta\otimes\operatorname{id}_B)(r\otimes b)](s\otimes c).
		\end{align*}
		Thus, $\delta\otimes \operatorname{id}_B$ is a $(\sigma\otimes \operatorname{id}_B)$-derivation of $R\otimes B$. Then we can consider the $K$-algebra $(R\otimes_K B)[z;\sigma \otimes \operatorname{id}_B, \delta\otimes\operatorname{id}_B]$.
		
		Now, since the map $R\times B\rightarrow A \otimes B$ given by $(r,b)\mapsto r\otimes b$ is $K$-bilinear, by the universal property of the tensor product, there exists a $K$-linear map $\phi: R\otimes B \rightarrow A\otimes B$ given by $\phi(r\otimes b)=r\otimes b$, for all $r\in R \subset A$ and $b\in B$. In fact, $\phi$ is a $K$-algebra morphism. Now, since for all $r\in R$,
		\begin{align*}
			(x\otimes 1)\phi(r \otimes b) &= (x\otimes 1)(r\otimes b)=xr\otimes b = (\sigma(r)x+\delta(r)) \otimes b\\&= \sigma(r)x \otimes b + \delta(r) \otimes b = \phi(\sigma(r) \otimes b)(x\otimes 1) + \sigma(\delta(b)\otimes b) \\ &= \phi[(\sigma \otimes \operatorname{id}_B)(r\otimes b)](x\otimes 1) + \phi[(\delta\otimes\operatorname{id}_B)(r\otimes b)],
		\end{align*}
		by Theorem~\ref{t1}, there exists an uniquely $K$-algebra morphism $$\psi: (R\otimes B)[z; \sigma\otimes \operatorname{id}_B, \delta \otimes\operatorname{id}_B]\rightarrow A\otimes B$$ such that $\psi(z)=x\otimes 1$ and $\phi|_{R\otimes B}=\phi$. Explicitly, $\psi$ is given by
		\begin{equation*}
			\psi\left( \sum_{i=0}^n (r_i\otimes b_i)z^i \right) = \sum_{i=0}^n \phi(r_i\otimes b_i)(x\otimes 1)^i = \sum_{i=0}^n (r_i \otimes b_i)(x\otimes 1)^i = \sum_{i=0}^n r_ix^i \otimes b_i.
		\end{equation*}
		 Conversely, since the map $A\times B \rightarrow (R\otimes B)[z; \sigma\otimes \operatorname{id}_B, \delta \otimes\operatorname{id}_B]$ given by $$\left( \sum_{i=0}^n r_ix^i,b \right) \mapsto \sum_{i=0}^n (r_i \otimes b)z^i$$ is $K$-bilinear, by the universal property of the tensor product, there exists a $K$-linear map $\varphi: A \otimes B \rightarrow (R\otimes B)[z; \sigma\otimes \operatorname{id}_B, \delta \otimes\operatorname{id}_B]$ given by
		\begin{equation*}
			\varphi\left( \sum_{i=0}^n r_ix^i \otimes b \right)=\sum_{i=0}^n (r_i \otimes b)z^i.
		\end{equation*}
		Now, the calculation
		\begin{align*}
			\psi\varphi\left( \sum_{i=0}^n r_ix^i \otimes b \right) &= \psi\left( \sum_{i=0}^n (r_i \otimes b)z^i \right) = \sum_{i=0}^n r_i x^i \otimes b,\\
			\varphi\psi\left( \sum_{i=0}^n (r_i \otimes b_i)z^i \right) &= \varphi\left( \sum_{i=0}^n r_ix^i \otimes b_i \right) = \sum_{i=0}^n \varphi( r_ix^i \otimes b_i ) = \sum_{i=0}^n (r_i \otimes b_i)z^i,
		\end{align*}
		proves that these maps are inverse of each other.
	\end{proof}
	
	Via the isomorphism, the indeterminate $z$ in $(R\otimes B)[z;\sigma \otimes \operatorname{id}_B, \delta\otimes\operatorname{id}_B]$ can be identified with the element $x\otimes 1$ of $R[x;\sigma,\delta] \otimes B$, and hence we write $(R\otimes B)[x\otimes 1;\sigma \otimes \operatorname{id}_B, \delta\otimes\operatorname{id}_B]$.
	
	\begin{proposition}\label{p14}
		Let $H$ be a $K$-Hopf algebra. Suppose that $R$ is a right $H$-comodule algebra with structure map $\rho_R: R \rightarrow R\otimes H$, and let $A=R[x;\sigma,\delta]$ be a skew polynomial ring over $R$ such that $\sigma$ and $\delta$ are $H$-comodule morphisms.
		Then $A$ is also a right $H$-comodule algebra with induced structure map $\rho_A: A \rightarrow A\otimes H$ given by
		\begin{equation*}
			\rho_A\left( \sum_{i=0}^n r_ix^i \right) = \sum_{i=0}^n (r_i)_{(0)} x^i \otimes (r_i)_{(1)} , \qquad \mbox{with } r_i \in R, \, 0\leq i \leq n.
		\end{equation*}
		Moreover, $A^{\operatorname{co}H}= R^{\operatorname{co}H}[x;\sigma,\delta]$, where $\sigma$ and $\delta$ are considered restricted to $R^{\operatorname{co}H}$.
	\end{proposition}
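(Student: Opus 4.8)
The statement has two parts: first, that $A = R[x;\sigma,\delta]$ is a right $H$-comodule algebra with the prescribed structure map $\rho_A$; second, the explicit computation of coinvariants $A^{\operatorname{co}H} = R^{\operatorname{co}H}[x;\sigma,\delta]$. The natural strategy is to leverage Lemma~\ref{l9} rather than verifying the comodule algebra axioms by hand. I would observe that $\rho_A$ is meant to be precisely the composite $A \xrightarrow{\rho_R \text{ on coefficients}} (R\otimes H)[x\otimes 1;\sigma\otimes\operatorname{id}_H,\delta\otimes\operatorname{id}_H] \cong A\otimes H$, where the last isomorphism is the one furnished by Lemma~\ref{l9}. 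The hypothesis that $\sigma$ and $\delta$ are $H$-comodule morphisms is exactly what makes this composite well defined: it guarantees that $\rho_R\sigma = (\sigma\otimes\operatorname{id}_H)\rho_R$ and $\rho_R\delta = (\delta\otimes\operatorname{id}_H)\rho_R$, so that applying $\rho_R$ coefficientwise respects the skew commutation rule~\ref{O3}.

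\textbf{First part (comodule algebra structure).} I would make $\rho_A$ a genuine algebra map by invoking the universal property, Theorem~\ref{t1}. Take $B := A\otimes H$, let $\phi := (\iota_R\otimes\operatorname{id}_H)\rho_R : R \to A\otimes H$ (which is an algebra map since $R$ is an $H$-comodule algebra), and take the distinguished element $y := x\otimes 1$. One then checks condition (ii) of Theorem~\ref{t1}, namely
\begin{equation*}
	y\,\phi(r) = \phi(\sigma(r))\,y + \phi(\delta(r)), \qquad \text{for all } r\in R.
\end{equation*}
Expanding the left side, $y\phi(r) = (x\otimes 1)(r_{(0)}\otimes r_{(1)}) = xr_{(0)}\otimes r_{(1)} = (\sigma(r_{(0)})x + \delta(r_{(0)}))\otimes r_{(1)}$; using that $\sigma,\delta$ are comodule morphisms this equals $\sigma(r)_{(0)}x\otimes\sigma(r)_{(1)} + \delta(r)_{(0)}\otimes\delta(r)_{(1)}$, which is exactly the right side. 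Theorem~\ref{t1} then yields a unique algebra morphism $\rho_A : A \to A\otimes H$ extending $\phi$ and sending $x\mapsto x\otimes 1$; its explicit formula on $\sum r_ix^i$ is the one in the statement. Since $\rho_A$ is an algebra map, by the remark following Definition~\ref{d6} it remains only to verify that $\rho_A$ is a comodule structure map, i.e.\ the coassociativity $(\rho_A\otimes\operatorname{id}_H)\rho_A = (\operatorname{id}_A\otimes\Delta)\rho_A$ and counit $(\operatorname{id}_A\otimes\varepsilon)\rho_A = \operatorname{id}_A$ axioms~\eqref{e32}. Both are linear identities; it suffices to check them on the algebra generators $R\cup\{x\}$, where on $R$ they reduce to the corresponding axioms for $\rho_R$, and on $x$ they are immediate from $\rho_A(x)=x\otimes 1$ and $\Delta(1)=1\otimes1$, $\varepsilon(1)=1$.

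\textbf{Second part (coinvariants).} For the computation of $A^{\operatorname{co}H}$, the inclusion $R^{\operatorname{co}H}[x;\sigma,\delta]\subseteq A^{\operatorname{co}H}$ is straightforward: if all coefficients $r_i$ lie in $R^{\operatorname{co}H}$ then $\rho_A(\sum r_ix^i) = \sum r_ix^i\otimes 1$ directly from the formula. For the reverse inclusion, I would take $p = \sum_{i=0}^n r_ix^i$ with $\rho_A(p) = p\otimes 1$ and compare both sides in $A\otimes H$. The key point is that $\{x^i\}_{i\geq0}$ is a left $R$-basis of $A$~\ref{O2}, so $\{x^i\otimes h\}$ with $h$ ranging over a basis of $H$ behaves as a basis allowing termwise comparison: the equation $\sum_i (r_i)_{(0)}x^i\otimes(r_i)_{(1)} = \sum_i r_ix^i\otimes 1$ forces $\rho_R(r_i) = r_i\otimes 1$ for each $i$, i.e.\ every $r_i\in R^{\operatorname{co}H}$. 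One must also confirm that $R^{\operatorname{co}H}[x;\sigma,\delta]$ is a bona fide skew polynomial ring, which requires $\sigma$ and $\delta$ to restrict to $R^{\operatorname{co}H}$; this follows because $R^{\operatorname{co}H}$ is the equalizer of $\rho_R$ and $-\otimes1$, and $\sigma,\delta$ commute with both maps by the comodule-morphism hypothesis. \textbf{The main obstacle} is making the termwise-comparison argument fully rigorous when $H$ is merely flat over $K$ rather than free: one cannot naively pick a $K$-basis of $H$. The clean fix is to identify $A\otimes H$ as the free left $(R\otimes H)$-module on $\{x^i\otimes 1\}$ via Lemma~\ref{l9}, so that the coefficient extraction happens at the level of $R\otimes H$ and the condition $\rho_R(r_i)=r_i\otimes1$ can be read off directly from uniqueness of the polynomial expansion, with flatness of $H$ guaranteeing that $R^{\operatorname{co}H}\otimes H \hookrightarrow R\otimes H$ detects coinvariants correctly.
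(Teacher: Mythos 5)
Your proposal is correct and follows essentially the same route as the paper's proof: both construct $\rho_A$ via the universal property (Theorem~\ref{t1}) together with the identification of Lemma~\ref{l9}, and both recover the coinvariants by comparing coefficients against the free left basis $\{x^i\}$ from~\ref{O2}, using that $\sigma$ and $\delta$ being comodule morphisms makes $R^{\operatorname{co}H}[x;\sigma,\delta]$ a genuine subalgebra. Your two small streamlinings --- applying Theorem~\ref{t1} directly with target $A\otimes H$, and verifying coassociativity and the counit axiom only on the algebra generators $R\cup\{x\}$ since both sides are algebra maps --- are valid and slightly shorten the paper's direct computations, but do not change the underlying argument.
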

	
	\begin{proof}
		By Lemma~\ref{l9} we have $A\otimes H \cong (R\otimes H)[x\otimes 1;\sigma \otimes \operatorname{id}_H, \delta\otimes\operatorname{id}_H]$ as algebras. Hence, since $R$ is a right comodule algebra, $\rho_R: R \rightarrow R \otimes H \subset (R\otimes H)[x\otimes 1;\sigma \otimes \operatorname{id}_H, \delta\otimes\operatorname{id}_H]$ is an algebra morphism. Moreover, for every $r\in R$, since $\sigma$ and $\delta$ are comodule morphisms, we have
		\begin{align*}
			\rho_R(\sigma(r))(x\otimes 1)+\rho_R(\delta(r)) &= (\sigma(r)_{(0)} \otimes \sigma(r)_{(1)})(x\otimes 1) + \delta(r)_{(0)} \otimes \delta(r)_{(1)}\\
			&= (\sigma(r_{(0)}) \otimes r_{(1)})(x\otimes 1) + (\delta(r_{(0)}) \otimes r_{(1)})\\
			&= [(\sigma\otimes \operatorname{id}_H) (r_{(0)} \otimes r_{(1)})] (x\otimes 1) + (\delta\otimes \operatorname{id}_H)(r_{(0)}\otimes r_{(1)})\\
			&= (x\otimes 1)(r_{(0)}\otimes r_{(1)}) = (x\otimes 1)\rho_R(r).
		\end{align*}
		Hence, by Theorem~\ref{t1}, there exists an algebra morphism $$\overline{\rho}: A \rightarrow (R\otimes H)[x\otimes 1;\sigma \otimes \operatorname{id}_H, \delta\otimes\operatorname{id}_H]$$ such that $\overline{\rho}(x)=x\otimes 1$ and $\overline{\rho}|_R = \rho_R$. Explicitly,
		\begin{equation*}
			\overline{\rho}\left( \sum_{i=0}^n r_ix^i \right) = \sum_{i=0}^n \rho_R(r_i)(x\otimes 1)^i = \sum_{i=0}^n ((r_i)_{(0)} \otimes (r_i)_{(1)})(x\otimes 1)^i.
		\end{equation*}
		Now, we use the isomorphism $\psi$ of the proof of Lemma~\ref{l9} to define the algebra morphism $\rho_A : A\rightarrow A \otimes H$ as $\rho_A:=\psi\overline{\rho}$. Then, we have
		\begin{align*}
			\rho_A\left( \sum_{i=0}^n r_ix^i \right) &= \psi\overline{\rho}\left( \sum_{i=0}^n r_ix^i \right) \\&= \psi \left( \sum_{i=0}^n ((r_i)_{(0)} \otimes (r_i)_{(1)})(x\otimes 1)^i \right) \\&= \sum_{i=0}^n (r_i)_{(0)} x^i \otimes (r_i)_{(1)}.
		\end{align*}
		Furthermore,
		\begin{align*}
			[(\operatorname{id}_A \otimes \Delta)\rho_A]\left( \sum_{i=0}^n r_ix^i\right) &= (\operatorname{id}_A \otimes \Delta) \left( \sum_{i=0}^n (r_i)_{(0)} x^i \otimes (r_i)_{(1)}\right)\\&= \sum_{i=0}^n (r_i)_{(0)} x^i \otimes (r_i)_{(1)} \otimes (r_i)_{(2)} \\
			&= (\rho_A \otimes \operatorname{id}_H) \left( \sum_{i=0}^n (r_i)_{(0)} x^i \otimes (r_i)_{(1)} \right)\\& = [(\rho_A \otimes \operatorname{id}_H)\rho_A]\left(\sum_{i=0}^n r_ix^i  \right),
		\end{align*}
	and
	\begin{align*}
			[(\operatorname{id}_A \otimes \varepsilon)\rho_A] \left( \sum_{i=0}^n r_ix^i \right) &= (\operatorname{id}_A \otimes \varepsilon) \left( \sum_{i=0}^n (r_i)_{(0)} x^i \otimes (r_i)_{(1)} \right) \\&= \sum_{i=0}^n (r_i)_{(0)} x^i \otimes \varepsilon((r_i)_{(1)})1\\
			&= \sum_{i=0}^n (r_i)_{(0)} \varepsilon((r_i)_{(1)}) x^i \otimes 1 = \sum_{i=0}^n r_i x^i \otimes 1,
		\end{align*}
		which proves that $A$ is a right $H$-comodule with structure map $\rho_A$. Moreover, since, $\rho_A$ is an algebra morphism, $A$ is indeed a right $H$-comodule algebra.
		
		Now, we define
		\begin{equation*}
			R^{\operatorname{co}H}[x;\sigma,\delta] := \left\{ \sum_{i=0}^n r_ix^i \in A :  r_i \in R^{\operatorname{co}H}, \, 0\leq i \leq n \right\} \subset A,
		\end{equation*}
		which, since $\sigma$ and $\delta$ are comodule morphisms, is indeed a subalgebra of $A$. For a given $ p(x)=\sum_{i=0}^n r_ix^i \in R^{\operatorname{co}H}[x;\sigma,\delta]$ we have
		\begin{equation*}
			\rho_A(p(x))=\rho_A \left( \sum_{i=0}^n r_ix^i \right) = \sum_{i=0}^n r_i x^i \otimes 1 = p(x) \otimes 1,
		\end{equation*}
		so $p(x)\in A^{\operatorname{co}H}$. Conversely, if $p(x)=\sum_{i=0}^n r_ix^i\in A^{\operatorname{co}H}$, then
		\begin{equation*}
			\sum_{i=0}^n (r_i)_{(0)} x^i \otimes (r_i)_{(1)} = \sum_{i=0}^n r_i x^i \otimes 1 \in A\otimes H.
		\end{equation*}
		Using the isomorphism of Lemma~\ref{l9}, this means
		\begin{equation*}
			\sum_{i=0}^n ((r_i)_{(0)} \otimes (r_i)_{(1)})(x\otimes 1)^i = \sum_{i=0}^n (r_i \otimes 1)(x\otimes 1)^i \in (R\otimes H)[x\otimes 1;\sigma \otimes \operatorname{id}_H, \delta\otimes\operatorname{id}_H].
		\end{equation*}
		By~\ref{O2} we must have $\rho_R(r_i)=r_i \otimes 1$ for all $0\leq i \leq n$, so each $r_i$ lies in $R^{\operatorname{co}H}$ and hence $p(x)$ is an element of $R^{\operatorname{co}H}[x;\sigma,\delta]$. Then $A^{\operatorname{co}H}= R^{\operatorname{co}H}[x;\sigma,\delta]$.
	\end{proof}
	
	\begin{corollary}
		Let $H$ be a $K$-Hopf algebra. Suppose that $R$ is a right $H$-comodule algebra with structure map $\rho_R: R \rightarrow R\otimes H$, and let $A=R[x_1;\sigma_1,\delta_1]\cdots [x_n;\sigma_n,\delta_n]$ be an iterated skew polynomial ring over $R$ such that each $\sigma_i$ and $\delta_i$ are $H$-comodule morphisms, for $1\leq i \leq n$. Then $A$ is also a right $H$-comodule algebra with induced structure map $\rho_A: A \rightarrow A\otimes H$ given by
		\begin{equation*}
			\rho_A\left( \sum_{i=0}^n r_i X_i \right) = \sum_{i=0}^n (r_i)_{(0)} X_i \otimes (r_i)_{(1)} , \qquad \mbox{with } r_i \in R \mbox{ and } X_i\in\operatorname{Mon}(A), \, 0\leq i \leq n.
		\end{equation*}
		Moreover, $A^{\operatorname{co}H}= R^{\operatorname{co}H}[x_1;\sigma_1,\delta_1]\cdots[x_n;\sigma_n,\delta_n]$, where $\sigma_i$ and $\delta_i$ are considered restricted to $R^{\operatorname{co}H}[x_1;\sigma_1,\delta_1] \cdots[x_{i-1};\sigma_{i-1},\delta_{i-1}]$, for every $1\leq i \leq n$.
	\end{corollary}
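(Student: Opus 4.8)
The plan is to iterate Proposition~\ref{p14} $n$ times. The key observation is that an iterated skew polynomial ring $A=R[x_1;\sigma_1,\delta_1]\cdots[x_n;\sigma_n,\delta_n]$ is, by definition, a single skew polynomial ring $A=A_{n-1}[x_n;\sigma_n,\delta_n]$ over the base $A_{n-1}:=R[x_1;\sigma_1,\delta_1]\cdots[x_{n-1};\sigma_{n-1},\delta_{n-1}]$, so an induction on $n$ is the natural strategy. The case $n=1$ is exactly Proposition~\ref{p14}. For the inductive step, I would assume the statement holds for $n-1$ variables, so that $A_{n-1}$ is a right $H$-comodule algebra with structure map $\rho_{A_{n-1}}$ acting on a standard-monomial expansion by $\sum_i (r_i)_{(0)} X_i' \otimes (r_i)_{(1)}$, where the $X_i'$ range over $\operatorname{Mon}(A_{n-1})$, and with coinvariants $A_{n-1}^{\operatorname{co}H}=R^{\operatorname{co}H}[x_1;\sigma_1,\delta_1]\cdots[x_{n-1};\sigma_{n-1},\delta_{n-1}]$.

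The main technical point I would need to verify before invoking Proposition~\ref{p14} on $A=A_{n-1}[x_n;\sigma_n,\delta_n]$ is that $\sigma_n$ and $\delta_n$ are $H$-comodule morphisms with respect to the \emph{induced} comodule structure $\rho_{A_{n-1}}$ on $A_{n-1}$, not merely on $R$. The hypothesis only assumes each $\sigma_i,\delta_i$ is an $H$-comodule morphism; for the induction to close, I must confirm that this colinearity propagates correctly through the comodule structure built in the previous step. Concretely, writing the action of $\sigma_n$ on a monomial and comparing $\rho_{A_{n-1}}(\sigma_n(r_iX_i'))$ with $(\sigma_n\otimes\operatorname{id}_H)\rho_{A_{n-1}}(r_iX_i')$, the compatibility should follow from $\sigma_n$ being colinear together with the explicit description of $\rho_{A_{n-1}}$ coming from the inductive hypothesis; the analogous check must be run for $\delta_n$. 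This is the step I expect to be the main obstacle, since it requires unwinding the induced structure map rather than working with the raw coaction on $R$, and one must be careful that the hypothesis ``$\sigma_i,\delta_i$ are $H$-comodule morphisms'' is being applied with the intended comodule structure on the relevant iterated subring.

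Once colinearity of $\sigma_n,\delta_n$ over $A_{n-1}$ is established, Proposition~\ref{p14} applies verbatim and yields that $A=A_{n-1}[x_n;\sigma_n,\delta_n]$ is a right $H$-comodule algebra with structure map sending $\sum_j p_j x_n^j$ (with $p_j\in A_{n-1}$) to $\sum_j (p_j)_{(0)} x_n^j \otimes (p_j)_{(1)}$. Expanding each $p_j$ in the PBW basis $\operatorname{Mon}(A_{n-1})$ and recombining, this collapses to the stated formula $\rho_A(\sum_i r_i X_i)=\sum_i (r_i)_{(0)} X_i \otimes (r_i)_{(1)}$ with $X_i\in\operatorname{Mon}(A)$, using Lemma~\ref{l8} to guarantee the standard monomials form an $R$-basis. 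Finally, the coinvariant computation is inherited directly: Proposition~\ref{p14} gives $A^{\operatorname{co}H}=A_{n-1}^{\operatorname{co}H}[x_n;\sigma_n,\delta_n]$, and substituting the inductive description of $A_{n-1}^{\operatorname{co}H}$ produces $R^{\operatorname{co}H}[x_1;\sigma_1,\delta_1]\cdots[x_n;\sigma_n,\delta_n]$, as claimed, with each $\sigma_i,\delta_i$ restricted to the appropriate subring of coinvariants.
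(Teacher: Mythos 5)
Your proposal is correct and matches the paper's approach exactly: the paper offers no separate argument for this corollary, since it is precisely the $n$-fold iteration of Proposition~\ref{p14} (induction on the number of variables, applying the proposition to $A=A_{n-1}[x_n;\sigma_n,\delta_n]$, then collapsing the formula via the PBW basis of Lemma~\ref{l8}) that you describe. The one remark worth making is that the ``main technical obstacle'' you flag---colinearity of $\sigma_n,\delta_n$ with respect to the induced coaction on $A_{n-1}$---is not something to be verified but is the only sensible reading of the hypothesis, since $\sigma_n,\delta_n$ are maps on $A_{n-1}$ (not on $R$) and the induced structure map $\rho_{A_{n-1}}$ is the only $H$-comodule structure available there, so Proposition~\ref{p14} applies directly at each step.
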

	
	Now, we prove that for a certain type of skew polynomial rings the Hopf Galois extension condition is preserved.
	
	\begin{theorem}\label{t15}
		Let $H$ be a $\Bbbk$-Hopf algebra, $R$ a $\Bbbk$-algebra and $A=R[x;\sigma]$ a polynomial ring of endomorphism type over $R$ such that $R$ is a right $H$-comodule algebra and $\sigma$ is an injective comodule morphism. If $R$ is a right $H$-Galois object, then $\Bbbk[x;\sigma] \subset A$ is a right $H$-Galois extension.
	\end{theorem}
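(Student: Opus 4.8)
The plan is to reduce the Galois map of $A$ to that of $R$ through a sequence of bookkeeping isomorphisms, exploiting that everything in sight is flat over the field $\Bbbk$. First I would invoke Proposition~\ref{p14} (with $\delta=0$): since $\sigma$ is a comodule morphism and $R$ is a right $H$-comodule algebra, $A=R[x;\sigma]$ is a right $H$-comodule algebra with $\rho_A(\sum_i r_ix^i)=\sum_i (r_i)_{(0)}x^i\otimes (r_i)_{(1)}$, and its subalgebra of coinvariants is $A^{\operatorname{co}H}=R^{\operatorname{co}H}[x;\sigma]$. Because $R$ is a right $H$-Galois object we have $R^{\operatorname{co}H}=\Bbbk$, and as $\sigma$ restricts to the identity on $\Bbbk\cdot 1_R$, this coinvariant subalgebra is exactly $\Bbbk[x;\sigma]=:B$. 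Thus the task is to prove that the Galois map $\beta_A\colon A\otimes_B A\to A\otimes H$, $\beta_A(p\otimes q)=p\,q_{(0)}\otimes q_{(1)}$ of Definition~\ref{d7}, is bijective.

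The structural heart of the argument is the observation that, as an $(R,B)$-bimodule, $A\cong R\otimes_\Bbbk B$ via $rx^i\leftrightarrow r\otimes x^i$ (the right $B$-action multiplies powers of $x$ on the far right, so the twist $\sigma$ never intervenes here). Using this identification together with associativity of the tensor product and flatness of $R$ over $\Bbbk$, I would establish the isomorphism $A\otimes_B A\cong R\otimes_\Bbbk A$, realized explicitly by $\Theta(\sum_i r_ix^i\otimes_B q)=\sum_i r_i\otimes_\Bbbk x^iq$; a short check on balanced tensors ($\Theta(px\otimes q)=\Theta(p\otimes xq)$) confirms well-definedness.

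With this in hand I would transport $\beta_A$ through the further $\Bbbk$-linear identifications $R\otimes_\Bbbk A\cong (R\otimes_\Bbbk R)\otimes_\Bbbk B$ (source) and $A\otimes_\Bbbk H\cong (R\otimes_\Bbbk H)\otimes_\Bbbk B$ (target), both obtained by sliding the powers of $x$ into a separate $B$-tensorand. A direct computation on a generator, $\beta_A(r\otimes_B sx^j)=r\,s_{(0)}x^j\otimes s_{(1)}$, then reads as $\beta_R(r\otimes s)\otimes x^j$ after these identifications, so that $\beta_A$ corresponds precisely to $\beta_R\otimes\operatorname{id}_B$, where $\beta_R$ is the Galois map of $R$. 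Since $R$ is a Galois object, $\beta_R$ is bijective, and because $\Bbbk$ is a field the functor $-\otimes_\Bbbk B$ preserves bijectivity; hence $\beta_R\otimes\operatorname{id}_B$, and therefore $\beta_A$, is bijective, establishing that $\Bbbk[x;\sigma]\subset A$ is a right $H$-Galois extension.

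I expect the main obstacle to be organizational rather than conceptual: one must track four intertwined module structures --- the left $R$-action, the right $B$-action, and the $\sigma$-twisted left $B$-action appearing implicitly in $A\otimes_B A$ --- and verify that each of the transport maps $\Theta$ and the two $x$-sliding isomorphisms is well defined over the relevant tensor balancing. It is worth noting that $\sigma$ disappears from the final Galois map computation (because $\rho_A$ keeps the $x$-powers on the right of the tensor), so the genuine work lies in confirming that the bimodule identification $A\cong R\otimes_\Bbbk B$ and the derived isomorphism $A\otimes_B A\cong R\otimes_\Bbbk A$ are compatible with all the actions in play.
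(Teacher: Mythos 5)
Your proposal is correct, and it takes a genuinely different route from the paper's proof. The paper argues directly on the Galois map $\beta_A$ of \eqref{e106}: injectivity is handled by reducing to monomial simple tensors $rx^i\otimes sx^j$ (here the injectivity of $\sigma$ and the fact that $\Bbbk$ is a field are used to split a vanishing tensor $r\otimes\sigma^i(s)=0$ into $r=0$ or $s=0$), and surjectivity is proved by exhibiting an explicit preimage $\sum_i r_i\sigma^i(h^{[1]})x^i\otimes h^{[2]}$ built from the translation map $h^{[1]}\otimes h^{[2]}=\beta_R^{-1}(1\otimes h)$ together with relation \eqref{e107}. You instead exploit the $(R,B)$-bimodule identification $A\cong R\otimes_\Bbbk B$ and the resulting isomorphisms $A\otimes_B A\cong (R\otimes_\Bbbk R)\otimes_\Bbbk B$ and $A\otimes_\Bbbk H\cong (R\otimes_\Bbbk H)\otimes_\Bbbk B$ to recognize $\beta_A$ as $\beta_R\otimes\operatorname{id}_B$, from which bijectivity is immediate. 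Your route buys three things. First, it treats injectivity and surjectivity in one stroke and on \emph{all} elements of $A\otimes_B A$, whereas the paper's injectivity step, as literally written, only tests simple tensors of monomials --- a reduction that is not valid for an arbitrary linear map and is in fact justified precisely by the kind of degree-by-degree decomposition your identification makes explicit (the degree-$n$ component of $A\otimes_B A$ is spanned by elements $r\otimes_B sx^n$, and on it $\beta_A$ acts as $\beta_R$ followed by tacking on $x^n$). Second, your argument never invokes the injectivity of $\sigma$, so it establishes the theorem under a weaker hypothesis than stated. Third, it isolates where the twist matters: $\sigma$ only enters through the $B$-balancing on the left tensor factor, and disappears once elements are normalized to the form $r\otimes_B q$. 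What the paper's computational approach buys in exchange is an explicit formula for $\beta_A^{-1}$ in terms of $\beta_R^{-1}$, in the same translation-map notation used in its treatment of quantum torsors; if you want that formula from your proof, you can extract it by transporting $\beta_R^{-1}\otimes\operatorname{id}_B$ back through your identifications, which reproduces exactly the paper's preimage.
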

	
	\begin{proof}
		$R$ being a right $H$-Galois object means that the map $\beta_R: R \otimes R \rightarrow R \otimes H$ given by
		$\beta_R(r\otimes s)=(r\otimes 1)\rho_R(s)=rs_{(0)} \otimes s_{(1)}$, for all $r,s\in R$, is bijective. For $A$, with the comodule structure induced by Proposition~\ref{p14}, we have that the Galois map $\beta_A: A \otimes_{A^{\operatorname{co}H}} A \rightarrow A \otimes H$ is given by
		\begin{align}
			\beta_A\left( \sum_{i=0}^n r_i x^i \otimes \sum_{j=0}^m s_jx^j \right) &= \left( \sum_{i=0}^n r_i x^i \otimes 1 \right) \rho_A\left( \sum_{j=0}^m s_jx^j\right) \nonumber\\ \label{e106}&= \left( \sum_{i=0}^n r_i x^i \right)\left( \sum_{j=0}^m (s_j)_{(0)}x^j \right) \otimes (s_j)_{(1)}.
		\end{align}
		 To prove that $\beta_A$ is injective it suffices to show that if for $r,s\in R$ and $i,j\in \mathbb{N}$ we have $\beta_A(rx^i \otimes sx^j)=0$, then $rx^i\otimes sx^j=0$. By \eqref{e106} we have
		\begin{equation*}
			0=\beta_A( rx^i \otimes sx^j )=(rx^i)(s_{(0)}x^j) \otimes s_{(1)} \overset{\mbox{\eqref{e25}}}{=} r\sigma^i(s_{(0)})x^{i+j} \otimes s_{(1)}.
		\end{equation*}
		Using the isomorphism of Lemma~\ref{l9}, we have $(r\sigma^i(s_{(0)})\otimes s_{(1)})(x\otimes 1)^{i+j}=0$, which by~\ref{O2} means that $r\sigma^i(s_{(0)})\otimes s_{(1)}=\beta_R(r\otimes \sigma^i(s))=0 \in R\otimes R$. By  hypothesis, it follows that $r\otimes \sigma^i(s)=0$. Since the tensor product is taken over the field $\Bbbk$ it follows that $r=0$ or $\sigma^i(s)=0$ (see e.g. \cite[Theorem 14.5]{Rom2}). Hence, by the injectivity of $\sigma$, $r=0$ or $s=0$. Either case, $rx^i \otimes sx^j=0$.
		
		Now, for the surjectivity of $\beta_A$, recall the notation of Section~\ref{se4}, i.e., we write $\beta_R^{-1}(1\otimes h)=h^{[1]}\otimes h^{[2]} \in R\otimes R$, for all $h\in H$. Then, for any $\sum_{i=0}^n r_ix^i \otimes h \in A \otimes H$, we have
		\begin{align*}
			\beta_A\left( \sum_{i=0}^n r_i\sigma^i(h^{[1]}) x^i \otimes h^{[2]} \right) &= \beta_A\left( \sum_{i=0}^n r_i x^ih^{[1]} \otimes h^{[2]} \right)\\
			&\ = \sum_{i=0}^n r_i x^ih^{[1]} {h^{[2]}}_{(0)} \otimes {h^{[2]}}_{(1)}\\
			&\ = \left( \sum_{i=0}^n r_ix^i \otimes 1 \right)( h^{[1]}{h^{[2]}}_{(0)}  \otimes {h^{[2]}}_{(1)})\\
			& \overset{\mbox{\eqref{e107}}}{=} \left( \sum_{i=0}^n r_ix^i \otimes 1 \right)(1\otimes h)\\
			&\ =\sum_{i=0}^n r_ix^i \otimes h.
		\end{align*}
		Thus $\beta$ is bijective and the extension is Galois.
	\end{proof}

	Notice that the calculations above strongly depend on the fact that the skew polynomial ring involved has no derivations. We end this section by formulating questions that, as far as the authors know, are still open:
\begin{itemize}
	\item Is the description given in Proposition~\ref{p14} unique? More precisely, do all coactions of a Hopf algebra on a skew polynomial ring arise in this manner?
	\item Under some additional compatibility conditions, can Theorem~\ref{t15} be extended to skew polynomial rings with non-zero derivations?
	\item May Theorem~\ref{t15} be extended to (skew) PBW extensions?
\end{itemize}

	\subsubsection{Almost symmetric algebras and Hopf Galois systems}\label{inter2}
	
	We saw in Section~\ref{s24} that for any almost symmetric algebra $A$, there exists a Lie algebra $\mathfrak{g}$ such that $\Bbbk \subset A$ is an $U(\mathfrak{g})$-extension. In this section, we mention an alternative path for that result using the equivalence between Hopf Galois objects and Hopf Galois systems (see diagram~\eqref{e110}).
	
	\begin{theorem}[{\cite[Theorem 3]{Gru}}]
	Let $\mathfrak{g}$ be a $\Bbbk$-Lie algebra and $f\in Z^2(\mathfrak{g},\Bbbk)$. Consider the Sridharan enveloping algebras $U_f(\mathfrak{g})$ and $U_{-f}(\mathfrak{g})$, and define $\gamma: U(\mathfrak{g})\rightarrow U_f (\mathfrak{g})\otimes U_{-f}(\mathfrak{g})$ and $\delta: U(\mathfrak{g})\rightarrow U_{-f}(\mathfrak{g}) \otimes U_f(\mathfrak{g})$ as $x \mapsto 1\otimes x + x\otimes 1$, for all $x\in\mathfrak{g}$, and $S: U_{-f}(\mathfrak{g})\rightarrow U_f(\mathfrak{g})$ as $S(x)=-x$, for all $x\in \mathfrak{g}$. Then, $(U(\mathfrak{g}), U(\mathfrak{g}), U_f(\mathfrak{g}), U_{-f}(\mathfrak{g}))$ is a $\Bbbk$-Hopf Galois system.
	\end{theorem}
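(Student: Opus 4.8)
The plan is to verify the four axioms \ref{HGS1}--\ref{HGS4} of Definition~\ref{d12} directly, exploiting that every structure map involved is determined on the generating space $\mathfrak{g}$ by a \emph{primitive-type} assignment $x\mapsto x\otimes 1 + 1\otimes x$ (or $x\mapsto -x$ for $S$). Axiom~\ref{HGS1} is immediate, since $A=B=U(\mathfrak{g})$ is a Hopf algebra, hence a bialgebra, by Example~\ref{ex17}. For~\ref{HGS2} I would equip $Z=U_f(\mathfrak{g})$ with the left coaction $\alpha_Z\colon U_f(\mathfrak{g})\to U(\mathfrak{g})\otimes U_f(\mathfrak{g})$, $\overline{x}\mapsto x\otimes 1 + 1\otimes\overline{x}$, and the right coaction $\beta_Z\colon U_f(\mathfrak{g})\to U_f(\mathfrak{g})\otimes U(\mathfrak{g})$, $\overline{x}\mapsto \overline{x}\otimes 1 + 1\otimes x$ (the map $\rho$ already built in the proof of Theorem~\ref{t5}); the bicomodule algebra conditions then become identities among primitives that are tested on generators.

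The crucial preliminary for each of these maps is \emph{well-definedness}: $\gamma$, $\delta$, $\alpha_Z$, $\beta_Z$ are first defined on the tensor algebra $T(\mathfrak{g})$ via the universal property, and one must show they annihilate the relevant defining ideal ($I_f$, $I_{-f}$, or $J$). This is exactly where the $2$-cocycle is used. Extending $\gamma$ multiplicatively and invoking~\eqref{e41} in each tensor factor gives
\begin{equation*}
\gamma(x)\gamma(y) - \gamma(y)\gamma(x) = \bigl(\overline{[x,y]} + f(x,y)1\bigr)\otimes 1 + 1\otimes\bigl(\overline{[x,y]} - f(x,y)1\bigr) = \gamma([x,y]),
\end{equation*}
because the $+f(x,y)$ produced in the $U_f(\mathfrak{g})$-factor and the $-f(x,y)$ produced in the $U_{-f}(\mathfrak{g})$-factor cancel. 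Hence $\gamma$ descends to an algebra morphism $U(\mathfrak{g})\to U_f(\mathfrak{g})\otimes U_{-f}(\mathfrak{g})$. The identical cancellation handles $\delta$ and the coactions $\alpha_Z,\beta_Z$, and an analogous computation with $S$ taken anti-multiplicatively shows that $S\colon U_{-f}(\mathfrak{g})\to U_f(\mathfrak{g})$, $\overline{x}\mapsto -\overline{x}$, kills $I_{-f}$, so that $S$ is a well-defined algebra anti-morphism as predicted by the remark after Definition~\ref{d12}.

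Once well-definedness is secured, axiom~\ref{HGS3} is painless: each of~\eqref{e100}, \eqref{e101}, \eqref{e102} is an equality of \emph{algebra} morphisms out of $U(\mathfrak{g})$ or $U_f(\mathfrak{g})$, so it suffices to check it on $x\in\mathfrak{g}$ and on $1$, where both sides collapse to the same primitive expression (e.g.\ $x\otimes 1\otimes 1 + 1\otimes x\otimes 1 + 1\otimes 1\otimes x$ for~\eqref{e101}). For axiom~\ref{HGS4} the computation on a generator is $m_Z(\operatorname{id}_Z\otimes S)\gamma(x)=\overline{x}-\overline{x}=0=\varepsilon_A(x)1_Z$, matching~\eqref{e103}, and symmetrically for~\eqref{e104}.

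The main obstacle is precisely extending~\ref{HGS4} \emph{beyond generators}: the composite $m_Z(\operatorname{id}_Z\otimes S)\gamma$ is not an algebra morphism (as $S$ is anti-multiplicative), so the reduce-to-generators argument that settles~\ref{HGS3} does not apply. I expect to close this by induction on the PBW filtration of $U(\mathfrak{g})$. Writing an ordered monomial $x_{i_1}\cdots x_{i_n}$ and expanding $\gamma(x_{i_1}\cdots x_{i_n})=\prod_{j}\bigl(\overline{x}_{i_j}\otimes 1 + 1\otimes \overline{x}_{i_j}\bigr)$, one applies $m_Z(\operatorname{id}_Z\otimes S)$ and observes that the alternating signs introduced by $S$ telescope exactly as in the classical verification of the antipode identity for $U(\mathfrak{g})$; Lemma~\ref{l5} is what lets me commute factors at the cost of strictly lower-order terms, so the induction hypothesis disposes of the error terms. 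This telescoping-plus-induction step is the genuinely substantive part of the argument, the rest being the cocycle cancellation above together with routine checks on the generating set.
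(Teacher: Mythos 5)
First, a point of reference: the paper does not actually prove this theorem — it is quoted from Grunspan \cite[Theorem 3]{Gru} without proof — so there is no in-paper argument to compare yours against, and what follows assesses your proposal on its own terms. Most of it is correct and correctly organized: the cocycle cancellation showing that $\gamma$ and $\delta$ descend from $T(\mathfrak{g})$ (the $+f(x,y)$ produced in the $U_f(\mathfrak{g})$-factor killing the $-f(x,y)$ produced in the $U_{-f}(\mathfrak{g})$-factor), the analogous well-definedness checks for $\alpha_Z$, $\beta_Z$ and for the anti-multiplicative $S$, and the observation that \eqref{e100}--\eqref{e102} are equalities of algebra morphisms and may therefore be tested on $1$ and on $\mathfrak{g}$, are all sound. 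You also correctly isolate the one step that cannot be reduced to generators, namely \ref{HGS4}.

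The gap is in how you close that step. Expanding as you do, $m_Z(\operatorname{id}_Z\otimes S)\gamma(x_{i_1}\cdots x_{i_n})$ becomes $\sum_{I\subseteq\{1,\ldots,n\}}(-1)^{n-|I|}u_I$, where $u_I\in U_f(\mathfrak{g})$ is the product of the factors $\overline{x}_{i_j}$ taken in the order: positions in $I$ increasingly, then positions outside $I$ decreasingly. If, as you propose, you use Lemma~\ref{l5} to rewrite every $u_I$ as $\overline{x}_{i_1}\cdots\overline{x}_{i_n}$ plus corrections of lower filtration degree, then the leading terms indeed cancel because $\sum_{I}(-1)^{n-|I|}=(1-1)^n=0$, but you are left with a sum of correction terms in $F_{n-1}(U_f(\mathfrak{g}))$, and your induction hypothesis cannot remove them: it asserts that $m_Z(\operatorname{id}_Z\otimes S)\gamma$ agrees with $u_Z\varepsilon_A$ on \emph{inputs} of lower degree, whereas the corrections are arbitrary lower-degree elements of $U_f(\mathfrak{g})$ appearing as \emph{outputs}, not values of that map on anything. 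So ``the induction hypothesis disposes of the error terms'' is a non sequitur, and as written the argument does not close.

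The repair is short and needs neither Lemma~\ref{l5} nor the PBW filtration. Since $\gamma$ is an algebra morphism and $S$ is anti-multiplicative, writing $\gamma(a)=a_Z\otimes a_T$ as the paper does, one has for all $a,b\in U(\mathfrak{g})$
\begin{equation*}
m_Z(\operatorname{id}_Z\otimes S)\gamma(ab)=a_Zb_Z\,S(a_Tb_T)=a_Zb_Z\,S(b_T)S(a_T)=a_Z\bigl(b_ZS(b_T)\bigr)S(a_T),
\end{equation*}
so whenever $b_ZS(b_T)=\varepsilon(b)1_Z$ this scalar escapes from the middle and the whole expression equals $\varepsilon(b)\,a_ZS(a_T)$. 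Hence the set of elements on which \eqref{e103} holds is a subalgebra of $U(\mathfrak{g})$; it contains $1$ and $\mathfrak{g}$ by your generator computation, so it is all of $U(\mathfrak{g})$. The mirror-image argument, collapsing $S(a_T)a_Z$ in the middle of $S(b_T)S(a_T)a_Zb_Z$, gives \eqref{e104}. (Alternatively, the cancellation in your expansion is in fact \emph{exact}: pair each subset $I$ containing the last position $n$ with $I\setminus\{n\}$; the products $u_I$ and $u_{I\setminus\{n\}}$ consist of the same factors in the same order and carry opposite signs, so no reordering — and hence no error term — ever arises.)
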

	
	By the equivalence theorems of Section~\ref{sec2.11}, we have the following immediate results.
	
	\begin{corollary}[{\cite[Corollaries 2 and 3]{Gru}}]
		Let $\mathfrak{g}$ be a $\Bbbk$-Lie algebra and $f\in Z^2(\mathfrak{g},\Bbbk)$. Then the following assertions for $U_f(\mathfrak{g})$ hold:
		\begin{enumerate}[label=\normalfont(\roman*)]
			\item $U_f(\mathfrak{g})$ is a quantum $\Bbbk$-torsor with associated map $\mu(x)=x\otimes 1 - 1 \otimes x \otimes 1 + 1\otimes 1 \otimes x$, for $x\in \mathfrak{g}$, and Grunspan map $\theta=\operatorname{id}_{U_f(\mathfrak{g})}$. Moreover, $H_l(U_f(\mathfrak{g})) \cong H_r(U_f(\mathfrak{g}))\cong U(\mathfrak{g})$.
			\item $U_f(\mathfrak{g})$ is a $(U(\mathfrak{g}),U(\mathfrak{g}))$-biGalois object.
		\end{enumerate}
	\end{corollary}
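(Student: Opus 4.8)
The plan is to deduce both assertions directly from the equivalence machinery of Section~\ref{sec2.11}, applied to the Hopf Galois system $(U(\mathfrak{g}),U(\mathfrak{g}),U_f(\mathfrak{g}),U_{-f}(\mathfrak{g}))$ furnished by the preceding theorem. First I would set $A=B=U(\mathfrak{g})$, $Z=U_f(\mathfrak{g})$ and $T=U_{-f}(\mathfrak{g})$, with $\gamma,\delta$ and $S$ as specified there, and let $\alpha_Z:Z\to A\otimes Z$, $\beta_Z:Z\to Z\otimes B$ be the left and right coactions $\overline{x}\mapsto 1\otimes\overline{x}+x\otimes 1$ and $\overline{x}\mapsto\overline{x}\otimes 1+1\otimes x$ carried by the $(A,B)$-bicomodule algebra $Z$. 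A preliminary observation, used throughout, is that $U_f(\mathfrak{g})$ is faithfully flat over $\Bbbk$: by the PBW theorem for Sridharan enveloping algebras it is free as a $\Bbbk$-module, and a nonzero free module over a field is faithfully flat.

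Assertion (ii) is then immediate: since $Z=U_f(\mathfrak{g})$ is $\Bbbk$-faithfully flat, Theorem~\ref{th1} asserts that $Z$ is an $(A,B)$-biGalois object, i.e.\ a $(U(\mathfrak{g}),U(\mathfrak{g}))$-biGalois object. For the torsor structure in (i), Theorem~\ref{t13} already equips $Z$ with the algebra map $\mu=(\operatorname{id}_Z\otimes S\otimes\operatorname{id}_Z)(\gamma\otimes\operatorname{id}_Z)\alpha_Z$ satisfying the axioms of Definition~\ref{d13}, so only the explicit formula remains to be checked. I would evaluate $\mu$ on the algebra generators $\overline{x}$, $x\in\mathfrak{g}$: applying $\gamma\otimes\operatorname{id}_Z$ to $\alpha_Z(\overline{x})=1\otimes\overline{x}+x\otimes 1$ and then $S$ (which sends the generator $x$ of $U_{-f}(\mathfrak{g})$ to $-\overline{x}$) to the middle factor yields
\[
\mu(\overline{x})=\overline{x}\otimes 1\otimes 1-1\otimes\overline{x}\otimes 1+1\otimes 1\otimes\overline{x},
\]
which is the stated formula. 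The care here lies in tracking which tensor slot lives in $U_f(\mathfrak{g})$ versus $U_{-f}(\mathfrak{g})$ and in respecting the opposite multiplication on the middle factor.

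To identify the Grunspan map I would show that $U_f(\mathfrak{g})$ is \emph{equipped with a commutative law}, i.e.\ $\mu=\mu^{\operatorname{op}}$. The coordinate swap $a\otimes b\otimes c\mapsto c\otimes b\otimes a$ is an algebra automorphism of $T\otimes T^{\operatorname{op}}\otimes T$, so $\mu^{\operatorname{op}}$ is again an algebra morphism and it suffices to compare $\mu$ and $\mu^{\operatorname{op}}$ on the generators $\overline{x}$; the displayed expression is visibly invariant under reversing the three factors, whence $\mu=\mu^{\operatorname{op}}$. By the remark following Theorem~\ref{t10}, this forces $\theta=\operatorname{id}_{U_f(\mathfrak{g})}$. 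Finally, the identification of the attached Hopf algebras uses that, by Theorem~\ref{t5}, $\Bbbk\subset U_f(\mathfrak{g})$ is a faithfully flat $U(\mathfrak{g})$-Galois object; then \cite[Proposition 3.4]{Sch5} gives $H_r(U_f(\mathfrak{g}))\cong U(\mathfrak{g})$, while the biGalois structure established in (ii) pins the left Hopf algebra down as $U(\mathfrak{g})$ as well, so $H_l(U_f(\mathfrak{g}))\cong H_r(U_f(\mathfrak{g}))\cong U(\mathfrak{g})$. I expect the genuine obstacle to be bookkeeping rather than conceptual: verifying the coaction formulas and the compatibility of $\mu$ with the three distinct (and partly opposite) algebra structures on $U_f(\mathfrak{g})$ and $U_{-f}(\mathfrak{g})$, and cleanly extracting $H_l$ from the general correspondence.
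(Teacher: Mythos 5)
Your proposal is correct and takes essentially the same route as the paper, which obtains the corollary ``immediately'' by feeding the Hopf Galois system $(U(\mathfrak{g}),U(\mathfrak{g}),U_f(\mathfrak{g}),U_{-f}(\mathfrak{g}))$ into the equivalence machinery of Section~\ref{sec2.11} (Theorem~\ref{th1} for the biGalois statement, Theorem~\ref{t13} for the torsor structure), leaving the details to the cited references; your generator computation of $\mu$, the palindromic-formula argument giving $\mu=\mu^{\operatorname{op}}$ and hence $\theta=\operatorname{id}$, and the faithful-flatness observation via PBW all check out. The only points you leave implicit are that the Theorem~\ref{t13} torsor structure coincides with the one Theorem~\ref{t10} attaches to the $U(\mathfrak{g})$-Galois object $U_f(\mathfrak{g})$ (needed before invoking \cite[Proposition 3.4]{Sch5}, and verified by the same one-line generator computation) and that the left Hopf algebra of a biGalois object is determined up to isomorphism by $(Z\otimes Z)^{\operatorname{co}H}$ --- both standard facts cited rather than proved, exactly as the paper itself does.
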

	
	\subsubsection{Kashiwara algebras and quantum torsors}\label{inter3}
	
	Kashiwara \cite{Kash} defined a type of algebras useful in the study of crystal bases. In this section, we introduce the preliminaries for such algebras and then prove that these are examples of Hopf Galois systems. As a reference for basic terminology, we follow \cite{HK,Kac}.
	
	\begin{definition}[Generalized Cartan matrix]
		 A square matrix $A=[a_{ij}]_{i,j=1}^n$ with entries in $\mathbb{Z}$ is called a \emph{generalized Cartan matrix} if it satisfies the following conditions:
		 \begin{enumerate}[label=\normalfont(\roman*)]
		 	\item $a_{ii}=2$, for $1\leq i \leq n$,
		 	\item $a_{ij}\leq 0$, for $i\neq j$,
		 	\item $a_{ij}=0$ if and only if $a_{ji}=0$.
		 \end{enumerate}
		   Moreover, $A$ is said to be \emph{indecomposable} if for every pair of nonempty subsets $I_1,I_2 \subseteq I=\{1,\ldots,n\}$ with $I_1\cup I_2 = I$, there exists some $i\in I_1$ and $j\in I_2$ such that $a_{ij}\neq 0$.
	\end{definition}
	
	Throughout we will suppose that every generalized Cartan matrix is \emph{symmetrizable}, i.e., there exists a diagonal matrix $D$ with entries in $\mathbb{Z}_{>0}$ such that $DA$ is symmetric.
	
	Let $P^\vee$ be a free Abelian group of rank $2n-\operatorname{rank}(A)$ with a $\mathbb{Z}$-basis
	\begin{equation*}
		\{h_i : 1\leq i \leq n \} \cup \{ d_s : s=1,\ldots,n-\operatorname{rank}(A) \}.
	\end{equation*}
	The group $P^\vee$ is known as the \emph{dual weight lattice}. The $\Bbbk$-linear space spanned by $P^\vee$, $\mathfrak{h}:=\Bbbk\otimes_\mathbb{Z} P^\vee$, is called \emph{the Cartan subalgebra}. We also define the \emph{weight lattice} to be
	\begin{equation*}
		P:=\{ \lambda \in \mathfrak{h}^* : \lambda(P^\vee) \subset \mathbb{Z} \}.
	\end{equation*}
	
	The elements of a linear independent subset $\Pi:=\{ \alpha_i : 1\leq i \leq n \} \subset \mathfrak{h}^*$ satisfying $		\alpha_j(h_i)=a_{ij}$ and $ \alpha_j(d_s)\in\{0,1\}$, for all $1\leq i,j\leq n$ and $s=1,\ldots,n-\operatorname{rank}(A)$,	are called \emph{simple roots}. Similarly, each element of the set $\Pi^\vee:=\{ h_i : 1\leq i \leq n \}$ is called a \emph{simple coroots}.
	
	\begin{definition}[Cartan datum]
		Let $A=[a_{ij}]_{i,j=1}^n$ be a generalized Cartan matrix. The quintuple $(A,\Pi,\Pi^\vee,P,P^\vee)$ defined as above is said to form a \emph{Cartan datum} associated to $A$. 
	\end{definition}
	
	Recall that if $V$ is a $\Bbbk$-vector space, the space $\mathfrak{gl}(V)$ of all $\Bbbk$-linear maps on $V$ acquires a Lie algebra structure via the Lie bracket $[x,y]=xy-yx$, for all $x,y\in\mathfrak{gl}(V)$, and it is called the \emph{general linear Lie algebra}. If $V=\Bbbk^n$, we denote the general linear Lie algebra by $\mathfrak{gl}(n,\Bbbk)$.  Given a Lie algebra $\mathfrak{g}$ we define the Lie morphism $\operatorname{ad}:\mathfrak{g}\rightarrow\mathfrak{gl}(\mathfrak{g})$, given by $\operatorname{ad} x(y)=[x,y]$, for all $x, y\in\mathfrak{g}$, which is called \emph{the adjoint representation of $\mathfrak{g}$}.
	
	 With this, we are able to define a type of algebras of great relevance since they are considered a natural generalization of semisimple Lie algebras to the infinite dimensional case \cite[Chapter 1]{Kac}.
	
	\begin{definition}[Kac-Moody algebra]
		Let $(A,\Pi,\Pi^\vee,P,P^\vee)$ be a Cartan datum associated to a generalized Cartan matrix $A=[a_{ij}]_{i,j=1}^n$. The \emph{Kac-Moody algebra} associated to the Cartan datum is the $\Bbbk$-Lie algebra generated by the elements $e_i$, $f_i$ ($1\leq i \leq n$) and $h\in P^\vee$ subject to the following defining relations:
		\begin{enumerate}[label=(KMA\arabic*), align=parleft, leftmargin=*]
			\item\label{KMA1} $[h,h']=0$ for all $h,h'\in P^\vee$,
			\item\label{KMA2} $[e_i,f_j]=\delta_{ij}h_i$ for all $i,j\in I$,
			\item\label{KMA3} $[h,e_i]=\alpha_i(h)e_i$ for all $i\in I$ and $h\in P^\vee$,
			\item\label{KMA4} $[h,f_i]=-\alpha_i(h)f_i$ for all $i\in I$ and $h\in P^\vee$,
			\item\label{KMA5} $ (\operatorname{ad} e_i)^{1-a_{ij}} e_j =0 $ for all $i\neq j$, $i,j\in I$,
			\item\label{KMA6} $(\operatorname{ad} f_i)^{1-a_{ij}} f_j = 0$, for all $i\neq j$, $i,j\in I$.
		\end{enumerate}
	\end{definition}
	
Conditions \ref{KMA1}-\ref{KMA4} are called the \emph{Weyl relations}, while~\ref{KMA5}-\ref{KMA6} are known as the \emph{Serre relations}.
	
	Given a Kac-Moody algebra $\mathfrak{g}$ associated to the Cartan datum $(A,\Pi,\Pi^\vee,P,P^\vee)$, we define an inner product on the Cartan subalgebra $\mathfrak{h}^*$ so that
	\begin{equation*}
		(\alpha_i,\alpha_i)\in \mathbb{N} \qquad \mbox{and} \qquad \langle h_i,\lambda \rangle=2(\alpha_i,\lambda)/(\alpha_i,\alpha_i), \qquad \mbox{for all } \lambda\in \mathfrak{h}^*.
	\end{equation*}
	
	Our base ring is $K:=\mathbb{Q}[[\hbar]]$, the formal power series ring over $\mathbb{Q}$. We also set $q=\exp(\hbar)$, $q_i=q^{\langle \alpha_i,\alpha_i \rangle/2}$, $t_i=q^{h_i}$,
	\begin{equation*}
		[n]_i=\dfrac{q_i^n-q_i^{-n}}{q_i - q_i^{-1}} \qquad \mbox{and} \qquad [n]_i!=\prod_{k=1}^n [k]_i.
	\end{equation*}
	
	\begin{definition}[Kashiwara algebra]
		Given a Kac-Moody algebra $\mathfrak{g}$ associated to the Cartan datum $(A,\Pi,\Pi^\vee,P,P^\vee)$, the \emph{Kashiwara algebra} $B_q(\mathfrak{g})$ is the associative $K$-algebra generated by the elements $e'_i$, $f_i$ ($1\leq i \leq n$) and $q^h,h\in \bigoplus_{i=1}^n \mathbb{Z} h_i$ (with $h_i\in \Pi^\vee$) together with the relations:
		\begin{gather*}
			q^h e'_i q^{-h} =  q^{\langle h,\alpha_i \rangle}e_i',\\
			q^h f_i q^{-h} =  q^{-\langle h,\alpha_i \rangle}f_i,\\
			e'_i f_j =  q_i^{\langle h_i,\alpha_j \rangle}f_je'_i + \delta_{ij},\\
			\sum_{k=0}^{1-\langle h_i,\alpha_k \rangle} (-1)^k X_i^{(k)}X_j X_i^{(1-\langle h_i,\alpha_j\rangle)} =  0,
		\end{gather*}
		where $X=e',f$ and $X_i^{(n)}=X_i^n/[n]_i!$.
	\end{definition}
	
	\begin{theorem}[{\cite[Theorem 4]{Gru}}]
		Let $B:=B_q(\mathfrak{g})$ be a Kashiwara algebra. Then the map $\mu: B\rightarrow B\otimes B^{\operatorname{op}} \otimes B$ defined by
		\begin{align*}
			\mu(e_i'):= &\ 1\otimes 1 \otimes e_i' - 1 \otimes e_i' t_i \otimes t_i^{-1} + e_i'\otimes t_i \otimes t_i^{-1},\\
			\mu(f_i) = &\ 1\otimes 1 \otimes f_i - 1\otimes f_i t_i \otimes t_i^{-1}+f_i \otimes t_i \otimes t_i^{-1},\\
			\mu(q^h) = &\ q^h \otimes q^{-h} \otimes q^h,
		\end{align*}
		makes $B$ into a quantum torsor. Moreover, the Grunspan map $\theta:B\rightarrow B$ is given by
		\begin{gather*}
			\theta(e_i')=t_i^{-1} e'_i t_i, \qquad \theta(f_i) t_i^{-1}f_it_i \qquad \mbox{and} \qquad \theta(q^h)=q^h,
		\end{gather*}
	so the torsor is autonomous.
	\end{theorem}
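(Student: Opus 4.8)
The plan is to exploit the fact that a quantum torsor structure (Definition~\ref{d13}) is governed entirely by an \emph{algebra} morphism $\mu$, so that every verification can be reduced to the algebra generators $e_i'$, $f_i$ and $q^h$ of $B=B_q(\mathfrak{g})$. Note first that the statement only asserts that $B$ is a quantum torsor with a Grunspan map and that it is autonomous; it does not require faithful flatness over $K=\mathbb{Q}[[\hbar]]$, so the equivalence with Hopf Galois objects (Theorems~\ref{t6} and~\ref{t10}) need not be invoked. The first and decisive task is to show that the assignment on generators given in the statement extends to an algebra homomorphism $\mu:B\to B\otimes B^{\operatorname{op}}\otimes B$. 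By the presentation of $B$ by generators and relations, this amounts to checking that the proposed images satisfy each defining relation of the Kashiwara algebra inside $B\otimes B^{\operatorname{op}}\otimes B$, always remembering that the middle tensor factor carries the opposite multiplication.

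The Weyl-type relations (those governing $q^h e_i' q^{-h}$, $q^h f_i q^{-h}$ and $e_i' f_j$) reduce to routine manipulation of the commutation rules among $e_i'$, $f_i$ and $t_i=q^{h_i}$, combined with the grouplike shape $\mu(q^h)=q^h\otimes q^{-h}\otimes q^h$; these are direct and mirror the grouplike/skew\-primitive bookkeeping of the Hopf algebra torsor in Section~\ref{sec2.10.1}. The genuine obstacle is the pair of quantum Serre relations: one must show that applying $\mu$ to the alternating sum $\sum_{k}(-1)^k X_i^{(k)}X_j X_i^{(1-\langle h_i,\alpha_j\rangle)}$ still vanishes, where $X=e',f$ and $X_i^{(n)}=X_i^n/[n]_i!$. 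The cleanest route I foresee is to first compute $\mu(X_i^{(n)})$ on the divided powers by induction on $n$, obtaining a $q$-binomial expansion wholly analogous to the coproduct of $U_q(\mathfrak{g})$ on divided powers, and then to invoke the relevant Gauss $q$-binomial identities to collapse the alternating sum to zero. This is where the bulk of the computational effort (and the possibility of sign or weight bookkeeping errors) concentrates.

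With $\mu$ established as an algebra morphism, the torsor axioms \eqref{neq2}, \eqref{neq4} and \eqref{neq3} need only be checked on the generators. For $q^h$ the grouplike form makes the coassociativity identity \eqref{e42} and the two counit-type identities immediate, exactly as in the Hopf algebra example where $\mu=(\operatorname{id}\otimes S\otimes\operatorname{id})\Delta_2$. For $e_i'$ and $f_i$ the three-summand expressions are a $q$-deformed skew-primitive pattern, and the three identities follow by expanding $\mu$ once more and matching like terms using the $t_i^{\pm 1}$ commutations; the opposite multiplication in the central slot must be tracked carefully but introduces no new difficulty.

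Finally, for the Grunspan map and autonomy, rather than verifying \eqref{e59}--\eqref{e60} directly I would use that $\theta$ is forced by $\mu$ through the formula $\theta(x)=x^{(1)}{x^{(2)}}^{(3)}{x^{(2)}}^{(2)}{x^{(2)}}^{(1)}x^{(3)}$ (see Corollary~\ref{cor1} and the surrounding discussion), and simply evaluate this expression on each generator, remembering that $x^{(2)}\in B^{\operatorname{op}}$ reverses the order of its own three $\mu$-components. A short computation then recovers $\theta(q^h)=q^h$ and $\theta(e_i')=t_i^{-1}e_i't_i$, $\theta(f_i)=t_i^{-1}f_it_i$, matching the claimed formulas. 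Autonomy follows at once: since $\theta$ is an algebra endomorphism sending each generator to its conjugate by $t_i$ (and fixing $q^h$), the assignment $e_i'\mapsto t_ie_i't_i^{-1}$, $f_i\mapsto t_if_it_i^{-1}$, $q^h\mapsto q^h$ defines a two-sided inverse, so $\theta$ is bijective and the torsor is autonomous.
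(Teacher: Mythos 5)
You should first be aware that the paper does not prove this theorem at all: it is quoted from \cite[Theorem~4]{Gru}, and the text around it only reports consequences (the attached Hopf algebras $H_l(B)$, $H_r(B)$). So your proposal can only be judged against what a complete proof requires. For the first half, your strategy is sound: since $\mu$ is an algebra morphism and both sides of the coassociativity axiom \eqref{neq2} are algebra morphisms, checking \eqref{neq2} on $e_i'$, $f_i$, $q^h$ suffices, and the two unit axioms also propagate from generators to products (via a short substitution argument using multiplicativity of $\mu$, which you assert but never write down). You also correctly locate the real work in showing that the three-term formulas respect the defining relations of $B_q(\mathfrak{g})$, above all the quantum Serre relations; your plan via divided powers and $q$-binomial identities is the standard route, though it remains a plan rather than a computation.

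The genuine gap is your treatment of the Grunspan map. You propose to bypass the axioms \eqref{e59}--\eqref{e60} on the grounds that $\theta$ is ``forced'' by the formula $\theta(x)=x^{(1)}{x^{(2)}}^{(3)}{x^{(2)}}^{(2)}{x^{(2)}}^{(1)}x^{(3)}$, evaluated on generators. But that formula determines $\theta$ only once a Grunspan map is already known to exist. In this paper, existence is Corollary~\ref{cor1}, whose proof runs through Theorems~\ref{t6} and~\ref{t10}, i.e., exactly the faithfully flat Hopf--Galois equivalence you disclaimed at the outset --- and over $K=\mathbb{Q}[[\hbar]]$ that machinery additionally requires topological freeness and completed tensor products (cf.\ the remark after Theorem~\ref{t11}), so it is not free of charge here. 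Nor does checking the formula on generators suffice by itself: the map $\Phi=(m\otimes\operatorname{id}\otimes m)(\operatorname{id}\otimes\mu^{\operatorname{op}}\otimes\operatorname{id})\mu$ is \emph{not} an algebra morphism (the multiplication $B\otimes B^{\operatorname{op}}\rightarrow B$ is not one), and the identity $\Phi(x)=1\otimes\theta(x)\otimes 1$ does not propagate from generators to products the way the unit axioms do. The repair stays inside your own framework: define $\theta$ on generators by the claimed formulas, check that it respects the defining relations of $B_q(\mathfrak{g})$ (so that it extends to an algebra endomorphism --- this is itself a nontrivial check, since different generators are conjugated by different $t_i$), and then verify \eqref{e59} and \eqref{e60} on generators; this last reduction is legitimate because, once $\theta$ is an algebra endomorphism, both sides of each of \eqref{e59} and \eqref{e60} are algebra morphisms. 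With that done, your two-sided-inverse argument for autonomy goes through.
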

	
	Using characterization theorems, Grunspan gave an explicit description of the two Hopf algebras $H_l(B)$ and $H_r(B)$ (which turn out to be quantum groups) that can be attached to the torsor, endowing it with a Hopf Galois system \cite[Section 4.2]{Gru}.
	
	\section{Conclusions}
	
	The investigation of properties of algebraic structures is a topic of interest for the mathematical community, and in this document we have shown several of those features, both from the point of view of Hopf Galois extensions and of noncommutative families of rings. We have presented in detail various relations between Hopf Galois extensions and different algebraic structures. Moreover, with examples, results and properties we illustrate the scope of the theory. Additionally, the study of the interactions between these algebraic contexts give rise to new open problems both in Hopf theory and in noncommutative algebra.
	
	\section{Acknowledgments}
	
	We thank Professor David A. Jordan for his remarks about skew polynomial rings of mixed type, central elements and simple rings, and Professor Zoran \v{S}koda for his comments on quantum heaps as an equivalent definition of quantum torsors. We are also grateful to the anonymous referees for their useful comments and corrections that improved the overall quality of the document.
	
	The authors were supported by the research fund of the Department of Mathematics, Universidad Nacional de Colombia - Sede Bogot\'a, Colombia, HERMES Code 52464. The first author was also supported by the Fulbright Visiting Student Researcher Program. 
%------
% bibliography
\printbibliography
%------
% authors info
\clearpage
\begin{shaded*}
        {\fontsize{11}{10}\selectfont\textbf{\textcolor{myseccolor}{Algunas
        interacciones entre extensiones de Hopf Galois y anillos no conmutativos}}}

        \vspace{3mm}

        {\fontsize{11}{10}\selectfont\textbf{\textcolor{myseccolor}{Resumen:}}}
        En este artículo, nuestros objetos de interés son las extensiones de
        Hopf Galois (p. ej., álgebras de Hopf, extensiones de Galois de cuerpos
        algebraicos, álgebras fuertemente graduadas, productos cruzados,
        fibrados principales, etc.) y familias de anillos no conmutativos (p.
        ej., anillos polinomiales torcidos, extensiones PBW y extensiones PBW
        torcidas, etc.). Recopilamos y sistematizamos preguntas, problemas,
        propiedades y avances recientes en ambas teorías desarrollando
        explícitamente ejemplos y haciendo cálculos que generalmente se omiten
        en la literatura. En particular, para las extensiones de Hopf Galois
        consideramos enfoques desde el punto de vista de torsores cuánticos
        (también conocidos como "heaps" cuánticos) y sistemas de Hopf Galois,
        mientras que para algunas familias de anillos no conmutativos
        presentamos avances en la caracterización de propiedades homológicas y
        teóricas de anillos. Cada tema desarrollado se ejemplifica con
        abundantes referencias a obras clásicas y actuales, por lo que este
        trabajo sirve de referencia para los interesados en
        cualquiera de las dos teorías. A lo largo de este trabajo, se presentan
        las interacciones entre ambos.

        {\fontsize{11}{10}\selectfont\textbf{\textcolor{myseccolor}{Palabras Clave:}}} 
        álgebra de Hopf; extensión de Hopf Galois; anillo no conmutativo; extensión de Ore; extensión PBW torcida.
\end{shaded*}
% \clearpage
\begin{shaded*}
        {\fontsize{11}{10}\selectfont\textbf{\textcolor{myseccolor}{Algumas
        interações entre extensões de Hopf Galois e anéis não comutativos}}}

        \vspace{3mm}

        {\fontsize{11}{10}\selectfont\textbf{\textcolor{myseccolor}{Resumo:}}}
        Neste artigo, nossos objetos de interesse são extensões de Hopf Galois
        (por exemplo, álgebras de Hopf, extensões de Galois de cuerpos
        algebraicos, álgebras fortemente graduadas, produtos cruzados, fibrados
        principais, etc.), e famílias de anéis não comutativos (por exemplo,
        anéis polinomiais torcidos, extensões PBW e extensões PBW torcidas,
        etc.). Coletamos e sistematizamos questões, problemas, propriedades e
        avanços recentes em ambas as teorias, desenvolvendo explicitamente
        exemplos e fazendo cálculos que geralmente são omitidos na literatura.
        Em particular, para extensões de Hopf Galois consideramos abordagens do
        ponto de vista de torsores quânticos (também conhecidos como heaps
        quânticos) e sistemas Hopf Galois, enquanto para algumas famílias de
        anéis não comutativos apresentamos avanços na caracterização de
        propriedades homológicas e teóricas de anéis. Cada tema desenvolvido é
        exemplificado com abundantes referências a obras clássicas e atuais, por
        isso este trabalho serve como referência para os interessados
        em qualquer uma das duas teorias. Ao longo desde trabalho,
        as interações entre os dois são apresentadas.

        {\fontsize{11}{10}\selectfont\textbf{\textcolor{myseccolor}{Palavras-chave:}}}
álgebra de Hopf; extensão de Hopf Galois; anel não comutativo; extensão de Ore; extensão PBW torcida.
\end{shaded*}

\clearpage

\begin{shaded*}
	{\fontsize{11}{10}\selectfont\textbf{\textcolor{myseccolor}{Fabio Calder\'on}}}
	
	Fabio Calder\'on is a PhD(c) at Universidad Nacional de Colombia (Bogot\'a, Colombia). In 2021-2022 he did a research stay at Rice University (Houston, TX, USA) funded by the Fulbright Visiting Student Researcher program. His topics of research are noncommutative algebra and (generalizations of) Hopf algebras.	
	
	ORCID: \href{https://orcid.org/0000-0003-1777-0805}{0000-0003-1777-0805}
\end{shaded*}

\vspace{5mm}
\begin{shaded*}
	{\fontsize{11}{10}\selectfont\textbf{\textcolor{myseccolor}{Armando Reyes}}}
	
	Armando Reyes is Associate Professor (Exclusive Dedication) at the Department of Mathematics, Universidad Nacional de Colombia - Sede Bogot\'a. He is the leader of the research group LEMAGNOC (M\'etodos algebraicos, anal\'iticos y geom\'etricos en geometr\'ia no conmutativa). He is \textit{Investigador Senior} by Minciencias.
	
	ORCID: \href{https://orcid.org/0000-0002-5774-0822}{0000-0002-5774-0822}
\end{shaded*}
\end{document}